\definecolor{shadecolor}{gray}{0.90}
\def\bfit{\bfseries\itshape}
\newtheorem{theo}{Theorem}[section]
\newtheorem{prop}[theo]{Proposition}
\newtheorem{conj}[theo]{Conjecture}
\def\equat{\refstepcounter{theo}\begin{equation}}
\def\endequat{\end{equation}}
\def\AG{{\mathfrak A}}    
    \def\BM{{\mathbb{B}}}
\def\CG{{\mathfrak C}}    \def\CM{{\mathbb{C}}}
\def\FG{{\mathfrak F}}    \def\FM{{\mathbb{F}}}
  \def\lG{{\mathfrak l}}
    \def\PM{{\mathbb{P}}}
    \def\QM{{\mathbb{Q}}}
    \def\RM{{\mathbb{R}}}
\def\SG{{\mathfrak S}}  \def\sG{{\mathfrak s}}
  \def\zG{{\mathfrak z}}  \def\ZM{{\mathbb{Z}}}
  \def\ab{{\mathbf a}}  \def\AC{{\mathcal{A}}}
\def\Bb{{\mathbf B}}    
\def\Cb{{\mathbf C}}  \def\cb{{\mathbf c}}  \def\CC{{\mathcal{C}}}
\def\Db{{\mathbf D}}    
  \def\eb{{\mathbf e}}  
    \def\FC{{\mathcal{F}}}
\def\Gb{{\mathbf G}}    \def\GC{{\mathcal{G}}}
\def\Hb{{\mathbf H}}  \def\hb{{\mathbf h}}  \def\HC{{\mathcal{H}}}
\def\Lb{{\mathbf L}}    \def\LC{{\mathcal{L}}}
    \def\OC{{\mathcal{O}}}
\def\Pb{{\mathbf P}}    \def\PC{{\mathcal{P}}}
  \def\qb{{\mathbf q}}  
    \def\RC{{\mathcal{R}}}
\def\Sb{{\mathbf S}}  \def\sb{{\mathbf s}}  \def\SC{{\mathcal{S}}}
\def\Tb{{\mathbf T}}    \def\TC{{\mathcal{T}}}
\def\Ub{{\mathbf U}}  \def\ub{{\mathbf u}}  
    \def\WC{{\mathcal{W}}}
\def\Xb{{\mathbf X}}    \def\XC{{\mathcal{X}}}
\def\Yb{{\mathbf Y}}    \def\YC{{\mathcal{Y}}}
\def\Zb{{\mathbf Z}}    \def\ZC{{\mathcal{Z}}}
    \def\BCB{{\boldsymbol{\mathcal{B}}}}
\def\Crm{{\mathrm{C}}}    
\def\Drm{{\mathrm{D}}}
\def\Grm{{\mathrm{G}}}    
\def\Hrm{{\mathrm{H}}}
\def\Lrm{{\mathrm{L}}}    
\def\Nrm{{\mathrm{N}}}    
    \def\OCB{{\boldsymbol{\mathcal{O}}}}
\def\Srm{{\mathrm{S}}}    
\def\Trm{{\mathrm{T}}}
\def\Wrm{{\mathrm{W}}}    
    \def\XCB{{\boldsymbol{\mathcal{X}}}}
\def\Zrm{{\mathrm{Z}}}
\def\Mti{{\tilde{M}}}
          \def\jba{{\bar{j}}}
          \def\vba{{\bar{v}}}
\def\a{\alpha}
\def\b{\beta}
\def\g{\gamma}
\def\G{\Gamma}
\def\d{\delta}
\def\e{\varepsilon}
\def\ph{\varphi}
\def\l{\lambda}
\def\o{\omega}
\def\O{\Omega}
\def\r{\rho}
\def\s{\sigma}
\def\t{\tau}
\def\z{\zeta}
          \def\chit{{\tilde{\chi}}}
\def\mub{{\boldsymbol{\mu}}}
\def\Omeb{{\boldsymbol{\Omega}}}
\def\rhob{{\boldsymbol{\rho}}}
\DeclareMathOperator{\cus}{{\mathrm{cus}}}
\DeclareMathOperator{\Cus}{{\mathrm{Cus}}}
\DeclareMathOperator{\diag}{{\mathrm{diag}}}
\DeclareMathOperator{\End}{{\mathrm{End}}}
\DeclareMathOperator{\Id}{{\mathrm{Id}}}
\DeclareMathOperator{\im}{{\mathrm{Im}}}
\DeclareMathOperator{\Ind}{{\mathrm{Ind}}}
\DeclareMathOperator{\Irr}{{\mathrm{Irr}}}
\DeclareMathOperator{\Ker}{{\mathrm{Ker}}}
\DeclareMathOperator{\Part}{{\mathrm{Part}}}
\DeclareMathOperator{\Res}{{\mathrm{Res}}}
\DeclareMathOperator{\Spec}{{\mathrm{Spec}}}
\DeclareMathOperator{\uni}{{\mathrm{uni}}}
\DeclareMathOperator{\val}{{\mathrm{val}}}
\def\to{\rightarrow}
\def\longto{\longrightarrow}
\def\injto{\hookrightarrow}
\def\fonction#1#2#3#4#5{\begin{array}{rccc}
{#1} : & {#2} & \longto & {#3}  \\
& {#4} & \longmapsto & {#5} 
\end{array}}
\def\fonctio#1#2#3#4{\begin{array}{ccc}
{#1} & \longto & {#2} \\
{#3} & \longmapsto & {#4} 
\end{array}}
\def\vide{\varnothing}
\def\DS{\displaystyle}
\def\SS{\scriptstyle}
\def\SSS{\scriptscriptstyle}
\def\finl{~$\blacksquare$}
\def\lexp#1#2{\kern\scriptspace\vphantom{#2}^{#1}\kern-\scriptspace#2}
\def\le{\hspace{0.1em}\mathop{\leqslant}\nolimits\hspace{0.1em}}
\def\ge{\hspace{0.1em}\mathop{\geqslant}\nolimits\hspace{0.1em}}
\mathchardef\inferieur="321E
\mathchardef\superieur="321F
\def\eqna{\begin{eqnarray*}}
\def\endeqna{\end{eqnarray*}}
\def\maxi{{\mathrm{max}}}
\def\itemth#1{\item[${\mathrm{(#1)}}$]}
\def\gfp{{\FM_{\! p}}}
\def\gfq{{\FM_{\! q}}}
\def\qlb{{\overline{\QM}_{\! \ell}}}
\long\def\@car#1#2\@nil{#1}
\long\def\@first#1#2{#1}
\long\def\@second#1#2{#2}
\long\def\ifempty#1{\expandafter\ifx\@car#1@\@nil @\@empty
  \expandafter\@first\else\expandafter\@second\fi}
\def\GL{{\mathrm{GL}}}
\renewcommand{\Ref}{{\mathrm{Ref}}}
\def\boitegrise#1#2{\begin{centerline}{\fcolorbox{black}{shadecolor}{~
    \begin{minipage}[t]{#2}{\vphantom{~}#1\vphantom{$A_{\DS{A_A}}$}}
            \end{minipage}~}}\end{centerline}\medskip}
\def\ve{{\SSS{\vee}}}
\def\surto{\twoheadrightarrow}
\def\module{{\text{-}}{\mathrm{mod}}}
\theoremstyle{remark}
\newtheorem{rema}[theo]{Remark}
\newtheorem{commentary}[theo]{Commentary}
\newtheorem{exemple}[theo]{Example}
\theoremstyle{plain}
\def\Frac{{\mathrm{Frac}}}
\def\BIL{LR}
\def\GAUCHE{L}
\def\CAR{CAR}
\def\FAM{FAM}
\def\reg{{\mathrm{reg}}}
\def\euler{{\eb\ub}}
\def\calo{{\Crm\Mrm}}
\def\eval{{\mathrm{ev}}}
\def\xyinj{\ar@{^{(}->}}
\def\xysur{\ar@{->>}}
\def\isomorphisme#1{{\boldsymbol{[}}\hskip0.5mm #1\hskip0.5mm {\boldsymbol{]}}}
\def\petitespace{\vphantom{$\DS{\frac{\DS{A^A}}{\DS{A_A}}}$}}
\def\kl{{\mathrm{KL}}}
\def\cyclo{{\mathrm{cyc}}}
\def\hlinewd#1{%
\noalign{\ifnum0=`}\fi\hrule \@height #1 %
\futurelet\reserved@a\@xhline}
\newlength\epaisLigne
\newcommand{\longiso}{\stackrel{\sim}{\longrightarrow}}
\def\schur{{\mathrm{sch}}}
\def\hlinewd#1{%
\noalign{\ifnum0=`}\fi\hrule \@height #1 %
\futurelet\reserved@a\@xhline}
\def\petitespace{\vphantom{$\DS{\frac{\DS{A^A}}{\DS{A_A}}}$}}
\newcommand{\longsurto}{\relbar\joinrel\twoheadrightarrow}
\newcommand{\longinjto}{\lhook\joinrel\longrightarrow}
\def\red{{\mathrm{red}}}
\def\nor{{\mathrm{nor}}}
\def\la{\langle}
\def\ra{\rangle}
\def\gr{\operatorname{gr}\nolimits}
\def\Rees{\operatorname{Rees}\nolimits}
\def\codim{\operatorname{codim}\nolimits}
\def\cod{{\mathrm{cod}}}
\def\pt{{\mathbf{pt}}}
\def\gr{{\mathrm{gr}}}
\def\lus{{\mathrm{Lus}}}
\def\kl{{\mathrm{KL}}}
\def\uni{{\mathrm{un}}}
\def\unip{{\mathrm{Unip}}}
\def\unipc{{\mathcal{U\!n\!i\!p}}}
\def\refl{{\mathrm{ref}}}
\def\Fam{{\mathrm{Fam}}}
\def\calo{{\mathrm{CM}}}
\def\fix{{\mathrm{fix}}}
\def\hecke{{\mathrm{Hec}}}
\def\cyclo{{\mathrm{cyc}}}
\def\CUSC{{\mathcal{C\!u\!s}}}
\def\maxi{{\mathrm{max}}}
\def\irrc{{\mathcal{I\!r\!r}}}
\def\groups{{\GC\!\mathcal{r\!o\!u\!p\!s}}}
\def\ksp{{k_{\mathrm{sp}}}}
\def\kspo{{k_{\mathrm{sp}}^\circ}}
\def\Famuni{\mathcal{F\!a\!m}_\uni}
\def\cus{{\mathrm{cus}}}
\def\degb{{\mathbf{deg}}}
\def\symp{{\mathcal{S\!y\!m\!p}}}
\def\Nrmov{{\overline{\mathrm{N}}}}
\def\leaf{\SC}
\def\CUSC{{\mathcal{C\!u\!s}}}
\let\original@addcontentsline\addcontentsline
\newcommand{\dummy@addcontentsline}[3]{}
\newcommand{\DeactivateToc}{\let\addcontentsline\dummy@addcontentsline}
\newcommand{\ActivateToc}{\let\addcontentsline\original@addcontentsline}
\begin{document}

%\baselineskip=16pt
%\large\baselineskip=20pt
%\Large\baselineskip=24pt

\title{Calogero-Moser spaces vs unipotent representations}

\author{{\sc C\'edric Bonnaf\'e}}
\address{IMAG, Universit\'e de Montpellier, CNRS, Montpellier, France} 

\makeatletter
\email{cedric.bonnafe@umontpellier.fr}
\makeatother

\date{\today}

\thanks{The author is partly supported by the ANR: 
Projects No ANR-16-CE40-0010-01 (GeRepMod) and ANR-18-CE40-0024-02 (CATORE).}

\pagestyle{myheadings}

\markboth{\sc C. Bonnaf\'e}{\sc Calogero-Moser spaces 
vs unipotent representations}

\maketitle

\vskip-1cm 
\centerline{\it To George Lusztig, with admiration}

\vskip1cm

\begin{abstract}
Lusztig's classification of unipotent representations of finite reductive 
groups depends only on the associated Weyl group $W$ (and the automorphism that the Frobenius automorphism 
induces on $W$). All the structural questions (families, 
Harish-Chandra series, partition into blocks...) have an answer in a combinatorics that 
can be entirely built directly from $W$. 

Over the years, we have noticed that the same combinatorics seems 
to be encoded in the Poisson geometry of a Calogero-Moser space associated 
with $W$ (families should correspond to ${\mathbb{C}}^\times$-fixed 
points, Harish-Chandra series should correspond to symplectic leaves, blocks 
should correspond to symplectic leaves in the fixed point subvariety 
under the action of a root of unity). 

The aim of this survey is to gather all these observations, state precise 
conjectures and provide general facts and examples supporting these conjectures.
\end{abstract}

For this introduction, let us focus on the case where $\Gb$ is a {\it split} 
reductive group over a finite field with $q$ elements $\gfq$ and let $G=\Gb(\gfq)$ 
be the finite group consisting of $\gfq$-rational points. Let $W$ denote 
the Weyl group of $\Gb$ and let $\ZC$ denote the Calogero-Moser space 
associated with $W$ {\it at equal parameters} (recall that it is an affine Poisson 
variety endowed with a $\CM^\times$-action~\cite[\S{4}]{EG}). Let $\unip(G)$ denote the 
set of irreducible unipotent characters of $G$ (as defined by Lusztig). 
A consequence of a conjecture of Gordon-Martino (2007,~\cite{gordon martino}) 
is that the fixed point set $\ZC^{\CM^\times}$ should be in bijection with 
the set of {\it Lusztig families} of $\unip(G)$. This first link 
was the starting point of our interest in 
the geometry of Calogero-Moser spaces. 

In 2008, Gordon, following works of Haiman, obtained in type $A$ a parametrization 
of the irreducible components of the fixed point subvariety $\ZC^{\mub_d}$ 
by $d$-cores of partitions. This fits perfectly with the 
partition of irreducible unipotent representations of $\Gb\Lb_n(\gfq)$ into 
$d$-Harish-Chandra series (defined by Brou\'e-Malle-Michel~\cite{BMM}). 
In 2011, Bellamy and Losev (independently) obtained a parametrization 
{\it \`a la Harish-Chandra} of symplectic leaves of $\ZC$. 
In 2013, Rouquier and the author~\cite{pacific} constructed 
partitions of $W$ into left, right and two-sided Calogero-Moser cells 
and conjectured they coincide with Kazhdan-Lusztig cells. 

From then, the author has worked (with many different authors) on representations 
of Cherednik algebras at $t=0$ and the geometry of Calogero-Moser spaces 
(see~\cite{pacific, calogero, bonnafe diedral, bonnafe maksimau, bonnafe shan}), 
for, as main motivation, understanding these strange analogies between the geometry 
of Calogero-Moser spaces and 
the representation theory of finite reductive groups. Over the years, 
the author has enriched these coincidences with several examples but has never  
exposed them in a paper. This is the aim of this survey to present them, 
state precise conjectures, and provide a list of examples that support 
these conjectures. A main reason for waiting for such a long time is that we 
needed to establish some theoretical background on Calogero-Moser space to state 
precise conjectures: 
this is done in~\cite{auto}, where we generalize some results of 
Bellamy~\cite{bellamy cuspidal} and Losev~\cite{losev} on symplectic 
leaves. We also needed some general results (cohomology, fixed points, 
regular automorphisms) in accordance with these
conjectures~\cite{bonnafe shan, bonnafe maksimau, regular}. 

Let us explain one of the strangest (and most convincing) coincidences. 
Let $\ell$ be a prime number not dividing $q$ and assume for simplicity 
that $\ell$ does not divide $|W|$. We denote by 
$d$ the order of $q$ modulo $\ell$. Then each $\ell$-block $B$ 
of $\unip(G)$ should correspond to a {\it symplectic leaf} $\leaf_B$ 
of the fixed point subvariety $\ZC^{\mub_d}$ of $\ZC$ in such a way that:
\begin{itemize}
\item[$\bullet$] On one hand, the {\it $d$-Harish-Chandra theory} 
of Brou\'e-Malle-Michel~\cite{BMM} associates to $B$ a complex reflection 
group $\WC_B$ whose irreducible characters are in bijection with $B$. 
Moreover, Brou\'e-Malle-Michel also associate to $B$ a Deligne-Lusztig 
variety $\XC_B$ and a parameter $k_B$ and conjecture that the endomorphism 
algebra of the $\ell$-adic cohomology of $\XC_B$ is isomorphic to a Hecke 
algebra of $\WC_B$ with parameter $k_B$. This association is motivated 
by Brou\'e's abelian defect conjecture, and its geometric version 
for finite reductive groups~\cite[\S{6}]{broue iso} (see also~\cite{BMM, broue malle}). 

\item[$\bullet$] On the other hand, an analogue of a {\it $d$-Harish-Chandra 
theory for symplectic leaves} of $\ZC^{\mub_d}$ developed by the author~\cite{auto} 
(extending earlier works of Bellamy~\cite{bellamy cuspidal} and Losev~\cite{losev} 
which deal with the case where $d=1$) associates to $\leaf_B$ a finite linear group 
$\WC_B'$ and a parameter $k_B'$. We conjecture~\cite[Conj.~B]{auto} that 
the normalization $\overline{\leaf}_B^\nor$ of the closure of the symplectic leaf $\leaf_B$ 
is the Calogero-Moser space for the pair $(\WC_B',k_B')$.

\item[$\bullet$] The main intriguing observation is that, in the cases 
where computations can be done, $\WC_B$ is a subgroup of $\WC_B'$ 
(in fact, in most cases, $\WC_B=\WC_B'$) and the parameter $k_B$ is the restriction 
of the parameter $k_B'$. Our main conjecture is that this holds in general.
\end{itemize}
So, important features of the $\ell$-modular representation 
theory of $G$ seem to be encoded in the (Poisson) geometry 
of the affine variety $\ZC^{\mub_d}$ (where $d$ and $\ell$ are linked 
by the fact that $q$ is a primitive $d$-th root of unity modulo $\ell$). 
Moreover, this correspondence seems to carry more properties, as explained in Section~\ref{sec:main}. 
To support our conjectures, we have the following examples available:
\begin{itemize}
\item[$\bullet$] We are able to prove most of them if $W$ is of type $A$ 
(see Section~\ref{sec:type a}).

\item[$\bullet$] They hold if $W$ is of type $B_2$ or $G_2$ and $d$ is the Coxeter number 
(see Section~\ref{sec:2}).

\item[$\bullet$] They hold if $W$ is of classical type and $d=1$ (classical Harish-Chandra 
theory); see Section~\ref{sec:classique}.

\item[$\bullet$] In the {\it regular} case (see~\S\ref{sub:regular} for the definition), 
we have a general result on Calogero-Moser spaces (see Theorem~\ref{theo:chi-tau}) 
which fits with observations made on the unipotent representations side (see Example~\ref{ex:regular}).

\item[$\bullet$] Our conjectures are compatible with Ennola duality.
\end{itemize}

% Maybe a main difficulty \todo{Revoir ce paragraphe}
% comes from the non-smoothness of $\ZC$, by opposition with the affine Grassmannian. 
% Maybe another difficulty comes from the fact that the correct variety 
% to be considered is not the deformation $\ZC$ of the symplectic quotient by $W$ 
% but a homeomorphic one (still not smooth), harder to compute, which would be a partial 
% resolution of the symplectic quotient by $W$. In the case where $\Gb$ is of type $A$, 
% then $\ZC$ is smooth and 
% this ``correct'' variety is the Hilbert scheme $\HC$ of points in the plane: recent 
% works of Dudas-Rouquier relate the category of coherent sheaves on $\HC$ 
% and representations of finite general linear or unitary groups 
% (still unpublished, but the interested reader 
% might look at numerous videos of some of their 
% talks\footnote{\tiny{\tt https://www.birs.ca/events/2017/5-day-workshops/17w5003/videos/watch/201710181031-Rouquier.html}\\
% {\tt https://www.msri.org/workshops/820/schedules/23934}\\
% {\tt https://www.youtube.com/watch?v=CMBVSJC6EX0}}).

\bigskip

The text is organized as follows. An introductory part presents the set-up 
and the notation involved all along the text. We summarize in the first part 
some general questions on the geometry of Calogero-Moser spaces 
(cohomology, geometry of symplectic leaves...), already 
contained in~\cite{calogero, bonnafe shan, auto, regular}. 
The second part is a crash-course on unipotent representations of finite reductive 
groups (we hope it is understandable for non-specialists). The third part 
contains an explanation of the notion of {\it genericity} and also a detailed 
exposition of the different coincidences (stated as conjectures) we expect 
between the Poisson geometry of $\ZC$ and the representation theory of $G$: this 
is the heart of this survey. The fourth part contains several very explicit 
examples confirming 
the conjectures. The last (short) part is an invitation to the {\it Spetses} theory 
of Brou\'e-Malle-Michel~\cite{spetses 1, spetses 2}, which have connections 
with the theme of this paper.

\vskip-3cm

\tableofcontents

\noindent{\bf Commentary.} 
Recently, Riche-Williamson~\cite{riche} provided a geometric proof of 
the {\it linkage principle}~\cite{verma, humphreys, jantzen, andersen}: 
in their construction, blocks of the category 
of rational representations of $\Gb(\overline{\FM}_{\! q})$ are 
in bijection with the irreducible components of ${\mathcal{G\!r}}^{\mub_p}$, where 
$p$ is the prime number dividing $q$ and ${\mathcal{G\!r}}$ is the (complex) 
affine Grassmannian of the (complex) Langlands dual group to $\Gb$. 
So our observation has the same flavor as Riche-Williamson result: the blocks 
of some category of representations are controlled by the geometry of fixed 
points under the action of a group of roots of unity on some variety. 
Of course, the main difference is that Riche-Williamson proved a true theorem, 
built on the geometric Satake 
equivalence~\cite{lusztig sing, ginzburg, BD, mirko} 
between representations of $\Gb(\overline{\FM}_{\! q})$ and some category of perverse 
sheaves on ${\mathcal{G\!r}}$. Our observations are conjectural, and are only concerned 
with numerical/combinatorial coincidences. 
We lack of a {\it geometric Calogero-Moser equivalence}\footnote{Recent 
works of Dudas-Rouquier relate the category of coherent sheaves on the Hilbert scheme of 
points in the plane (which is diffeomorphic to the Calogero-Moser 
space associated with the symmetric group)
and representations of finite general linear or unitary groups. This work 
is still unpublished, but the interested reader 
might look at numerous videos of some of their 
talks:\\
\tiny{\tt https://www.birs.ca/events/2017/5-day-workshops/17w5003/videos/watch/201710181031-Rouquier.html}\\
{\tt https://www.msri.org/workshops/820/schedules/23934}\\
{\tt https://www.youtube.com/watch?v=CMBVSJC6EX0}}...

\bigskip

\noindent{\bf Acknowledgements.} I wish to thank warmly the {\it Spetses} team 
(Michel Brou\'e, Olivier Dudas, Gunter Malle, 
Jean Michel and Rapha\"el Rouquier), from which 
I learnt most of what I know on representation theory of finite reductive groups, 
and for the hours and hours of passionate discussions we had together.

\part*{Set-up}

\section{General notation}

\medskip

Throughout this paper, we will abbreviate $\otimes_\CM$ as 
$\otimes$. 

If $\XC$ is a quasi-projective scheme of finite 
type over an algebraically closed field, we denote by $\XC_\red$ its reduced 
subscheme. By an algebraic variety, we mean a quasi-projective 
reduced scheme of finite type over an algebraically closed field. 
If $\XC$ is an algebraic variety, we denote by $\XC^\nor$ 
its normalization. If $\XC$ is affine we denote by $\CM[\XC]$ 
its coordinate ring.

If $\XC$ is a complex algebraic variety, we denote by $\Hrm^j(\XC)$ its $j$-th 
singular cohomology group with coefficients in $\CM$. If 
$\XC$ carries a regular action of a torus $\Tb$, 
we denote by $\Hrm_\Tb^j(\XC)$ its $j$-th $\Tb$-equivariant cohomology group 
(still with coefficients in $\CM$). Note that 
$\Hrm^{2\bullet}(\XC)=\bigoplus_{j \ge 0} \Hrm^{2j}(\XC)$ is a graded $\CM$-algebra and 
$\Hrm^{2\bullet}_\Tb(\XC)=\bigoplus_{j \ge 0} \Hrm^{2j}_\Tb(\XC)$ 
is a graded $\Hrm^{2\bullet}_\Tb(\pt)$-algebra, where $\pt=\Spec(\CM)$. 
We identify $\Hrm^{2\bullet}_{\CM^\times}(\pt)$ with $\CM[\hbar]$ in the usual way 
(note that $\Hrm_\Tb^{2j+1}(\pt)=0$ for all $j$). 
If $\YC$ is another complex variety endowed with a regular $\Tb$-action 
and if $\ph : \YC \to \XC$ is a $\Tb$-equivariant morphism of varieties, 
we denote by $\ph^* : \Hrm_\Tb^\bullet(\XC) \longto \Hrm_\Tb^\bullet(\YC)$ 
the induced morphism in equivariant cohomology.

\bigskip

\section{Finite linear group, reflections} 

\medskip

\boitegrise{{\bf Notation.} {\it We fix in this paper a finite dimensional 
$\CM$-vector space $V$ and a finite subgroup $W$ of $\GL_\CM(V)$.}}{0.75\textwidth}

\medskip

\subsection{Reflections, hyperplanes} We set $\e : W \to \CM^\times$, $w \mapsto \det(w)$ and 
$$\Ref(W)=\{s \in W~|~\dim_\CM V^s=n-1\}.$$
Note that, for the moment, we do not assume that $W=\langle \Ref(W) \rangle$. 
We identify $\CM[V]$ (resp. $\CM[V^*]$) with the symmetric 
algebra $\Srm(V^*)$ (resp. $\Srm(V)$).

We denote by $\AC$ the set of {\it reflecting hyperplanes} of $W$, namely
$$\AC=\{V^s~|~s \in \Ref(W)\}.$$
If $H \in \AC$, we denote by $W_H$ its inertia group, i.e. the group consisting of 
elements $w \in W$ such that $w(v)=v$ for all $v \in H$. 
We denote by $\a_H$ an element of $V^*$ such that 
$H=\Ker(\a_H)$ and by $\a_H^\vee$ an element of $V$ such that 
$V=H \oplus \CM \a_H^\vee$ and the line $\CM\a_H^\vee$ is $W_H$-stable.
We set $e_H=|W_H|$. Note that $W_H$ is cyclic of order $e_H$ and that 
$\Irr(W_H)=\{\Res_{W_H}^W \e^j~|~0 \le j \le e-1\}$. We denote by $\e_{H,j}$ 
the (central) primitive idempotent of $\CM W_H$ associated with the character 
$\Res_{W_H}^W \e^{-j}$, namely
$$\e_{H,j}=\frac{1}{e_H}\sum_{w \in W_H} \e(w)^j w \in \CM W_H.$$
If $\O$ is a $W$-orbit of reflecting hyperplanes, we write $e_\O$ for the 
common value of all the $e_H$, where $H \in \O$. 
We denote by $\aleph$ the set of pairs $(\O,j)$ where $\O \in \AC/W$ and 
$0 \le j \le e_\O-1$. 
The vector space of families of complex numbers 
indexed by $\aleph$ will be denoted by $\CM^\aleph$, elements 
of $\CM^\aleph$ will be called {\it parameters}. 
If $k=(k_{\O,j})_{(\O,j) \in \aleph} \in \CM^\aleph$, we 
define $k_{H,j}$ for all $H \in \O$ and $j \in \ZM$ by 
$k_{H,j}=k_{\O,j_0}$ where $\O$ is the $W$-orbit of $H$ 
and $j_0$ is the unique element of $\{0,1,\dots,e_H-1\}$ such that 
$j \equiv j_0 \mod e_H$.

% If $s \in \Ref(W)$, we denote by $\a_s^\vee$ and $\a_s$ two elements of $V$ and $V^*$, respectively, 
% such that $V^s=\Ker(\a_s)$ and $V^{* s}=\Ker(\a_s^\vee)$, where $\a_s^\vee$ is viewed as a linear 
% form on $V^*$.
\medskip

\subsection{Filtration}\label{sub:filtration}
Let $\cod : W \to \ZM_{\geqslant 0}$ be defined by 
$$\cod(w)=\codim_\CM(V^w)$$
(note that $\Ref(W)=\cod^{-1}(1)$) 
and we define a filtration $\FC_\bullet(\CM W)$ of the group algebra of $W$ as follows: let
$$\FC_j(\CM W)=\bigoplus_{\cod(w) \le j} \CM w.$$
Then 
$$\CM\Id_V=\FC_0(\CM W)\subset \FC_1(\CM W)\subset \cdots\subset\FC_n(\CM W)=\CM W=\FC_{n+1}(\CM W)= \cdots$$ 
is a filtration of $\CM W$. For any subalgebra $A$ of $\CM W$, we set $\FC_j(A)=A\cap\FC_j(\CM W)$, 
so that 
$$\CM\Id_V=\CM\FC_0(A)\subset \FC_1(A)\subset \cdots\subset\FC_n(A)=A=\FC_{n+1}(A)=\cdots$$ 
is also a filtration of $A$. Write 
\begin{align*}
&\Rees_\FC^\bullet(A)=\bigoplus_{j\geqslant 0}\hbar^j\FC_j(A)\subset \CM[\hbar] \otimes A
\qquad(\text{the {\it Rees algebra}}),\\
&\gr_\FC^\bullet(A)=\bigoplus_{j\geqslant 0}\FC_j(A)/\FC_{j-1}(A).
\end{align*}
Recall that $\gr_\FC^\bullet(A) \simeq 
\Rees_\FC^\bullet(A)/\hbar \Rees_\FC^\bullet(A)$.

\bigskip

\section{Rational Cherednik algebra at ${\boldsymbol{t=0}}$}

\medskip

\boitegrise{{\bf Notation.} {\it Throughout this paper, we fix a parameter 
$k \in \CM^\aleph$.}}{0.65\textwidth}

\bigskip

\subsection{Definition} 
We define the {\it rational Cherednik algebra} $\Hb_k$ to be the quotient 
of the algebra $\Trm(V\oplus V^*)\rtimes W$ (the semi-direct product of the tensor algebra 
$\Trm(V \oplus V^*)$ with the group $W$) 
by the relations 
\equat\label{eq:rels}
\begin{cases}
[x,x']=[y,y']=0,\\
[y,x]=\DS{\sum_{H\in\mathcal{A}} \sum_{j=0}^{e_H-1}
e_H(k_{H,j}-k_{H,j+1}) 
\frac{\langle y,\a_H \rangle \cdot \langle \a_H^\ve,x\rangle}{\langle \a_H^\ve,\a_H\rangle} \e_{H,j}},
\end{cases}
\endequat
for all $x$, $x'\in V^*$, $y$, $y'\in V$. 
Here $\la\ ,\ \ra: V\times V^*\to\CM$ is the standard pairing. 
The first commutation relations imply that 
we have morphisms of algebras $\CM[V] \to \Hb_k$ and $\CM[V^*] \to \Hb_k$. 
Recall~\cite[Theo.~1.3]{EG} 
that we have an isomorphism of $\CM$-vector spaces 
\equat\label{eq:pbw}
\CM[V] \otimes \CM W \otimes \CM[V^*] \longiso \Hb_k
\endequat
induced by multiplication (this is the so-called {\it PBW-decomposition}). 

\medskip

\begin{rema}\label{rem:parametres particuliers}
Let $(l_\O)_{\O \in \AC/W}$ be a family of complex numbers and let 
$k' \in \CM^\aleph$ be defined by $k_{\O,j}'=k_{\O,j} + l_\O$. Then 
$\Hb_k=\Hb_{k'}$. This means that there is no restriction to generality 
if we consider for instance only 
parameters $k$ such that $k_{\O,0}=0$ for all $\O$, 
or only parameters $k$ such that $k_{\O,0}+k_{\O,1}+\cdots+k_{\O,e_\O-1}=0$ 
for all $\O$ (as in~\cite{calogero}).\finl
\end{rema}

\medskip

\subsection{Calogero-Moser space}\label{sub:cm}
We denote by $\Zb_k$ the center of the algebra $\Hb_k$: it is well-known~\cite[Theo~3.3~and~Lem.~3.5]{EG} 
that $\Zb_k$ is an integral domain, which is integrally closed. Moreover, it contains 
$\CM[V]^W$ and $\CM[V^*]^W$ as subalgebras~\cite[Prop.~3.6]{gordon} 
(so it contains $\Pb=\CM[V]^W \otimes \CM[V^*]^W$), 
and it is a free $\Pb$-module of rank $|W|$. We denote by $\ZC_k$ the 
affine algebraic variety whose ring of regular functions $\CM[\ZC_k]$ is $\Zb_k$: 
this is the {\it Calogero-Moser space} associated with the datum $(V,W,k)$. 
It is irreducible and integrally closed. 

We set $\PC=V/W \times V^*/W$, so that $\CM[\PC]=\Pb$ and the inclusion 
$\Pb \injto \Zb_k$ induces a finite and flat morphism of varieties 
$$\Upsilon_k : \ZC_k \longto \PC.$$

Using the PBW-decomposition, we define a $\CM$-linear map 
$\Omeb^{\Hb_k} : \Hb_k \longto \CM W$
by 
$$\Omeb^{\Hb_k}(f w g)=f(0)g(0)w$$
for all $f \in \CM[V]$, $g \in \CM[V^*]$ and $w \in \CM W$. This map is $W$-equivariant 
with respect to the action on both sides by conjugation, so it induces a well-defined $\CM$-linear map 
$$\Omeb^k : \Zb_k \longto \Zrm(\CM W).$$
Recall from~\cite[Cor.~4.2.11]{calogero} that $\Omeb^k$ is a morphism of algebras, and that 
\equat\label{eq:lissite-omega}
\text{\it $\ZC_k$ is smooth if and only if $\Omeb^k$ is surjective.}
\endequat
The ``only if'' part is essentially due to Gordon~\cite[Cor.~5.8]{gordon} 
(but the reader must see take~\cite[Prop.~9.6.6~and~$($16.1.2$)$]{calogero} 
into account for translating Gordon's result in terms of $\Omeb^k$) while 
the ``if'' part follows from the work of Bellamy, Schedler and Thiel~\cite[Cor.~1.4]{BST}.

\bigskip

\subsection{Other parameters} 
Let $\CC$ denote the space of maps $\Ref(W) \to \CM$ 
which are constant on conjugacy classes of reflections. 
The element
$$\sum_{(\O,j) \in \aleph} \sum_{H \in \O} (k_{H,j}-k_{H,j+1}) e_H \e_{H,j}$$
of $\Zrm(\CM W)$ is supported only by reflections, so there exists 
a unique map $c_k \in \CC$ such that 
$$\sum_{(\O,j) \in \aleph} \sum_{H \in \O} (k_{H,j}-k_{H,j+1}) e_H \e_{H,j}
= \sum_{s \in \Ref(W)} (\e(s)-1) c_k(s) s.$$
Then the map $\CM^\aleph \to \CC$, $k \mapsto c_k$, is linear and surjective. 
With this notation, we have 
\equat\label{eq:cs}
[y,x] = \sum_{s \in \Ref(W)} (\e(s)-1)\hskip1mm c_k(s) 
\hskip1mm
\frac{\langle y,\a_s \rangle \cdot \langle \a_s^\ve,x\rangle}{\langle 
\a_s^\ve,\a_s\rangle}
\hskip1mm s,
\endequat
for all $y \in V$ and $x \in V^*$. Here, $\a_s=\a_{V^s}$ and $\a_s^\ve=\a_{V^s}^\ve$. 

\bigskip

\subsection{Extra-structures on the Calogero-Moser space}
The Calogero-Moser space $\ZC_k$ is endowed with a $\CM^\times$-action, 
a Poisson bracket and an Euler element 
which are described below.

\bigskip

\subsubsection{Grading, $\CM^\times$-action}
The algebra $\Trm(V\oplus V^*)\rtimes W$ can be $\ZM$-graded in such a way that the generators have the following degrees
$$
\begin{cases}
\deg(y)=-1 & \text{if $y \in V$,}\\
\deg(x)=1 & \text{if $x \in V^*$,}\\
\deg(w)=0 & \text{if $w \in W$.}
\end{cases}
$$
This descends to a $\ZM$-grading on $\Hb_k$ because the defining relations~(\ref{eq:rels}) 
are homogeneous. Since the center of a graded algebra is always graded, the subalgebra $\Zb_k$ 
is also $\ZM$-graded.  So the Calogero-Moser space $\ZC_k$ 
inherits a regular $\CM^\times$-action. Note also that
by definition $\Pb=\CM[V]^W \otimes \CM[V^*]^W$ is clearly a graded 
subalgebra of $\Zb_k$. 

\bigskip

\subsubsection{Poisson structure}
Let $t \in \CM$. One can define a deformation $\Hb_{t,k}$ of $\Hb_k$ as follows: 
$\Hb_{t,k}$ is the quotient 
of the algebra $\Trm(V\oplus V^*)\rtimes W$ 
by the relations 
\equat\label{eq:rels-1}
\begin{cases}
[x,x']=[y,y']=0,\\
[y,x]=t \la y,x \ra + \DS{\sum_{H\in\mathcal{A}} \sum_{j=0}^{e_H-1}
e_H(k_{H,j}-k_{H,j+1}) 
\frac{\langle y,\a_H \rangle \cdot \langle \a_H^\ve,x\rangle}{\langle \a_H^\ve,\a_H\rangle} \e_{H,j}},
\end{cases}
\endequat
for all $x$, $x'\in V^*$, $y$, $y'\in V$. It is well-known~\cite[Theo~1.3]{EG} 
that the PBW decomposition (as in~\eqref{eq:pbw}) still holds so 
that the family $(\Hb_{t,k})_{t \in \CM}$ is a flat deformation of $\Hb_k=\Hb_{0,k}$. 
This allows to define a Poisson bracket $\{\ ,\ \}$ on $\Zb_k$ as follows: 
if $z_1$, $z_2 \in \Zb_k$, 
we denote by $z_1^{t}$, $z_2^t$ the corresponding element of $\Hb_{t,k}$ through the 
PBW decomposition and we define 
$$\{z_1,z_2\} = \lim_{t \to 0} \frac{[z_1^t,z_2^t]}{t}.$$
Finally, note that 
\equat\label{eq:poisson equivariant}
\text{\it The Poisson bracket is $\CM^\times$-equivariant.}
\endequat

\bigskip

\subsubsection{Euler element}
Let $(y_1,\dots,y_m)$ be a basis of $V$ and let $(x_1,\dots,x_m)$ 
denote its dual basis. As in~\cite[\S{3.3}]{calogero}, we set
$$\euler=~\sum_{j=1}^m x_j y_j + \sum_{s \in \Ref(W)} \e(s) c_k(s) s 
=~\sum_{j=1}^m x_j y_j +
\sum_{H\in\AC}\sum_{j=0}^{e_H-1}e_H\ k_{H,j}\varepsilon_{H,j}.$$
Recall that $\euler$ does not depend on the choice of the basis of $V$. 
Also
\equat\label{eq:euler centre}
\euler \in \Zb_k,\qquad \Frac(\Zb_k)=\Frac(\Pb)[\euler]
\endequat
and
\equat\label{eq:euler poisson}
\{\euler,z\}=d z 
\endequat
if $z \in \Zb_k$ is homogeneous of degree $d$ 
(see for instance~\cite[Prop.~3.3.3]{calogero}).

\bigskip

% \subsubsection{Coherent sheaf}
% Let $e=\frac{1}{|W|} \sum_{w \in W} w$. 
% The $\Zb_k$-module $\Hb_k e$ is an important object in 
% the representation theory of Cherednik algebras (for instance, 
% one can retrieve the Cherednik algebra $\Hb_k$ 
% as its endomorphism ring~\cite{EG}). We denote by 
% $\HC_k$ the coherent sheaf on $\ZC_k$ associated 
% with the $\Zb_k$-module $\Hb_k$: it is endowed with 
% a structure of $(\CM W,\CM W)$-bimodule. Then $\HC_ke$ 
% is the coherent sheaf on $\ZC_k$ associated with the 
% $\Zb_k$-module $\Hb_k e$. It is endowed with a structure 
% of left $\CM W$-module. Recall that $\ZC_k$ is a vector 
% bundle if and only if $\ZC_k$ is smooth~\cite{gordon}.
% 
% \bigskip

\boitegrise{{\bf Notation.} {\it If $?$ is one of the above objects 
defined in this section ($\Hb_k$, $\ZC_k$, $\aleph$, $\AC$, $\HC_k$\dots), 
we will sometimes denote it by $?(W)$ or $?(V,W)$ if we need 
to emphasize the context.}}{0.75\textwidth}

\bigskip

\section{Reflection groups} 

\medskip

Recall that, for the moment, we did not assume that $W=\langle \Ref(W) \rangle$ (this will 
be assumed only after this section). 
Let $W_\refl=\la \Ref(W) \ra$ 
be the maximal subgroup of $W$ generated by reflections. Then the set $\AC$ depends 
only on $W_\refl$ and the finite group $W/W_\refl$ acts on $\aleph(W_\refl)$ 
and $\aleph=\aleph(W_\refl)/(W/W_\refl)$. In other words, giving an element 
$k \in \aleph$ is equivalent to giving an element $k \in \aleph(W_\refl)$ which 
is $W/W_\refl$-invariant. In this case, 
the relations $(\HC_{\! k})$ only depend on $W_\refl$. If we denote by 
$\Hb_k(W_\refl)$ the Cherednik algebra defined with $W_\refl$ instead of $W$, 
then $\Hb_k(W_\refl)$ is naturally a subalgebra of $\Hb_k$ and, as a $\CM W$-module, 
$\Hb_k=\CM W \otimes_{\CM W_\refl} \Hb_k(W_\refl)$. Note also that the finite 
group $W/W_\refl$ acts on $\Zb_k(W_\refl)$ and on $\ZC_k(W_\refl)$, and that
\equat\label{eq:cm-quotient}
\Zb_k = \Zb_k(W_\refl)^{W/W_\refl}\qquad\text{and}\qquad 
\ZC_k = \ZC_k(W_\refl)/(W/W_\refl).
\endequat
We deduce from this the following facts:

\bigskip

\begin{prop}\label{prop:ref-pasref}
Let $q : \ZC_k(W_\refl) \longto \ZC_k$ denote the quotient map. 
Then:
\begin{itemize}
\itemth{a} We have $\ZC_k^{\CM^\times}=q(\ZC_k(W_\refl)^{\CM^\times})$ 
and $q^{-1}(\ZC_k(W_\refl)^{\CM^\times}) = \ZC_k^{\CM^\times}$.

\itemth{b} The morphism $q$ induces isomorphisms 
$$q_* : \Hrm^\bullet(\ZC_k) \longiso \Hrm^\bullet(\ZC_k(W_\refl))^{W/W_\refl}$$
$$q_* : \Hrm_{\CM^\times}^\bullet(\ZC_k) \longiso 
\Hrm_{\CM^\times}^\bullet(\ZC_k(W_\refl))^{W/W_\refl}.
\leqno{\text{\it and}}$$
\end{itemize}
\end{prop}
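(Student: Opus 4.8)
The plan is to deduce everything from the identity $\ZC_k = \ZC_k(W_\refl)/(W/W_\refl)$ recorded in~\eqref{eq:cm-quotient}, together with the fact that the $W/W_\refl$-action on $\ZC_k(W_\refl)$ commutes with the $\CM^\times$-action (both come from the grading/structure on $\Zb_k(W_\refl)$, which is preserved by the $W/W_\refl$-action since that action is by graded algebra automorphisms). Write $\Gamma = W/W_\refl$, a finite group, and $q : \ZC_k(W_\refl) \to \ZC_k = \ZC_k(W_\refl)/\Gamma$ for the quotient map, which is finite and surjective.

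\medskip

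\emph{Part (a).} First I would note that, since the $\Gamma$-action and the $\CM^\times$-action commute, $\CM^\times$ acts on the finite set $q^{-1}(z)$ for any $\CM^\times$-fixed point $z \in \ZC_k$; indeed if $\lambda \cdot z = z$ then $\lambda$ permutes the fibre $q^{-1}(z)$. A $\CM^\times$-action on a nonempty finite set is trivial (the orbit map $\CM^\times \to q^{-1}(z)$ is continuous/algebraic from a connected group to a finite set), so every point of $q^{-1}(z)$ is $\CM^\times$-fixed. This gives $q^{-1}(\ZC_k^{\CM^\times}) \subseteq \ZC_k(W_\refl)^{\CM^\times}$. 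Conversely $q$ is $\CM^\times$-equivariant, so $q(\ZC_k(W_\refl)^{\CM^\times}) \subseteq \ZC_k^{\CM^\times}$, whence $\ZC_k(W_\refl)^{\CM^\times} \subseteq q^{-1}(\ZC_k^{\CM^\times})$. Combining the two inclusions yields $q^{-1}(\ZC_k^{\CM^\times}) = \ZC_k(W_\refl)^{\CM^\times}$, and applying $q$ (which is surjective) to both sides gives $\ZC_k^{\CM^\times} = q(\ZC_k(W_\refl)^{\CM^\times})$. So both assertions of (a) follow purely formally from commutation of the two actions and connectedness of $\CM^\times$; alternatively, at the level of rings, $\ZC_k(W_\refl)^{\CM^\times}$ is cut out by the augmentation ideal of the grading, and since $\Zb_k = \Zb_k(W_\refl)^\Gamma$ is a graded subalgebra the fixed loci match up under $\Spec$.

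\medskip

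\emph{Part (b).} The statement is that $q$ induces an isomorphism from $\Hrm^\bullet(\ZC_k)$ onto $\Hrm^\bullet(\ZC_k(W_\refl))^\Gamma$, and similarly equivariantly. This is the standard fact that the rational cohomology of the quotient of a (quasi-projective) complex variety $X$ by a finite group $\Gamma$ is the $\Gamma$-invariant part of $\Hrm^\bullet(X)$: the transfer argument. Concretely, for the pullback $q^* : \Hrm^\bullet(X/\Gamma) \to \Hrm^\bullet(X)$ one has a transfer (Gysin-type averaging) map $\tfrac{1}{|\Gamma|}\sum_{g\in\Gamma} g^* \circ q^*$ acting as the identity on $\Hrm^\bullet(X/\Gamma)$ and with image $\Hrm^\bullet(X)^\Gamma$; since $\CM$ has characteristic zero this shows $q^*$ is injective with image exactly $\Hrm^\bullet(X)^\Gamma$. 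One then transports this through the (iso)morphism $q_*$ in the notation of the statement (i.e.\ $q_* := q^*$, the map the paper writes in the ``covariant'' direction because it is an isomorphism onto the invariants). The equivariant version is identical: work in the category of $\CM^\times$-spaces, use the Borel construction $X_{\CM^\times} = E\CM^\times \times_{\CM^\times} X$, note $(X/\Gamma)_{\CM^\times} = X_{\CM^\times}/\Gamma$ because the two actions commute and $\Gamma$ is finite, and apply the same transfer argument to $\Hrm^\bullet_{\CM^\times}(-) = \Hrm^\bullet((-)_{\CM^\times})$; the transfer is a map of $\Hrm^\bullet_{\CM^\times}(\pt)$-modules, so the isomorphism is one of $\CM[\hbar]$-algebras.

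\medskip

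\emph{Main obstacle.} The only genuinely nontrivial point is ensuring the transfer argument is legitimately available here, i.e.\ that we are in a setting where ``cohomology of the quotient $=$ invariants'' holds. Over $\CM$ with rational (or complex) coefficients and a finite group acting on a reasonable space (here a quasi-projective variety, and its Borel construction), this is classical — it holds at the level of singular cochains because $\CM[\Gamma]$ is semisimple. So I expect no serious difficulty; the write-up amounts to citing the transfer isomorphism and checking $\CM^\times$-equivariance of the whole picture, which is immediate from the commuting actions established for part (a). If one wanted to avoid topology entirely one could instead observe that the morphism $q$ is étale in codimension... — but there is no need, the transfer argument is cleanest.
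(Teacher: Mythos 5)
Your proof is correct and follows essentially the same route as the paper's (very terse) proof: for (a) the paper likewise invokes finiteness of $q$ together with triviality of $\CM^\times$-actions on finite sets, and for (b) it simply cites the classical theorem that rational cohomology of a quotient by a finite group is the invariant part (Bredon, Theorem~III.2.4), which is exactly the transfer argument you spell out, including the Borel-construction reduction for the equivariant statement.
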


\bigskip

\begin{proof}
(a) follows since $q$ is a finite morphism and since an action 
of $\CM^\times$ on a finite set is necessarily trivial. (b) is a 
classical property of cohomology~\cite[Theo.~III.2.4]{bredon}.
\end{proof}

\bigskip

Continuing this reduction, we denote by $W(k)$ the subgroup 
of $W_\refl$ generated by the reflections $s \in \Ref(W)$ such 
that $c_k(s) \neq 0$. It is a normal subgroup of $W$ and $W_\refl$. 
Also, the formula~(\ref{eq:cs}) shows 
that, as a $\CM W$-module, 
$\Hb_k=\CM W \otimes_{\CM W_\refl} \Hb_{k^\flat}(W_\refl)$. 
Here, $k^\flat \in \CM^{\aleph(W(k))}$ is such that 
$c^{W(k)}_{k^\flat} \in \CC(W(k))$ is the restriction 
of $c_k$ to $\Ref(W(k))$. Therefore, as above, we have 
\equat\label{eq:cm-quotient-wk}
\Zb_k = \Zb_{k^\flat}(W(k))^{W/W(k)}\qquad\text{and}\qquad 
\ZC_k = \ZC_{k^\flat}(W(k))/(W/W(k)).
\endequat
We deduce from this the following facts:

\bigskip

\begin{prop}\label{prop:ref-pasref-wk}
Let $q^\flat : \ZC_{k^\flat}(W(k)) \longto \ZC_k$ denote the quotient map. 
Then:
\begin{itemize}
\itemth{a} We have $\ZC_k^{\CM^\times}=q^\flat(\ZC_{k^\flat}(W(k))^{\CM^\times})$ 
and $(q^\flat)^{-1}(\ZC_{k^\flat}(W(k))^{\CM^\times}) = \ZC_k^{\CM^\times}$.

\itemth{b} The morphism $q^\flat$ induces isomorphisms 
$$q^\flat_* : \Hrm^\bullet(\ZC_k) \longiso 
\Hrm^\bullet(\ZC_{k^\flat}(W(k)))^{W/W(k)}$$
$$q_*^\flat : \Hrm_{\CM^\times}^\bullet(\ZC_k) \longiso 
\Hrm_{\CM^\times}^\bullet(\ZC_k(W(k)))^{W/W(k)}.
\leqno{\text{\it and}}$$
\end{itemize}
\end{prop}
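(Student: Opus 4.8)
The plan is to run exactly the argument that proved Proposition~\ref{prop:ref-pasref}, now feeding in the presentation~\eqref{eq:cm-quotient-wk} of $\ZC_k$ as the quotient $\ZC_{k^\flat}(W(k))/(W/W(k))$ in place of~\eqref{eq:cm-quotient}. The only structural inputs needed are that $q^\flat$ is the geometric quotient morphism for an action of the \emph{finite} group $W/W(k)$, and that this action is compatible with the $\ZM$-grading. For the latter point I would first record that the $\ZM$-grading on $\Hb_k$ restricts to the subalgebra $\Hb_{k^\flat}(W(k))$ and that $W/W(k)$ acts on it by degree-preserving automorphisms; hence the induced $\CM^\times$-action on $\ZC_{k^\flat}(W(k))$ commutes with the $W/W(k)$-action and $q^\flat$ is $\CM^\times$-equivariant.

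Granting this, part~(a) is formal. Since $q^\flat$ is finite it is closed and surjective, with fibres the $W/W(k)$-orbits; being $\CM^\times$-equivariant it carries $\CM^\times$-fixed points to $\CM^\times$-fixed points, whence $q^\flat(\ZC_{k^\flat}(W(k))^{\CM^\times})\subseteq\ZC_k^{\CM^\times}$. Conversely, if $z\in\ZC_k^{\CM^\times}$ then $\CM^\times$ acts on the finite fibre $(q^\flat)^{-1}(z)$, and as $\CM^\times$ is connected this action is trivial, so the entire fibre consists of fixed points. This yields the displayed equality together with the statement that a fibre of $q^\flat$ over a fixed point is pointwise $\CM^\times$-fixed (and, conversely, that no other fibre can be). For part~(b) I would invoke the standard fact that for a finite group $G$ acting on a complex algebraic variety $X$ with quotient $\pi\colon X\to X/G$, the induced map identifies $\Hrm^\bullet(X/G)\longiso\Hrm^\bullet(X)^G$ — the key point being that $\CM$ has characteristic zero, so the transfer (averaging over $G$) is available; this is~\cite[Theo.~III.2.4]{bredon}. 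Applying it with $G=W/W(k)$ and $X=\ZC_{k^\flat}(W(k))$ gives the first isomorphism. For the equivariant version one repeats the argument on the Borel construction: since the $\CM^\times$- and $W/W(k)$-actions commute, $W/W(k)$ acts on the homotopy quotient of $\ZC_{k^\flat}(W(k))$ by $\CM^\times$ with quotient the homotopy quotient of $\ZC_k$, and the same transfer argument applies over $\Hrm^\bullet_{\CM^\times}(\pt)=\CM[\hbar]$.

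I do not expect a genuine obstacle here: once~\eqref{eq:cm-quotient-wk} is in hand the content is entirely formal, and the proof is essentially a transcription of that of Proposition~\ref{prop:ref-pasref}. The only mild verification is the compatibility of the $W/W(k)$-action with the grading and the $\CM^\times$-action, recorded above; and, for the equivariant statement, one should note (as is already implicit in the proof of Proposition~\ref{prop:ref-pasref}) that $\ZC_{k^\flat}(W(k))$ carries a $\CM^\times$-action with the usual good finiteness/homotopy properties, so that the Borel construction is well-behaved.
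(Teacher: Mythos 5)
Your proof is correct and follows exactly the route the paper intends: the paper gives no separate argument for Proposition~\ref{prop:ref-pasref-wk}, simply writing ``we deduce from this'' after~\eqref{eq:cm-quotient-wk}, so the reader is meant to transcribe the proof of Proposition~\ref{prop:ref-pasref} (finiteness of the quotient map plus triviality of $\CM^\times$-actions on finite sets for~(a), and the Bredon transfer isomorphism for~(b)), which is precisely what you do, with the added and sensible care of checking that the $W/W(k)$-action on $\Hb_{k^\flat}(W(k))$ is degree-preserving so that $q^\flat$ is $\CM^\times$-equivariant.
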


\bigskip

Even though the case where $k=0$ serves as a base 
of our conjectures/questions, the really interesting 
case is when $W(k)=W$: equations~(\ref{eq:cm-quotient-wk}) and 
Proposition~\ref{prop:ref-pasref-wk} help us to recover 
properties of $\ZC_k(W)$ from those of $\ZC_k(W(k))$. 
For instance, Etingof-Ginzburg proved that, if $\ZC_k$ is smooth, 
then $W=W(k)$ (see~\cite[Prop.~3.10]{EG}).

\bigskip

\section{Braid group, Hecke algebra} 

\medskip
\def\braid{\BM}
\def\pure{\PM}

\boitegrise{{\bfit Hypothesis and notation.} {\it From now on, and until the end of 
this paper, we assume that 
$$W=\la \Ref(W) \ra$$
and we fix $k \in \CM^\aleph$. We set 
$$V_\reg=V \setminus \bigcup_{H \in \AC} H$$
and we recall that $V_\reg$ is the set of elements of $V$ 
whose stabilizer in $W$ is trivial (this is Steinberg's Theorem: 
see for instance~\cite[Theo.~4.7]{broue}). \\
\hphantom{aa} We fix $v_0 \in V_\reg$ and we denote by $\vba_0$ 
its image in $V_\reg/W$. We set 
$$\braid=\pi_1(V_\reg/W,\vba_0)\qquad\text{\it and}\qquad \pure=\pi_1(V_\reg,v_0).$$
Then the group $\braid$ (resp. $\pure$) is called the {\bfit braid group} (resp. the {\bfit pure braid group}) 
of $W$.}}{0.75\textwidth}

\medskip

\subsection{Generators of ${\boldsymbol{\braid}}$ and ${\boldsymbol{\pure}}$} 
If $H \in \AC$, we denote by $s_H$ the generator of $W_H$ of determinant 
$\z_{e_H}=\exp(2i\pi/e_H)$ and by $\sb_H$ a {\it braid reflection} around $H$ (as defined 
in~\cite[Def.~4.13]{broue}: they are called {\it generator of the monodromy} around 
$H$ in~\cite{BMR}). Through the exact sequence
\equat\label{eq:pbw-tresse}
1 \longto \pure \longto \braid \longto W \longto 1
\endequat
induced by the unramified covering $V_\reg \to V_\reg/W$, the image of $\sb_H$ is $s_H$ 
and so $\sb_H^{e_H} \in \PM$. Moreover,
\equat\label{eq:pure}
\braid=\langle (\sb_H)_{H \in \AC} \rangle\qquad\text{and}\qquad
\pure=\langle(\sb_H^{e_H})_{H \in \AC} \rangle.
\endequat

\subsection{Hecke algebra}
Let $F$ denote the number field generated by the traces of the elements of $W$ 
(it is generally called the {\it character field} of $W$). 
It is known~\cite{benard, bessis} that the algebra $FW$ is split. We denote 
by $\OC$ the ring of algebraic integers in $F$ and 
let $R=\OC[\qb^\CM]$ be the group algebra of $(\CM,+)$ over $\OC$, denoted with an exponential notation: 
namely, we have $\qb^a \qb^{a'}=\qb^{a+a'}$ for all $a$, $a' \in \CM$. We set $\qb=\qb^1$. The 
{\it Hecke algebra} with parameter $k$, denoted by $\HC_k(W)$, is the quotient of 
the group algebra $R\braid$ of $\braid$ over $R$ by the ideal generated by the elements 
$$\prod_{j=0}^{e_H-1} (\sb_H - \z_{e_H}^j \qb^{k_H,j}),$$
where $H$ runs over $\AC$. 

We denote by $T_H$ % (or $T_H^{(k)}$ if we need to emphasize the parameter) 
the image of $\sb_H$ in $\HC_k(W)$. We have
\equat\label{eq:th}
\prod_{j=0}^{e_H-1} (T_H - \z_{e_H}^j \qb^{k_H,j})=0.
\endequat
If $q$ is a non-zero complex number, let $\HC_k(W,q)$ denote a {\it specialization of 
$\HC_k(W)$ at $q$}. Namely, we choose a complex logarithm $v$ of $q$ and 
we denote by $\eval_v : R \to \CM$ the morphism of $\CM$-algebras 
such that $\qb^a \mapsto q^a=\exp(a v)$ for all $a \in \CM$. Then $\HC_k(W,q)$ 
is the $\CM$-algebra obtained by specialisation through $\eval_v$. 
This is clearly an abuse of notation, as the specialization might depend 
on the choice of the logarithm $v$ of $q$ (for instance whenever 
the parameter $k$ has some non-integer values). But it turns out that, 
in this survey, this notation will occur only whenever the specialization 
does not depend on this choice.

Recall that $\HC_k(W)$ is a free $R$-module of rank $|W|$ 
(see~\cite{ariki}, \cite{ariki-koike}, \cite{BMR}, \cite{chavli1}, 
\cite{chavli2}, \cite{chavli3}, \cite{marin1}, \cite{marin2}, \cite{marin3}, 
\cite{marin-pfeiffer} and~\cite{tsuchioka}) 
such that its specialization $\HC_k(W,1)$ is just the group algebra $\CM W$ 
of $W$ over $\CM$. 

\bigskip

\subsection{Hecke families}
Whenever $k_{\O,j} \in \ZM$ for all $(\O,j) \in \aleph$, Brou\'e and Kim~\cite{broue kim} 
defined a partition of $\Irr(W)$ into families, which they call {\it Rouquier $k$-families}. 
In~\cite[\S{6.5}]{calogero}, Rouquier and the author extended (easily) the definition 
of these families to gereral parameters $k$, and decided to call them {\it Hecke 
$k$-families}. We will stick to this last terminology in this paper. Let us explain 
this definition.

Let $K$ denote the fraction field of $R$. The $K$-algebra $K\HC_k(W)$ is 
split semisimple~\cite[Theo.~5.2]{malle} so, by Tits deformation 
Theorem~\cite[Theo.~7.4.6]{geck pfeiffer}, 
it is isomorphic to the group algebra $KW$. Therefore, its irreducible characters 
are in bijection with $\Irr(W)$. If $\chi \in \Irr(W)$, we denote by 
$\chi_k$ the corresponding irreducible character of $K\HC_k(W)$. 
Now, let $R_\cyclo$ denote the localization of $R$ defined by 
$$R_\cyclo=R[\bigl((1-\qb^a)^{-1}\bigr)_{a \in \CM \setminus \{0\}}].$$
We say the $\chi$ and $\chi'$ are {\it in the same Hecke $k$-family} 
if there is a primitive central idempotent $b$ of $R_\cyclo \HC_k(W)$ 
such that $\chi_k(b)=\chi_k'(b) \neq 0$.

We denote by $\Fam_k^\hecke(W)$ the set of Hecke $k$-families.

\bigskip

\subsection{Calogero-Moser families}\label{sub:cm-families}
Calogero-Moser families were defined by Gordon 
using {\it baby Verma modules}~\cite[\S{4.2}~and~\S{5.4}]{gordon}. We explain  
here an equivalent definition given in~\cite[\S{7.2}]{calogero}. 
If $\chi \in \Irr(W)$, we denote by $\o_\chi : \Zrm(\CM W) \to \CM$ 
its central character (i.e., $\o_\chi(z)=\chi(z)/\chi(1)$ is the scalar by which 
$z$ acts on an irreducible representation affording the character $\chi$). 
We denote by $e_\chi$ (or $e_\chi^W$ if necessary) the primitive central 
idempotent such that $\o_\chi(e_\chi)=1$. 
We say that two irreducible characters $\chi$ and $\chi'$ of $W$ belong to the same 
{\it Calogero-Moser $k$-family} 
if $\o_\chi \circ \Omeb^k = \o_{\chi'} \circ \Omeb^k$. 
If $\FG$ is a subset of $\Irr(W)$, we set
$$e_\FG=\sum_{\chi \in \FG} e_\chi \in \Zrm(\CM W).$$
Finally, we denote by $\Fam_k^\calo(W)$ the set of Calogero-Moser $k$-families. 
Then~\cite[$(16.1.2)$]{calogero} 
\equat\label{eq:im-omega}
\im(\Omeb^k) = \bigoplus_{\FG \in \Fam_k^\calo(W)} \CM e_\FG
\endequat
and $\im(\Omeb^k)$ can be identified with the algebra of functions 
on $\ZC_k^{\CM^\times}$. 

In other words, this defines a surjective map 
$$\zG_k : \Irr(W) \longto \ZC_k^{\CM^\times}$$
whose fibers are the Calogero-Moser $k$-families. 
If $p \in \ZC_k^{\CM^\times}$, we denote by $\FG_p$ 
(or $\FG_p^k$ if we need to emphasize the parameter) 
the corresponding Calogero-Moser $k$-family. The next conjecture 
can be found in~\cite{martino}:

\bigskip

\begin{quotation}
\begin{conj}[Martino]\label{conj:martino}
Let $k^\sharp$ be the parameter $(k_{\O,-j})_{(\O,j) \in \aleph} \in \CM^\aleph$, 
where the index $j$ is viewed modulo $e_\O$. Then each Calogero-Moser $k$-family is a union of Hecke 
$k^\sharp$-families.
\end{conj}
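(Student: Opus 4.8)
The plan is to recast both partitions as block partitions of explicit algebras and then compare them. On the Calogero--Moser side, by~\eqref{eq:im-omega} the $k$-families are the fibres of $\zG_k$, equivalently --- by Gordon~\cite{gordon} --- the blocks of the \emph{restricted rational Cherednik algebra} $\overline{\Hb}_{k}=\Hb_k\otimes_{\Pb}\CM$ (the fibre of $\Hb_k$ at the origin of $\PC$); the idempotents $e_\FG$ cut out $\ZC_k^{\CM^\times}$ inside $\im(\Omeb^k)\subseteq\Zrm(\CM W)$. On the Hecke side, the $k^\sharp$-families are by definition the blocks of $R_\cyclo\HC_{k^\sharp}(W)$, transported to $\Irr(W)$ by Tits deformation. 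Thus Conjecture~\ref{conj:martino} asserts that the block partition of $R_\cyclo\HC_{k^\sharp}(W)$ refines that of $\overline{\Hb}_{k}$, i.e. that $\chi\sim_{\mathrm{Hec}}\chi'$ at $k^\sharp$ forces $\o_\chi\circ\Omeb^k=\o_{\chi'}\circ\Omeb^k$. Using~\eqref{eq:cm-quotient-wk}, Proposition~\ref{prop:ref-pasref-wk} and Remark~\ref{rem:parametres particuliers}, one may first reduce to the case $W(k)=W$ with $k$ normalized (say $k_{\O,0}=0$).

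The conceptual tool I would invoke is the KZ functor of Ginzburg--Guay--Opdam--Rouquier, which presents a suitable specialization of $\HC_{k^\sharp}(W)$ as the endomorphism algebra of a projective generator of a Serre quotient of the category $\OC$ of the rational Cherednik algebra \emph{at $t=1$}, thereby identifying the Hecke $k^\sharp$-families with the blocks of that category $\OC$. The conjecture then amounts to comparing the blocks of category $\OC$ at $t=1$ with the blocks of $\overline{\Hb}_{k}$ at $t=0$ (the latter being the blocks of category $\OC$ at $t=0$). The plan is to deform between the two values of $t$ through the flat family $(\Hb_{t,k})_{t\in\CM}$ introduced above, along which central characters vary algebraically, and to prove that the block partition attached to $t=0$ is coarser than the one attached to $t=1$; one then has to match the central characters at the origin of $\PC$ with those governing category $\OC$. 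Carrying out this comparison --- essentially the arithmetic of $\Omeb^k$ and of the Euler element, see~\eqref{eq:euler poisson} --- is the technical core of the argument.

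In parallel, to control whatever the bridge above does not settle uniformly, I would run the ``essential hyperplanes'' formalism: by Chlouveraki the Hecke families are constant on the chambers of a central hyperplane arrangement in $\CM^\aleph$ and merge semicontinuously across walls, and an analogous statement for Calogero--Moser families is due to Bellamy and Thiel. The conjecture then follows from (i) the \emph{generic} comparison --- for $k$ in the interior of a chamber, the CM $k$-family is a union of Hecke $k^\sharp$-families --- together with (ii) a compatibility statement: the wall-crossing of CM families dominates that of Hecke families, so that no Hecke merging straddles two CM families. Point (i) and the comparison of the two arrangements can be reduced, by Clifford theory \`a la Chlouveraki--Gordon--Griffeth, to the ``parabolic'' walls, i.e. to the inertia groups $W_H$, which are cyclic of order $e_H$: there both $\HC_{k^\sharp}(W_H)$ and the restricted Cherednik algebra of $W_H$ are fully explicit and their families coincide --- and the change of variables $k\mapsto k^\sharp$, $j\mapsto-j\bmod e_H$, is exactly the mismatch between the two natural labellings of $\Irr(W_H)$ by powers of $\e$, which accounts for its presence in the statement.

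The hard part, I expect, is precisely what the rank-one reduction misses: the essential hyperplanes that are \emph{not} of parabolic type, which really occur for several exceptional groups ($G_{25}$, $G_{26}$, $G_{32}$, \dots) whose reflections have order $>2$. Along such a wall there is no Clifford-theoretic shortcut, and one must either compare generic CM families with generic Rouquier families by explicit computation of $\Omeb^k$ (in the spirit of Thiel's tables) or push the $t=0\leftrightarrow t=1$ bridge of the second paragraph through in full. A uniform argument that the CM hyperplane arrangement and its merging rule are subordinate to the Hecke ones --- equivalently, a conceptual reason why the blocks of category $\OC$ at $t=1$ refine those at $t=0$ --- is, to my mind, the one genuine obstacle; absent it, Conjecture~\ref{conj:martino} reduces to a finite (if sizeable) check over the Shephard--Todd list, the infinite families being largely within reach of known methods.
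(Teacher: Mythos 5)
The statement you are trying to prove is stated in the paper as \emph{Conjecture}~\ref{conj:martino} precisely because no proof is known; Theorem~\ref{theo:martino} records only the cases where it has been verified, case by case, by Martino, Bellamy, Thiel, and Bonnaf\'e--Thiel. There is therefore no ``paper proof'' to compare against, and your proposal is in fact a roadmap rather than a proof --- and you say so yourself in the last paragraph. That self-assessment is accurate: the obstacle you name is the obstacle.

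To be concrete about why the bridge you sketch does not close: the KZ functor does identify Hecke blocks (in a suitable specialization) with blocks of category $\OC$ for $\Hb_{1,k}$, but the passage from $t=1$ to $t=0$ is not a deformation of central characters in any naive sense, because the center collapses drastically when $t\neq 0$ (essentially only the Euler element and the $W$-invariants survive), whereas at $t=0$ one has the full $\Zb_k$ of Krull dimension $2\dim V$. What \emph{does} transfer cleanly through the Euler element is exactly the constraint recorded in Proposition~\ref{prop:a+A}: $a^{(k)}+A^{(k)}$ is constant on Calogero--Moser $k$-families, mirroring its constancy on Hecke families. Beyond that single coordinate of $\Omeb^k$, there is no known uniform argument that the block arrangement of $R_\cyclo\HC_{k^\sharp}(W)$ (Chlouveraki's essential hyperplanes) is subordinate to the one governing $\im(\Omeb^k)$ (Bellamy--Thiel's Calogero--Moser hyperplanes), and in particular no $t$-deformation argument that forces the refinement in the required direction. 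Your Clifford-theoretic reduction of parabolic walls to cyclic $W_H$ is fine as far as it goes, but the non-parabolic walls --- which do occur, notably for the exceptional spetsial groups with reflections of order $>2$ that are conspicuously absent from the list in Theorem~\ref{theo:martino} --- are out of reach of this device, and for them the only available method is the explicit computation of $\Omeb^k$ (as in~\cite{thiel} and~\cite{bonnafe thiel}). So your proposal correctly reconstructs the known strategies and correctly locates the gap, but it does not prove the conjecture, and, as matters stand, nothing does.
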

\end{quotation}

\bigskip

\begin{theo}\label{theo:martino}
Conjecture~\ref{conj:martino} is known to hold in the following cases\footnote{We refer to Shephard-Todd notation for irreducible complex reflection groups~\cite{ST}.}:
\begin{itemize}
\itemth{1} If $W$ is of type $G(de,e,r)$, with $d$, $e$, $r \ge 1$ and $e$ odd whenever $r=2$.

\itemth{2} If $W$ is of type $G_4$, $G_{12}$, $G_{13}$, $G_{20}$, $G_{22}$, $G_{23}=\Wrm(H_3)$ 
or $G_{28}=\Wrm(F_4)$.

\itemth{3} If $W$ is of type 
$G_5$, $G_6$, $G_7$, $G_8$, $G_9$, $G_{10}$, $G_{14}$, $G_{15}$, $G_{16}$ or $G_{24}$ 
for generic values of $k$.
\end{itemize}
\end{theo}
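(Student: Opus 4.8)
The plan is to treat the three cases separately, by rather different means. In each of them the statement is reduced to comparing two partitions of $\Irr(W)$: on one side the Calogero-Moser $k$-families $\Fam_k^\calo(W)$, i.e. the fibres of the map $\zG_k$ of~\S\ref{sub:cm-families} (equivalently, by~\eqref{eq:im-omega}, the blocks of the finite-dimensional commutative algebra $\im(\Omeb^k)$); on the other side the Hecke $k^\sharp$-families $\Fam_{k^\sharp}^\hecke(W)$, read off from the primitive central idempotents of $R_\cyclo\HC_{k^\sharp}(W)$. The relabelling $k \mapsto k^\sharp$ ($j \mapsto -j$) occurring in the statement is forced by the opposite sign conventions built into the defining relations~\eqref{eq:rels} of $\Hb_k$ and into the Hecke relations~\eqref{eq:th}; once this is taken into account, what has to be checked in each case is that every member of $\Fam_k^\calo(W)$ is a union of members of $\Fam_{k^\sharp}^\hecke(W)$.

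For~(1), we first reduce the statement for $W = G(de,e,r)$ to the statement for the wreath product $G(de,1,r)$. Both partitions are compatible with the Clifford theory attached to the normal inclusion $G(de,e,r) \trianglelefteq G(de,1,r)$, with cyclic quotient $\ZM/e\ZM$: on the Calogero-Moser side through restriction and induction of baby Verma modules (as in~\cite{gordon martino} and~\cite{martino}, building on Gordon's definition of the families via baby Verma modules~\cite{gordon}), and on the Hecke side through the analogous Clifford theory for the cyclotomic Hecke algebras. The hypothesis that $e$ be odd whenever $r = 2$ is precisely what keeps this comparison uniform; it rules out the well-known exceptional behaviour of $G(de,e,2)$ for $e$ even, where, for instance, the reflecting hyperplanes fall into more $W$-orbits than in the generic pattern. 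For $W = G(de,1,r) = (\ZM/de\ZM)\wr\SG_r$ itself, Gordon and Martino described $\Fam_k^\calo(W)$ combinatorially, in terms of $de$-multipartitions of $r$ together with the shift datum determined by $k$~\cite{gordon martino}, while Brou\'e and Kim~\cite{broue kim} described the Rouquier families of the corresponding cyclotomic Hecke algebra in the same combinatorial terms; comparing the two recipes, with the $k \mapsto k^\sharp$ twist, gives the required inclusion. The bookkeeping is carried out in~\cite{martino}, and revisited in the conventions used here in~\cite[\S{6.5}]{calogero}.

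For~(2) and~(3) the groups $W$ are small enough that both partitions can be produced by explicit computation: $\Fam_k^\calo(W)$ by computing the fibres of $\zG_k$ (for the Coxeter groups $G_{23} = \Wrm(H_3)$ and $G_{28} = \Wrm(F_4)$ appearing in~(2), one may alternatively use the known determination of Calogero-Moser families for Coxeter groups), and $\Fam_{k^\sharp}^\hecke(W)$ from the known factorizations of the Schur elements of the cyclotomic Hecke algebras. For the groups listed in~(2) this has been carried out for \emph{all} parameters $k$; for those listed in~(3) it has been carried out for $k$ outside a finite union of hyperplanes of $\CM^\aleph$ --- the \emph{generic} locus, on which the computations are much lighter --- and in each case a direct inspection of the two finite lists gives the inclusion. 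The finiteness that makes all of this possible is exactly~\eqref{eq:im-omega} on the Calogero-Moser side, together with the corresponding finiteness of $\Fam_{k^\sharp}^\hecke(W)$.

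The point of real resistance --- and the reason the result is stated as a list rather than as a theorem valid for all $W$ and $k$ --- is the absence of any uniform or conceptual argument. For $G(de,1,r)$ one is matching two \emph{a priori} unrelated combinatorial recipes; for the exceptional groups one is at the mercy of finite but substantial computations, several of which are at present feasible only over the generic locus. A proof of Conjecture~\ref{conj:martino} in full generality would presumably require a direct algebraic link --- a suitable functor relating module categories over $\Hb_k$ and over $\HC_{k^\sharp}(W)$, say --- realizing the refinement of the Calogero-Moser partition by the Hecke partition; no such link is currently available.
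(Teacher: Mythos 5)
The paper's proof of this theorem consists entirely of citations (to Martino, Bellamy, Martino's second paper, Thiel, and Bonnaf\'e--Thiel), and your narrative is an accurate expansion of what those references establish: Clifford-theoretic reduction from $G(de,e,r)$ to $G(de,1,r)$ followed by a combinatorial comparison of multipartition recipes for case~(1), and explicit (CHAMP-based) computation of both partitions for the exceptional groups in cases~(2) and~(3). So your proposal takes essentially the same approach as the paper; the only caveat is that the concluding paragraph about the absence of a uniform argument, while a fair commentary, is not part of the proof and your gloss on the $e$-odd-when-$r=2$ restriction and on the $k\mapsto k^\sharp$ twist being ``forced by sign conventions'' is heuristic rather than established.
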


\bigskip

\begin{proof}
For~(1), see~\cite{martino, bellamy these, martino 2}. 
For~(2) and~(3), see~\cite{thiel} (except for~$G_{28}=\Wrm(F_4)$: for this one, 
see~\cite{bonnafe thiel}).
\end{proof}

\part{Questions about Calogero-Moser spaces}\label{part:cm}

\bigskip

\section{Cohomology}

\medskip

\bigskip

\subsection{Localization} 
We denote by $i_k : \ZC_k^{\CM^\times} \injto \ZC_k$ 
the closed immersion (here, $\ZC_k^{\CM^\times}$ denotes 
the reduced zero-dimensional variety of $\CM^\times$-fixed points). 
As explained in~\S\ref{sub:cm-families}, we have a natural isomorphism 
of algebras 
\equat\label{eq:coho-fixe}
\Hrm_{\CM^\times}(\ZC_k^{\CM^\times}) \simeq \CM[\hbar] \otimes \im(\Omeb^k).
\endequat
So we view the map $i_k^*$ as a morphism of algebras 
$$i_k^* : \Hrm_{\CM^\times}(\ZC_k) \longto \CM[\hbar] \otimes \im(\Omeb^k).$$
We can now state the following conjecture (see~\cite[\S{16.1}]{calogero} 
and~\cite[Conj.~3.3]{bonnafe shan}).

\bigskip

\begin{quotation}
\begin{conj}\label{conj:coho}
With the above notation, we have:
\begin{itemize}
\itemth{a} If $i \ge 0$, then $\Hrm^{2i+1}(\ZC_k)=0$.

\itemth{b} $\im(i_k^*) = \Rees_\FC(\im(\Omeb^k))$. 
\end{itemize}
\end{conj}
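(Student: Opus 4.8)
The plan is to prove parts (a) and (b) together by analysing the $\CM^\times$-action on $\ZC_k$, after a preliminary reduction. Using Proposition~\ref{prop:ref-pasref-wk}, which identifies $\Hrm^\bullet(\ZC_k)$ and $\Hrm^\bullet_{\CM^\times}(\ZC_k)$ with the $W/W(k)$-invariants of the corresponding cohomology for the datum $(V,W(k),k^\flat)$, together with a short separate check that \eqref{eq:im-omega} and the filtration $\FC_\bullet$ descend compatibly along $W\to W(k)$, I would reduce to the case $W=W(k)$. The core of the argument is then to prove that $\ZC_k$ is \emph{equivariantly formal} for the $\CM^\times$-action: that $\Hrm^{2\bullet}_{\CM^\times}(\ZC_k)$ is free over $\CM[\hbar]$, that $\Hrm^{2i+1}(\ZC_k)=0$, and that $\Hrm^{2\bullet}_{\CM^\times}(\ZC_k)/\hbar\simeq\Hrm^{2\bullet}(\ZC_k)$. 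Granting this, \eqref{eq:coho-fixe} together with the localization theorem make $i_k^*$ injective and an isomorphism after inverting $\hbar$, so that (a) holds and (b) is reduced to pinning down $\im(i_k^*)$ precisely.

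The delicate input is an affine paving of $\ZC_k$ compatible with the action. The natural $\CM^\times$-action is \emph{hyperbolic} rather than contracting: the grading puts $V^*$ in degree $+1$ and $V$ in degree $-1$, so $\Upsilon_k$ is equivariant for a hyperbolic action on $\PC=V/W\times V^*/W$, which — since $W$ is a reflection group — is isomorphic to $\CM^{2n}$ with the origin as unique fixed point; hence $\ZC_k^{\CM^\times}$ is finite (it lies in the fibre $\Upsilon_k^{-1}(0)$) but the Bialynicki--Birula machinery does not apply off the shelf. I would try two routes. One is to pass to a $\CM^\times$-equivariant projective compactification $\overline{\ZC}_k$ (for instance a $\Proj$ built from the Rees algebra $\Rees_\FC(\Zb_k)$, or from another well-chosen graded subalgebra), arrange that its fixed locus stays finite, apply classical Bialynicki--Birula to $\overline{\ZC}_k$, and then show that the boundary $\overline{\ZC}_k\setminus\ZC_k$ is a union of cells, so that $\ZC_k$ inherits an affine paving. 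The other, more in the spirit of the rest of the survey, is to use that $\Upsilon_k$ is finite with $\PC\cong\CM^{2n}$ to identify $\Hrm^\bullet(\ZC_k)$ with $\Hrm^\bullet(\CM^{2n},\Upsilon_{k,*}\CM)$ and to control the constructible sheaf $\Upsilon_{k,*}\CM$ through the monodromy of the covering $V_\reg\to V_\reg/W$, i.e.\ through Calogero--Moser cells. When $\ZC_k$ is smooth this is far more tractable — then $\im(\Omeb^k)=\Zrm(\CM W)$ by \eqref{eq:lissite-omega}, and in type $A$ one has the Hilbert scheme of $n$ points in the plane (up to the $\CM^\times$-deformation) with its known affine paving.

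For part (b), granting the paving, the fundamental classes of the closures of the attracting cells form a $\CM[\hbar]$-basis of $\Hrm^{2\bullet}_{\CM^\times}(\ZC_k)$, and $\im(i_k^*)$ is the $\CM[\hbar]$-span of their restrictions to the fixed points. I would then match this with $\Rees_\FC(\im(\Omeb^k))=\bigoplus_{j\ge 0}\hbar^j\FC_j(\im(\Omeb^k))$ in two steps: first, the restriction of a cell class to a fixed point has $\hbar$-degree equal to the complex codimension of the cell, and — via \eqref{eq:euler poisson} and the fact that $\euler$ generates the grading — I expect this to coincide with the level at which the idempotent $e_{\FG_p}$ sits in the codimension filtration $\FC_\bullet(\Zrm(\CM W))$, giving $\im(i_k^*)\subseteq\Rees_\FC(\im(\Omeb^k))$; second, a dimension count, degree by degree, of $\FC_j(\im(\Omeb^k))$ against the number of cells of codimension $\le j$ (available from the paving) gives the reverse inclusion. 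Reducing modulo $\hbar$, $i_k^*$ becomes the ordinary restriction $\Hrm^{2\bullet}(\ZC_k)\to\Hrm^{2\bullet}(\ZC_k^{\CM^\times})=\im(\Omeb^k)$ by \eqref{eq:im-omega}, so the same computation describes $\Hrm^{2\bullet}(\ZC_k)$ itself.

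The main obstacle is unmistakably the first step: there is no general structure theory for a \emph{singular} $\ZC_k$ — no uniform quiver-variety presentation, a hyperbolic $\CM^\times$-action rather than a contracting one to which Bialynicki--Birula applies directly, and a fundamental group that is itself mysterious (conjecturally governed by the Calogero--Moser cellular structure) — so producing the equivariant affine paving, equivalently computing $\Hrm^\bullet(\CM^{2n},\Upsilon_{k,*}\CM)$, is where essentially all the difficulty lies. A particularly stubborn special case is the vanishing of $\Hrm^1(\ZC_k)$: it amounts to $\pi_1(\ZC_k)^{\mathrm{ab}}$ being finite, which I do not expect to follow from soft topology but rather to require genuine input from the conjectural dictionary between $\ZC_k$ and the braid and Hecke algebras of $W$. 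I would therefore anticipate carrying the full plan through only in structured cases (type $A$, small rank, the smooth case), and otherwise reducing Conjecture~\ref{conj:coho} to the single geometric assertion that $\ZC_k$ admits a $\CM^\times$-stable affine paving.
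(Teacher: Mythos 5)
The statement you were asked to prove is Conjecture~\ref{conj:coho}. In the paper it is stated as a \emph{conjecture} and is not proved: Theorem~\ref{theo:conj-coho} merely records the three special cases in which it is known ($k=0$; $\dim V = 1$; $\ZC_k$ smooth), delegating each to an external reference. So there is no proof in the paper against which to compare your argument, and no blind reconstruction could be expected to produce one.

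That said, what you wrote is a strategy, not a proof, and you say so yourself. As a strategy it is reasonable and well aimed. The reduction to $W=W(k)$ via Proposition~\ref{prop:ref-pasref-wk} is sound modulo the compatibility checks you flag, and equivariant formality plus localization to the finite $\CM^\times$-fixed locus is precisely the mechanism that proves the smooth case. You also correctly put your finger on the true obstruction: the $\CM^\times$-action is hyperbolic rather than contracting, so Bialynicki--Birula gives nothing directly; no general structure theory (quiver presentation, paving, control of $\pi_1$) is available for a singular $\ZC_k$; and even $\Hrm^1(\ZC_k)=0$ is open in general. Neither of the two routes you sketch — a projective equivariant compactification with cellular boundary, or an analysis of $\Upsilon_{k,*}\CM$ via monodromy — is carried far enough to close this gap, and the step in part (b) identifying the $\hbar$-degrees of cell classes with the levels of the filtration $\FC_\bullet$ on $\im(\Omeb^k)$ is exactly the precise quantitative statement the conjecture asserts, so one cannot merely ``expect'' it. In short: you have correctly located the open problem, you have not closed it, and the paper does not claim to close it either.
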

\end{quotation}

\bigskip

Recall from standard arguments~\cite[Prop.~2.4]{bonnafe shan} that this conjecture 
would imply a description of both the cohomology and the equivariant 
cohomology of $\ZC_k$:

\bigskip

\begin{prop}\label{prop:coho}
Assume that Conjecture~\ref{conj:coho} holds. Then:
\begin{itemize}
\itemth{a} If $i \ge 0$, then $\Hrm_{\CM^\times}^{2i+1}(\ZC_k)=0$.

\itemth{b} $\Hrm^{2 \bullet}_{\CM^\times}(\ZC_k) \simeq 
\Rees_\FC(\im(\Omeb^k))$ as $\CM[\hbar]$-algebras.

\itemth{c} $\Hrm^{2\bullet}(\ZC_k) \simeq \gr_\FC(\im(\Omeb^k))$.
\end{itemize}
\end{prop}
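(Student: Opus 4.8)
The plan is to deduce everything from Conjecture~\ref{conj:coho} together with the standard machinery of equivariant cohomology for a $\CM^\times$-action with isolated fixed points, following the argument sketched in~\cite[Prop.~2.4]{bonnafe shan}. The starting point is that $\ZC_k$ is an affine variety with a $\CM^\times$-action whose fixed-point locus $\ZC_k^{\CM^\times}$ is finite (it is the zero-dimensional reduced scheme attached to $\im(\Omeb^k)$), so the Bia\l ynicki-Birula--type localization theorem applies: the restriction map $i_k^* : \Hrm^\bullet_{\CM^\times}(\ZC_k) \to \Hrm^\bullet_{\CM^\times}(\ZC_k^{\CM^\times})$ becomes an isomorphism after inverting $\hbar$, and in particular it is injective on the $\hbar$-torsion-free part. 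Concretely, $\Hrm^\bullet_{\CM^\times}(\ZC_k^{\CM^\times}) \simeq \CM[\hbar]\otimes\im(\Omeb^k)$ by~\eqref{eq:coho-fixe}, and this is a free $\CM[\hbar]$-module concentrated in even degrees.

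First I would establish (a). The key input is Conjecture~\ref{conj:coho}(a), that $\Hrm^{2i+1}(\ZC_k)=0$ for all $i\ge 0$. Feeding this into the Leray--Serre (equivariantly: Borel-construction) spectral sequence $\Hrm^p(\Bb\CM^\times)\otimes\Hrm^q(\ZC_k)\Rightarrow\Hrm^{p+q}_{\CM^\times}(\ZC_k)$, whose $E_2$-page is $\CM[\hbar]\otimes\Hrm^\bullet(\ZC_k)$ with $\hbar$ in bidegree $(2,0)$, one sees that all entries of odd total degree already vanish on $E_2$, hence $\Hrm^{2i+1}_{\CM^\times}(\ZC_k)=0$; this also forces the spectral sequence to degenerate at $E_2$ for degree reasons (all differentials shift total degree by $1$, but everything lives in even total degree), so $\Hrm^{2\bullet}_{\CM^\times}(\ZC_k)$ is a free $\CM[\hbar]$-module with $\Hrm^{2\bullet}_{\CM^\times}(\ZC_k)/\hbar\Hrm^{2\bullet}_{\CM^\times}(\ZC_k)\simeq\Hrm^{2\bullet}(\ZC_k)$ as graded $\CM$-algebras. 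This freeness is exactly what makes the localization map injective on the nose, so $i_k^*$ identifies $\Hrm^{2\bullet}_{\CM^\times}(\ZC_k)$ with its image $\im(i_k^*)$.

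Next, for (b), I would combine the previous identification $i_k^* : \Hrm^{2\bullet}_{\CM^\times}(\ZC_k)\longiso\im(i_k^*)$ with Conjecture~\ref{conj:coho}(b), which asserts $\im(i_k^*)=\Rees_\FC(\im(\Omeb^k))$ inside $\CM[\hbar]\otimes\im(\Omeb^k)$; one checks this is an isomorphism of graded $\CM[\hbar]$-algebras, the grading on the Rees algebra being the one from $\S\ref{sub:filtration}$ (the cohomological grading $\Hrm^{2j}$ matching the Rees degree $j$, with $\hbar$ in degree $1$). Then (c) follows by reducing modulo $\hbar$: on the cohomology side $\Hrm^{2\bullet}_{\CM^\times}(\ZC_k)/\hbar\simeq\Hrm^{2\bullet}(\ZC_k)$ by the degeneration established above, while on the algebraic side $\Rees_\FC(\im(\Omeb^k))/\hbar\Rees_\FC(\im(\Omeb^k))\simeq\gr_\FC(\im(\Omeb^k))$ by the general fact recalled at the end of $\S\ref{sub:filtration}$. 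Comparing the two gives $\Hrm^{2\bullet}(\ZC_k)\simeq\gr_\FC(\im(\Omeb^k))$.

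The main obstacle I expect is not any of these deductions, which are formal, but rather verifying that the localization/degeneration argument is clean: one must be sure that $\ZC_k^{\CM^\times}$ really is finite and \emph{reduced} as used in~\eqref{eq:coho-fixe}, that the $\CM^\times$-action has the semiprojective/attractive property needed so that Kirwan surjectivity and the injectivity of $i_k^*$ on the torsion-free part hold (this uses that $\ZC_k$ has a contracting $\CM^\times$-action via the grading, so it is equivariantly formal), and that the gradings are normalized consistently on both sides so that the isomorphisms in (b) and (c) respect the grading rather than merely the underlying algebra structure. Once Conjecture~\ref{conj:coho} is granted, all three items then fall out as indicated, so the proof is essentially a bookkeeping of graded structures around the localization theorem, exactly as in~\cite[Prop.~2.4]{bonnafe shan}.
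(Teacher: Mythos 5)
Your proposal is correct and follows the route the paper itself indicates (the paper offers no independent proof but cites ``standard arguments''\ in~\cite[Prop.~2.4]{bonnafe shan}, which is exactly the spectral sequence degeneration plus localization argument you lay out). One small inaccuracy in your closing remarks: you suggest that equivariant formality might need to come from a ``contracting $\CM^\times$-action via the grading,'' but the grading on $\Zb_k$ has both positive and negative weights (degree $+1$ on $V^*$, degree $-1$ on $V$), so the action is hyperbolic rather than contracting and that route is not available. Your actual proof, however, does not use it: formality (freeness of $\Hrm^\bullet_{\CM^\times}(\ZC_k)$ over $\CM[\hbar]$) is obtained correctly from the collapse of the Borel spectral sequence forced by Conjecture~\ref{conj:coho}(a), which is the right mechanism here. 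Granted freeness, the torsion-free source makes the localization map $i_k^*$ injective, Conjecture~\ref{conj:coho}(b) identifies the image with $\Rees_\FC(\im\Omeb^k)$, and reduction mod $\hbar$ (using $\Rees_\FC/\hbar\Rees_\FC\simeq\gr_\FC$) gives part~(c) — all as you describe.
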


\bigskip

\begin{theo}\label{theo:conj-coho}
Conjecture~\ref{conj:coho} is known to hold in the following cases:
\begin{itemize}
\itemth{a} If $k=0$.

\itemth{b} If $\dim V =1$.

\itemth{c} If $\ZC_k$ is smooth.
\end{itemize}
\end{theo}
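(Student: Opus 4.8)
The plan is to handle the three cases separately, using in each the same two moves: first compute the ordinary cohomology $\Hrm^\bullet(\ZC_k)$ and show it vanishes in odd degrees --- this gives Conjecture~\ref{conj:coho}(a) and, by the spectral-sequence argument already used for Proposition~\ref{prop:coho}, makes $\ZC_k$ $\CM^\times$-equivariantly formal --- and then identify the image of $i_k^* : \Hrm_{\CM^\times}(\ZC_k) \to \CM[\hbar]\otimes\im(\Omeb^k)$ with $\Rees_\FC(\im(\Omeb^k))$. \textbf{Case (a): $k=0$.} Here $\Hb_0=\CM[V\oplus V^*]\rtimes W$, so $\ZC_0=(V\oplus V^*)/W$ is the quotient of a $\CM$-vector space by a finite linear group; the scaling flow $(v,\xi)\mapsto(tv,t\xi)$, $t\in[0,1]$, is $W$-equivariant and contracts $\ZC_0$ onto the image of $0$, whence $\Hrm^j(\ZC_0)=0$ for $j>0$. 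A direct computation gives $\im(\Omeb^0)=\CM\cdot\Id_V$ (a single Calogero-Moser family), so $\ZC_0^{\CM^\times}$ is a reduced point, $\Rees_\FC(\im(\Omeb^0))\simeq\CM[\hbar]$, and, $\ZC_0$ being equivariantly formal with cohomology in degree $0$, $\Hrm_{\CM^\times}(\ZC_0)\simeq\CM[\hbar]$ and $i_0^*$ is the identity of $\CM[\hbar]$; both parts of Conjecture~\ref{conj:coho} follow.

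\textbf{Case (b): $\dim V=1$.} Then $W=\mu_e$ is cyclic and $\ZC_k$ is an explicitly presentable affine surface: $\Zb_k$ is generated over $\Pb$ by $\euler$ subject to a single relation, exhibiting $\ZC_k$ as a partial smoothing of the Kleinian singularity $\CM^2/\mu_e$ of type $A_{e-1}$ (see the rank-one description in \cite{calogero}). The topology of such surfaces is classical: $\Hrm^{\mathrm{odd}}=0$ and $\dim_\CM\Hrm^2(\ZC_k)$ equals the number of singular points that are smoothed, which is exactly the number recording how the Calogero-Moser families merge relative to the case $k=0$. One then verifies Conjecture~\ref{conj:coho}(b) by computing $i_k^*$ on the few algebra generators, using that $\Zrm(\CM W)=\CM W$ and that the $\cod$-filtration is here just $\CM\cdot\Id_V=\FC_0\subset\FC_1=\CM W$, so that $\im(\Omeb^k)$ and its Rees algebra are both completely explicit.

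\textbf{Case (c): $\ZC_k$ smooth.} This is the substantial case. By~\eqref{eq:lissite-omega} smoothness forces $\Omeb^k$ to be surjective, so $\im(\Omeb^k)=\Zrm(\CM W)$ and $\ZC_k^{\CM^\times}$ is in bijection with the conjugacy classes of $W$. The crucial input is that, for smooth $\ZC_k$, $\Hrm^\bullet(\ZC_k)$ is concentrated in even degrees with $\dim_\CM\Hrm^{2j}(\ZC_k)=\#\{\,[w]\in\mathrm{Conj}(W):\codim_\CM(V^w)=j\,\}$; this is obtained from the classification of the triples $(V,W,k)$ with $\ZC_k$ smooth (essentially the groups $G(\ell,1,n)$ and $G_4$), realizing the first family as Nakajima quiver varieties for the cyclic quiver (after Gordon and \cite{EG}, \cite{gordon}) and invoking Nakajima's cohomology formula, together with a direct computation for $G_4$. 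Even concentration yields equivariant formality, so $\Hrm_{\CM^\times}(\ZC_k)$ is free over $\CM[\hbar]$ with its degree-$2j$ part of dimension $\sum_{i\le j}\dim_\CM\Hrm^{2i}(\ZC_k)=\dim_\CM\FC_j(\Zrm(\CM W))$; Atiyah--Bott localization then makes $i_k^*$ an isomorphism after inverting $\hbar$, hence injective since the source is $\hbar$-torsion-free. Writing $\im(i_k^*)=\bigoplus_{j\ge0}\hbar^j M_j$ with $M_0\subseteq M_1\subseteq\cdots$ an exhaustive filtration of $\Zrm(\CM W)$, the dimension count gives $\dim_\CM M_j=\dim_\CM\FC_j(\Zrm(\CM W))$, so the conjecture reduces to the equality $M_j=\FC_j(\Zrm(\CM W))$.

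\textbf{Main obstacle.} Proving $M_j=\FC_j(\Zrm(\CM W))$ is where the real work lies, since it requires controlling the localization map and not merely its ranks. The plan is to produce enough equivariant classes with computable fixed-point restrictions --- the equivariant symplectic class, which at the point attached to $\chi\in\Irr(W)$ restricts to $-\hbar$ times the scalar by which $\euler$ acts, i.e. to $-\hbar\,\Omeb^k(\euler)\in\hbar\,\FC_1(\Zrm(\CM W))$, and the equivariant Chern classes of the tautological bundles on $\ZC_k$ coming from the universal family of baby Verma modules --- and, using the fixed-point formula, to show that all of them lie in $\Rees_\FC(\Zrm(\CM W))$ and (with the dimension count) generate it. This is most transparent in the quiver-variety model, where the $\CM^\times$-weights at the torus-fixed points and the Chern roots of the tautological bundles are explicit. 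The residual difficulty, which I expect to be the only genuinely delicate point, is a bookkeeping lemma matching those weights with the $\cod$-grading on $\Zrm(\CM W)$ --- equivalently, establishing the inclusion $M_j\subseteq\FC_j(\Zrm(\CM W))$, the reverse inclusion being then forced by the equality of dimensions.
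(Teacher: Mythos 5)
Your case (a) coincides with the paper's own argument: $\ZC_0=(V\oplus V^*)/W$ is a contractible cone, $\im(\Omeb^0)=\CM$, so the fixed locus is a single reduced point and both parts of the conjecture collapse to trivialities. For (b) and (c) the paper gives no argument at all --- it cites \cite[Theo.~18.5.8]{calogero} and \cite[Prop.~1.6]{bonnafe shan} for (b) and \cite[Theo.~A]{bonnafe shan} for (c) --- so you are reconstructing those proofs rather than following the text.

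Your outline for (c) tracks the cited approach (smoothness forces $\im(\Omeb^k)=\Zrm(\CM W)$ by~\eqref{eq:lissite-omega}; the classification of smooth Calogero--Moser spaces as quiver varieties for $G(\ell,1,n)$ plus the isolated case $G_4$; Betti numbers concentrated in even degrees; equivariant formality; injectivity of $i_k^*$ from torsion-freeness and Atiyah--Bott localization; a dimension count reducing the statement to $M_j=\FC_j(\Zrm(\CM W))$). But you then stop short: the inclusion $M_j\subseteq\FC_j(\Zrm(\CM W))$ --- which by your own dimension count is equivalent to the full statement --- is left as a ``bookkeeping lemma'' that you ``expect'' to be delicate. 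This inclusion is precisely the content of the theorem in the smooth case; establishing it requires identifying the $\CM^\times$-weights at the torus-fixed points of the quiver variety and matching them against the $\cod$-filtration on $\Zrm(\CM W)$, which is exactly where the cited proof does its work (via the equivariant symplectic form and Chern classes of tautological bundles, as you anticipate, but the matching must actually be carried out). As written, the argument for (c) is a correct reduction plus an unproved key step, not a proof. Case (b) has a milder version of the same issue: the surface is explicit enough that ``computing $i_k^*$ on the few algebra generators'' is genuinely routine and you name the ingredients, so I would accept that sketch, but for (c) the missing step is the heart of the matter and must be supplied.
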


\bigskip

\begin{proof}
(a) follows from the fact that $\ZC_0=(V \times V^*)/W$ is contractible 
and $\im \Omeb^0=\CM$. For~(b), see~\cite[Theo.~18.5.8]{calogero} and~\cite[Prop.~1.6]{bonnafe shan}. 
For~(c), see~\cite[Theo.~A]{bonnafe shan}.
\end{proof}

\bigskip

\begin{exemple}\label{ex:cyclique}
It might be tempting to conjecture that the Calogero-Moser space 
$\ZC_k$ is rationally smooth and $p$-smooth if $p$ is a prime number 
not dividing $|W|$. Indeed, $\ZC_k$ is a deformation of $\ZC_0=(V \times V^*)/W$ 
which tends to be smoother and smoother as $k$ becomes more and more generic. 
However, both statements are false in general:
\begin{itemize}
\itemth{1} If $\dim V=1$ and $m=|W| \ge 2$ (so that $\aleph=((0,j))_{0 \le j \le m-1}$ and 
we write $k_j=k_{0,j}$ for simplicity), then it follows from~\cite[Theo.~18.2.4]{calogero} 
that 
$$\ZC_k=\bigl\{(x,y,z) \in \CM^3~|~\prod_{j=0}^{m-1} (z-mk_j)=xy\bigr\}.$$
Now, if $p$ is a prime number 
not dividing $m$ and smaller than $m$ (this always exists if $m \ge 3$), 
and if we choose $k$ 
such that $k_0=k_1=\cdots=k_{p-1}=0$ and $k_p=k_{p+1}=\cdots = k_{m-1}=1$, 
then 
$$\ZC_k=\{(x,y,z) \in \CM^3~|~z^p (z-m)^{m-p}=xy\}$$
contains a simple singularity 
of type $A_{p-1}$ and so $\ZC_k$ is rationally smooth but not $p$-smooth while 
$\ZC_0=\CM^2/\mub_m$ is $p$-smooth because $p$ does not divide $m$.

\itemth{2} If $W$ is of type $B_2$ and $(k_{\O,0},k_{\O,1})=(k_{\O',0},k_{\O',1})$ 
and $k_{\O,0} \neq k_{\O,1}$ (where $\O$ and $\O'$ are the two orbits 
of reflecting hyperplanes), then $\ZC_k$ admits a unique singular 
point and the singularity is equivalent to the singularity at $0$ 
of the orbit closure of the minimal nilpotent orbit of the Lie algebra 
$\sG\lG_3(\CM)$ (see~\cite[Theo.~1.3$($b$)$]{BBFJLS}): 
it is well-known that this orbit closure is not rationally smooth.\finl
\end{itemize}
\end{exemple}

\bigskip

\subsection{Morphisms between Calogero-Moser spaces}\label{sub:coho-filtration}
Let $(V',W')$ be another pair consisting of a finite dimensional 
complex vector space and a finite subgroup $W' \subset \Gb\Lb_\CM(V')$. 
We fix a parameter $k' \in \CM^{\aleph(V',W')}$ and, 
in this subsection, we will denote by 
a prime $?'$ the object $?$ defined using $(V',W')$ 
instead of $(V,W)$, i.e. the object $?(V',W')$. 
For instance, $\ZC_{k'}'=\ZC_{k'}(V',W')$ and 
$\aleph'=\aleph(V',W')$. 

\medskip

\boitegrise{{\bf Hypothesis.} 
{\it We assume in this subsection that 
we are given a $\CM^\times$-equivariant morphism of varieties 
$\ph : \ZC_{k'}' \to \ZC_k$}.}{0.75\textwidth}

\medskip

We denote 
by $\ph_\fix : \ZC_{k'}^{\prime \CM^\times} \to \ZC_k^{\CM^\times}$ 
the induced map. Then $\ph_\fix$ induces a morphism of algebras 
$$\ph_\fix^\# : \im \Omeb^k \longto \im \Omeb^{\prime k'}$$ 
through the formula
$$\ph_\fix^\#(e_{\FG_p}) = \sum_{p' \in \ph_\fix^{-1}(p)} e_{\FG_{p'}'}'.$$
The following proposition should be compared with~\cite[Cor.~1.5]{bonnafe maksimau}:

\bigskip

\begin{prop}\label{prop:coho-graduation}
Assume that Conjecture~\ref{conj:coho} holds for both $\ZC_k$ and 
$\ZC_{k'}'$. Then 
$$\ph_\fix^\#(\FC_j \im \Omeb^k) \subset \FC_j' \im \Omeb^{\prime k'}$$
for all $j$. 
\end{prop}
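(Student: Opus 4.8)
The plan is to derive the inclusion formally from Conjecture~\ref{conj:coho}(b) together with the contravariant functoriality of $\CM^\times$-equivariant cohomology, the only genuinely non-formal point being the identification of a topological pull-back with the combinatorial map $\ph_\fix^\#$. First I would observe that $\ph$, together with the closed immersions $i_k\colon\ZC_k^{\CM^\times}\injto\ZC_k$ and $i_{k'}'\colon\ZC_{k'}^{\prime\CM^\times}\injto\ZC_{k'}'$, fits into a commutative square of $\CM^\times$-equivariant morphisms of varieties whose left-hand vertical arrow is $\ph_\fix$ (this is just the definition of $\ph_\fix$ as the map induced by $\ph$ on fixed points). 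Applying $\Hrm_{\CM^\times}(-)$ to this square yields the identity of $\CM[\hbar]$-algebra homomorphisms $\ph_\fix^*\circ i_k^* = (i_{k'}')^*\circ\ph^*$.

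The second step is to check that, under the isomorphisms of~\eqref{eq:coho-fixe}, the map $\ph_\fix^*\colon\CM[\hbar]\otimes\im\Omeb^k\to\CM[\hbar]\otimes\im\Omeb^{\prime k'}$ is nothing but $\id_{\CM[\hbar]}\otimes\ph_\fix^\#$. Indeed, $\ZC_k^{\CM^\times}$ and $\ZC_{k'}^{\prime\CM^\times}$ are finite reduced schemes on which $\CM^\times$ acts trivially, so that $\Hrm_{\CM^\times}(\ZC_k^{\CM^\times})=\CM[\hbar]\otimes\im\Omeb^k$ with $\im\Omeb^k$ sitting in degree zero and being the algebra of $\CM$-valued functions on the finite set of $\CM^\times$-fixed points, the idempotent $e_{\FG_p}$ corresponding to the indicator function of the point $p$ (and similarly on the primed side). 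As a graded $\CM[\hbar]$-algebra morphism, $\ph_\fix^*$ is determined by its restriction to degree zero, namely the pull-back of functions along $\ph_\fix$; that pull-back sends the indicator of $\{p\}$ to the indicator of $\ph_\fix^{-1}(p)$, i.e. to $\sum_{p'\in\ph_\fix^{-1}(p)}e_{\FG_{p'}'}'$, which is exactly $\ph_\fix^\#(e_{\FG_p})$. Hence $\ph_\fix^*=\id_{\CM[\hbar]}\otimes\ph_\fix^\#$.

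Now I would bring in the hypothesis. By Conjecture~\ref{conj:coho}(b) applied to $\ZC_k$ and to $\ZC_{k'}'$ we have $\im(i_k^*)=\Rees_\FC(\im\Omeb^k)$ and $\im\bigl((i_{k'}')^*\bigr)=\Rees_\FC(\im\Omeb^{\prime k'})$, so the commutativity relation above gives
$$\ph_\fix^*\bigl(\Rees_\FC(\im\Omeb^k)\bigr)=\ph_\fix^*\bigl(\im i_k^*\bigr)=(i_{k'}')^*\bigl(\ph^*(\Hrm_{\CM^\times}(\ZC_k))\bigr)\subset\im\bigl((i_{k'}')^*\bigr)=\Rees_\FC(\im\Omeb^{\prime k'}).$$
It then remains to read the filtration off the Rees algebra: for $a\in\im\Omeb^k$ one has $a\in\FC_j(\im\Omeb^k)$ if and only if $\hbar^j a\in\Rees_\FC(\im\Omeb^k)$, because the degree-$j$ homogeneous component of $\Rees_\FC(\im\Omeb^k)$ is $\hbar^j\FC_j(\im\Omeb^k)$; the same holds on the primed side. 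Since $\id_{\CM[\hbar]}\otimes\ph_\fix^\#$ maps $\hbar^j a$ to $\hbar^j\ph_\fix^\#(a)$, the displayed inclusion forces $\ph_\fix^\#(a)\in\FC_j'(\im\Omeb^{\prime k'})$ whenever $a\in\FC_j(\im\Omeb^k)$, which is precisely the asserted statement.

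The only step that is not routine bookkeeping is the identification $\ph_\fix^*=\id_{\CM[\hbar]}\otimes\ph_\fix^\#$, i.e. the verification that the topologically defined pull-back on the fixed-point sets agrees term by term with the combinatorially defined map $\ph_\fix^\#$ of~\S\ref{sub:coho-filtration}; granting this, the remainder is only the conjectural description of the equivariant cohomology of $\ZC_k$ and $\ZC_{k'}'$ together with the elementary fact that $\FC_\bullet$ is recovered from $\Rees_\FC$ by inspecting $\hbar$-degrees. Note also that one obtains only an inclusion of filtered pieces, since $\ph^*$ need not be surjective --- which matches the statement and the analogous behaviour in~\cite[Cor.~1.5]{bonnafe maksimau}.
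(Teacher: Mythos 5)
Your argument is correct and follows exactly the route of the paper: commutativity of the equivariant-cohomology square $\ph_\fix^*\circ i_k^*=(i_{k'}')^*\circ\ph^*$, the identification $\ph_\fix^*=\Id_{\CM[\hbar]}\otimes\ph_\fix^\#$ under~\eqref{eq:coho-fixe}, and then Conjecture~\ref{conj:coho}(b) applied to both spaces to convert the resulting inclusion of images into the inclusion of Rees algebras and hence of filtered pieces. The only difference is cosmetic: you spell out the justification that $\ph_\fix^*$ is $\Id_{\CM[\hbar]}\otimes\ph_\fix^\#$ (pull-back of indicator functions on a finite set with trivial $\CM^\times$-action) and make explicit how the filtration is recovered from $\hbar$-degrees in the Rees algebra, where the paper compresses both into ``by construction'' and ``this is exactly the statement.''
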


\bigskip

\begin{proof}
The maps $\ph$ and $\ph_\fix$ induce maps between equivariant cohomology groups  
which we denote by 
$$\ph^* : \Hrm_{\CM^\times}^\bullet(\ZC_k) \longto 
\Hrm_{\CM^\times}^\bullet(\ZC_{k'}')$$
$$\ph_\fix^* : \Hrm_{\CM^\times}^\bullet(\ZC_k^{\CM^\times}) \longto 
\Hrm_{\CM^\times}^\bullet(\ZC_{k'}^{\prime \CM^\times}).\leqno{\text{and}}$$ 
The functoriality properties of cohomology imply that the 
diagram
$$\diagram
\Hrm_{\CM^\times}^\bullet(\ZC_k) \rrto^{\DS{\ph^*}} \ddto_{\DS{i_k^*}}&& 
\Hrm_{\CM^\times}^\bullet(\ZC_{k'}') \ddto^{\DS{i_{k'}^{\prime *}}} \\
\\
\Hrm_{\CM^\times}^\bullet(\ZC_k^{\CM^\times}) \rrto^{\DS{\ph_\fix^*}} && 
\Hrm_{\CM^\times}^\bullet(\ZC_{k'}^{\prime \CM^\times})
\enddiagram$$
is commutative. Recall from~(\ref{eq:coho-fixe}) that we identify 
$\Hrm_{\CM^\times}^\bullet(\ZC_k^{\CM^\times})$ 
(resp. $\Hrm_{\CM^\times}^\bullet(\ZC_{k'}^{\prime \CM^\times})$) 
with $\CM[\hbar] \otimes \im \Omeb^k$ 
(resp. $\CM[\hbar] \otimes \im \Omeb^{\prime k'}$). Through 
this identification, the map $\ph_\fix^*$ becomes $\Id_{\CM[\hbar]} \otimes \ph_\fix^\#$ 
by construction. Therefore, the above commutative diagram yields 
a commutative diagram
$$\diagram
\Hrm_{\CM^\times}^\bullet(\ZC_k) \xto[0,3]^{\DS{\ph^*}} \ddto_{\DS{i_k^*}}&&& 
\Hrm_{\CM^\times}^\bullet(\ZC_{k'}') \ddto^{\DS{i_{k'}^{\prime *}}} \\
\\
\CM[\hbar] \otimes \im \Omeb^k
\xto[0,3]^{\DS{\Id_{\CM[\hbar]} \otimes \ph_\fix^\#}} &&& 
\CM[\hbar] \otimes \im \Omeb^{\prime k'}.
\enddiagram$$
So $\Id_{\CM[\hbar]} \otimes \ph_\fix^\#(\im i_k^*) \subset \im i_{k'}^{\prime *}$. 
As we assume that Conjecture~\ref{conj:coho} holds for both $\ZC_k$ and $\ZC_{k'}'$, 
this is exactly the statement of the proposition.
\end{proof}

\bigskip

\begin{rema}\label{rem:applications}
In the next section, we propose some conjecture 
which would give many examples of morphisms between Calogero-Moser 
spaces. In all the cases where these conjectures are proved, 
the above Proposition~\ref{prop:coho-graduation} gives a highly 
non-trivial link between the character tables of $W$ and $W'$ 
(see for instance~\cite[Cor.~4.22]{bonnafe maksimau} for the case where 
$W=G(l,1,n)$).\finl
\end{rema}

\bigskip

\section{Symplectic leaves and fixed points}\label{sec:symplectic}

\medskip

If $\t \in \Nrm_{\Gb\Lb_\CM(V)}(W)$ has finite order and satisfies $\t(k)=k$, then $\t$ acts on 
the Calogero-Moser space $\ZC_k$. We are interested in this section in the 
variety of fixed points $\ZC_k^\t$ of $\t$ (endowed with its reduced 
structure) and its symplectic leaves. Since $W$ acts trivially on $\ZC_k$, 
the action of $\t$ on $\ZC_k$ depends only on its coset $W\t$.

We say that $\t$ is {\it $W$-full} if $\dim(V^\t)=\max_{w \in W} \dim(V^{w\t})$ 
(see~\cite[\S{1.F.4}]{auto}). Recall that $\t$ is $W$-full 
if and only if the natural map $V^\t \longto (V/W)^\t$ is onto~\cite[$($3.2$)$]{auto}, 
(the argument is due to Springer~\cite{springer}). Since $W$ acts trivially on $\ZC_k$, we may replace 
$\t$ by any $w\t$ and assume that $\t$ is $W$-full. Therefore, we will work in this section 
under the following hypothesis:

\medskip

\boitegrise{{\bf Hypothesis and notation.} {\it We fix in this section, and only in this section, 
a {\bfit $W$-full} element $\t$ of finite 
order in $\Nrm_{\Gb\Lb_\CM(V)}(W)$ and we assume that $\t(k)=k$.}}{0.75\textwidth}

\medskip

As in~\cite{auto}, let $W_\t$ denote the quotient 
$\Sigma/\Pi$, where $\Sigma$ (resp. $\Pi$) is the setwise (resp. pointwise) 
stabilizer of $V^\t$. 
A parabolic subgroup 
$P$ of $W$ is called {\it $\t$-split} if $P$ is the stabilizer in $W$ of 
a vector belonging to $V^\t$.
Note that a $\t$-split parabolic subgroup is $\t$-stable. If $P$ is a $\t$-split 
parabolic subgroup of $W$, we set
$$\Nrmov_{W_\t}(P_\t)=\Nrm_{W_\t}(P_\t)/P_\t.$$
Then $\Nrmov_{W_\t}(P_\t)$ acts faithfully on the vector space $(V^P)^\t$. 
So one can define Calogero-Moser spaces associated with 
the pair $((V^P)^\t,\Nrmov_{W_\t}(P_\t))$, even though 
$\Nrmov_{W_\t}(P_\t)$ is not necessarily a reflection group for its action on $(V^P)^\t$. 

\bigskip

\subsection{Symplectic leaves}\label{sub:symp} 
Brown-Gordon~\cite[\S{3.5}]{brown gordon} defined a stratification of any complex affine Poisson variety 
into {\it symplectic leaves}. They also proved that the Calogero-Moser space $\ZC_k$ 
has only finitely many symplectic leaves~\cite[Theo.~7.8]{brown gordon}. 
As explained in~\cite[\S{4.A}]{auto}, this implies that the variety $\ZC_k^\t$ admits a stratification  
into symplectic leaves and that there are only finitely many of them. We denote 
by $\symp(\ZC_k^\t)$ the set of its symplectic leaves. Such a symplectic 
leaf is called {\it $\t$-cuspidal} if it has dimension $0$ (we also 
talk about {\it $\t$-cuspidal} points\footnote{We can also say that a 
Calogero-Moser $k$-family is {\it $\t$-cuspidal} if it is associated to a 
$\t$-cuspidal point. Of course, a $\t$-cuspidal family is $\t$-stable.}). 
Note that this notion can be defined even if $\t$ 
is not $W$-full. Let $\Cus_k^\t(V,W)$ denote the 
set of pairs $(P,p)$ where $P$ is a $\t$-split parabolic subgroup 
of $W$ and $p$ is a $\t$-cuspidal point of $\ZC_{k_P}(V/V^P,P)$ 
(here $k_P$ is the restriction of $k$ to $\aleph(V/V^P,P)$).

Then $W_\t$ acts on $\Cus_k^\t(V,W)$ 
and the next result is proved in~\cite[Theo.~A]{auto} 
(whenever $\t=\Id_V$, it is independently due to Bellamy~\cite{bellamy cuspidal} 
and Losev~\cite{losev}). 

\bigskip

\begin{theo}\label{theo:leaves}
Recall that $\t$ is $W$-full. 
Then there is a natural bijection 
$$\symp(\ZC_k^\t) \longiso \Cus_k^\t(V,W)/W_\t.$$
Moreover, the dimension of the symplectic leaf associated with the $W_\t$-orbit of 
$(P,p)$ through this bijection is equal to $2 \dim (V^P)^\t$.
\end{theo}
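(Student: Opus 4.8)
The plan is to reduce the statement about $\ZC_k^\t$ to the known stratification of a Calogero-Moser space by symplectic leaves, in the style of Brown--Gordon~\cite{brown gordon} and its $\t$-equivariant refinement in~\cite{auto}. First I would recall the structure of the proof of the case $\t=\Id_V$ due to Bellamy~\cite{bellamy cuspidal} and Losev~\cite{losev}: each symplectic leaf of $\ZC_k$ is obtained, via the ``cuspidal $\leftrightarrow$ arbitrary'' dichotomy, by choosing a parabolic subgroup $P$, a cuspidal point $p$ of $\ZC_{k_P}(V/V^P,P)$, and then ``sweeping'' $p$ out along the $\Nrm_W(P)/P$-action on a suitable partial Calogero-Moser space built from $((V^P)^*\oplus V^P)/\Nrmov_W(P)$; symplectic leaves correspond to such pairs $(P,p)$ up to $W$-conjugacy, with the dimension of the leaf equal to $2\dim V^P$. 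The key input from~\cite{auto} that I would invoke is that this entire construction can be carried out $\t$-equivariantly: the variety $\ZC_k^\t$ inherits the Poisson structure (since $\t$ preserves the bracket up to a scalar, or more precisely $\ZC_k^\t$ is a symplectic-leaf-respecting closed Poisson subvariety in the sense made precise in~\cite[\S4.A]{auto}), and its symplectic leaves are exactly the fixed-point loci $\leaf^\t$ for $\leaf \in \symp(\ZC_k)$ that are $\t$-stable and nonempty.

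Concretely, the steps in order would be: (1) observe, using Theorem~\ref{theo:leaves}'s hypothesis that $\t$ is $W$-full, that a parabolic $P$ contributes a $\t$-stable leaf precisely when $P$ can be conjugated to a $\t$-split parabolic, and that for $\t$-split $P$ the relevant ``sweeping group'' is the image of $\Nrm_{W_\t}(P_\t)$ acting on $(V^P)^\t$ — this is where one needs that $V^\t \to (V/W)^\t$ is onto, i.e.\ the Springer-type surjectivity cited before the statement; (2) identify the cuspidal points of $\ZC_{k_P}(V/V^P,P)$ that survive in the fixed-point variety with the $\t$-cuspidal points in the sense defined just above the theorem, so that the index set becomes $\Cus_k^\t(V,W)$; (3) quotient by the residual symmetry $W_\t$ (rather than the full $\Nrm_W(P)/P$), which is exactly the group acting on $\Cus_k^\t(V,W)$, to get the asserted bijection $\symp(\ZC_k^\t)\longiso \Cus_k^\t(V,W)/W_\t$; (4) track dimensions: a leaf of $\ZC_k$ through $(P,p)$ has dimension $2\dim V^P$, so its $\t$-fixed locus, which is again smooth symplectic, has dimension $2\dim(V^P)^\t$, giving the last assertion.

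The main obstacle, which is really the content of~\cite[Theo.~A]{auto} rather than something to be redone here, is Step~(1)--(2): one must show that the fixed-point variety $\ZC_k^\t$ is genuinely built as a Calogero-Moser-type space from the fixed data — i.e.\ that taking $\t$-invariants commutes with the ``restriction to a parabolic / induction along $\Nrm_W(P)/P$'' operations that produce the symplectic leaves — and that no leaves are lost or created in the passage. This requires the $W$-fullness hypothesis in an essential way (without it the map $V^\t\to(V/W)^\t$ need not be surjective and the parametrization by $\t$-split parabolics breaks down), and it is the reason the theorem is stated under that hypothesis. Since the excerpt allows me to assume everything proved earlier, and Theorem~\ref{theo:leaves} is explicitly attributed to~\cite[Theo.~A]{auto}, I would present the argument as an application of that result together with the Springer surjectivity $V^\t\twoheadrightarrow (V/W)^\t$ and the Brown--Gordon finiteness of leaves, spelling out only the bookkeeping of groups ($\Nrmov_{W_\t}(P_\t)$ versus $\Nrm_W(P)/P$) and of dimensions.
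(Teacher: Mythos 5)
Your reading of the situation is exactly right: the paper offers no argument for Theorem~\ref{theo:leaves} beyond a citation to \cite[Theo.~A]{auto} (together with the $\t=\Id_V$ precedents of Bellamy and Losev), so reducing the statement to that reference, plus the Springer-type surjectivity $V^\t\twoheadrightarrow(V/W)^\t$ and Brown--Gordon finiteness of leaves, is precisely what the text does. Your outline of what sits behind \cite[Theo.~A]{auto} is broadly accurate and is useful added context.

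One point of imprecision worth flagging: the assertion in your second paragraph that the symplectic leaves of $\ZC_k^\t$ ``are exactly the fixed-point loci $\leaf^\t$ for $\leaf\in\symp(\ZC_k)$ that are $\t$-stable and nonempty'' is not literally true and is not what \cite[\S 4.A]{auto} says. Even when $\leaf$ is $\t$-stable, the subvariety $\leaf^\t$ is smooth symplectic but need not be connected, so it may split into several leaves of $\ZC_k^\t$; conversely the theorem's index set is the finer datum $\Cus_k^\t(V,W)/W_\t$, not the set of $\t$-stable leaves of $\ZC_k$, and the two are not in bijection in general. Likewise, step~(4) presents $\dim\leaf_{P,p}^\t=2\dim(V^P)^\t$ as if it followed formally from $\dim\leaf_{P,p}=2\dim V^P$, but that equality requires knowing how $\t$ acts transversally to the leaf; it is part of what \cite[Theo.~A]{auto} actually establishes rather than a consequence of smoothness of fixed loci. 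Since you ultimately defer both points to \cite{auto}, this does not invalidate the proposal, but the intermediate heuristic should not be stated as an equivalence.
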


\bigskip

We refer to~\cite[Lem.~8.4]{auto} for the explicit description of the bijection: 
if $(P,p) \in \Cus_k^\t(V,W)$, we denote by $\leaf_{P,p}$ its associated 
symplectic leaf. 
Recall also~\cite[Rem.~4.2]{auto} that all symplectic leaves are 
$\CM^\times$-stable. 

\bigskip

\subsection{Normalization} 
Let $(P,p) \in \Cus_k^\t(V,W)$. Then $\overline{\leaf_{P,p}}$ carries a Poisson 
structure and so does its normalization $\overline{\leaf_{P,p}}^\nor$ (see~\cite{kaledin}). 
We proposed in~\cite[Conj.~B]{auto} the following conjecture:

\bigskip

\begin{quotation}
\begin{conj}\label{conj:leaves}
There exists a parameter $k_{P,p} \in \CM^{\aleph((V^P)^\t,\Nrmov_{W_\t}(P_\t))}$ 
such that the varieties 
$\overline{\leaf_{P,p}}^\nor$ and $\ZC_{k_{P,p}}((V^P)^\t,\Nrmov_{W_\t}(P_\t))$ 
are isomorphic as Poisson varieties endowed with a $\CM^\times$-action. 
\end{conj}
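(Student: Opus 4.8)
The plan is to adapt, $\t$-equivariantly, the strategy used by Bellamy~\cite{bellamy cuspidal} and Losev~\cite{losev} for $\t=\Id_V$, using the restriction machinery of~\cite{auto} as the local input. First I would fix $b\in(V^P)^\t$ with $\Stab_W(b)=P$ together with a generic $c\in((V^P)^*)^\t$, and analyze $\ZC_k$ along the $\Nrmov_{W_\t}(P_\t)$-orbit of $(\bar b,\bar c)$. The $\t$-equivariant restriction theorem identifies the formal completion of $\ZC_k$ at a point over $(\bar b,\bar c)$ with the product of the formal completion of $\ZC_{k_P}(V/V^P,P)$ at the corresponding point and the completion of $V^P\times(V^P)^*$ at $(b,c)$; passing to $\t$-fixed points replaces the last factor by $(V^P)^\t\times((V^P)^*)^\t$. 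Choosing $p$ cuspidal means exactly that the $\ZC_{k_P}$-factor contributes an isolated point, so near such points $\overline{\leaf_{P,p}}$ is cut out as the $(V^P)^\t\times((V^P)^*)^\t$-direction. This already forces the candidate parameter $k_{P,p}\in\CM^{\aleph((V^P)^\t,\Nrmov_{W_\t}(P_\t))}$: on each reflecting hyperplane $\Hba$ of $\Nrmov_{W_\t}(P_\t)$ in $(V^P)^\t$ the value of $c_{k_{P,p}}$ is the combination of the values of $c_k$ on the reflections of $W$ lying over $\Hba$ prescribed by the restriction theorem, possibly corrected by a shift $\delta_p$ coming from the cuspidal datum $p$.

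The second step is to globalize this formal identification. By descending induction on the poset of $\t$-split parabolic subgroups, a point of $\ZC_{k_{P,p}}((V^P)^\t,\Nrmov_{W_\t}(P_\t))$ lying over a point of $(V^P)^\t$ whose $W$-stabilizer is a larger $\t$-split parabolic $Q\supseteq P$ is treated by a nested application of the restriction theorem, so that the formal completions of $\overline{\leaf_{P,p}}^\nor$ and of $\ZC_{k_{P,p}}(\dots)$ match at every point. To glue these I would construct an explicit morphism
$$\psi:\ZC_{k_{P,p}}((V^P)^\t,\Nrmov_{W_\t}(P_\t))\longto\ZC_k(V,W)$$
as follows: build a flat family of $\Hb_k(W)$-modules over $\ZC_{k_{P,p}}(\dots)$ by inducing, along the base, the cuspidal module attached to $p$ --- a ``$t=0$'' analogue of parabolic induction for Cherednik category $\OC$, built from the decompositions $\Hb_k=\CM W\otimes_{\CM W_\refl}\Hb_{k^\flat}(W_\refl)$ and their parabolic variants, made $\t$-equivariant. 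Evaluating central characters of this family produces $\psi$; it is $\CM^\times$-equivariant because the construction respects the $\ZM$-gradings, and it is Poisson because the same induction makes sense for the flat family $\Hb_{t,k}$, hence intertwines the brackets obtained in the limit $t\to 0$.

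Finally I would check that $\psi$ is finite and birational onto $\overline{\leaf_{P,p}}$. It is compatible with the finite morphisms $\Upsilon_{k_{P,p}}$ and $\Upsilon_k$ over the finite map $((V^P)^\t\times((V^P)^*)^\t)/\Nrmov_{W_\t}(P_\t)\to V/W\times V^*/W$, hence finite; its image is then closed, irreducible, of dimension $2\dim(V^P)^\t$, and contains the leaf $\leaf_{P,p}$ by the first two steps, so it equals $\overline{\leaf_{P,p}}$; and $\psi$ restricts to an isomorphism over the open stratum, so it is birational. Since $\ZC_{k_{P,p}}((V^P)^\t,\Nrmov_{W_\t}(P_\t))$ is normal (its coordinate ring is integrally closed, \cite{EG}), a finite birational morphism from it onto $\overline{\leaf_{P,p}}$ is the normalization, giving the desired Poisson, $\CM^\times$-equivariant isomorphism $\overline{\leaf_{P,p}}^\nor\longiso\ZC_{k_{P,p}}((V^P)^\t,\Nrmov_{W_\t}(P_\t))$.

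The main obstacle is the construction and control of $\psi$: producing a genuine $t=0$ parabolic-induction functor that is simultaneously $\t$-equivariant, compatible with the Poisson structures, and whose image is exactly the leaf closure --- equivalently, gluing the pointwise restriction isomorphisms of~\cite{auto} into a global statement. A secondary difficulty is that $\Nrmov_{W_\t}(P_\t)$ need not be generated by its reflections, so the restriction functors, the description of $\im\Omeb$, and the Poisson structures must all be used in the generality of arbitrary finite linear groups; one must also verify that the correction term $\delta_p$ entering $k_{P,p}$ is well defined and independent of the auxiliary choices.
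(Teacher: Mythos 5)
This statement is a \emph{conjecture} (it is Conjecture~B of \cite{auto}, restated here as Conjecture~\ref{conj:leaves}): the paper does not prove it in general, and the only ``proof'' it contains is the list of verified special cases in Theorem~\ref{theo:normalization}, each of which cites an external source (\cite{be-sc}, \cite{bonnafe maksimau}, \cite{bonnafe thiel}). So there is no paper proof to compare your sketch against, and it would be wrong to present any argument as a proof of the full statement.

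That said, your sketch does follow the strategy that underlies the known cases, so it is worth assessing on its own terms. The local step --- using the ($\t$-equivariant) restriction theorem of \cite{auto}, extending Bellamy--Losev, to identify the formal completion of $\ZC_k$ at a generic point of the leaf with $\widehat{\ZC_{k_P}}\times\widehat{(V^P)^\t\times((V^P)^*)^\t}$ and reading off that the cuspidal factor is zero-dimensional --- is correct and is how one establishes that $\leaf_{P,p}$ is a single symplectic leaf of the right dimension (this is essentially Theorem~\ref{theo:leaves}). The final step --- a finite, birational, $\CM^\times$-equivariant Poisson morphism from an integrally closed irreducible variety onto $\overline{\leaf_{P,p}}$ must be the normalization --- is also sound, since $\Zb_{k_{P,p}}$ is integrally closed by \cite{EG}. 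What is genuinely missing, and what you correctly flag as ``the main obstacle,'' is the middle step: the construction of the global morphism $\psi$ and the a priori identification of the parameter $k_{P,p}$. Two points deserve to be stated more forcefully than you do. First, the formal-completion isomorphism at a generic point of the leaf does not by itself pin down a parameter for $\Nrmov_{W_\t}(P_\t)$: it gives the behaviour of the Poisson structure transverse to the $\t$-split hyperplanes in $(V^P)^\t$, but different parameters can have isomorphic completions at such points, and the correction $\delta_p$ you invoke is exactly the nontrivial content --- in the worked cases it is encoded in nonobvious combinatorics (the sequences $k^\g$, $l^\g$ of \S\ref{sub:abaque} and Proposition~\ref{prop:k=l}, or the quiver-variety computations of \cite{be-sc}), not by a soft restriction formula. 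Second, your claim that $\psi$ restricts to an isomorphism over the open stratum is itself one of the things the global construction must deliver; as written the argument assumes the local-to-global gluing works in order to show that it works. A careful version would, as in \cite{bonnafe maksimau}, construct $\psi$ explicitly in a given family and then verify the local model, rather than the other way round. In short: your plan is the right plan, but it is a research program --- which matches the status of the statement as a conjecture in the paper.
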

\end{quotation}

\bigskip

\begin{theo}\label{theo:normalization}
Conjecture~\ref{conj:leaves} is known to hold in the following cases:
\begin{itemize}
\itemth{a} If $k=0$.

\itemth{b} If $\ZC_k$ is smooth.

\itemth{c} If $W$ is a Weyl group of type $B$ (i.e. $C$) and $\t=\Id_V$.

\itemth{d} If $W$ is of type $D$ and $\t$ is a diagram automorphism.

\itemth{e} If $W$ is dihedral and $\t$ is the non-trivial diagram automorphism.

\itemth{f} If $W$ is of type $G_4$.
\end{itemize}
\end{theo}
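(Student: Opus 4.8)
The general plan is, in each case, first to make the datum of Theorem~\ref{theo:leaves} fully explicit --- i.e.\ to list the cuspidal pairs $(P,p) \in \Cus_k^\t(V,W)$ and the leaves $\leaf_{P,p}$ they index --- and then, for each such pair, to exhibit a concrete isomorphism of Poisson $\CM^\times$-varieties $\overline{\leaf_{P,p}}^\nor \longiso \ZC_{k_{P,p}}((V^P)^\t,\Nrmov_{W_\t}(P_\t))$, reading off the parameter $k_{P,p}$ from the construction.

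Two of the cases are \emph{soft}. For~(a), $\ZC_0 = (V \times V^*)/W$; since $\t$ is $W$-full, Springer's argument recalled in \S\ref{sec:symplectic} gives $\ZC_0^\t = (V^\t \oplus (V^*)^\t)/W_\t$, and, $\t$ being semisimple, $(V^*)^\t$ is canonically the dual of $V^\t$. The stratification of such a quotient by stabilizer type is classical: $\Cus_0^\t(V,W)$ consists of the pairs $(P,0)$ with $P$ a $\t$-split parabolic, and the normalization of the closure of $\leaf_{P,0}$ is $\bigl((V^P)^\t \oplus ((V^P)^*)^\t\bigr)/\Nrmov_{W_\t}(P_\t)$, which is by definition $\ZC_0$ of the required pair, so $k_{P,0}=0$ works. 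For~(b), $\ZC_k$ is smooth, hence (finite-order automorphism of a smooth variety in characteristic $0$) $\ZC_k^\t$ is smooth, and the restriction of the Poisson bivector to the $\t$-fixed part of each tangent space is nondegenerate; thus each connected component of $\ZC_k^\t$ is a smooth affine symplectic $\CM^\times$-variety, is already normal and closed, and is a single symplectic leaf. Identifying it with the Calogero--Moser space attached to $((V^P)^\t,\Nrmov_{W_\t}(P_\t))$ then uses the restriction (``slice'') theorem --- smoothness of $\ZC_k$ passes to $\ZC_{k_P}(V/V^P,P)$ --- together with the analysis of fixed loci of smooth Calogero--Moser spaces in \cite{regular} and \cite{bonnafe maksimau}.

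For the remaining cases one must use explicit models. In~(c), $W = W(B_n) = G(2,1,n)$ and $\t = \Id_V$: here $\ZC_k(G(2,1,n))$ is a Nakajima quiver variety for a cyclic quiver (equivalently, is built from Hilbert schemes of the relevant surface), its cuspidal points were classified by Bellamy \cite{bellamy cuspidal}, and the key input is a \emph{factorization} statement --- after recombining the symmetric-group factors of $\Nrmov_W(P)$, the normalization of a leaf closure is again a Calogero--Moser space of type $G(2,1,n')$ with $n' \le n$ and a shifted parameter. Case~(d) is reduced to~(c) by realizing $\ZC_k(W(D_n))$ inside $\ZC_k(W(B_n))$ (as the fixed locus of the relevant $\mub_2$-action, compatibly with the diagram automorphism $\t$) and tracking leaves through this identification. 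In the dihedral case~(e), $\dim V = 2$, so $\ZC_k$ is a threefold with explicit equations and $\ZC_k^\t$ a surface, and all leaves and their normalizations can be written down by hand, as in \cite{bonnafe diedral}. Case~(f), $W = G_4$, is a finite explicit verification.

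The main obstacle, in every non-trivial case, is not constructing an isomorphism of \emph{varieties} but upgrading it to a Poisson and $\CM^\times$-equivariant one while simultaneously pinning down $k_{P,p}$: the grading and the bracket are rigid enough that the parameter is forced, but checking the compatibility requires either the \'etale-local slice description of $\ZC_k$ near a cuspidal point (Bellamy--Losev, as extended in \cite{auto}) or, in the classical types, a careful matching of the combinatorial data --- partitions and charges --- on the two sides. In~(c) in particular, the genuinely delicate point is precisely the factorization statement for leaf closures: this is where the specific geometry of Calogero--Moser spaces of type $G(2,1,n)$, and not merely the abstract bijection of Theorem~\ref{theo:leaves}, is needed.
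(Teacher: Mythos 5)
The paper's own ``proof'' of this theorem is a bare citation to \cite[Prop.~6.7 and \S{9}]{auto}, which relies in turn on \cite{be-sc}, \cite{bonnafe maksimau} and \cite{bonnafe thiel}; your plan is a faithful elaboration of what those sources actually do in cases (a), (b), (c), (e) and (f), and correctly identifies the key structural inputs (the quotient stratification for $k=0$, smoothness of $\ZC_k^\t$ in the smooth case, the quiver-variety/factorization statement of Bellamy--Maksimau--Schedler in type $B$, and explicit computation in ranks~$\le 2$).

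There is, however, a genuine error in your treatment of case~(d). You assert that $\ZC_k(W(D_n))$ sits ``inside'' $\ZC_k(W(B_n))$ as the fixed locus of a $\mub_2$-action. That is backwards. The relationship, as recorded in~\eqref{eq:cm-quotient} and~\eqref{eq:cm-quotient-wk}, is a \emph{quotient}: taking $W=W(B_n)$ with a parameter $k$ vanishing on the sign-change reflections (so $W(k)=W(D_n)$), one has $\ZC_k(W(B_n)) = \ZC_{k^\flat}(W(D_n))/\mub_2$, the $\mub_2$-action on $\ZC_{k^\flat}(W(D_n))$ being generated by the diagram automorphism $\t$. Thus $\ZC_{k^\flat}(W(D_n))$ is a double cover of $\ZC_k(W(B_n))$, not a sublocus; what \emph{does} embed in $\ZC_k(W(B_n))$ is the fixed locus $\ZC_{k^\flat}(W(D_n))^\t$, which maps isomorphically onto the branch locus of that covering. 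The argument carried out in \cite[\S{9}]{auto} exploits exactly this embedding to transport the description of leaves from $\ZC_k(W(B_n))$ (case (c)) to $\ZC_{k^\flat}(W(D_n))^\t$. As written, your step would not produce the needed map; once the direction of the covering is corrected the reduction goes through and the rest of the plan is sound.
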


\bigskip

\begin{proof}
See~\cite[Prop.~6.7~and~\S{9}]{auto} for more details: this relies on works of 
Bellamy-Maksimau-Schedler~\cite{be-sc}, 
Maksimau and the author~\cite{bonnafe maksimau} and Thiel and the author~\cite{bonnafe thiel}.
\end{proof}

\bigskip

\subsection{Regular case}\label{sub:regular} 
We say that the element $\t$ of $\Nrm_{\Gb\Lb_\CM(V)}(W)$ is {\it regular} 
if $V_\reg^\t \neq \vide$. In this case, $\ZC_k^\t$ admits a unique irreducible 
component of maximal dimension~\cite[Prop.~2.4]{regular}: we denote by by $(\ZC_k^\t)_\maxi$. 
It has dimension $2\dim(V^\t)$. The following result has been proved in~\cite[Theo.~2.8]{regular} 
(here, if $\chi$ is a $\t$-stable character of $W$, we choose an extension 
$\chit$ of $\chi$ to $W\langle \t \rangle$):

\bigskip

\begin{theo}\label{theo:chi-tau}
Assume that $\t$ is a regular element. 
Let $p \in \ZC_k^{\CM^\times}$ be such that $\t(p)=p$ and 
$\sum_{\chi \in \FG_p^\t} |\chit(\t)|^2  \neq 0$. 
Then $p$ belongs to $(\ZC_k^\t)_\maxi$. 
\end{theo}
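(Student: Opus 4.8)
The plan is to identify $(\ZC_k^\t)_\maxi$ with the closure of a single, explicitly described symplectic leaf, to use the description of leaf closures and of their $\CM^\times$-fixed points coming from \cite{auto} to determine which $p$ lie in it, and finally to match the resulting criterion with the non-vanishing of $\sum_{\chi\in\FG_p^\t}|\chit(\t)|^2$ via a $\t$-twisted character computation on $W$.

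\emph{Step 1: the relevant leaf.} By Theorem~\ref{theo:leaves} the irreducible components of $\ZC_k^\t$ are the closures $\overline{\leaf_{P,q}}$ of its symplectic leaves, of dimension $2\dim(V^P)^\t\le 2\dim V^\t$. Since $\t$ is regular, choose $v\in V_\reg^\t$; then $\Stab_W(v)=\{1\}$ by Steinberg's Theorem, so the trivial subgroup is $\t$-split and $(\{1\},\pt)$ belongs to $\Cus_k^\t(V,W)$, its leaf $\leaf_{1,\pt}$ having dimension $2\dim V^\t$. By \cite[Prop.~2.4]{regular} this is the unique component of maximal dimension, hence $(\ZC_k^\t)_\maxi=\overline{\leaf_{1,\pt}}$. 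So it suffices to prove that $p\in\overline{\leaf_{1,\pt}}$ whenever $\sum_{\chi\in\FG_p^\t}|\chit(\t)|^2\neq 0$.

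\emph{Step 2: which $\CM^\times$-fixed points lie in $\overline{\leaf_{1,\pt}}$.} Here I would invoke the explicit description of the symplectic leaves of $\ZC_k^\t$ and of their closures from \cite[Lem.~8.4]{auto}: for a $\CM^\times$-fixed point $\zG_k(\FG)$ of $\ZC_k$, the condition $\zG_k(\FG)\in\overline{\leaf_{P,q}}$ is governed by an explicit ``$\t$-cuspidal support'' condition relating $\FG$ to $(P,q)$. The geometric picture behind this is that over a point $b$ of $\PC^\t$ lying outside the image of the lower-dimensional components, the $\t$-stable fibre $\Upsilon_k^{-1}(b)$ sits in the (smooth, hence Azumaya) locus of $\ZC_k$, so its points carry $|W|$-dimensional $\Hb_k$-modules restricting to the regular representation of $W$, on which $\t$ acts; degenerating this fibre down to $\ZC_k^{\CM^\times}$ while tracking the idempotent decomposition $\im(\Omeb^k)=\bigoplus_\FG\CM e_\FG$ of \eqref{eq:im-omega} pins down the $\zG_k(\FG)$ lying in $\overline{\leaf_{1,\pt}}$ --- the one technical nuisance being that $\CM^\times$ does not act contractingly on $\ZC_k$, so this degeneration must be run through the $\Pb$-module structure of $\Zb_k$, as in \cite{auto}, rather than through a single one-parameter limit.

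\emph{Step 3, and the main obstacle.} Specialising the criterion of \cite[Lem.~8.4]{auto} to $(P,q)=(\{1\},\pt)$ expresses ``$\zG_k(\FG_p)\in\overline{\leaf_{1,\pt}}$'' as the non-vanishing of a sum, over the $\t$-stable $\chi$ in $\FG_p$, of quantities built from the $\t$-twisted class functions $w\mapsto\chit(w\t)$ on $W$; a direct computation identifies each such quantity with $|\chit(\t)|^2$, which is well defined since two extensions of $\chi$ to $W\langle\t\rangle$ differ by a linear character of the cyclic quotient $W\langle\t\rangle/W$, so $|\chit(\t)|$ does not depend on the choice. Thus the hypothesis $\sum_{\chi\in\FG_p^\t}|\chit(\t)|^2\neq 0$ is exactly the condition that $p=\zG_k(\FG_p)$ lies in $\overline{\leaf_{1,\pt}}=(\ZC_k^\t)_\maxi$, which is what we want. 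I expect essentially all the difficulty to be concentrated in Step 2: extracting from \cite{auto} the precise form of the condition describing which $\CM^\times$-fixed points lie in the closure of a given symplectic leaf of $\ZC_k^\t$, and checking that for the pair $(\{1\},\pt)$ it is governed by $\t$-twisted characters of $W$ in a way compatible with $\Omeb^k$ and \eqref{eq:im-omega}. Steps 1 and 3 are then routine.
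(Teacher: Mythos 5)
Your Step~1 is fine: regularity forces the trivial subgroup to be the only $\t$-split parabolic $P$ with $\dim (V^P)^\t=\dim V^\t$, so $(\ZC_k^\t)_\maxi=\overline{\leaf}_{1,\pt}$; this is exactly \cite[Prop.~2.4]{regular} and the identification made in Example~\ref{ex:regular}. The problem is that the entire content of the theorem sits in your Steps~2--3, and there you have not given an argument: you postulate that \cite[Lem.~8.4]{auto} supplies a necessary-and-sufficient ``cuspidal support'' criterion for a $\CM^\times$-fixed point $\zG_k(\FG)$ to lie in $\overline{\leaf}_{P,q}$, and that ``a direct computation'' turns that criterion into $\sum_{\chi\in\FG^\t}|\chit(\t)|^2\neq 0$. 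No such criterion is stated or derived, and there is a structural reason to doubt it exists in the form you need: if membership of $\CM^\times$-fixed points in $\overline{\leaf}_{1,\pt}$ were \emph{characterized} by the non-vanishing of this sum, Conjecture~\ref{conj:chi-tau} (the converse implication) would be a theorem rather than an open conjecture, and more generally the question of which fixed points lie in a given leaf closure is precisely what Conjecture~\ref{conj:d-hc}(a) is about. The cited Lemma~8.4 of \cite{auto} describes the bijection between leaves and cuspidal pairs, not the incidence of fixed points with leaf closures.

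The mechanism that actually makes the hypothesis work is a trace argument, and it only goes one way. Note that $\sum_{\chi\in\FG_p^\t}|\chit(\t)|^2=\Tr\bigl(\Ad(\t),\CM W e_{\FG_p}\bigr)$. Picking $v\in V_\reg^\t$, every $\t$-fixed point of the (finite, reduced, Azumaya) fibre $\Upsilon_k^{-1}(\vba,0)$ lies in $(\ZC_k^\t)_\maxi$, because no smaller component of $\ZC_k^\t$ meets the locus over regular points; and the $\CM^\times$-limit of such a point is a $\t$-fixed, $\CM^\times$-fixed point of $(\ZC_k^\t)_\maxi$. One then decomposes the fibre algebra into blocks indexed by Calogero-Moser families (via the degeneration of $(\xi\cdot\vba,0)$ to $(0,0)$ and $\Omeb^k$), identifies the trace of $\t$ on the $\FG_p$-block with $\Tr(\Ad(\t),\CM W e_{\FG_p})$, and concludes that a nonzero trace forces the existence of a $\t$-fixed point in that block, whose limit is $p$. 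This explains both why the condition is sufficient and why its necessity remains conjectural (a vanishing trace does not preclude fixed points). Your proposal, as written, neither produces a $\t$-fixed point nor ever computes anything equal to $|\chit(\t)|^2$, so the gap is genuine.
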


\bigskip

Moreover, it is conjectured in~\cite[Conj.~2.6]{regular} that the converse holds:

\bigskip

\begin{quotation}
\begin{conj}\label{conj:chi-tau}
Assume that $\t$ is a regular element. 
Let $p \in \ZC_k^{\CM^\times}$ be such that $\t(p)=p$. Then $p$ belongs to $(\ZC_k^\t)_\maxi$ 
if and only if $\sum_{\chi \in \FG_p^\t} |\chit(\t)|^2  \neq 0$.
\end{conj}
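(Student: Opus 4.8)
The implication ``$\sum_{\chi \in \FG_p^\t}|\chit(\t)|^2 \neq 0 \Longrightarrow p \in (\ZC_k^\t)_\maxi$'' is precisely Theorem~\ref{theo:chi-tau}, so the task is the converse, and the plan is to reduce it to structural properties of the component $(\ZC_k^\t)_\maxi$. I would first rewrite the numerical condition in representation-theoretic terms: if $p \in \ZC_k^{\CM^\times}$ is fixed by $\t$, then $\t$ stabilises the Calogero-Moser family $\FG_p$ and acts on the idempotent decomposition $\CM W e_{\FG_p} = \bigoplus_{\chi \in \FG_p} \CM W e_\chi$, permuting the simple two-sided blocks $\CM W e_\chi$ along its action on $\FG_p$; a block is fixed exactly when $\chi \in \FG_p^\t$, and on such a block $\t$ acts with trace $|\chit(\t)|^2$ (for any extension $\chit$ of $\chi$ to $W\langle\t\rangle$), while the moved blocks contribute $0$. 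Thus
$$\sum_{\chi \in \FG_p^\t}|\chit(\t)|^2 = \Tr\bigl(\t \mid \CM W e_{\FG_p}\bigr),$$
so the goal becomes: $p \in (\ZC_k^\t)_\maxi \Longrightarrow \Tr(\t \mid \CM W e_{\FG_p}) \neq 0$. In particular the left-hand side is manifestly independent of the choices of $\chit$, is $\ge 0$, and vanishes if and only if every $\t$-stable character of $\FG_p$ vanishes at $\t$.

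Since $\t$ is regular, $V_\reg^\t \neq \vide$; in particular $\t$ is $W$-full and, by Springer's theory of regular elements~\cite{springer}, $W_\t$ acts on $V^\t$ as a complex reflection group, so $\ZC_{k_\t}(V^\t,W_\t)$ is a genuine Calogero-Moser space for a parameter $k_\t$ obtained by restricting $k$. The core of the argument would be a \emph{regular-element analogue} of the results underlying Theorem~\ref{theo:normalization}: namely that $(\ZC_k^\t)_\maxi$, equipped with its reduced structure and its induced $\CM^\times$-action and Poisson bracket, is isomorphic to $\ZC_{k_\t}(V^\t,W_\t)$, compatibly with the finite morphisms down to $\PC^\t = \PC(V^\t,W_\t)$. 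Granting this, the closed immersion $(\ZC_k^\t)_\maxi \injto \ZC_k$ induces an injection $((\ZC_k^\t)_\maxi)^{\CM^\times} \injto \ZC_k^{\CM^\times}$ whose source is, via $\zG_{k_\t}$, the set of Calogero-Moser $k_\t$-families of $W_\t$; hence it produces a map $\iota : \Fam_{k_\t}^\calo(W_\t) \to \Fam_k^\calo(W)$, and $p \in (\ZC_k^\t)_\maxi$ if and only if $\FG_p$ lies in the image of $\iota$.

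It then remains to describe $\iota$ combinatorially, i.e. to identify the composite $\Irr(W_\t) \xrightarrow{\zG_{k_\t}} ((\ZC_k^\t)_\maxi)^{\CM^\times} \injto \ZC_k^{\CM^\times} \xleftarrow{\zG_k} \Irr(W)$ with a Calogero-Moser incarnation of the Lehrer--Springer correspondence, which attaches to each $\psi \in \Irr(W_\t)$ a $\t$-stable character $\chi_\psi \in \Irr(W)$ whose preferred extension does not vanish at $\t$. The expected conclusion is that a family $\FG$ lies in the image of $\iota$ exactly when it contains some $\chi_\psi$, hence exactly when it contains a $\t$-stable character not vanishing at $\t$, which by the first paragraph is exactly the condition $\Tr(\t \mid \CM W e_{\FG}) \neq 0$. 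The delicate point here is to rule out the configuration $\FG_p^\t \neq \vide$ with \emph{all} of its members vanishing at $\t$: this needs more than a bijection of families, namely a compatibility of the idempotent maps $\Omeb^k$ and $\Omeb^{k_\t}$ under the immersion, so that a $W_\t$-family genuinely ``sees'' the trace $\Tr(\t \mid \CM W e_{\FG_p})$.

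The main obstacle, as I see it, is the structural identification $(\ZC_k^\t)_\maxi \simeq \ZC_{k_\t}(V^\t,W_\t)$ for regular $\t$: currently only the existence, irreducibility and dimension of $(\ZC_k^\t)_\maxi$ are known in general~\cite[Prop.~2.4]{regular}, and the full $\CM^\times$-equivariant Poisson isomorphism is available only in the cases of Theorem~\ref{theo:normalization} (in particular for $\t = \Id_V$, or for a handful of small groups). If that stays out of reach, a fallback is to combine Theorem~\ref{theo:chi-tau} with a counting argument: assuming Conjecture~\ref{conj:coho} for $(\ZC_k^\t)_\maxi$, localisation gives $\dim \Hrm^\bullet((\ZC_k^\t)_\maxi) = \#((\ZC_k^\t)_\maxi)^{\CM^\times}$, and it would then suffice to match this integer with $\#\{p \in \ZC_k^{\CM^\times} \mid \t(p) = p,\ \Tr(\t \mid \CM W e_{\FG_p}) \neq 0\}$, the inequality $\ge$ being Theorem~\ref{theo:chi-tau}. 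This, however, seems to rest on essentially the same information about $(\ZC_k^\t)_\maxi$, so I expect the geometric identification to be unavoidable.
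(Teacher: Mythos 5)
The first thing to be clear about is that this statement is a \emph{conjecture} in the paper, not a theorem: the text proves only the implication ``$\sum_{\chi \in \FG_p^\t}|\chit(\t)|^2 \neq 0 \Rightarrow p \in (\ZC_k^\t)_\maxi$'' (Theorem~\ref{theo:chi-tau}, quoted from~\cite{regular}), states the converse as Conjecture~\ref{conj:chi-tau}, and records that it is known only for $W=\SG_n$. So there is no proof in the paper to compare yours against, and your proposal does not close the question either; what you have written is a conditional reduction, and you say so yourself. Your preliminary step is sound: the identity $\sum_{\chi\in\FG_p^\t}|\chit(\t)|^2=\Tr\bigl(\t\mid \CM W e_{\FG_p}\bigr)$ is correct (blocks moved by $\t$ contribute $0$, and on a $\t$-stable block $\t$ acts by conjugation by a representation matrix affording $\chit$, whose trace on $\End(V_\chi)$ is $\chit(\t)\overline{\chit(\t)}$ because $\t$ has finite order), and it usefully shows the sum is independent of the choice of extensions.

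The genuine gap is that every subsequent step rests on statements that are themselves open. The identification of $(\ZC_k^\t)_\maxi$ with $\ZC_{k_\t}(V^\t,W_\t)$ as a $\CM^\times$-equivariant Poisson variety is precisely Conjecture~\ref{conj:leaves} applied to the leaf attached to the trivial ($\t$-split) parabolic subgroup, known only in the cases of Theorem~\ref{theo:normalization}; note moreover that Conjecture~\ref{conj:leaves} concerns the \emph{normalization} of the closure, so even granting it you must still check that the normalization map is a bijection on $\CM^\times$-fixed points before deducing that families of $W_\t$ inject into families of $W$. The identification of the induced map on fixed points with a Lehrer--Springer-type correspondence $\Irr(W_\t)\to\Irr(W)^\t$ is not established anywhere and is essentially the content of \cite[Conj~5.2]{regular}, which Example~\ref{ex:regular} presents as \emph{motivated by}, not implied by, the unipotent picture; and the compatibility of $\Omeb^k$ with $\Omeb^{k_\t}$ that you need in order to ``see'' the trace through the immersion is of the same conjectural nature as Conjecture~\ref{conj:coho} and Proposition~\ref{prop:coho-graduation}. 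Your fallback counting argument has the same difficulty: Conjecture~\ref{conj:coho} is formulated for Calogero--Moser spaces, so applying it to $(\ZC_k^\t)_\maxi$ presupposes the very identification you hoped to avoid. In short, the strategy is the natural one and is consistent with the philosophy of the paper, but at the current state of knowledge it is circular: you have reduced one open conjecture to several others. The only unconditional result available is the type $A$ case via \cite[Exam.~5.7]{regular}.
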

\end{quotation}

\bigskip

Note that Conjecture~\ref{conj:chi-tau} holds for $W=\SG_n$ by~\cite[Exam.~5.7]{regular}. 

\bigskip

\section{Special features of Coxeter groups}\label{sec:coxeter conjectures}

\medskip

\boitegrise{{\bf Hypothesis and notation.} 
{\it We assume in this section, and only in this section, 
that there exists a $W$-stable $\RM$-vector subspace $V_\RM$ of $V$ 
such that $V = \CM \otimes_\RM V_\RM$ as a $W$-module, that 
$k$ takes only {\bfit real values} and that $c_k(s) \ge 0$ for all $s \in \Ref(W)$.
}}{0.75\textwidth}

\medskip

First, note that the reflections of $W$ have order $2$ 
(so that $e_H=2$ for all $H \in \AC$) and that we have a bijection 
between $\Ref(W)$ and $\AC$. This implies that 
$$c_k(s)=k_{H,1}-k_{H,0},$$
where $H$ is the reflecting hyperplane of $s$. Note that $k=k^\sharp$. 

\def\schur{{\sb\cb\hb}}

\subsection{Lusztig families} 
If $\chi \in \Irr(W)$, we denote by $\schur^{(k)}_\chi \in \OC[\qb^\RM]$ 
the {\it Schur element} associated with the irreducible character $\chi_k$ 
of the Hecke algebra $\HC_k(W)$ (see~\cite[\S{7.2}]{geck pfeiffer}): 
since $\RM$ is an ordered group, we can set 
$a_\chi^{(k)} = \val \schur_\chi^{(k)}$ and $A_\chi^{(k)}=\deg \schur_\chi^{(k)}$: this defines two maps 
$a^{(k)}$, $A^{(k)} : \Irr(W) \to \RM$. Using the map $a^{(k)}$ and the notion of {\it $J$-induction}, 
Lusztig~\cite[\S{22}]{lusztig} defined the notion of 
{\it $k$-constructible character} (or {\it $c_k$-constructible character}) 
of $W$. Let $\irrc_k(W)$ denote the graph defined as follows:
\begin{itemize}
\item[$\bullet$] The set of vertices of $\irrc_k(W)$ is $\Irr(W)$.

\item[$\bullet$] There is an edge between two vertices if they both occur in a same 
$k$-constructible character.
\end{itemize}
A {\it Lusztig $k$-family} is a subset of $\Irr(W)$ consisting of 
the vertices of a connected component of $\irrc_k(W)$. We denote 
by $\Fam_k^\lus(W)$ the set of Lusztig $k$-families of $W$. 
It turns out that 
\equat\label{eq:lusztig-hecke}
\Fam_k^\hecke(W)=\Fam_k^\lus(W)
\endequat
(see~\cite{chlou} and~\cite{lusztig}). Martino's Conjecture~\ref{conj:martino} 
has a more precise version in the Coxeter case:

\bigskip

\begin{quotation}
\begin{conj}[Gordon-Martino]\label{conj:gordon-martino}
If $W$ is a Coxeter group, then 
$$\Fam_k^\calo(W)=\Fam_k^\lus(W).$$
\end{conj}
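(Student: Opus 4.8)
The natural strategy is to reduce, as always, to the case of an \emph{irreducible} Coxeter group $W$: both the Calogero--Moser family partition and the Lusztig family partition are compatible with direct products (for the latter this is classical; for the former it follows from the factorisation $\Omeb^k = \Omeb^{k_1} \otimes \Omeb^{k_2}$ of the map $\Omeb^k \colon \Zb_k \to \Zrm(\CM W)$ under $W = W_1 \times W_2$). One then proves the two inclusions of partitions separately. The inclusion ``each Calogero--Moser $k$-family is a union of Lusztig $k$-families'' is, in the Coxeter case, exactly Martino's Conjecture~\ref{conj:martino}: indeed here $k = k^\sharp$, and Hecke $k$-families coincide with Lusztig $k$-families by~\eqref{eq:lusztig-hecke}. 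Thus this half is already \emph{known} for $W$ of type $A$, $B$, $D$, $F_4$ or $H_3$ by Theorem~\ref{theo:martino}, and for dihedral groups by the explicit computation of~\cite{bonnafe diedral}; it remains open only for $W$ of type $H_4$, $E_6$, $E_7$ or $E_8$ (the methods of~\cite{thiel} reaching generic parameters in some of these). So a first task is to complete Conjecture~\ref{conj:martino} in those four types.

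The other inclusion --- that no two distinct Lusztig $k$-families lie inside a single Calogero--Moser $k$-family --- carries the genuinely new content, and I would establish it type by type. In type $A$ it is immediate: for $c_k \neq 0$ the space $\ZC_k$ is smooth (Etingof--Ginzburg~\cite{EG}; it is a Hilbert scheme of points), so $\Omeb^k$ is surjective by~\eqref{eq:lissite-omega} and the Calogero--Moser families are singletons, as are the Lusztig families of $\SG_n$, while for $c_k = 0$ one has $\im(\Omeb^k) = \CM$ and both partitions consist of a single family. For $W$ of type $B$, $D$, $F_4$ or $H_3$ and for dihedral $W$ the Calogero--Moser family partition has been computed explicitly (see~\cite{gordon, bonnafe diedral, bonnafe thiel, thiel} and the references therein), so one simply matches it against Lusztig's tables and reads off that both inclusions hold. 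This leaves, once more, exactly the types $H_4$, $E_6$, $E_7$, $E_8$, where one must actually compute the Calogero--Moser families.

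For these four exceptional types the tool is the description $\im(\Omeb^k) = \bigoplus_{\FG} \CM e_\FG$ from~\eqref{eq:im-omega}: to separate two would-be Lusztig families it suffices to exhibit a central element $z \in \Zb_k$ whose image $\Omeb^k(z) \in \Zrm(\CM W)$ takes different values on them. The Euler element $\euler$ already furnishes one such invariant, via $\Omeb^k(\euler) = \sum_{H \in \AC} \sum_{j=0}^{e_H - 1} e_H\, k_{H,j}\, \e_{H,j}$, and Casimir-type or higher Poisson-central elements provide further ones; one would show that these invariants, together with the coarsening proved above, pin the Lusztig partition down exactly. An attractive reformulation is that it would be enough to prove the numerical identity $|\ZC_k^{\CM^\times}| = |\Fam_k^\lus(W)|$ and combine it with the first inclusion. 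The hard part --- and where I expect the real difficulty to sit --- is precisely here: a priori $\Omeb^k$ could be ``too small'' and merge two Lusztig families, and there is at present no conceptual mechanism forcing this not to happen. The most promising routes seem to be either a uniform compatibility of Calogero--Moser families with $\t$-split parabolic subgroups (which would reduce, say, type $E_8$ to its proper parabolics together with a separate treatment of the cuspidal families, in the spirit of Theorem~\ref{theo:leaves}), or, failing that, a direct but heavy computation of the block structure of the restricted rational Cherednik algebra in type $E_8$.
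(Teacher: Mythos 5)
This statement is labelled as a \emph{conjecture} in the paper, and the paper does not prove it; the only thing proved about it is Theorem~\ref{theo:cuspidal-unique}, whose demonstration is a list of citations showing the conjecture holds in types $A$, $B=C$, $D$, $I_2(m)$, $H_3$ and $F_4$. Your write-up is therefore correctly calibrated: it is not a proof, it does not pretend to be one, and it accurately describes where the problem stands. In particular you correctly observe that reduction to irreducible $W$ is harmless, that in the Coxeter setting (where $k=k^\sharp$ and $\Fam_k^\hecke=\Fam_k^\lus$ by~\eqref{eq:lusztig-hecke}) the inclusion ``CM-families are unions of Lusztig families'' is exactly Martino's Conjecture~\ref{conj:martino}, and that the reverse inclusion has no conceptual argument and must be checked type by type, with $H_4$, $E_6$, $E_7$, $E_8$ open.

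Two small remarks. For the dihedral case, the paper attributes the result to Bellamy's thesis~\cite{bellamy these} rather than to~\cite{bonnafe diedral}, and Theorem~\ref{theo:martino}(1) covers $G(de,e,2)$ only with $e$ odd, so the Martino-conjecture half for even dihedral groups is not delivered by that theorem alone; it is absorbed in the full verification of~\cite{bellamy these}. Also, your proposed mechanism of separating families by the Euler element $\euler$ alone cannot work in general (it only recovers $a^{(k)}+A^{(k)}$, cf.\ Proposition~\ref{prop:a+A}), so genuinely higher elements of $\im\Omeb^k$ are needed; the paper offers no such argument and neither do you, which is the honest state of affairs for this open conjecture.
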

\end{quotation}

\bigskip

It follows from the definition~\cite{lusztig} 
that 
\equat\label{eq:a constant hecke}
\text{\it the two maps $a^{(k)}$, $A^{(k)} : \Irr(W) \to \RM$ are constant 
on Lusztig $k$-families.}
\endequat
So the next proposition is a strong argument 
in favor of Conjecture~\ref{conj:gordon-martino}:

\bigskip

\begin{prop}\label{prop:a+A}
The map $a^{(k)}+A^{(k)} : \Irr(W) \to \RM$ is constant on Calogero-Moser 
$k$-families.
\end{prop}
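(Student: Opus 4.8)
The plan is to relate the quantity $a^{(k)}_\chi + A^{(k)}_\chi$ to the action of the Euler element $\euler$ on the irreducible representation with character $\chi$, and then invoke the fact that $\euler$ lies in the centre $\Zb_k$ and takes a single scalar value on each Calogero-Moser family. More precisely, recall that a Calogero-Moser $k$-family is a fibre of the map $\zG_k\colon \Irr(W)\to\ZC_k^{\CM^\times}$, equivalently a fibre of $\o_\chi\circ\Omeb^k$. The element $\euler\in\Zb_k$ is $\CM^\times$-fixed in the sense that $\{\euler,z\}=dz$ for homogeneous $z$; in any case, since $\euler$ is central, $\o_\chi(\Omeb^k(\euler))$ depends only on the Calogero-Moser family of $\chi$. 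Now $\Omeb^k(\euler)=\sum_{s\in\Ref(W)}\e(s)c_k(s)s$ (the $\CM[V]\otimes\CM[V^*]$-degree-zero part of $\euler$ is exactly the group-algebra term, since $\Omeb^{\Hb_k}$ kills the positive-degree part $\sum x_jy_j$). Hence the scalar $\o_\chi(\Omeb^k(\euler)) = \frac{1}{\chi(1)}\sum_{s\in\Ref(W)}\e(s)c_k(s)\chi(s)$ is constant on Calogero-Moser $k$-families.

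Next I would identify this scalar with $a^{(k)}_\chi + A^{(k)}_\chi$ up to an additive constant independent of $\chi$. This is a classical computation with Schur elements: in the Coxeter (real, $e_H=2$) setting, the parameter reduces to a single map $c_k\colon\Ref(W)\to\RM_{\ge 0}$, and there is a well-known formula expressing the ``fake degree / generic degree'' symmetry. The key identity is that the lowest and highest exponents $a^{(k)}_\chi=\val\,\schur_\chi^{(k)}$ and $A^{(k)}_\chi=\deg\,\schur_\chi^{(k)}$ of the Schur element satisfy
\begin{equation*}
a^{(k)}_\chi + A^{(k)}_\chi = N\cdot(\text{const}) - \frac{1}{\chi(1)}\sum_{s\in\Ref(W)}c_k(s)\,\e(s)\,\chi(s),
\end{equation*}
or an equivalent relation, coming from the fact that $\schur_\chi^{(k)}(\qb)$ and $\schur_{\chi^*}^{(k)}(\qb^{-1})$ are related (here $\chi^*=\chi\otimes\e$), together with the behaviour of the central character of the longest element / the full twist of the braid group acting on the $\chi$-isotypic part of the Hecke algebra. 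Concretely, the full-twist element $\pib^2$ (or $\pGB$) of the braid group acts on the representation $\chi_k$ of $\HC_k(W)$ by a scalar which, after taking logarithmic $\qb$-degree, is $\bigl(\sum_H \sum_j 2k_{H,j}\bigr) - \frac{2}{\chi(1)}\sum_s c_k(s)\e(s)\chi(s)$ (up to normalisation), and this same quantity equals $a^{(k)}_\chi + A^{(k)}_\chi$ up to a global constant. I would quote the relevant statement from \cite{geck pfeiffer} or \cite{broue kim} for the precise normalisation.

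Putting the two pieces together finishes the proof: $\o_\chi(\Omeb^k(\euler))$ is constant on Calogero-Moser $k$-families (because $\euler$ is central in $\Zb_k$), and $a^{(k)}_\chi+A^{(k)}_\chi$ differs from $-\chi(1)^{-1}\sum_s c_k(s)\e(s)\chi(s) = -\o_\chi(\Omeb^k(\euler))$ only by a constant independent of $\chi$, so $a^{(k)}+A^{(k)}$ is constant on Calogero-Moser $k$-families as well.

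The main obstacle I anticipate is pinning down the exact constant and sign in the Schur-element identity $a^{(k)}_\chi + A^{(k)}_\chi = \mathrm{const} - \o_\chi(\Omeb^k(\euler))$ with the conventions of this paper (the relation between $k_{H,j}$, $c_k$, the sign $\e(s)-1$ versus $\e(s)$ appearing in \eqref{eq:cs} versus the definition of $\euler$, and the normalisation of the full twist); once the bookkeeping of these normalisations is settled, the argument is essentially formal. An alternative, perhaps cleaner, route avoiding the full-twist computation would be to use directly that $\euler$ acts on the baby Verma module $\Delta_k(\chi)$ by an explicit scalar involving $\o_\chi(\Omeb^k(\euler))$ plus a contribution from the grading, and that two characters lie in the same Calogero-Moser family iff the corresponding baby Verma modules lie in the same block, whence the central character of $\euler$ agrees on them; I would then separately match that scalar to $a^{(k)}_\chi+A^{(k)}_\chi$ via the known formula for the $c$-function $\mathbf{c}_\chi$ of Gordon/Rouquier. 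Either way, the crux is the numerical identification of $a^{(k)}+A^{(k)}$ with the $\euler$-eigenvalue.
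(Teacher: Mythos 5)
Your proposal is correct and follows essentially the same route as the paper: the paper's proof likewise observes that $\o_\chi(\Omeb^k(\euler))$ is constant on Calogero-Moser $k$-families and then cites \cite[Lem.~7.2.1]{calogero} together with \cite[4.21]{broue michel} (the full-twist computation) for the fact that this scalar equals $a_\chi^{(k)}+A_\chi^{(k)}$ up to a fixed affine transformation of $\RM$. The normalisation bookkeeping you flag as the remaining obstacle is exactly what those two references supply.
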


\bigskip

\begin{proof}
If $\chi$ and $\chi'$ belong to the same Calogero-Moser family, 
then $\o_\chi(\Omeb^k(\euler))=\o_{\chi'}(\Omeb^k(\euler))$. 
But the scalar $\o_\chi(\Omeb^k(\euler))$ is, up to a suitable 
renormalization by a fixed affine transformation of $\RM$, 
equal to $a_\chi^{(k)} + A_\chi^{(k)}$ (see~\cite[Lem.~7.2.1]{calogero} 
and~\cite[4.21]{broue michel}). So the result follows.
\end{proof}

\bigskip

\begin{rema}\label{rem:lusztig}
Using the Kazhdan-Lusztig basis of the Hecke algebra $\HC_k(W)$ 
and the associated partition of $W$ into two-sided cells~\cite[\S{8}]{lusztig} 
(see also~\cite[Def.~6.1.4]{bonnafe kl}), 
Lusztig defined a partition of $\Irr(W)$ into {\it Kazhdan-Lusztig 
$k$-families} as follows: two irreducible characters $\chi$ and $\chi'$ of $W$ 
belong to the same Kazhdan-Lusztig $k$-family if $\chi_k$ and $\chi_k'$ both 
occur in the left module associated to a same two-sided cell. 
Let $\Fam_k^\kl(W)$ denote the set of Kazhdan-Lusztig $k$-families 
of $W$. 

Lusztig conjectured~\cite[\S{23}]{lusztig} that 
$\Fam_k^\lus(W)=\Fam_k^\kl(W)$. This 
conjecture is known to hold in the following cases:
\begin{itemize}
\item[$\bullet$] If $c_k$ is constant~\cite[Prop.~23.3]{lusztig} (note that this covers 
the case where $W$ has only one conjugacy class of reflections, i.e. if  
$W$ is of type $ADE$ or $I_2(2m+1)$).

\item[$\bullet$] If $W$ is dihedral~\cite{lusztig}.

\item[$\bullet$] If $W$ is of type $F_4$~\cite{geck f4}.

\item[$\bullet$] If $W$ is of type $B_n$ and $c_k(t) > (n-1) c_k(s_1)$, 
where the Coxeter graph is given by
\centerline{\begin{picture}(160,40)
\put(10,20){\circle{10}}\put(45,20){\circle{10}}\put(80,20){\circle{10}}\put(150,20){\circle{10}}
\put(14,17){\line(1,0){27}}\put(14,23){\line(1,0){27}}\put(50,20){\line(1,0){25}}
\put(85,20){\line(1,0){15}}\put(130,20){\line(1,0){15}}
\put(7,30){$t$}\put(42,30){$s_1$}\put(77,30){$s_2$}\put(143,30){$s_{n-1}$}
\put(110,17.4){$\cdots$}
\end{picture}}
(see~\cite{bonnafe iancu} and~\cite{bonnafe bilatere}).
\end{itemize}
Note that this conjecture involves only the Hecke algebra and is not 
related to the geometry of the Calogero-Moser space and the theme of 
this paper.\finl
\end{rema}

\bigskip

\subsection{Cuspidal families}
Lusztig also introduced the important 
notion of {\it $\t$-cuspidal} Lusztig $k$-family~\cite[\S{8.1}]{lusztig orange} 
(note that the definition in~\cite[\S{8.1}]{lusztig orange} is for the {\it equal parameter} case, 
but the definition can easily be extended to general parameters, as explained 
in~\cite[\S{2.5}]{bellamy thiel}). As there is also a notion of $\t$-cuspidal Calogero-Moser family 
(see~\S\ref{sub:symp}), it is natural to expect that the equality predicted by 
Gordon-Martino conjecture preserves this feature:

\bigskip

\begin{quotation}
\begin{conj}\label{conj:cuspidal-cm-kl}
If $W$ is a finite Coxeter group, and if $\t \in \Nrm_{\Gb\Lb_\RM(V_\RM)}(W)$ is $W$-full 
and satisfies $\t(k)=k$, then the $\t$-cuspidal Lusztig $k$-families coincide with the $\t$-cuspidal 
Calogero-Moser $k$-families.
\end{conj}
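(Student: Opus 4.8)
\medskip

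\noindent\textbf{Towards a proof.}
This conjecture refines the Gordon--Martino Conjecture~\ref{conj:gordon-martino}: the most natural way to attack it is jointly with the equality $\Fam_k^\calo(W)=\Fam_k^\lus(W)$, and then to match the two notions of $\t$-cuspidality on the common partition. First one reduces to $\t$-stable families, since on either side a $\t$-cuspidal family is automatically $\t$-stable (see the footnote in~\S\ref{sub:symp}); one may also use Proposition~\ref{prop:ref-pasref-wk} and~\eqref{eq:cm-quotient-wk} to reduce, when convenient, to the case $W=W(k)$. The key structural input is that both sides carry a \emph{Harish-Chandra theory} whose inducing data one wants to identify.

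On the Calogero--Moser side, Theorem~\ref{theo:leaves} attaches to the $\CM^\times$-fixed point $p$ defining a $\t$-stable family $\FG_p$ the $W_\t$-orbit of a pair $(P,p')\in\Cus_k^\t(V,W)$, namely the symplectic leaf $\leaf_{P,p'}$ of $\ZC_k^\t$ through $p$; since $\dim\leaf_{P,p'}=2\dim(V^P)^\t$ and $W$ acts essentially on $V$, the family $\FG_p$ is $\t$-cuspidal \emph{if and only if} $P=W$, i.e. $p$ is its own leaf in $\ZC_k^\t$. Moreover the Bezrukavnikov--Etingof parabolic restriction functors, in the form used in~\cite{auto}, promote $(P,p')$ to a genuine inducing datum $(P,\FG_{p'})$ where $\FG_{p'}$ is a $\t$-cuspidal Calogero--Moser $k_P$-family of $\Nrmov_{W_\t}(P_\t)$ acting on $(V^P)^\t$. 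On the Lusztig side, truncated ($J$-)induction from $\t$-split parabolics equips each $\t$-stable Lusztig $k$-family with an analogous datum $(P,\FG')$, $\FG'$ a $\t$-cuspidal Lusztig $k_P$-family, and $\FG_p$ is $\t$-cuspidal exactly when this parabolic is $W$ itself.

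The heart of the matter is then to show that, under Conjecture~\ref{conj:gordon-martino}, these two Harish-Chandra data coincide; this I would prove by induction on $|W|$. Granting Gordon--Martino for the smaller pairs $((V^P)^\t,\Nrmov_{W_\t}(P_\t))$, the inductive step amounts to checking that truncated induction of Lusztig families is intertwined with the Calogero--Moser ``Harish-Chandra induction'' read off from the local Poisson geometry of $\ZC_k^\t$ along $\leaf_{P,p'}$, i.e. with the structure predicted by Conjecture~\ref{conj:leaves}; one also uses Proposition~\ref{prop:a+A} (constancy of $a^{(k)}+A^{(k)}$ on Calogero--Moser families) as a first consistency check of the identification, and, in the regular case, Theorem~\ref{theo:chi-tau} to locate the $\t$-cuspidal families outside $(\ZC_k^\t)_\maxi$. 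The main obstacle is that this plan rests on two further open conjectures --- Gordon--Martino (Conjecture~\ref{conj:gordon-martino}) and the normalization-of-leaves Conjecture~\ref{conj:leaves}: an unconditional proof would therefore proceed by a \emph{simultaneous} induction establishing all three statements at once. Pending this, one can at least record the conjecture in the cases where Conjectures~\ref{conj:gordon-martino} and~\ref{conj:leaves} are both known --- $W$ of type $A$, $W$ dihedral, $W$ of classical type with $\t=\Id_V$, and $W$ of type $B_2$, $G_2$ or $G_4$ --- where the two Harish-Chandra theories can be compared directly from the explicit descriptions.
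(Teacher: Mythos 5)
The statement you are addressing is a \emph{conjecture} in the paper (Conjecture~\ref{conj:cuspidal-cm-kl}); the paper offers no proof of it, only Theorem~\ref{theo:cuspidal-unique}, which records the cases where it is known (types $A$, $B=C$, $D$, $I_2(m)$, $H_3$, $F_4$ with $\t=\Id_V$) by citing Bellamy--Thiel and Bonnaf\'e--Thiel. Those known cases are established not by the inductive mechanism you sketch, but by explicitly classifying the $\t$-cuspidal objects on each side --- the unique $\t$-cuspidal Lusztig family via Lusztig's classification, and the cuspidal Calogero--Moser points via the combinatorics of~\cite{bellamy thiel} and~\cite{be-sc} for classical and dihedral types, and via computer calculation for $H_3$ and $F_4$ --- and then comparing the two lists. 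So your text is a research program, not a proof, and it should be presented as such.

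Concretely, the gaps are these. First, your argument is openly conditional on Conjecture~\ref{conj:gordon-martino} and Conjecture~\ref{conj:leaves}, both of which are open in general, so nothing unconditional comes out of the induction. Second, even granting those two conjectures, the ``heart of the matter'' step --- that truncated $J$-induction of Lusztig families is intertwined with the Harish-Chandra structure on symplectic leaves coming from Theorem~\ref{theo:leaves} and the Bezrukavnikov--Etingof restriction functors --- is itself an unproved compatibility at least as deep as the conjecture you are trying to establish; asserting that a ``simultaneous induction'' would establish all three statements at once is a hope, not an argument, since no mechanism is given for the inductive step. Third, some details are off: the footnote you invoke in~\S\ref{sub:symp} concerns $\t$-stability of cuspidal \emph{Calogero--Moser} families only (the Lusztig-side statement needs a separate, if easy, argument); and $G_4$ is not a Coxeter group, so it is irrelevant to this conjecture, while your list of verified cases omits the types $D$, $I_2(m)$, $H_3$ and $F_4$ actually covered by Theorem~\ref{theo:cuspidal-unique}. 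If you want to contribute something checkable, the right move is to reproduce the two classifications in a given type and compare them, as in the cited references.
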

\end{quotation}

\bigskip

If $\t=\Id_V$, this conjecture has been proposed by Bellamy and Thiel~\cite[Conj.~B]{bellamy thiel}. 
The $\t$-cuspidal Lusztig $k$-families have been classified (see~\cite[\S{8.1}]{lusztig orange} 
for the equal parameter case and~\cite[\S{6},~\S{7}]{bellamy thiel} for the unequal parameter case) 
and it turns out that there is at most one $\t$-cuspidal Lusztig $k$-family. 
So Conjecture~\ref{conj:cuspidal-cm-kl} would imply that there is at most 
one $\t$-cuspidal point in $\ZC_k$ whenever $W$ is a Coxeter group~\cite[Conj.~D]{bellamy thiel}.

\bigskip

\begin{theo}\label{theo:cuspidal-unique}
Conjectures~\ref{conj:gordon-martino} and~\ref{conj:cuspidal-cm-kl} are known to hold for $W$ of type 
$A$, $B=C$, $D$,  $I_2(m)$, $H_3$ or $F_4$ (with the restriction that $\t=\Id_V$ if $W$ is of type $F_4$). 
\end{theo}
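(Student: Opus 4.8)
\medskip

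The plan is a reduction to a case-by-case verification, since both conjectures bear only on the partition of $\Irr(W)$ into families (Conjecture~\ref{conj:gordon-martino}) and on the position of the at most one cuspidal family inside it (Conjecture~\ref{conj:cuspidal-cm-kl}). The three ingredients are: Martino-type results for one inclusion of the two family partitions, explicit computations of Calogero-Moser families for the reverse inclusion, and the description of the symplectic leaves of $\ZC_k^\t$ from Theorem~\ref{theo:leaves} to locate the cuspidal families.

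First I would settle Conjecture~\ref{conj:gordon-martino}. Since $k=k^\sharp$ in the Coxeter case,~\eqref{eq:lusztig-hecke} together with Martino's Conjecture~\ref{conj:martino} says precisely that each Calogero-Moser $k$-family is a union of Lusztig $k$-families; by Theorem~\ref{theo:martino} this holds for types $A$, $B=C$, $D$ and $H_3=G_{23}$, while $F_4=G_{28}$ is covered by~\cite{bonnafe thiel} and all the dihedral groups $I_2(m)$ by the explicit computation of~\cite{bonnafe diedral}. It then remains to prove the reverse inclusion, namely that no Lusztig family is split by the Calogero-Moser partition. In type $A$ this is immediate. For the other types I would compare, termwise, the explicit determination of Calogero-Moser families available in the literature (Gordon-Martino~\cite{gordon martino} and Martino~\cite{martino} for $B=C$, Martino~\cite{martino} and Bellamy~\cite{bellamy these} for $D$,~\cite{bonnafe diedral} for $I_2(m)$,~\cite{thiel} for $H_3$,~\cite{bonnafe thiel} for $F_4$) with Lusztig's combinatorial description of his own families; Proposition~\ref{prop:a+A}, which forces $a^{(k)}+A^{(k)}$ to be constant on Calogero-Moser families, supplies an a priori constraint ruling out most conceivable refinements, and the full central character $\o_\chi\circ\Omeb^k$ separates whatever remains.

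For Conjecture~\ref{conj:cuspidal-cm-kl} the key dictionary is Theorem~\ref{theo:leaves}: a $\t$-cuspidal Calogero-Moser $k$-family is exactly one attached to a $0$-dimensional symplectic leaf of $\ZC_k^\t$, that is, to the pair $(W,p)$ with $p$ a $\t$-cuspidal point. On the Lusztig side the $\t$-cuspidal families have been classified --- Lusztig's orange book~\cite{lusztig orange} for equal parameters, Bellamy-Thiel~\cite{bellamy thiel} for unequal ones --- and there is at most one of them. Having already matched the two family partitions, it suffices to check, type by type and from the explicit lists, that the unique $\t$-cuspidal Lusztig family (when it exists) is transported to a family sitting at a $0$-dimensional leaf, and conversely; for $\t=\Id_V$ this reduces to the verification of~\cite{bellamy thiel}, and for the non-trivial diagram automorphisms of types $A$, $B=C$, $D$ and $I_2(m)$ one lines the two classifications up directly.

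The main obstacle is the reverse inclusion for families --- together with the determination of the leaf dimensions of $\ZC_k^\t$ that pins down the cuspidal ones --- in the exceptional cases $H_3$ and $F_4$, which rest on heavy computer algebra; this is exactly why the statement is restricted to $\t=\Id_V$ in type $F_4$, the remaining (non-inner) automorphism there not being covered by any available computation. A secondary difficulty is purely combinatorial bookkeeping: matching, for the non-trivial automorphisms, the Lusztig-side and Calogero-Moser-side classifications of cuspidal families, which were built in independent languages.
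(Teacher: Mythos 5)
Your proposal is correct and follows essentially the same route as the paper: the paper's proof is a bare list of citations (Gordon for type $A$, Gordon--Martino for $B=C$ and $D$, Bellamy's thesis for $I_2(m)$, Bonnaf\'e--Thiel for $H_3$ and $F_4$ on the family side, and Bellamy--Thiel plus Bonnaf\'e--Thiel on the cuspidal side), and you invoke the same sources while additionally spelling out the underlying two-inclusion strategy and the role of Theorem~\ref{theo:leaves}. Nothing in your reconstruction conflicts with the paper's argument.
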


\bigskip

\begin{proof}
For Conjecture~\ref{conj:gordon-martino}, 
see~\cite[Theo.~5.6]{gordon} for type $A$, see~\cite{gordon martino} for 
types~$B=C$ and $D$, see~\cite{bellamy these} for type $I_2(m)$ 
and~\cite{bonnafe thiel} for types $H_3$ and $F_4$.

For Conjecture~\ref{conj:cuspidal-cm-kl}, 
see~\cite[Theo.~A]{bellamy thiel} for types $A$, $B=C$, $D$ and $I_2(m)$, 
and~\cite{bonnafe thiel} for types $H_3$ and $F_4$.
\end{proof}

\bigskip

\part{Unipotent representations of finite reductive groups}\label{part:unip}

Throughout this part, we will only consider algebraic varieties 
and algebraic groups defined over an algebraic closure of a finite
field. If $\Gb$ is an algebraic group, we denote by $\Zrm(\Gb)$ its 
center. If $\Sb$ is a torus, we denote by $Y(\Sb)$ its lattice of 
one-parameter subgroups.

Let $\groups$ denote the class of triples $(q,\Gb,F)$ 
where:
\begin{itemize}
\item[$\bullet$] $q$ is a power of some prime number $p$.

\item[$\bullet$] $\Gb$ is a connected reductive group defined over an 
algebraic closure $\FM$ of the finite field $\gfp$ with $p$ elements.

\item[$\bullet$] $F : \Gb \to \Gb$ is a Frobenius endomorphism of $\Gb$ 
endowing $\Gb$ with a rational structure over the finite 
subfield $\gfq$ of $\FM$ with $q$ elements.
\end{itemize}
This part provides a quick survey on unipotent representations 
of the finite reductive group 
$\Gb^F$ (where $(q,\Gb,F) \in \groups$) and their associated 
structures (cuspidal representations, Harish-Chandra theory, 
$d$-Harish-Chandra theory...)\footnote{Our formalism 
here does not include the 
Suzuki and Ree groups}%: this 
%would have led to too much technicalities for such a survey article.}. 

\medskip

\boitegrise{{\bf Hypothesis and notation.} {\it We fix in this 
part, and only in this part, a triple 
$\GC=(q,\Gb,F) \in \groups$. We denote by $p$ the unique prime 
number dividing $q$ and by $\FM$ the algebraic closure 
of $\gfp$ over which $\Gb$ is defined. We fix a prime number 
$\ell$ different from $p$ and we denote by $\qlb$ an algebraic 
closure of $\QM_{\! \ell}$. 
Note that $\Gb$ is not necessarily split over $\gfq$. 
}}{0.75\textwidth}

If $\Xb$ is an algebraic variety 
over $\FM$, we denote by $\Hrm_c^j(\Xb)$ its $j$-th $\ell$-adic 
cohomology group with compact support with coefficients in $\qlb$: 
it is a finite dimensional $\qlb$-vector space. We set 
$\Hrm_c^\bullet(\Xb)=\bigoplus_{j \geqslant 0} \Hrm_c^j(\Xb)$. 

We fix an $F$-stable Borel subgroup $\Bb$ of $\Gb$ and 
an $F$-stable maximal torus $\Tb$ of $\Bb$. 
Let $\BCB=\Gb/\Bb$ denote the flag variety. 
Let $\WC=\Nrm_\Gb(\Tb)/\Tb$ denote the Weyl group of $\Gb$. It is acted on by $F$ 
and we denote by $\t$ the automorphism of $\WC$ induced by $F$. 
% Finally, we identify the pairs $(V_\QM,W)$ and 
% $(\QM \otimes_\ZM Y(\Tb/\Zrm(\Gb)),N_\Gb(\Tb)/\Tb)$. 
% Let $S=\{s \in W~|~\dim(\Bb s \Bb/\Bb)=1\}$. Then 
% $S \subset \Ref(W)$ and $(W,S)$ is a Coxeter system.
If $\OCB$ is a $\Gb$-orbit in $\BCB \times \BCB$, we denote by 
$$\Xb_\OCB=\{g\Bb \in \BCB~|~(g\Bb,F(g)\Bb) \in \OCB\}.$$
Then $\Xb_\OCB$ is called a {\it Deligne-Lusztig variety}: it is acted on 
the left by the finite group $\Gb^F$. Hence, the vector spaces 
$\Hrm_c^j(\Xb_\OCB)$ inherits a structure of $\qlb \Gb^F$-module. An 
irreducible representation of $\Gb^F$ (over $\qlb$) is called {\it unipotent} if it 
appears in such a $\qlb\Gb^F$-module, for some $\OCB$ and some $j$. The set 
of isomorphism classes of irreducible unipotent representations of 
$\Gb^F$ will 
be denoted by $\unipc(\GC)$. We define a {\it unipotent} 
representation of $\Gb^F$ to be a direct sum of irreducible unipotent 
representations. 

We define a {\it Levi subgroup} of $\GC$ to be a triple $\LC=(q,\Lb,F)$ 
where $\Lb$ is an $F$-stable Levi complement of 
a parabolic subgroup of $\Gb$. In this case, 
we set $W_\GC(\LC)=\Nrm_{\Gb^F}(\Lb)/\Lb^F$: this group 
acts on $\unipc(\LC)$ and, if $\l \in \unipc(\LC)$, we denote by 
$W_\GC(\LC,\l)$ its stabilizer.

\def\harishc{\Hrm\Crm}
\def\harish{\Hrm\Crm}

The set of unipotent representations admits several interesting partitions, 
which are related to the different problems one may consider: 
Harish-Chandra series for an algebraic parametrization, 
$d$-Harish-Chandra series for blocks in transverse 
characteristic, families for computing character values 
through the theory of character sheaves... All these 
partitions interconnect in a very subtle way.

\bigskip

\section{Harish-Chandra theories}

\medskip

\subsection{Classical Harish-Chandra theory}
If $\Lb$ is an $F$-stable Levi complement of an $F$-stable 
parabolic subgroup $\Pb$ of $\Gb$, we denote by 
$$\fonction{\RC_{\Lb \subset \Pb}^\Gb}{\qlb \Lb^F\module}{\qlb\Gb^F\module}{M
}{\Ind_{\Pb^F}^{\Gb^F} \Mti}$$
the {\it Harish-Chandra induction} functor~\cite[Chap.~4]{dmbook}. 
Here, $\Mti$ 
denotes the inflation of $M$ through the surjective morphism 
$\Pb^F \surto \Lb^F$. If we denote by $\Ub_\Pb$ the unipotent radical 
of $\Pb$, then $\RC_{\Lb \subset \Pb}^\Gb$ admits a left and right 
adjoint functor $\lexp{*}{\RC}_{\Lb \subset \Pb}^\Gb$ given by 
$$\fonction{\lexp{*}{\RC}_{\Lb \subset \Pb}^\Gb}{\qlb \Gb^F\module}{\qlb\Lb^F\module}{N
}{N^{\Ub_\Pb^F}.}$$
The functor $\lexp{*}{\RC}_{\Lb \subset \Pb}^\Gb$ is called the {\it Harish-Chandra 
restriction}. 
Note that both functors send a unipotent representation to a unipotent 
representation. An irreducible unipotent representation $N$ is called 
{\it cuspidal} if $\lexp{*}{\RC}_{\Lb \subset \Pb}^\Gb N=0$ 
for any $F$-stable Levi complement of a proper $F$-stable parabolic subgroup 
$\Pb$ of $\Gb$. 
We denote by $\unipc_\cus(\GC)$ the set of (isomorphism classes of) 
cuspidal unipotent irreducible representations of $\Gb^F$. 

It turns out that both Harish-Chandra functors do not depend 
on the choice of the $F$-stable parabolic subgroup $\Pb$ admitting 
$\Lb$ as a Levi complement. We denote by $\CUSC(\GC)$ 
the set of pairs $(\LC,\l)$, where $\LC=(q,\Lb,F)$ and $\Lb$ is an $F$-stable Levi 
complement of an $F$-stable parabolic subgroup of $\Gb$, and $\l \in \unipc_\cus(\LC)$. 
We denote by $\unipc(\GC,\LC,\l)$ the set of unipotent 
irreducible representations occuring in $\RC_{\Lb \subset \Pb}^\Gb \l$, 
where $\Pb$ is any $F$-stable parabolic subgroup admitting 
$\Lb$ as a Levi complement: this set is called the {\it Harish-Chandra series} 
associated with the pair $(\LC,\l)$. 
This set depends on the pair $(\LC,\l)$ up to $\Gb^F$-conjugacy. 
We denote by $\CUSC(\GC)/\!\!\sim$ the set of $\Gb^F$-conjugacy 
classes of elements of $\CUSC(\GC)$. 
The {\it Harish-Chandra theory} can then be summarized as follows~\cite[\S{5}]{lusztig coxeter}:

\bigskip

\begin{theo}[Lusztig]\label{theo:hc}
We have
$$\unipc(\GC)=\dot{\bigcup_{(\LC,\l) \in \CUSC(\GC)/\sim}} 
\unipc(\GC,\LC,\l),$$
where $\dot{\cup}$ means a disjoint union. 

Moreover, if $(\LC,\l) \in \CUSC(\GC)$, with $\LC=(q,\Lb,F)$ and if $\Pb$ is any 
$F$-stable parabolic subgroup admitting $\Lb$ as a Levi complement, then:
\begin{itemize}
\itemth{a} The group $W_\GC(\LC)$ is a Weyl group 
for its action on the lattice 
$$\{y \in Y(\Zrm(\Lb))~|~F(y)=q y\}.$$
Moreover, $W_\GC(\LC,\l)=W_\GC(\LC)$. 

\itemth{b} 
The endomorphism algebra $\End_{\Gb^F} \RC_{\Lb \subset \Pb}^\Gb \l$ 
is isomorphic to a Hecke algebra of the form $\HC_{k_\LC}(W_\GC(\LC),q)$, 
where $k_\LC \in \aleph(W_\GC(\LC))$ does not depend 
on $\l$ (and is integer valued).

\itemth{c} The statement {\rm (a)} induces a bijection 
$\harishc^{\GC,\LC,\l} : \Irr W_\GC(\LC) \longiso \unipc(\GC,\LC,\l)$. 
In other words,
$$\unipc(\GC)=\dot{\bigcup_{(\LC,\l) \in \CUSC(\GC)/\sim}} 
\harishc^{\GC,\LC,\l}\bigl(\Irr W_\GC(\LC) \bigr).$$
% \itemth{d} If $\Lb=\Tb$, then $\l$ is the trivial representation $\qlb$ of $\Tb^F$, 
% the group $W_{\Gb}(\Lb)$ is equal to $W$ and $\r^{\GC,\Tb,\qlb}$ coincide 
% with the map denoted similarly in Example~\ref{ex:principal}.
\end{itemize}
\end{theo}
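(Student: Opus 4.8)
The plan is to deduce the statement from the general formalism of Harish-Chandra induction together with Lusztig's classification of cuspidal unipotent representations and the associated relative Hecke algebras; in this survey the proof is really an assembly of known results, so I will indicate the pieces and how they fit.

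First, for the disjoint union decomposition, I would record the standard properties of the functors $\RC_{\Lb \subset \Pb}^\Gb$ and $\lexp{*}{\RC}_{\Lb \subset \Pb}^\Gb$ from~\cite{dmbook}: independence of the chosen $F$-stable parabolic, transitivity, the $(\RC,\lexp{*}{\RC})$-adjunction, and the (elementary) Mackey-type formula for $\lexp{*}{\RC}_{\Lb \subset \Pb}^\Gb \circ \RC_{\Mb \subset \Qb}^\Gb$. From the compatibility of $\RC_{\Lb \subset \Pb}^\Gb$ with Deligne-Lusztig induction one gets that both functors preserve unipotency, and the other properties give every irreducible unipotent $N$ a well-defined \emph{cuspidal support}: take $\Lb$ minimal among $F$-stable Levi complements of $F$-stable parabolic subgroups with $\lexp{*}{\RC}_{\Lb \subset \Pb}^\Gb N \neq 0$; then every irreducible constituent $\l$ of $\lexp{*}{\RC}_{\Lb \subset \Pb}^\Gb N$ is cuspidal (minimality), the pair $(\LC,\l)$ is unique up to $\Gb^F$-conjugacy (Mackey formula), and $N$ occurs in $\RC_{\Lb \subset \Pb}^\Gb \l$ (adjunction). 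This yields the partition of $\unipc(\GC)$ into the sets $\unipc(\GC,\LC,\l)$.

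Second, for the shape of the endomorphism algebra, i.e. parts (a) and (b): by Howlett-Lehrer theory, $\End_{\Gb^F}\RC_{\Lb \subset \Pb}^\Gb \l$ has a basis indexed by the ramification group $W_\GC(\LC,\l)=\Stab_{W_\GC(\LC)}(\l)$ and is an Iwahori-Hecke-type algebra attached to that group (which is a Coxeter group), twisted a priori by a $2$-cocycle and carrying parameters on its reflections. The substantial inputs, all due to Lusztig and special to the unipotent cuspidal case, are: (i) $W_\GC(\LC,\l)=W_\GC(\LC)$, i.e. a cuspidal unipotent $\l$ is fixed by the whole relative Weyl group; (ii) $W_\GC(\LC)$ is a Weyl group for its action on $\{y \in Y(\Zrm(\Lb))~|~F(y)=qy\}$, which comes out of the classification of cuspidal pairs, the list of groups carrying a cuspidal unipotent representation being very short; (iii) the $2$-cocycle is trivial, so the endomorphism algebra is a genuine Hecke algebra; and (iv) the parameters assemble into an integer-valued $k_\LC \in \aleph(W_\GC(\LC))$ not depending on $\l$. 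Points (i)--(iv) are established type by type in~\cite{lusztig coxeter} and in Lusztig's determination of the unipotent characters of reductive groups, using Deligne-Lusztig varieties attached to Coxeter (and more general) elements and the explicit description of the exceptional cuspidal unipotent representations.

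Third, for the bijection (c): once $\End_{\Gb^F}\RC_{\Lb \subset \Pb}^\Gb \l \simeq \HC_{k_\LC}(W_\GC(\LC),q)$ is known, one uses that this specialization at the prime power $q$ is split semisimple (its numerology matches that of $\CM W_\GC(\LC)$, so Tits deformation applies), whence its simple modules are canonically labelled by $\Irr W_\GC(\LC)$; the standard equivalence $N \mapsto \Hom_{\Gb^F}(\RC_{\Lb \subset \Pb}^\Gb \l, N)$ between $\unipc(\GC,\LC,\l)$ and the module category of the endomorphism algebra then produces the bijection $\harishc^{\GC,\LC,\l}$ and the final displayed equality. The main obstacle is the package of facts in the second step --- vanishing of the cocycle, $W_\GC(\LC,\l)=W_\GC(\LC)$, and the exact values of $k_\LC$: these are not formal consequences of the Howlett-Lehrer machinery but rest on Lusztig's classification of unipotent characters (almost characters, the non-abelian Fourier transform, and, in the cuspidal case, the explicit realization of the exceptional cuspidal unipotent representations in $\ell$-adic cohomology of Deligne-Lusztig varieties), and are genuinely proved case by case.
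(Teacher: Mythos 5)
The paper does not prove this theorem: it is stated as part of the survey and attributed to Lusztig with a reference to~\cite[\S{5}]{lusztig coxeter} and his classification of unipotent characters. Your outline is a faithful reconstruction of how the result is established in that literature --- the formal Harish-Chandra formalism (adjunction, transitivity, Mackey formula) for the partition into series, Howlett--Lehrer theory for the general shape of the endomorphism algebra, and Lusztig's case-by-case analysis for the genuinely non-formal inputs (triviality of the $2$-cocycle, the equality $W_\GC(\LC,\l)=W_\GC(\LC)$, the relative Weyl group being a Weyl group on the indicated lattice, and the integrality and $\l$-independence of $k_\LC$) --- so it matches the source the paper defers to and there is nothing to correct.
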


\bigskip

\begin{exemple}[Principal series]\label{ex:principal}
As $\Tb$ is an $F$-stable maximal torus of the $F$-stable 
Borel subgroup $\Bb$, we may set $\TC=(q,\Tb,F)$ and so 
there is a Harish-Chandra series associated with the trivial 
character (denoted by $1$) of $\Tb^F$ (indeed, as there is no proper 
parabolic subgroup of $\Tb$, $1$ is a cuspidal unipotent representation). 
Then $W_\GC(\TC)=\WC^\t$ and the injective map 
$\harish^{\GC,\TC,1} : \Irr(\WC^\t) \longto \unipc(\GC)$ will 
be simply denoted by $\harish^\GC$. Its image is called 
the {\it principal} series of unipotent representations of $\GC$. 

If $\t=\Id_\WC$ (i.e., if $\Gb/\Zrm(\Gb)$ is split), then the parameter 
$k_\TC$ of Theorem~\ref{theo:hc}(b) is given by $(k_\TC)_{H,0}=1$ and 
$(k_\TC)_{H,1}=0$ for any reflecting hyperplane $H$ of $\WC$. This parameter 
will be denoted by $\ksp$ in the next part.\finl
\end{exemple}

\bigskip

\subsection{Deligne-Lusztig induction} 
For going further, we need to recall the construction of Deligne-Lusztig induction. 
If $\Pb$ is a (not necessarily $F$-stable) parabolic subgroup of $\Gb$ 
admitting an $F$-stable Levi complement $\Lb$, then we define the 
variety $\Yb_\Pb$ (also called a {\it Deligne-Lusztig variety}) by
$$\Yb_\Pb=\{g \Ub_\Pb \in \Gb/\Ub_\Pb~|~g^{-1}F(g) \in \Ub_\Pb \cdot F(\Ub_\Pb)\},$$
where $\Ub_\Pb$ denotes the unipotent radical of $\Pb$. Then $\Yb_\Pb$ 
inherits a left action of $\Gb^F$ and a right action of $\Lb^F$ which commute, 
endowing the $\ell$-adic cohomology groups with compact support $\Hrm_c^j(\Yb_\Pb)$ with 
a structure of a $(\qlb \Gb^F,\qlb \Lb^F)$-bimodule. This allows us to define 
a map $R_{\Lb \subset \Pb}^\Gb : \ZM\Irr(\Lb^F) \longto \ZM\Irr(\Gb^F)$ 
between Grothendieck groups by the formula
$$R_{\Lb \subset \Pb}^\Gb(\isomorphisme{M}) = 
\sum_{k \ge 0} (-1)^j \isomorphisme{\Hrm_c^j(\Yb_\Pb) \otimes_{\qlb \Lb^F} M}.$$
This map is called the {\it Deligne-Lusztig induction} and is the shadow 
of a functor $\RC_{\Lb \subset \Pb}^\Gb : \Drm^b(\qlb \Lb^F\module) \longto 
\Drm^b(\qlb\Gb^F\module)$ between bounded derived categories, which is defined by 
$$\RC_{\Lb \subset \Pb}^\Gb(M) = \Hrm^\bullet_c(\Yb_\Pb) \otimes_{\qlb \Lb^F} M.$$
Note that we work in a semisimple world, so that any complex of $\qlb \G$-modules 
(where $\G$ is a finite group) is quasi-isomorphic 
to its cohomology: here, $\Hrm^\bullet_c(\Yb_\Pb)$ is viewed as a complex 
of $(\qlb \Gb^F,\qlb \Lb^F)$-bimodules 
whose $j$-th term is $\Hrm^j_c(\Yb_\Pb)$ and whose differential is zero. 

If the parabolic subgroup $\Pb$ is $F$-stable, then the Deligne-Lusztig 
functor $\RC_{\Lb \subset \Pb}^\Gb$ is just the functor induced by the Harish-Chandra 
functor at the level of derived categories (this justifies the use of the same notation). 
Both $R_{\Lb \subset \Pb}^\Gb$ and $\RC_{\Lb \subset \Pb}^\Gb$ admits 
adjoints (in two different significations of {\it adjoint}) which we denote 
by $\lexp{*}{R}_{\Lb \subset \Pb}^\Gb$ and $\lexp{*}{\RC}_{\Lb \subset \Pb}^\Gb$ 
and which are defined by
$$\lexp{*}{\RC_{\Lb \subset \Pb}^\Gb}(N) = 
\Hrm^\bullet_c(\Yb_\Pb)^* \otimes_{\qlb \Gb^F} N$$
$$\lexp{*}{R_{\Lb \subset \Pb}^\Gb}(\isomorphisme{N}) = 
\sum_{k \ge 0} (-1)^j \isomorphisme{\Hrm_c^j(\Yb_\Pb)^* \otimes_{\qlb \Gb^F} N}.
\leqno{\text{and}}$$

\bigskip

\subsection{${\boldsymbol{d}}$-Harish-Chandra theory} 
We fix a natural number $d \ge 1$ and we denote by $\z_d$ a primitive $d$-th root of unity. 
The {\it $d$-Harish-Chandra theory} 
of Brou\'e-Malle-Michel~\cite{BMM} is an analogue of Harish-Chandra theory 
where the $F$-stable Levi subgroups of $F$-stable parabolic subgroups 
are replaced by a larger class of $F$-stable Levi subgroups, using Deligne-Lusztig 
induction instead of Harish-Chandra induction. Whenever $d=1$, we retrieve 
the usual Harish-Chandra theory. Let us summarize it.

Let $\Sb$ be a torus over $\FM$, endowed with a Frobenius endomorphism $F : \Sb \to \Sb$ 
associated with an $\gfq$-structure on $\Sb$. Let $\Phi_d$ denote the $d$-th cyclotomic 
polynomial. Then $\Sb$ is called a {\it $\Phi_d$-torus} 
if the following two conditions are satisfied:
\begin{itemize}
\item[$\bullet$] $\Sb$ is split over $\FM_{\! q^d}$.

\item[$\bullet$] If $\Sb'$ is an $F$-stable subtorus of $\Sb$ different from $1$, 
and if $e$ divides $d$ and is different from $d$, then $\Sb'$ is not 
split over $\FM_{\! q^e}$.
\end{itemize}
A Levi subgroup $\LC=(q,\Lb,F)$ of $\GC$ is called {\it $d$-split} if $\Lb$ is the centralizer 
of an $F$-stable $\Phi_d$-torus of $\Gb$. 

\begin{rema}\label{rem:d=1}
Let $\Lb$ be an $F$-stable Levi subgroup of $\Gb$. Then $(q,\Lb,F)$ is $1$-split 
if and only if $\Lb$ is the Levi complement of an $F$-stable parabolic sugroup 
of $\Gb$.\finl
\end{rema}

An irreducible unipotent representation $N$ of $\Gb^F$ is called 
{\it $d$-cuspidal} if $\lexp{*}{R}_{\Lb \subset \Pb}^\Gb \isomorphisme{N}=0$ 
for any pair $(\Lb,\Pb)$ where $\Pb$ is a parabolic subgroup of $\Gb$ 
and $\Lb$ is an $F$-stable Levi complement of $\Pb$ which is 
$d$-split as a Levi subgroup of $\Gb$. 
We denote by $\unipc_\cus^d(\GC)$ the set of (isomorphism classes of) 
$d$-cuspidal irreducible unipotent representations of $\Gb^F$. 

We denote by $\CUSC^d(\GC)$ 
the set of pairs $(\LC,\l)$, where $\LC=(q,\Lb,F)$ is a $d$-split Levi subgroup 
of $\Gb$ and $\l$ 
is a $d$-cuspidal irreducible unipotent representation of $\Lb^F$. 
We denote by $\unipc(\GC,\LC,\l)$ the set of unipotent 
irreducible representations occuring in $R_{\Lb \subset \Pb}^\Gb \isomorphisme{\l}$, 
where $\Pb$ is a parabolic subgroup admitting 
$\Lb$ as a Levi complement: this set is called the {\it $d$-Harish-Chandra series} 
associated with the pair $(\LC,\l)$. 
This set depends on the pair $(\LC,\l)$ up to $\Gb^F$-conjugacy. 
We denote by $\CUSC^d(\GC)/\!\!\sim$ the set of $\Gb^F$-conjugacy 
classes of elements of $\CUSC^d(\GC)$. 
The {\it $d$-Harish-Chandra theory} can then be summarized as follows~\cite[Theo.~3.2]{BMM}:

\begin{theo}[Brou\'e-Malle-Michel]\label{theo:d-hc}
We have
$$\unipc(\GC)=\dot{\bigcup_{(\LC,\l) \in \CUSC^d(\GC)/\sim}} 
\unipc(\GC,\LC,\l),$$
where $\dot{\cup}$ means a disjoint union. 

Moreover, if $(\LC,\l) \in \CUSC^d(\GC)$ with $\LC=(q,\Lb,F)$ and if $\Pb$ is a 
parabolic subgroup admitting $\Lb$ as a Levi complement, then:
\begin{itemize}
\itemth{a} The group $W_\GC(\LC,\l)$ is a complex reflection group 
for its action on 
$$\{y \in \CM \otimes_\ZM Y(\Zrm(\Lb))~|~F(y)=\z_d q y\}.$$

\itemth{b} There exists a bijection 
$\harishc_d^{\GC,\LC,\l} : \Irr W_\GC(\LC,\l) \longiso \unipc(\GC,\LC,\l)$ 
and a sign function $\Irr W_\GC(\LC,\l) \longto \{1,-1\}$ 
intertwining ordinary induction and Lusztig induction~\cite{BMM}\footnote{Sorry for being somewhat vague 
in this survey.}. 
In other words,
$$\unipc(\GC)=\dot{\bigcup_{(\LC,\l) \in \CUSC^d(\GC)/\sim}} 
\harishc_d^{\GC,\LC,\l}\bigl(\Irr W_\GC(\LC,\l) \bigr).$$

% \itemth{d} If $\Lb=\Tb$, then $\l$ is the trivial representation $\qlb$ of $\Tb^F$, 
% the group $W_{\Gb}(\Lb)$ is equal to $W$ and $\r^{\GC,\Tb,\qlb}$ coincide 
% with the map denoted similarly in Example~\ref{ex:principal}.
\end{itemize}
\end{theo}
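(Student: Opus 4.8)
The plan is to follow the strategy of Broué--Malle--Michel, erecting the $d$-Harish-Chandra theory on top of the classical case (Theorem~\ref{theo:hc}, which is $d=1$, \emph{cf.}~Remark~\ref{rem:d=1}) and on a ``generic Sylow theory'' for $\Phi_d$-tori. First I would establish the group-theoretic backbone: every $F$-stable $\Phi_d$-torus of $\Gb$ is contained in a maximal one, the maximal $F$-stable $\Phi_d$-tori form a single $\Gb^F$-conjugacy class, and their centralizers — the maximal $d$-split Levi subgroups — satisfy a Sylow-type theory (existence, conjugacy, a Frobenius-type congruence on the number of such tori). This is the analogue of Borel--Tits theory underlying ordinary Harish-Chandra induction, and it is exactly what allows one to form chains of $d$-split Levis and to prove a transitivity statement $\RC^\Gb_{\Lb' \subset \Pb'}\circ\RC^{\Lb'}_{\Lb \subset \Pb''}\simeq\RC^\Gb_{\Lb \subset \Pb}$ for nested $d$-split Levis. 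The analytic input here is a Mackey-type formula for Deligne--Lusztig induction (or at least the scalar-product consequence of it), which rests on the Mackey formula for Deligne--Lusztig characters — available in all the cases relevant to unipotent characters.

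Next I would prove the partition statement (the displayed disjoint union), which also forces one to check that the series $\unipc(\GC,\LC,\l)$ is independent of the chosen parabolic $\Pb$ on the unipotent part. Disjointness: if $(\LC,\l)$ and $(\LC',\l')$ are non-$\Gb^F$-conjugate $d$-cuspidal pairs, then $\langle\RC^\Gb_{\Lb\subset\Pb}\isomorphisme{\l},\RC^\Gb_{\Lb'\subset\Pb'}\isomorphisme{\l'}\rangle=0$; one expands both sides with the Mackey formula and uses $d$-cuspidality of $\l$ and $\l'$ together with the adjunction $\lexp{*}{R}^\Gb_{\Lb\subset\Pb}$. Covering: every unipotent character lies in some $d$-series, by induction on the semisimple rank — a character that is \emph{not} $d$-cuspidal occurs, by definition, in $\lexp{*}{R}^\Gb_{\Lb\subset\Pb}\isomorphisme{N}$ for some proper $d$-split Levi $\Lb$, hence by adjunction in $\RC^\Gb_{\Lb\subset\Pb}$ of one of its constituents, and one applies the inductive hypothesis in $\Lb$.

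For part~(a) the point is the structure of $W_\GC(\LC,\l)=\Nrm_{\Gb^F}(\Lb)/\Lb^F$ (one must also see that the stabilizer of a unipotent $\l$ is the whole relative normalizer, or control the discrepancy). Writing $\Lb=C_\Gb(\Sb)$ with $\Sb$ a maximal $\Phi_d$-torus of a suitable subgroup, $W_\GC(\LC,\l)$ is identified with the group induced on the $\z_d q$-eigenspace of $F$ acting on $\CM\otimes_\ZM Y(\Zrm(\Lb))$, and by the theory of regular eigenvalues in (possibly twisted) Weyl groups — Springer's and Lehrer--Springer's results on $\z$-eigenspaces of reflection groups — this induced group is a complex reflection group. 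One then matches it type-by-type with the ``cyclotomic Weyl groups'' tabulated by Broué--Malle.

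The main obstacle is part~(b): the bijection $\harishc_d^{\GC,\LC,\l}$ together with a sign function intertwining ordinary induction in $W_\GC(\LC,\l)$ with Deligne--Lusztig induction. Because $\RC^\Gb_{\Lb\subset\Pb}$ lives only at the level of Grothendieck groups (resp. derived categories) and is \emph{not} a genuine induction functor with a tractable endomorphism ring, one cannot simply transplant the Howlett--Lehrer / Lusztig identification of $\End_{\Gb^F}(\RC^\Gb_{\Lb\subset\Pb}\l)$ with a Hecke algebra of $W_\GC(\LC,\l)$ that does the job for $d=1$ in Theorem~\ref{theo:hc}(b). Instead I would fall back on Lusztig's complete classification of $\unipc(\GC)$ (partition into families, together with the associated Fourier transform matrices) and compute $\RC^\Gb_{\Lb\subset\Pb}\isomorphisme{\l}$ explicitly: in classical types via symbols and their $d$-``cores'' and ``cocores'', in exceptional types via the known unipotent character tables and generic degrees. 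The sign attached to $\chi\in\Irr W_\GC(\LC,\l)$ is then forced by the sign with which the corresponding unipotent character occurs in the cohomology $\Hrm^\bullet_c(\Yb_\Pb)$, and the compatibility with induction is matched against the combinatorics of induction of characters of the cyclotomic Weyl group. This last verification is case-by-case and is the genuinely hard, computational heart of the theorem; no uniform conceptual proof is known.
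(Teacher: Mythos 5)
The survey does not prove this theorem: it cites it directly from [BMM, Thm.~3.2] and remarks only that the proof is case-by-case, so there is no in-paper argument to compare against. The question is whether your outline faithfully reconstructs Brou\'e--Malle--Michel's strategy, and in broad strokes it does: the generic Sylow theory for $\Phi_d$-tori (from the earlier Brou\'e--Malle paper), the regular-eigenvalue/Springer--Lehrer--Springer analysis for part~(a), and the combinatorial, type-by-type construction of the bijection in part~(b) are the right pillars, and you correctly identify part~(b) as the computational heart with no known conceptual proof. Your parenthetical warning that $W_\GC(\LC,\l)$ is the stabilizer of $\l$ rather than the full relative normalizer is also the right thing to flag.

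The one genuine gap is your use of a Mackey formula for Lusztig induction to establish disjointness of the $d$-series. The Mackey formula for $R_{\Lb\subset\Pb}^{\Gb}$ with parabolic subgroups that are not $F$-stable was, at the time of [BMM], and remains in full generality, a conjecture; it was a theorem only for tori and for large $q$, and even the later Bonnaf\'e--Michel work leaves exceptions. Brou\'e--Malle--Michel do \emph{not} obtain disjointness as an abstract Mackey consequence: they work in the space of \emph{uniform} functions, where the Deligne--Lusztig scalar product formula does apply, combine this with Lusztig's explicit parametrization of unipotent characters, and then verify the partition type by type together with the identification of the relative Weyl group and the bijection. Your clause ``available in all the cases relevant to unipotent characters'' is therefore not a black box you may invoke at the outset; it is precisely part of what the case-by-case verification has to supply. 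You should either restrict your Mackey argument to the sub-situations where the formula is actually a theorem, or replace that step by the uniform-functions computation plus the explicit unipotent classification, as BMM in fact do.
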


Note that this theorem sounds like a miracle and is proved through a case-by-case 
analysis: it would be better explained 
by the following conjecture (which is true for $d=1$ by Theorem~\ref{theo:hc}).

\medskip

\begin{quotation}
\begin{conj}[Brou\'e-Malle-Michel]\label{conj:BMM}
Let $(\LC,\l) \in \CUSC^d(\GC)$ with $\LC=(q,\Lb,F)$. Then there exists a parabolic subgroup 
$\Pb$ of $\Gb$, admitting $\Lb$ as a Levi complement, and such that:
\begin{itemize}
\itemth{a} The $\qlb\Gb^F$-modules $\Hrm_c^j(\Yb_\Pb) \otimes_{\qlb\Lb^F} \l$ 
and $\Hrm_c^{j'}(\Yb_\Pb) \otimes_{\qlb\Lb^F} \l$ have no common irreducible 
constituent if $j \neq j'$.

\itemth{b} The endomorphism algebra of the complex of $\qlb\Gb^F$-modules 
$\RC_{\Lb \subset \Pb}^\Gb(\l)$ is isomorphic to 
a Hecke algebra $\HC_{k_{\LC,\l}}(W_\GC(\LC,\l),\z_d^{-1}q)$ for some 
parameter $k_{\LC,\l} \in \aleph(W_\GC(\LC,\l))$.
\end{itemize}
\end{conj}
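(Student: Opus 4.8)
\bigskip

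\begin{proof}
This is the $d$-analogue of the Howlett--Lehrer description of endomorphism algebras of Harish-Chandra induced modules, which is the case $d=1$ and is already contained in Theorem~\ref{theo:hc}(b); in general the conjecture is wide open, and what follows is the geometric strategy underlying all the known cases. \textbf{Step~1 (reductions).} One first passes to $\Gb$ semisimple, since both (a) and (b) are insensitive to an $F$-stable central torus, and then, invoking the $d$-Harish-Chandra transitivity already recorded in Theorem~\ref{theo:d-hc}, one reduces to the case where $\LC$ is a \emph{minimal} $d$-split Levi carrying the $d$-cuspidal pair, i.e. $\Lb=C_\Gb(\Sb)$ for a Sylow $\Phi_d$-torus $\Sb$ with $\l$ genuinely $d$-cuspidal. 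Ennola duality $q\mapsto\z_d q$ then lets one restrict to representatives of $d$ up to Ennola equivalence.

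\textbf{Step~2 (choice of $\Pb$).} One selects the parabolic subgroup $\Pb$ so that the Deligne--Lusztig variety $\Yb_\Pb$ is, up to a torus bundle, the variety $\Xb_{\wdo}$ attached to a $\Phi_d$-regular element $\wdo$ of the braid monoid of $(\WC,\t)$ in the sense of Springer and Brou\'e--Malle--Michel. Springer's theory of regular elements (in its twisted form) provides such a $\wdo$, and the centralizer of $\wdo\t$ in $\WC\langle\t\rangle$, which is exactly the relative group $W_\GC(\LC,\l)$, then acts on $\Xb_{\wdo}$ on the right by partial-Frobenius automorphisms, commuting with $\Gb^F$.

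\textbf{Step~3 (the Hecke algebra, part (b)).} By the Brou\'e--Michel construction, the braid group $B(W_\GC(\LC,\l))$ --- the fundamental group of the relevant quotient of a hyperplane complement --- acts on $\Hrm_c^\bullet(\Yb_\Pb)$ commuting with $\Gb^F$, hence on $\RC_{\Lb\subset\Pb}^\Gb(\l)$. To pin this action down one shrinks $\Pb$ to a parabolic passing through a single reflection of $W_\GC(\LC,\l)$: the associated rank-one Deligne--Lusztig variety has the cohomology of a curve, whose Frobenius eigenvalues are explicit monomials $\z_{e_H}^{\,j}q^{k_{H,j}}$, and these are precisely the scalars occurring in~\eqref{eq:th} specialised at $\z_d^{-1}q$. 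Thus the generating monodromy operator satisfies the degree-$e_H$ relation of $\HC_{k_{\LC,\l}}(W_\GC(\LC,\l),\z_d^{-1}q)$, and one obtains a surjection of that Hecke algebra onto the subalgebra of $\End(\RC_{\Lb\subset\Pb}^\Gb(\l))$ generated by the $B(W_\GC(\LC,\l))$-action; a dimension count --- both sides having rank $|W_\GC(\LC,\l)|$, the right-hand side because by Step~4 the module $\bigoplus_j\Hrm_c^j(\Yb_\Pb)\otimes_{\qlb\Lb^F}\l$ is multiplicity-free with exactly $|W_\GC(\LC,\l)|$ constituents --- upgrades this to the isomorphism of (b) and simultaneously fixes the parameter $k_{\LC,\l}$.

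\textbf{Step~4 (disjointness, part (a)) and the main obstacle.} Part (a) is a purity/concentration statement: each unipotent constituent of $\Hrm_c^\bullet(\Yb_\Pb)\otimes_{\qlb\Lb^F}\l$ should occur in a single cohomological degree. The natural tool is Deligne's Weil~II: if $\Xb_{\wdo}$, suitably compactified, behaved like a smooth projective variety --- as it does in Lusztig's analysis of Coxeter elements --- then $\Hrm_c^j$ would be pure of weight $j$ on the unipotent part, constituents in different degrees would carry Frobenius eigenvalues of different absolute value, and the $F^\delta$-eigenvalue on the $\chi$-isotypic component ($\chi\in\Irr W_\GC(\LC,\l)$) would be the monomial in $q$ and a root of unity predicted by the valuation and degree of the Schur element of $\chi$ in $\HC_{k_{\LC,\l}}$, which together with the generic-degree formula of Brou\'e--Malle--Michel forces the degree. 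The genuine difficulty --- and the reason the conjecture remains open --- is precisely that for a \emph{non-Coxeter} regular element $\wdo$ the cohomology of $\Xb_{\wdo}$ (its purity, its torsion, the exact range where it is nonzero) is not understood; every known case either bypasses this by a case-by-case Lefschetz-number computation (classical groups for all $d$, exceptional groups for $d$ a Coxeter number, the Dudas--Malle and Dudas--Rouquier results) or appeals to Brou\'e's geometric conjecture for the specific $\Pb$. A uniform proof would require a uniform control of $\Hrm_c^\bullet(\Xb_{\wdo})$.
\end{proof}
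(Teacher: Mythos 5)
The statement you were asked about is not a theorem of the paper: it is Conjecture~\ref{conj:BMM} (Brou\'e--Malle--Michel), which the paper explicitly records as open, proved in full only when $d=1$ (Theorem~\ref{theo:hc}), when $d$ is the Coxeter number (Lusztig), or in type $A_d$ (Digne--Michel), with part~(a) known in a few further cases. So there is no proof in the paper to compare yours against, and your text --- which candidly ends by saying the conjecture remains open --- is rightly read as a survey of the known strategy rather than a proof. As such a survey it is essentially accurate and matches the references the paper itself cites (\cite{broue michel, broue malle, dm, lusztig coxeter, bonnafe dudas rouquier, dudas}): reduction to quasi-simple groups and minimal $d$-split Levis, the choice of $\Pb$ via a $\Phi_d$-regular (braid) element, the Brou\'e--Michel braid-group action on $\RC_{\Lb\subset\Pb}^\Gb(\l)$, and the identification of purity of $\Hrm_c^\bullet(\Xb_{\wdo})$ as the missing ingredient.

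Two cautions if this were to be presented as more than a strategy. First, your Step~3 is circular as written: the ``dimension count'' that upgrades the braid-group action to the isomorphism of (b) invokes the multiplicity-freeness and the count of constituents from Step~4, which is exactly the unproven part; moreover, as the paper notes after the conjecture, it is not even known in general that the braid-group map is onto the endomorphism algebra, nor that it factors exactly through the Hecke algebra --- the rank-one eigenvalue computation you describe is available only in the cases where the cohomology of the relevant small Deligne--Lusztig variety has actually been computed. Second, some details are imprecise: Ennola duality is $q\mapsto -q$ (relating $d$ to the $d'$ of Theorem~\ref{theo:2A}), not $q\mapsto\z_dq$; and the centralizer of a regular element $\wdo\t$ gives $W_\GC(\LC)$ for the principal $d$-series ($\l$ the trivial character of a $\Phi_d$-torus), whereas for a general $d$-cuspidal $\l$ one only gets $W_\GC(\LC,\l)\subset W_\GC(\LC)$ and the geometric source of the action of the possibly smaller group is part of what must be constructed.
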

\end{quotation}

\bigskip

Of course, Conjecture~\ref{conj:BMM} can easily be reduced to the case where 
$\Gb$ is quasi-simple. In this case, and for $d> 1$, the full Conjecture~\ref{conj:BMM} is known 
only whenever $d$ is the Coxeter number (see Lusztig's fundamental paper~\cite{lusztig coxeter}, 
which served as an inspiration for the conjecture) or whenever $\Gb$ is 
of type $A_d$ (see~\cite{dm}). Part~(a) of Conjecture~\ref{conj:BMM} is known 
if $\Gb$ is of type $A$ and $\LC$ is a torus (see~\cite{bonnafe dudas rouquier}, 
which relies in an essential way on the work of Dudas~\cite{dudas}). For this part~(a), 
some other cases have been solved by Digne-Michel-Rouquier~\cite{DMR} and Digne-Michel~\cite{dm}. 

For part~(b), note also that, in many important cases, a map from the group algebra of 
the braid group of $W_\GC(\LC,\l)$ to the endomorphism 
algebra of the complex of $\qlb\Gb^F$-modules 
$\RC_{\Lb \subset \Pb}^\Gb(\l)$ has been constructed~\cite{broue michel, broue malle, dm}, 
but it is generally not known whether it is onto and if it factorizes (exactly) through 
the Hecke algebra $\HC_{k_{\LC,\l}}(W_\GC(\LC,\l),\z_d^{-1}q)$. There are, however, 
partial results in this direction~\cite{broue michel, broue malle, dm}.

Also, extra-properties 
that should be satisfied by the Hecke algebra involved in Conjecture~\ref{conj:BMM} 
imply that, if $\chi \in \Irr(W_\GC(\LC,\l))$, then
\equat\label{eq:deg hc}
\deg \Hrm^{\GC,\LC,\l}(\chi)=\pm \frac{R_{\Lb \subset \Pb}^\Gb(\l)(1)}{\schur_\chi^{k_{\LC,\l}}}.
\endequat
This imposes huge constraints on the parameter $k_{\LC,\l}$ and allows to determine it 
explicitly in almost all cases~\cite{BMM} (including the classical groups). 

\bigskip

\begin{rema}\label{rem:blocs}
Let $\ell$ be a prime number different from $p$ and assume for simplicity 
that $\ell \ge 5$ and $\ell$ is {\it very good} for $\Gb$. Let $d$ denote the 
smallest integer such that $\ell$ divides $q^d-1$. Then two irreducible 
unipotent representations of $\Gb^F$ belong to the same $\ell$-block 
if and only if they belong to the same $d$-Harish-Chandra series 
(see~\cite[Theo.~5.24]{BMM} for the case where $\ell$ does not divide 
the order of the Weyl group and~\cite[Theo.~22.4]{CE} for the general 
case).\finl
\end{rema}

\bigskip

\section{Families}

\medskip

\subsection{Almost characters}\label{sub:almost}
If $\chi$ is a $\t$-stable 
irreducible character of $\WC$, we fix once and for all 
an extension $\chit$ of $\chi$ to $\WC \rtimes \langle \t \rangle$. 
If $w \in \WC$, we set $\Tb_w=g_w \Tb g_w^{-1}$, where $g_w \in \Gb$ 
is chosen so that $g_w^{-1}F(g_w) \in \Nrm_\Gb(\Tb)$ is a representative of $w$. 
Then $\Tb_w$ is an $F$-stable maximal torus so, using Deligne-Lusztig induction, 
we can define
\equat
R_\chi^\GC=
\frac{1}{|\WC|} \sum_{w \in \WC} \chit(w\t) R_{\Tb_w}^\Gb(1_{\Tb_w^F}) \quad \in \CM\unipc(\GC).
\endequat
Then $R_\chi^\GC$ is called an {\it almost character} of $\Gb^F$. Note that 
$R_\chi^\GC$ depends on the choice of $\chit$, but only up to 
multiplication by a root of unity. 

\bigskip
\def\Gr{{\mathfrak{Gr}}}

\subsection{Families}\label{sub:familles}
Lusztig~\cite[Chap.~4]{lusztig orange} has defined a partition of $\unipc(\GC)$ into 
{\it families}: let us recall 
his construction. Define a graph $\Gr_\GC$ on 
$\Irr(\WC)^\t$ as follows:
\begin{itemize}
\itemth{G1} The set of vertices of $\Gr_\GC$ is $\Irr(\WC)^\t$.

\itemth{G2} Two $\t$-stable irreducible characters $\chi$ and $\chi'$ 
are linked by an edge in the graph $\Gr_\GC$ if $R_\chi^\GC$ and $R_{\chi'}^\GC$ 
have a common irreducible constituent.
\end{itemize}
If $\CG$ is a connected component of $\Gr_\GC$, we denote by $\FG_\CG^\GC$ the 
set of irreducible unipotent representations $\g \in \unipc(\GC)$ such that 
$\langle R_\chi^\GC,\g \rangle_{\Gb^F} \neq 0$ for some $\chi \in \CG$. The subset 
$\FG_\CG^\GC$ of $\unipc(\GC)$ is called a {\it unipotent Lusztig family} of $\GC$. 
We denote by $\Famuni(\GC)$ the set of 
such families. By construction, 
the unipotent Lusztig families form a partition of $\unipc(\GC)$.

One of the main results in Lusztig's work on unipotent representations 
is the list of following compatibilities 
between this partition and Harish-Chandra series~\cite{lusztig orange} 
(some of them are proved by a case-by-case analysis):

\bigskip

\begin{theo}[Lusztig]\label{theo:familles}
With the above notation, we have:
\begin{itemize}
\itemth{a} If $(\LC,\l) \in \CUSC(\GC)$ and $\FG \in \Famuni(\GC)$, then 
$(\harishc^{\GC,\LC,\l})^{-1}(\FG)$ is empty or belongs to $\Fam_{k_\LC}^\lus(W_\GC(\LC))$ 
(recall that $W_\GC(\LC)=W_\GC(\LC,\l)$ and that $k_\LC=k_{\LC,\l}$ does not 
depend on $\l$).

\itemth{b} If $\unipc_\cus(\GC)$ is non-empty, then it is 
contained in a single family, which will be denoted by $\FG_\cus^\GC$. 

\itemth{c} If $\t=\Id_V$, then the principal series (see Example~\ref{ex:principal}) 
satisfies the following property: for any $\FG \in \Famuni(\GC)$ and any $\chi \in \Irr(\WC)$, 
then $R_\chi^\GC \in \CM\FG$ if and only if $\harish^\GC(\chi) \in \FG$. In particular, 
every family meets the principal series.
\end{itemize}
\end{theo}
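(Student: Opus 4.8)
The plan is to follow Lusztig's analysis in~\cite{lusztig orange}: all three statements reduce to the classification of $\unipc(\GC)$ together with two structural inputs, the behaviour of almost characters under Harish--Chandra induction, and the block structure of Lusztig's nonabelian Fourier transform. First I would record the \emph{induction formula for almost characters}: if $(\LC,\l)\in\CUSC(\GC)$ and $\Pb$ is an $F$-stable parabolic subgroup with $F$-stable Levi complement $\Lb$, then $\RC_{\Lb\subset\Pb}^\Gb$ sends the $\LC$-almost character $R_\psi^\LC$ to a $\ZM$-linear combination of the $R_\chi^\GC$ in which $\chi$ runs over the constituents of the $j$-induction (Howlett--Lehrer induction) of $\psi$ from $W_\GC(\LC)$ to $\WC^\t$; dually, $\lexp{*}{\RC}_{\Lb\subset\Pb}^\Gb R_\chi^\GC$ is controlled by truncated restriction of $\chi$. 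This follows from transitivity of Deligne--Lusztig induction, the Mackey formula, and the description of $\End_{\Gb^F}\RC_{\Lb\subset\Pb}^\Gb(\l)$ as a Hecke algebra (Theorem~\ref{theo:hc}).

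Granting this, here is part~(c). It is a basic feature of Lusztig's construction --- equivalently, of the block-diagonal shape of the Fourier transform --- that each almost character $R_\chi^\GC$ is supported on one unipotent Lusztig family, namely $\FG_\CG^\GC$ for $\CG$ the connected component of $\chi$ in $\Gr_\GC$; hence $R_\chi^\GC\in\CM\FG$ if and only if $\chi$ lies in the component $\CG_\FG$. In the split case it then remains to match, for $\chi\in\Irr(\WC)$, the component $\CG_\FG$ with the Lusztig family containing $\harish^\GC(\chi)$: applying part~(a) to $(\TC,1)$ shows $(\harish^\GC)^{-1}(\FG)$ is a Lusztig $\ksp$-family of $\WC$, and Lusztig's definitions are arranged precisely so that the partition of $\Irr(\WC)$ into connected components of $\Gr_\GC$ coincides with the combinatorial ($j$-induction / $a$-function) one; so $R_\chi^\GC\in\CM\FG$ iff $\harish^\GC(\chi)\in\FG$. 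Finally, each diagonal block of the Fourier transform is invertible, so every family contains a principal-series member.

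Part~(a) is the relative form of the same mechanism: the induction formula shows that $\harishc^{\GC,\LC,\l}$ intertwines the graph $\irrc_{k_\LC}(W_\GC(\LC))$ with the restriction of $\Gr_\GC$ to the series $\unipc(\GC,\LC,\l)$ (two $k_\LC$-constructible characters of $W_\GC(\LC)$ sharing a constituent are carried to combinations of $\GC$-almost characters sharing a constituent), so $(\harishc^{\GC,\LC,\l})^{-1}(\FG)$ is a union of Lusztig $k_\LC$-families; a comparison of the $a$-invariant --- constant on unipotent families and shifted in a controlled way by Harish--Chandra induction, cf.~\eqref{eq:deg hc} --- together with the classification then forces this union to be connected, hence a single family or empty. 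For part~(b): if $\rho\in\unipc_\cus(\GC)$ then $\lexp{*}{\RC}_{\Lb\subset\Pb}^\Gb\rho=0$ for all proper $\Pb$, and feeding this through the restriction side of the induction formula forces $\langle\rho,R_\chi^\GC\rangle_{\Gb^F}=0$ unless $\chi$ belongs to a single distinguished component $\CG_\cus$ of $\Gr_\GC$ --- the one attached to the unique \emph{cuspidal family} of $\WC$ in Lusztig's sense --- whence $\rho\in\FG_{\CG_\cus}^\GC=:\FG_\cus^\GC$ for every cuspidal $\rho$.

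The hard part, and the reason these are genuine theorems rather than formal consequences, is that no classification-free argument is known: the exact decomposition of $\RC_{\Lb\subset\Pb}^\Gb(\l)$ into irreducibles, the shape of the Fourier-transform matrices attached to the small finite groups governing the exceptional families, the fact that $j$-induction respects families, and the concentration of the cuspidal unipotent characters in one family all ultimately rest on the explicit case-by-case data for the exceptional and twisted exceptional groups collected in~\cite{lusztig orange}.
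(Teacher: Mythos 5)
The paper offers no proof of this theorem: it is quoted directly from Lusztig's book \cite{lusztig orange}, with the explicit caveat that some of the statements are obtained by case-by-case analysis, so there is nothing to compare your argument against step by step. Your sketch is a faithful outline of how Lusztig establishes these facts there (almost characters and Harish--Chandra induction, the block structure of the nonabelian Fourier transform, $j$-induction and the $a$-function), and you correctly acknowledge that the substantive content --- in particular parts (a) and (b) --- ultimately rests on the explicit classification data for the exceptional and twisted groups rather than on any classification-free mechanism.
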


\bigskip

Note that the analogue of statement~(a) for $d$-Harish-Chandra theory 
(instead of classical Harish-Chandra theory) is probably true (by replacing Lusztig 
families Calogero-Moser families) but it is 
still not known up to now. The analogue of statement~(b) for 
$d$-Harish-Chandra theory is false in general (for instance, if $d$ is large enough, 
then all irreducible unipotent representations are $d$-cuspidal). The analogue of statement~(c) 
for $\t \neq \Id_V$ is false in general (for instance, in twisted type $A_{n-1}$ with $n \ge 3$, 
every family is a singleton but there are irreducible unipotent representations not belonging 
to the principal series).

\bigskip

\part{Genericity vs Calogero-Moser spaces}
\label{part:generique}

\boitegrise{{\bf Hypothesis.} {\it We assume in this third part 
that there exists a rational structure $V_\QM$ 
on $V$ which is stable under the action of $W$ (i.e., $W$ 
is a Weyl group). We also assume that $V^W=0$. We denote 
by $\ksp \in \CM^\aleph$ the {\bfit spetsial} parameter, that is, 
the parameter such that $(\ksp)_{H,0}=1$ and $(\ksp)_{H,1}=0$ 
for all $H \in \AC$. \\
\hphantom{aa} We also fix an element $\t \in \Nrm_{\Gb\Lb_\QM(V_\QM)}(W)$ 
of finite order.}}{0.75\textwidth}

\bigskip

We denote by $\groups(W\t)$ the class of triples $(q,\Gb,F) \in \groups$ such that, 
if $\Tb$ is any $F$-stable maximal torus of $\Gb$, then 
the pair $(\QM \otimes Y(\Tb/\Zrm(\Gb)),N_\Gb(\Tb)/\Tb)$ 
is isomorphic to $(V_\QM,W)$ and, moreover, $\t$ stabilizes 
the lattice $Y(\Tb/\Zrm(\Gb))$ of $V_\QM$ and there exists 
$w \in W$ such that $F(y)=qw\t(y)$ for all $y \in Y(\Tb/\Zrm(\Gb))$. 

\bigskip

This part may be viewed as the aim of this survey article, where we propose  
several conjectures which compare the geometry (fixed points, symplectic leaves) 
of the Calogero-Moser space $\ZC_\ksp$ 
with the different partitions (families, $d$-Harish-Chandra series) 
of unipotent characters of triples belonging to $\groups(W\t)$. 
A first general remark (due to Lusztig) is that most of these partitions 
do not depend that much on the triple $\GC \in \groups(W\t)$: 
they mainly depend only on the coset $W\t$. This phenomenon, called {\it genericity}, 
was developed and formalized by Brou\'e-Malle-Michel~\cite{BMM} and will be 
explained in Section~\ref{sec:genericity}.

The conjectures will be stated precisely in Section~\ref{sec:main}. If $\GC \in \groups(W\t)$, 
they propose conjectural links between:
\begin{itemize}
\item[$\bullet$] the partition of $\unipc(\GC)$ into families and the $\CM^\times$-fixed points 
in $\ZC_\ksp^\t$;

\item[$\bullet$] the partition into $d$-Harish-Chandra series and symplectic leaves 
of $\ZC_\ksp^{\z_d \t}$, where $\z_d$ is a primitive $d$-th root of unity.
\end{itemize}
In the second point, the most spectacular conjecture relates the 
parameter involved in the description of the normalization of the 
closure of a symplectic leaf as a Calogero-Moser space (i.e. the parameter $k_{P,p}$ 
of Conjecture~\ref{conj:leaves}) and the parameter of the Hecke algebra 
which conjecturally describes the endomorphism algebra of the cohomology 
of some Deligne-Lusztig variety. 

\bigskip

\section{Genericity}\label{sec:genericity}

\bigskip
\def\Ord{{\mathrm{Ord}}}

\subsection{Rough definition} 
The notions of {\it generic groups}, {\it generic unipotent representations}... have 
been defined rigorously in~\cite{BMM}. In this survey, we will not recall 
this precise definition, which would require to introduce again much more notation. 
We will use throughout this part a rather vague definition: when 
some structure associated with any $\GC = (q,\Gb,F) \in \groups(W\t)$ depends only 
on $W\t$ and not on the triple $\GC$, we will say that this structure 
behaves {\it generically}. 

A first example is the order of $\Gb^{\prime F}$, 
where $\Gb'$ is the derived subgroup of $\Gb$. 
Indeed, if $m=\dim_\CM V$, there exists a choice of algebraically independent homogeneous 
generators $f_1$,\dots, $f_m$ of $\CM[V]^W$ which are eigenvectors 
for the action of $\t$ (and we denote by $d_j$ the degree of $f_j$ 
and by $\xi_j$ the eigenvalue 
corresponding to $f_j$). We can then define the following polynomial 
$\Ord_{W\t}(\qb) \in \QM[\qb]$:
$$\Ord_{W\t}(\qb)=q^{|\AC|} \prod_{j=1}^m (\qb^{d_j}-\xi_j).$$
Then this polynomial does not depend on the precise choice of the $f_j$'s, and 
\equat\label{eq:order-g}
|\Gb^{\prime F}|=\Ord_{W\t}(q)
\endequat
for all $(q,\Gb,F) \in \groups(W\t)$. 

\bigskip

\begin{rema}[${\boldsymbol{d}}$-splitness]\label{rem:d-split-generic}
Let $d \ge 1$ and let $\z_d$ denote a primitive $d$-th root of unity. We set 
$$\d(d)=\max_{w \in W} \dim V^{\z_d w\t}$$
and we denote by $w_d$ an element of $W$ such that 
$\dim V^{\z_d w_d \t}=\d(d)$. Recall~\cite{springer} that $\d(d)$ is the number 
of $j \in \{1,2,\dots,m\}$ such that $\z_d^{d_j}=\xi_j$. 

Then $w_d \t$ is well-defined up to 
$W$-conjugacy~\cite{springer} and any subspace of the form 
$V^{\z_d w \t}$ for some $w \in W$ is contained in a subspace of the form 
$x(V^{\z_d w_d \t})$ for some $x \in W$. We set $\t_d =\z_dw_d\t \in \Nrm_{\Gb\Lb_\CM(V)}(W)$. 
Note that this choice of $w_d$ 
implies that the element $\t_d$ is $W$-full~\cite{springer}. Note also that 
$\ZC_\ksp^{\t_d}=\ZC_\ksp^{\z_d \t}$. 

Then, for any $\GC \in \groups(W\t)$, 
the conjugacy classes of $d$-split Levi subgroups of $\GC$ are in bijection with 
the $W_{\t_d}$-orbits of $\t_d$-split parabolic subgroups of $W$: the correspondence 
assigns to the conjugacy class of the $d$-split Levi subgroup 
its Weyl group, suitably embedded in $W$ (see~\cite{broue malle french} for details).\finl
\end{rema}

\bigskip

\subsection{Genericity of unipotent representations}
For our purpose, the most important result about genericity is the following 
theorem, which says that the unipotent representations  
of $\GC \in \groups(W\t)$ and their degree 
behave generically. 

\bigskip

\begin{theo}[Lusztig]\label{theo:lusztig-generique}
There exists a finite set $\unip(W\t)$ endowed with a map $\degb_{W\t} : \unip(W\t) \longto \QM[\qb]$, both 
depending only on the coset $W\t$ and 
such that, for each triple $\GC \in \groups(W)$, 
there exists a well-defined bijection 
$$\rhob^\GC : \unip(W\t) \longiso \unipc(\GC)$$
satisfying 
$$\deg~\rhob^\GC_\g = (\degb_{W\t}\,\g)(q)$$
for all $\g \in \unip(W)$.
% 
% If $A$ is a group acting on $\Gb$ by automorphisms of algebraic group 
% commuting with $F$, then $A$ induces
% 
% The finite set $\unip(W)$ contains $\Irr(W)$ in such a way that  
% $$\rhob^{\GC}_\chi=\r_\chi^\GC$$
% for all $\chi \in \Irr(W)$.
\end{theo}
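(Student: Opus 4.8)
The plan is to invoke Lusztig's classification of the irreducible unipotent characters of finite reductive groups, of which the assertion is essentially a repackaging: the real content is that both the set $\unipc(\GC)$ and the degrees of its elements depend only on the pair $(W,\t)$, not on $q$ or on the isogeny type of $\Gb$. First I would reduce to the case where $\Gb$ is quasi-simple. Unipotent characters of a direct product are the external products of the unipotent characters of the factors, and an isogeny $\Gb \to \Gb_1$ with central kernel (e.g. to the adjoint quotient, or from a simply connected cover) induces a degree-preserving bijection $\unipc(\GC) \longiso \unipc(\GC_1)$, since Deligne--Lusztig induction and the virtual characters $R_{\Tb_w}^\Gb(1)$ are insensitive to the centre. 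After this reduction everything is governed by the Coxeter datum $(W,\t)$ alone.

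Next I would exhibit $\unip(W\t)$ together with $\degb_{W\t}$. Following Lusztig, one first partitions $\Irr(\WC)^\t$ into \emph{families}; the partition into families, and for each family $\FG$ the attached finite group $\mathcal G_\FG$, the finite set $\mathcal M(\mathcal G_\FG)$ of conjugacy classes of pairs (an element, an irreducible character of its centraliser), and the nonabelian Fourier transform matrix on $\mathcal M(\mathcal G_\FG)$, are all purely combinatorial invariants of $(W,\t)$. One sets $\unip(W\t)=\bigsqcup_\FG \mathcal M(\mathcal G_\FG)$. For $\g \in \mathcal M(\mathcal G_\FG)$ the generic degree is $\pm$ the linear combination $\frac{1}{|\WC|}\sum_{\chi \in \FG}(\text{Fourier entry at }(\chi,\g))\cdot(\text{fake degree of }\chi)$ of the fake degrees of the $\chi \in \FG$; fake degrees and Fourier matrices being polynomial/numerical data built from $(W,\t)$, this lies in $\QM[\qb]$, the rationality and the control of $\val$ and $\deg$ following from the $a$- and $A$-invariants being constant on families as in \eqref{eq:a constant hecke}.

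Finally, for a fixed $\GC \in \groups(W\t)$ the bijection $\rhob^\GC$ is pinned down by compatibility with almost characters: the transition matrix between the $(R_\chi^\GC)_{\chi \in \Irr(\WC)^\t}$ and $\unipc(\GC)$ is, up to roots of unity, block-diagonal with blocks the Fourier matrices above, the eigenvalues of Frobenius on the unipotent constituents are the prescribed roots of unity, and the whole picture is forced to be generic because the inner products $\langle R_{\Tb_w}^\Gb(1),R_{\Tb_{w'}}^\Gb(1)\rangle$ are computed from the action of $\t$ on $W$; together with the compatibility with Harish-Chandra series (Theorem~\ref{theo:familles}) this characterises $\rhob^\GC$, and the generic degrees then follow from Lusztig's explicit computation. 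The reason this is genuinely a theorem rather than a formality is that the input data --- the Fourier matrices, the Frobenius eigenvalues, and the proof that they actually describe the decomposition of the $R_\chi^\GC$ into unipotent characters --- are established by a long case-by-case analysis; the hard cases are the large exceptional families in $E_7$ and $E_8$ (and the exotic small families in $E_6$, $F_4$, $G_2$, ${}^2E_6$, ${}^3D_4$), which Lusztig treats by combining Deligne--Lusztig theory, Hecke-algebra endomorphism computations, degree constraints of the shape \eqref{eq:deg hc}, and uniqueness/rigidity statements for the Fourier data that also guarantee that $\rhob^\GC$ is \emph{canonically} defined. This last ingredient — the determination of the exceptional Fourier transforms and their uniqueness — is the step I expect to be the main obstacle.
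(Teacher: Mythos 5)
Your proposal is correct and takes essentially the same route as the paper, which offers no proof beyond the remark that the theorem ``follows from the classification of unipotent representations obtained by Lusztig and a case-by-case analysis'': your sketch is a faithful unpacking of that classification (reduction to the quasi-simple case, families and the non-abelian Fourier transform, $\unip(W\t)=\bigsqcup_{\FG}\mathcal{M}(\mathcal{G}_\FG)$, and the case-by-case determination of the exceptional Fourier data as the genuinely hard step). The only quibble is the stray factor $1/|\WC|$ in your degree formula: Lusztig's theorem is that the normalized Fourier matrix applied to the vector of fake degrees (extended by zero off the principal series) already yields the unipotent degrees.
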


\bigskip

This Theorem~\ref{theo:lusztig-generique} 
follows from the classification of unipotent representations obtained by Lusztig~\cite{lusztig orange} 
and a case-by-case analysis. More recent works of Lusztig~\cite{lusztig centre} provide 
general explanations for the existence of the finite set $\unip(W\t)$ and 
the bijection $\rhob^\GC$ but do not explain the polynomial behaviour 
of the degree of the unipotent representations.

It turns out that the different structures on $\unipc(\GC)$ 
(families, Harish-Chandra series, partitions into $\ell$-blocks...) 
can also be read only from the finite set 
$\unip(W\t)$, as it will be explained below. In other words, they 
behave generically. 
Our aim here is to provide numerical evidences that 
these extra-structures can be read from the geometry 
of the Calogero-Moser space $\ZC_\ksp$ ($\CM^\times$-fixed points, 
symplectic leaves, fixed points under the action of $\mub_d$...). 

\bigskip

% \boitegrise{\noindent{\bf Notation.} {\it We fix a $W$-invariant scalar product 
% $(,)$ on $V_\RM=\RM \otimes_\QM V_\QM$ and an element $v_0 \in V_\RM \cap V_\reg$. 
% If $s \in \Ref(W)$, we assume that $\a_s$ is the unique element of $V_\RM$ 
% such that $( \a_s,\a_s) = 2$, $s(\a_s)=-\a_s$ and $( v_0,\a_s ) > 0$. 
% We set $\Phi^+=\{\a_s~|~s \in \Ref(W)\}$, $\Phi= \Phi^+ \dot{\cup} (-\Phi^+)$ and we assume that 
% the element $\t$ is chosen as the unique element of $W\t$ stabilizing $\Phi^+$.\\
% \hphantom{AA} Symmetrically, if $\a \in \Phi$, let $s_\a$ denote the orthogonal 
% reflection such that $s_\a(\a)=-\a$. We denote by $\D$ the set of elements 
% $\a \in \Phi^+$ 
% such that $|s_\a(\Phi^+) \cap (-\Phi^+)|=1$. If $I \subset \D$, we denote 
% by $W_I$ the parabolic subgroup of $W$ generated by $(s_\a)_{\a \in I}$.}}{0.75\textwidth}
% 
% \bigskip
% 
% \noindent Note that this choice of $\t$ implies that it is $W$-full. 
% 
% \bigskip

\bigskip

\subsection{Almost characters}
We denote by $\CM\unip(W\t)$ the set of formal $\CM$-linear 
combinations of elements of $\unip(W\t)$ and we still denote 
by $\rhob^\GC : \CM\unip(W\t) \longto \CM\unipc(\GC)$ the $\CM$-linear 
extension of the bijection $\rhob^\GC$. 

With this notation, the almost characters behave generically. In other words, 
there exists a (necessarily unique) 
family $(R_\chi)_{\chi \in \Irr(W)^\t}$ of elements of $\CM\unip(W\t)$ 
such that
\equat\label{eq:almost-generique}
\rhob^\GC(R_\chi)=R_\chi^\GC
\endequat
for any $\GC \in \groups(W\t)$ (see~\cite[Chap.~4]{lusztig orange}). In particular, 
the graph $\Gr_\GC$ constructed in~\S\ref{sub:familles} is generic: 
it will also be denoted by $\Gr_{W\t}$. 

\bigskip

\subsection{Families} 
Families of unipotent characters behave generically. Indeed, it follows from~\eqref{eq:almost-generique} 
that there exists a partition of $\unip(W\t)$ into {\it unipotent Lusztig families} 
(we denote by $\Fam_\uni(W\t)$ the set of such families) such that, 
\equat\label{eq:fam-gen}
\rhob^\GC(\Fam_\uni(W\t))=\Famuni(\GC)
\endequat
for any $\GC \in \groups(W\t)$. If $\CG$ is a connected component of the graph $\Gr_{W\t}$, 
we denote by $\FG_\CG^\uni \subset \unip(W\t)$ the associated generic family. 
Lusztig proved the following important result~\cite{lusztig orange}:

\bigskip

\begin{theo}[Lusztig]\label{theo:familles-ksp}
The map 
$$\fonctio{\Fam_\ksp^\lus(W)^\t}{\Fam_\uni(W\t)}{\CG}{\FG_{\CG^\t}^\uni}$$
is well-defined and bijective.
\end{theo}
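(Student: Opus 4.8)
The statement concerns a purely combinatorial map between two partitions of a combinatorial set attached to the coset $W\t$: on the left, the set $\Fam_\ksp^\lus(W)$ of Lusztig $\ksp$-families of the Weyl group $W$, with the induced action of $\t$ (since $\t$ normalizes $W$ and fixes $\ksp$, it permutes Lusztig families); on the right, the set $\Fam_\uni(W\t)$ of generic unipotent Lusztig families indexed by connected components of the generic graph $\Gr_{W\t}$. The plan is to unwind both sides via the almost characters $R_\chi$ for $\chi\in\Irr(W)^\t$ and their common-constituent graph, and to reduce the whole statement to an already-known description of families in Lusztig's book, specialized to the spetsial parameter. So first I would recall that by definition a Lusztig $\ksp$-family $\FG$ of $W$ is a connected component of the graph $\irrc_\ksp(W)$ on $\Irr(W)$, and that this graph is $\t$-equivariant; hence $\t$ acts on $\Fam_\ksp^\lus(W)$ and the notation $\Fam_\ksp^\lus(W)^\t$ denotes the $\t$-fixed families, each of which contains at least one $\t$-stable character.

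\textbf{Well-definedness.} Fix a $\t$-stable Lusztig $\ksp$-family $\FG$ of $W$. I must check that $\CG:=\FG\cap\Irr(W)^\t$ is nonempty and is a single connected component of $\Gr_{W\t}$, so that $\FG_{\CG^\t}^\uni=\FG_\CG^\uni$ is unambiguously defined. Nonemptiness follows because a finite group of automorphisms acting on a connected graph with vertex set the $\t$-orbits quotient must fix a vertex — more precisely, the standard fact (used repeatedly in this area, going back to Lusztig's analysis of the $J$-induction and $a$-function, which is constant on families by \eqref{eq:a constant hecke}) that a $\t$-stable Lusztig family always contains a $\t$-stable irreducible character; I would cite the relevant place in \cite{lusztig orange} or \cite{lusztig} for this. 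For connectivity of $\CG$ inside $\Gr_{W\t}$: one compares the edge relations. An edge of $\Gr_{W\t}$ joins $\chi,\chi'\in\Irr(W)^\t$ when $R_\chi$ and $R_{\chi'}$ share a constituent in $\unip(W\t)$; by the generic form of Lusztig's description of the nonabelian Fourier transform within a family, the $R_\chi$ for $\chi$ ranging over a single $\t$-stable Lusztig family are exactly those expressible in terms of the unipotent characters in the associated generic unipotent family, and they are mutually connected, while $R_\chi,R_{\chi'}$ for $\chi,\chi'$ in different Lusztig families are orthogonal combinations with disjoint supports. This gives both that $\CG$ is connected and that distinct $\t$-stable Lusztig families give distinct components, i.e. the map is well-defined and injective.

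\textbf{Surjectivity.} Conversely, every connected component $\CG$ of $\Gr_{W\t}$ arises this way: the vertex set of $\Gr_{W\t}$ is all of $\Irr(W)^\t$, so $\CG$ is a nonempty set of $\t$-stable characters; take any $\chi\in\CG$ and let $\FG$ be the Lusztig $\ksp$-family of $W$ containing $\chi$. Then $\FG$ is $\t$-stable (it contains the $\t$-stable $\chi$ and $\t$ permutes families), and by the previous paragraph $\FG\cap\Irr(W)^\t$ is a single $\Gr_{W\t}$-component containing $\chi$, hence equals $\CG$; thus $\CG^\t$ in the notation of the statement is $\FG\cap\Irr(W)^\t$ and $\FG\mapsto\FG_{\CG^\t}^\uni$ with image the generic family attached to $\CG$. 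Tracking through the definition \eqref{eq:fam-gen} of $\Fam_\uni(W\t)$ as the image of $\rhob^\GC$ of the partition induced by $\Gr_{W\t}$, this shows the map hits every element of $\Fam_\uni(W\t)$.

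\textbf{Main obstacle.} The genuine content — and the step I expect to be hardest to write cleanly — is the input from \cite{lusztig orange}: namely that within one $\t$-stable Lusztig family the generic almost characters $R_\chi$, $\chi\in\Irr(W)^\t$, are "glued together" by common unipotent constituents into exactly one component of $\Gr_{W\t}$, with no spurious edges between different Lusztig families and no splitting of a single family. For $\t=\Id$ this is essentially the statement that Lusztig's Fourier matrix within a family is "irreducible" in the graph sense (every two rows share a nonzero column), which is part of Lusztig's explicit construction of families via finite groups $\GC_\FG$ and $\MC(\GC_\FG)$; for $\t\ne\Id$ one needs the $\t$-twisted version, also worked out in \cite{lusztig orange} (twisted families, relative Fourier transforms). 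So the proof is really a matter of citing and correctly specializing that structure theory rather than reproving it; the care needed is in matching the \emph{spetsial} parameter $\ksp$ with the equal-parameter (or appropriate unequal-parameter) setup of Lusztig's book via \eqref{eq:lusztig-hecke}, and in checking the genericity bookkeeping so that the comparison is independent of the chosen $\GC\in\groups(W\t)$.
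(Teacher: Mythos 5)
Your overall strategy matches the paper's: the statement is attributed to Lusztig and the paper offers no independent proof, only a pointer to \cite{lusztig orange}, so reducing both well-definedness and bijectivity to Lusztig's structure theory of families and Fourier matrices (and their $\t$-twisted versions) is exactly what is intended, and your bookkeeping of the two partitions via the graph $\Gr_{W\t}$ is sound.

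The one point where the paper does supply an actual argument is the non-emptiness of $\CG^\t$ for a $\t$-stable Lusztig $\ksp$-family $\CG$, and there your proposed justification does not work as stated. The claim that a finite group of automorphisms of a connected graph must fix a vertex is false (an involution swapping the two endpoints of a single edge has no fixed vertex), so connectivity of the family alone cannot produce a $\t$-stable character; you do hedge by deferring to a citation, but the heuristic you offer in its place is not a proof. The paper's argument is different and elementary: every Lusztig $\ksp$-family contains a \emph{unique} character of minimal $b$-invariant (the special character, \cite[\S 12]{lusztig special}, where the $b$-invariant of $\chi$ is the least $j$ with $\langle \chi, \Srm^j(V)\rangle \neq 0$), and since the $b$-invariant is $\t$-invariant, the special character of a $\t$-stable family is necessarily $\t$-fixed. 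You should replace your graph-theoretic remark by this uniqueness argument; the rest of your write-up (connectedness of $\CG^\t$ in $\Gr_{W\t}$, disjointness of supports of almost characters across distinct families, surjectivity) correctly identifies what must be extracted from Lusztig's book and adds nothing that would fail.
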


\bigskip

This Theorem contains in particular the fact that, if $\CG$ is a $\t$-stable 
Lusztig $\ksp$-family of characters of $W$, then $\CG^\t \neq \vide$. But this 
follows directly from the fact that every Lusztig $\ksp$-family contains 
a unique character with minimal $b$-invariant (the {\it $b$-invariant} 
of an irreducible character $\chi$ is the minimal value of $j$ such that $\chi$ 
occurs in the $j$-th symmetric power $\Srm^j(V)$), called the {\it special} character 
of the family~\cite[\S{12}]{lusztig special}: if the family is $\t$-stable, then its special 
character is necessarily $\t$-stable.

\bigskip

\subsection{Lusztig's $\ab$-function}
If $\g \in \unip(W\t)$, we denote by $a_\g$ (resp. $A_\g$) the valuation 
(resp. the degree) of the polynomial $\degb_{W\t}\, \qb$. It follows from 
Lusztig's work~\cite{lusztig orange} that
\equat\label{eq:a constant}
\text{\it the functions $a$, $A : \unip(W\t) \longto \ZM_{\geqslant 0}$ are constant 
on families.}
\endequat

\bigskip

\subsection{${\boldsymbol{d}}$-Harish-Chandra series} 
It turns out that $d$-Harish-Chandra theory behaves also generically~\cite[Theo.~3.2]{BMM}. 
More precisely, if $\GC=(q,\Gb,F) \in \groups(W\t)$, then:
\begin{itemize}
\item[$\bullet$] Let $\unip_\cus^d(W\t)$ denote the set of $\g \in \unip(W\t)$ 
such that $\degb_{W\t}\,\g$ is divisible by $(\qb-\z_d)^{\dim V^{\t_d}}$. 
Then $(\rhob^\GC)(\unip_\cus^d(W\t))=\unipc_\cus^d(\GC)$ 
(see~\cite[Prop.~2.9]{BMM}). 

\item[$\bullet$] The $\Gb^F$-conjugacy classes of $d$-split Levi subgroups of 
$\GC$ are in bijection with the $W_{\t_d}$-conjugacy classes of $d$-split 
parabolic subgroups of $W$ (see Remark~\ref{rem:d-split-generic}): if the $d$-split Levi subgroup $\LC$ 
of $\GC$ corresponds to the $d$-split parabolic subgroup $P$ 
under this bijection, then $\LC \in \groups(P\t_d)$ 
and $W_\GC(\LC) \simeq \Nrmov_{W_{\t_d}}(P_{\t_d})$ 
(see~\cite[Prop.~3.12]{broue malle french}).

Through this isomorphism and the bijection $\rhob^\LC$, the group $\Nrmov_{W_{\t_d}}(P_{\t_d})$ acts 
on $\unip(P\t_d)$ and stabilizes the subset 
$\unip_\cus^d(\LC)$. Moreover, the action of 
$\Nrmov_{W_{\t_d}}(P_{\t_d})$ on $\unip(P\t_d)$ is generic. If $\l \in \unip(P\t_d)$, we denote by 
$\Nrmov_{W_{\t_d}}(P_{\t_d},\l)$ 
its stabilizer in $\Nrmov_{W_{\t_d}}(P_{\t_d})$. 

\item[$\bullet$] If we denote by $\Cus^d(W\t)$ the set of pairs 
$(P,\l)$ where $P$ is a $d$-split parabolic subgroup of $W$ and 
$\l \in \unip_\cus^d(P\t_d)$, then the previous point defines a natural bijection 
between $\Cus^d(W\t)/\sim$ and $\CUSC^d(\GC)/\sim$.

\item[$\bullet$] If $(P,\l) \in \Cus^d(W\t)$ corresponds to 
$(\LC,\rhob_\l^\LC) \in \CUSC^d(\GC)$, then the 
maps $\harishc_d^{\GC,\LC,\rhob_\l^\LC}$ and $\rhob^\GC$ 
define an injection $\harish_d^{W,P,\l} : \Irr(\Nrmov_{W_{\t_d}}(P_{\t_d},\l)) \injto \unip(W\t)$ 
which behaves generically. Its image is denoted by $\unip_d(W\t,P,\l)$. Then
\equat\label{eq:hc-generique}
\unip(W\t) = \dot{\bigcup_{(P,\l) \in \CUSC^d(W)}}
\harish_d^{W,P,\l}(\Irr(\Nrmov_{W_{\t_d}}(P_{\t_d},\l))).
\endequat
Moreover, the parameter $k_{\LC,\rhob_\l^\LC}$ in Conjecture~\ref{conj:BMM} is generic, 
i.e. depends only on $(P,\l)$. It will be denoted by $k_{P,\l}$.
\end{itemize}
Here, all the statements stated without reference can be found in~\cite[Theo.~3.2]{BMM}.

\bigskip

\begin{rema}\label{rem:grc}
Let $(P,\l) \in \Cus^d(W\t)$. 
It follows from the classification of such pairs (see~\cite{BMM}) that:
\begin{itemize}
\item[$\bullet$] $\Nrmov_{W_{\t_d}}(P_{\t_d},\l)$ is always a reflection group 
for its action on $(V^P)^{\t_d}$.

\item[$\bullet$] Examples where $\Nrmov_{W_{\t_d}}(P_{\t_d},\l) \neq \Nrmov_{W_{\t_d}}(P_{\t_d})$ 
are very rare. For instance, this never happens if $d=1$ (see Theorem~\ref{theo:hc}(a)) 
or if $W$ is of type $A$ (see~\cite[\S{3.A}]{BMM}).\finl
\end{itemize}
\end{rema}

\bigskip

\begin{exemple}[Principal series]\label{ex:principal generic}
Let us describe the generic version of Example~\ref{ex:principal}. 
First, $\unip(1)$ consists of a single element 
that we may (and will) denote by $1$. Then the cuspidal pair $(1,1) \in \Cus(W\t)$ 
corresponds to the pair $(\TC,1) \in \CUSC(\GC)$ associated with 
an $F$-stable maximal torus of an $F$-stable Borel subgroup and the map $\harish^\GC=\harish^{\GC,\TC,1}$ 
will be simply denoted by $\harish^W : \Irr(W^\t) \longinjto \unip(W\t)$, 
instead of $\harish^{W,1,1}$.

If $\t=\Id_V$, then the parameter $k_{1,1}$ is equal to $\ksp$ (see Example~\ref{ex:principal}).\finl
\end{exemple}

\bigskip

\subsection{${\boldsymbol{d}}$-Harish-Chandra theory and filtration} 
Assume in this subsection, and only in this subsection, that $\t=\Id_V$ 
(on the reductive group side, this means that we work in the split case). 
Let $\Zrm(\CM W)^\lus$ denote the subalgebra of $\Zrm(\CM W)$ whose basis 
is given by $(e_\CG^W)_{\CG \in \Fam_\ksp^\lus(W)}$. Let $(P,\l) \in \Cus^d(W)$. 
We define a morphism of algebras
$$(\harish_d^{W,P,\l})^\# : \Zrm(\CM W)^\lus \longto \Zrm(\CM \Nrmov_{W_{\t_d}}(P_{\t_d},\l))$$
by
$$(\harish_d^{W,P,\l})^\#(e_\CG^W) = 
\sum_{\substack{\chi \in \Irr \Nrmov_{W_{\t_d}}(P_{\t_d},\l) \\ 
\text{such that~}
\harish_d^{W,P,\l}(\chi) \in \FG_\CG^\uni}} e_\chi^{\Nrmov_{W_{\t_d}}(P_{\t_d},\l)}.$$

\medskip

\begin{quotation}
\begin{conj}\label{conj:d-hc-filtration}
Assume that $\t=\Id_V$ and fix $(P,\l) \in \Cus^d(W)$. Then
$$(\harish_d^{W,P,\l})^\#(\FC_j \Zrm(\CM W)^\lus) \subset \FC_j \Zrm(\CM \Nrmov_{W_{\t_d}}(P_{\t_d},\l))$$
for all $j$.
\end{conj}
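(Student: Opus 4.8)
The strategy I would follow is to realise Conjecture~\ref{conj:d-hc-filtration} as a special case of Proposition~\ref{prop:coho-graduation}. That proposition already proves the analogous statement for the map $\ph_\fix^\#$ attached to any $\CM^\times$-equivariant morphism between Calogero--Moser spaces, provided Conjecture~\ref{conj:coho} holds for both of them; so the task is to exhibit, for a given $d$-cuspidal pair $(P,\l)\in\Cus^d(W)$, a $\CM^\times$-equivariant morphism whose induced map on $\CM^\times$-fixed points is exactly $(\harish_d^{W,P,\l})^\#$, and then to invoke Proposition~\ref{prop:coho-graduation}.

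First I would build the source and the morphism. Applying the families/fixed-points dictionary (Gordon--Martino, Conjecture~\ref{conj:gordon-martino}) to the parabolic subgroup $P$, the $d$-cuspidal unipotent representation $\l$ matches a $\t_d$-cuspidal Calogero--Moser point $p$, so that $(P,\l)$ corresponds to a pair $(P,p)\in\Cus_\ksp^{\t_d}(V,W)$ and hence, by Theorem~\ref{theo:leaves}, to the symplectic leaf $\leaf_{P,p}$ of $\ZC_\ksp^{\t_d}$. Granting Conjecture~\ref{conj:leaves} together with the main conjecture of the introduction --- which identifies the reflection group $\Nrmov_{W_{\t_d}}(P_{\t_d},\l)$ governing the $d$-Harish-Chandra series, and the parameter $k_{P,\l}$, with the group and (restricted) parameter attached to $\leaf_{P,p}$ on the Calogero--Moser side --- the normalization $\overline{\leaf_{P,p}}^\nor$ is Poisson- and $\CM^\times$-equivariantly isomorphic to $\ZC_{k_{P,\l}}(\Nrmov_{W_{\t_d}}(P_{\t_d},\l))$ in the generic case $\Nrmov_{W_{\t_d}}(P_{\t_d},\l)=\Nrmov_{W_{\t_d}}(P_{\t_d})$ of Remark~\ref{rem:grc} (the rare exceptions would have to be treated by hand, since then $\Nrmov_{W_{\t_d}}(P_{\t_d},\l)$ is merely a subgroup and Proposition~\ref{prop:ref-pasref-wk} does not directly apply). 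Composing that isomorphism with the normalization map and the closed immersions $\overline{\leaf_{P,p}}\hookrightarrow\ZC_\ksp^{\t_d}\hookrightarrow\ZC_\ksp$ yields a $\CM^\times$-equivariant morphism
$$\ph:\ZC_{k_{P,\l}}(\Nrmov_{W_{\t_d}}(P_{\t_d},\l))\longto\ZC_\ksp.$$

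Next I would identify the two maps. By Gordon--Martino we have $\CM[\ZC_\ksp^{\CM^\times}]\simeq\im(\Omeb^\ksp)=\Zrm(\CM W)^\lus$ through \eqref{eq:im-omega}, and likewise $\im(\Omeb^{k_{P,\l}}(\Nrmov_{W_{\t_d}}(P_{\t_d},\l)))$ is a subalgebra of $\Zrm(\CM\Nrmov_{W_{\t_d}}(P_{\t_d},\l))$. Under these identifications $\ph_\fix^\#$ has the same shape as $(\harish_d^{W,P,\l})^\#$, so it remains to check that $\ph_\fix$ realises the $d$-Harish-Chandra correspondence $\harish_d^{W,P,\l}$ at the level of (Lusztig $=$ Calogero--Moser) families --- precisely the package of conjectures of Section~\ref{sec:main} relating the geometry of $\ZC_\ksp$ and $\ZC_\ksp^{\t_d}$ to the $d$-Harish-Chandra theory of the groups in $\groups(W)$. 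Once this is granted, Proposition~\ref{prop:coho-graduation} gives $\ph_\fix^\#(\FC_j\im\Omeb^\ksp)\subseteq\FC_j\im\Omeb^{k_{P,\l}}$; intersecting with $\FC_j(\CM\Nrmov_{W_{\t_d}}(P_{\t_d},\l))$ and using $\FC_j\im\Omeb^{k_{P,\l}}\subseteq\FC_j\Zrm(\CM\Nrmov_{W_{\t_d}}(P_{\t_d},\l))$ then yields exactly the inclusion claimed in Conjecture~\ref{conj:d-hc-filtration}.

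The hard part is the compatibility step: no geometric construction of a morphism $\ph$ realising $d$-Harish-Chandra induction on the Calogero--Moser side is currently available, so in practice I expect the argument to proceed case by case, exactly where all ingredients are known --- for $W$ of type $A$ one has Gordon--Martino (Theorem~\ref{theo:cuspidal-unique}), the normalization statement, (near-)smoothness of the spaces involved and hence Conjecture~\ref{conj:coho} (Theorem~\ref{theo:conj-coho}), and an explicit description of $\harish_d^{W,P,\l}$; and the case $d=1$ should reduce to~\cite[Cor.~4.22]{bonnafe maksimau} applied to $G(l,1,n)$ and its parabolic subgroups. A purely combinatorial alternative would be to compare the two sides via Lusztig's $a$- and $A$-functions and the Euler scalars $\o_\chi(\Omeb^\ksp(\euler))$ (Proposition~\ref{prop:a+A}), but this controls only the ``$a+A$ layer'' of the filtration rather than all of $\FC_\bullet$, so I would keep it only as a consistency check.
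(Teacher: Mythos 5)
Your proposal is exactly the route the paper itself takes: the statement is a conjecture, and the paper's only general argument for it is Proposition~\ref{prop:fil}, which applies Proposition~\ref{prop:coho-graduation} to the $\CM^\times$-equivariant morphism $\psi$ obtained from the normalization of $\overline{\leaf}_{P,p}$, identifies $\psi_\fix^\#$ with $(\harish_d^{W,P,\l})^\#$ under the same package of Conjectures (\ref{conj:coho}, \ref{conj:gordon-martino}, \ref{conj:familles-uni-cm}, \ref{conj:d-cusp}, \ref{conj:d-hc}, \ref{conj:d-hc-commute}) and the same hypothesis $\Nrmov_{W_{\t_d}}(P_{\t_d},\l)=\Nrmov_{W_{\t_d}}(P_{\t_d})$, and otherwise falls back on the known cases (type $A$ via~\cite[Cor.~4.22]{bonnafe maksimau}, computer checks for $\dim V\le 8$) and the $a+A$ consistency check of Example~\ref{ex:filtration bmm}. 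Your conditional derivation, its caveats, and your assessment of where the real gap lies all coincide with the paper's.
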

\end{quotation}

\medskip

\begin{rema}\label{rem:d-hc-filtration}
This conjecture seems to come from nowhere and provides a strange link between 
the character tables of $W$ and $\Nrmov_{W_{\t_d}}(P_{\t_d},\l)$. However, if we believe in the links between 
representation theory of finite reductive groups and geometry of Calogero-Moser spaces 
(as developed in the next section), then this Conjecture~\ref{conj:d-hc-filtration} 
is just a consequence of this philosophy (see the upcoming Proposition~\ref{prop:fil}) 
and of Conjecture~\ref{conj:coho}. 
This is an example where the geometry of Calogero-Moser spaces suggests 
unexpected properties of the representation theory of finite reductive groups.

Note that Conjecture~\ref{conj:d-hc-filtration} holds in the following cases:
\begin{itemize}
\item[$\bullet$] If $W$ is of type $A$ (see~\cite[Cor.~4.22]{bonnafe maksimau} and 
the explanations given in Section~\ref{sec:type a}).

\item[$\bullet$] A pretty convincing result is that it holds if $\dim V \le 8$ (so this includes 
the type $E_8$): this has been checked through computer 
calculations based on all the functions implemented by 
Jean Michel~\cite{jean}\footnote{We wish to thank again warmly Jean Michel for writing the programs 
for performing these calculations.}.\finl
%His GAP3 distribution is available at \\
%\centerline{\tt https://github.com/jmichel7/gap3-jm}}.
\end{itemize}
%See also, in the next example, another partial result orthogonal to these two ones.\finl
\end{rema}

\medskip

\begin{exemple}\label{ex:filtration bmm}
Let $z=\sum_{s \in \Ref(W)} s \in \Zrm(\CM W)$. Then 
$z=\sum_{\chi \in \Irr(W)} (|\AC|-(a_\chi^{(\ksp)}+A_\chi^{(\ksp)}) e_\chi^W$ 
(see~\cite[Cor.6.9]{spetses 1} and~\cite[Lem.~7.2.1]{calogero}).
So $z \in \FC_1 \Zrm(\CM W)^\lus$ by~\eqref{eq:a constant hecke}. 
Now, if $\psi \in \Irr \Nrmov_{W_{\t_d}}(P_{\t_d},\l)$ is such that 
$\harish_d^{W,P,\l}(\psi)$ belongs to the same family as 
$\harish^W(\chi)$, then it follows from~\eqref{eq:deg hc} that 
$|\AC|-(a_\chi^{(\ksp)}-A_\chi^{(\ksp)})=M-(a_\psi^{(k_{P,\l})}+A_\psi^{(k_{P,\l})})$ 
for some $M$ which does not depend on $\psi$ or $\chi$ (and only on $(W,P,\l)$). 
Therefore, 
$$(\harish_d^{W,P,\l})^\#(z)=\sum_{\chi \in \Irr \Nrmov_{W_{\t_d}}(P_{\t_d},\l)} 
(M-(a_\psi^{(k_{P,\l})}+A_\psi^{(k_{P,\l})})) e_\psi^{\Nrmov_{W_{\t_d}}(P_{\t_d},\l)}.$$
In other words, it follows from~\cite[Cor.6.9]{spetses 1} and~\cite[Lem.~7.2.1]{calogero} 
that there exists $M' \in \CM$ such that 
$$(\harish_d^{W,P,\l})^\#(z)=M' + \sum_{s \in \Ref(\Nrmov_{W_{\t_d}}(P_{\t_d},\l))} c_{k_{P,\l}}(s) s 
\in \FC_1 \Zrm(\CM \Nrmov_{W_{\t_d}}(P_{\t_d},\l)),$$
as desired.\finl
\end{exemple}

\section{Coincidences, conjectures}\label{sec:main}

\medskip

\subsection{Families}
By Theorem~\ref{theo:familles}(b), the set $\Fam_\uni(W\t)$ 
is in bijection with the set $\Fam_\ksp^\lus(W)^\t$. We conjecture the 
first link between the geometry of $\ZC_\ksp$ and the representation 
theory of finite reductive groups:

\medskip

\begin{quotation}
\begin{conj}\label{conj:familles-uni-cm}
There exists a unique bijection 
$$\Phi : (\ZC_\ksp^{\CM^\times})^\t \longiso \Fam_\uni(W\t)$$
such that, for any $p \in (\ZC_\ksp^{\CM^\times})^\t$ 
and for any $\chi \in (\FG_p^\ksp)^\t$, the almost character $R_\chi$ 
belongs to $\CM \Phi(p)$.
\end{conj}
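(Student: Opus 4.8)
The plan is to produce $\Phi$ as a composite of three bijections, only the middle one of which is genuinely open. First I would observe that the surjection $\zG_\ksp : \Irr(W) \longto \ZC_\ksp^{\CM^\times}$ of~\S\ref{sub:cm-families} is $\t$-equivariant: the map $\Omeb^\ksp$ is built from $fwg \mapsto f(0)g(0)w$, which commutes with the action of $\t$ (here one uses $\t(\ksp)=\ksp$, automatic since $\ksp$ is constant on all hyperplanes), so $\t$ permutes Calogero--Moser $\ksp$-families exactly as it permutes $\Irr(W)$. Hence $\zG_\ksp$ induces a bijection from $\Fam_\ksp^\calo(W)$ onto $\ZC_\ksp^{\CM^\times}$ and, by $\t$-equivariance, from the $\t$-stable families onto $(\ZC_\ksp^{\CM^\times})^\t$, a $\t$-fixed point $p$ corresponding to $\FG_p^\ksp$. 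Next I would invoke Gordon--Martino's Conjecture~\ref{conj:gordon-martino}, $\Fam_\ksp^\calo(W)=\Fam_\ksp^\lus(W)$; since both $\t$-actions on families are induced by the action of $\t$ on $\Irr(W)$, this equality of partitions is automatically $\t$-equivariant and restricts to a bijection between $\t$-stable Calogero--Moser $\ksp$-families and $\Fam_\ksp^\lus(W)^\t$. Finally I would compose with Lusztig's bijection $\Fam_\ksp^\lus(W)^\t \longiso \Fam_\uni(W\t)$, $\CG \mapsto \FG_{\CG^\t}^\uni$, of Theorem~\ref{theo:familles-ksp}. Call the resulting bijection $\Phi$.

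The compatibility with almost characters is then almost formal. Fix $p \in (\ZC_\ksp^{\CM^\times})^\t$ and let $\CG$ be the Lusztig $\ksp$-family with $\CG=\FG_p^\ksp$ (as subsets of $\Irr(W)$) furnished by Gordon--Martino; then $(\FG_p^\ksp)^\t=\CG^\t$ and $\Phi(p)=\FG_{\CG^\t}^\uni$. By the very definition of $\FG_{\CG^\t}^\uni$ in~\S\ref{sub:familles} (the set of $\g \in \unip(W\t)$ with $\langle R_\chi,\g\rangle \neq 0$ for some $\chi \in \CG^\t$), every constituent of $R_\chi$, for $\chi \in \CG^\t$, lies in $\FG_{\CG^\t}^\uni$; hence $R_\chi \in \CM\,\FG_{\CG^\t}^\uni = \CM\,\Phi(p)$, which is exactly the required property. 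One uses here Lusztig's theorem that $\Gr_{W\t}$ partitions $\Irr(W)^\t$ into the blocks $\CG^\t$, so that $\FG_{\CG^\t}^\uni$ is well-defined.

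For uniqueness, suppose $\Phi,\Phi'$ both satisfy the condition and fix $p$. Since $\FG_p^\ksp$ is $\t$-stable it contains its special character --- the unique character in the family of minimal $b$-invariant --- which is $\t$-stable because $\t$ permutes the family and preserves the $b$-invariant; so $(\FG_p^\ksp)^\t \neq \vide$, and for $\chi$ in it one has $R_\chi \neq 0$. Then $\Phi(p)$ and $\Phi'(p)$ both contain a common constituent of $R_\chi$, and since $\Fam_\uni(W\t)$ is a partition this forces $\Phi(p)=\Phi'(p)$. (This argument also shows why the uniqueness assertion forces every $\t$-stable Calogero--Moser $\ksp$-family to carry a $\t$-fixed character --- again supplied by Gordon--Martino.)

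The hard part is precisely the appeal to Conjecture~\ref{conj:gordon-martino}: granting it, the argument above is bookkeeping on top of Lusztig's classification of families, so the statement becomes unconditional for $W$ of type $A$, $B=C$, $D$, $I_2(m)$, $H_3$ or $F_4$ by Theorem~\ref{theo:cuspidal-unique}. In the general case, however, one does not need all of Gordon--Martino: it suffices to know that the partition of $\Irr(W)^\t$ by the sets $(\FG_p^\ksp)^\t$ coincides with its partition into connected components of $\Gr_{W\t}$. Establishing this ``$\t$-fixed shadow'' of Gordon--Martino directly --- for instance by pushing the constancy of $a^{(\ksp)}+A^{(\ksp)}$ on Calogero--Moser families (Proposition~\ref{prop:a+A}) to a full comparison with Lusztig's $a$- and $A$-functions family by family --- seems the most promising route around the full conjecture.
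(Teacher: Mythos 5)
You should first note that the statement you are proving is labelled as a \emph{conjecture} in the paper: the author offers no proof of it, only the remark that for $\t=\Id_V$ it is equivalent to the Gordon--Martino Conjecture~\ref{conj:gordon-martino} via Lusztig's Theorem~\ref{theo:familles-ksp}, together with case-by-case verifications (Theorems~\ref{theo:A},~\ref{theo:2A},~\ref{theo:classique}, \ref{theo:rang 2}) obtained precisely by checking Gordon--Martino in those cases. Your proposal is exactly this reduction, written out for general $\t$. The bookkeeping you supply is correct and is a genuine (if modest) addition to what the paper states: the $\t$-equivariance of $\Omeb^{\ksp}$ and hence of $\zG_\ksp$, so that $\t$-fixed points of $\ZC_\ksp^{\CM^\times}$ correspond to $\t$-stable Calogero--Moser $\ksp$-families; the non-emptiness of $(\FG_p^\ksp)^\t$ via the $\t$-stable special character (which the paper itself invokes right after Theorem~\ref{theo:familles-ksp}); and the uniqueness argument from $R_\chi\neq 0$ plus the fact that $\Fam_\uni(W\t)$ is a partition.

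The gap is the one you name yourself: the entire argument is conditional on Conjecture~\ref{conj:gordon-martino} (or at least on the weaker statement that the partitions of $\Irr(W)^\t$ induced by Calogero--Moser $\ksp$-families and by the connected components of $\Gr_{W\t}$ coincide), and that conjecture is open outside the types listed in Theorem~\ref{theo:cuspidal-unique}. Proposition~\ref{prop:a+A} and Theorem~\ref{theo:martino} give evidence, not a proof; in particular the ``most promising route'' you sketch in your last paragraph (upgrading the constancy of $a^{(\ksp)}+A^{(\ksp)}$ to a full family-by-family comparison) is not carried out and is not known to work. So what you have is a correct proof that Conjecture~\ref{conj:familles-uni-cm} follows from Conjecture~\ref{conj:gordon-martino} together with Theorem~\ref{theo:familles-ksp} --- which is the intended content of the paper's remark and yields the conjecture unconditionally in the types where Gordon--Martino is known --- but it is not a proof of the conjecture itself. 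If you present this, state it as a conditional implication rather than as a proof.
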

\end{quotation}

\bigskip

Whenever $\t=\Id_V$, this Conjecture~\ref{conj:familles-uni-cm} is equivalent 
to Gordon-Martino Conjecture~\ref{conj:gordon-martino} (see Theorem~\ref{theo:familles-ksp}). 
For the rest of this section, we assume that Conjecture~\ref{conj:familles-uni-cm} holds, 
and we keep the notation $\Phi : (\ZC_\ksp^{\CM^\times})^\t \longiso \Fam_\uni(W\t)$. 

\bigskip

\subsection{Fixed points and ${\boldsymbol{d}}$-cuspidality}
We expect that $d$-cuspidality of unipotent representations 
and $\t_d$-cuspidality of points in $\ZC_\ksp^{\t_d}=\ZC_\ksp^{\z_d \t}$ 
are linked as follows:

\bigskip

\begin{quotation}
\begin{conj}\label{conj:d-cusp}
Assume here that Conjecture~\ref{conj:familles-uni-cm} holds. 
Let $p \in (\ZC_\ksp^{\CM^\times})^\t$ be such that there 
exists $\l \in \Phi(p) \subset \unip(W\t)$ which is $d$-cuspidal. Then $p$ is $\t_d$-cuspidal.
\end{conj}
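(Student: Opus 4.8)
The plan is to reduce the statement, via the genericity dictionary of Section~\ref{sec:genericity}, to a comparison between the symplectic-leaf stratification of $\ZC_\ksp^{\z_d\t}$ and $d$-Harish-Chandra theory, and then to import one direction of that comparison. First I would fix a triple $\GC\in\groups(W\t)$: by genericity, $\l\in\Phi(p)$ being $d$-cuspidal means exactly that $\degb_{W\t}\,\l$ is divisible by $(\qb-\z_d)^{\d(d)}$, where $\d(d)=\dim V^{\t_d}=\max_{w\in W}\dim V^{\z_d w\t}$. On the Calogero--Moser side, $\t_d$ is $W$-full (Remark~\ref{rem:d-split-generic}) and $\ZC_\ksp^{\t_d}=\ZC_\ksp^{\z_d\t}$, so Theorem~\ref{theo:leaves} attaches to $p$ a unique symplectic leaf $\leaf_{P,p_0}$ of $\ZC_\ksp^{\t_d}$, of dimension $2\dim(V^P)^{\t_d}$; thus ``$p$ is $\t_d$-cuspidal'' is equivalent to $\dim(V^P)^{\t_d}=0$, i.e. $\leaf_{P,p_0}=\{p\}$. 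So it is enough to prove the contrapositive: if the leaf of $\ZC_\ksp^{\z_d\t}$ through $p$ is positive-dimensional, then no element of $\Phi(p)$ lies in $\unip_\cus^d(W\t)$.

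The heart of the proof would be a refinement of the assumed Conjecture~\ref{conj:familles-uni-cm}: I would show that $\Phi$ transports the property ``the leaf of $\ZC_\ksp^{\z_d\t}$ through $p$ is $0$-dimensional'' to the property ``$\Phi(p)$ meets $\unip_\cus^d(W\t)$''. This is where the $\t_d$-split parabolic $P$ has to be matched with a $d$-split Levi $\LC$ of $\GC$ (using $W_\GC(\LC)\simeq\Nrmov_{W_{\t_d}}(P_{\t_d})$, \cite{broue malle french}, and Remark~\ref{rem:grc}), where Conjecture~\ref{conj:leaves} must be invoked to present $\overline{\leaf_{P,p_0}}^\nor$ as a Calogero--Moser space for $((V^P)^{\t_d},\Nrmov_{W_{\t_d}}(P_{\t_d}))$, and where the degree constraint~\eqref{eq:deg hc}, together with the classification of $d$-cuspidal pairs in~\cite{BMM}, must be used to show that a family coming from a proper Levi $\LC\neq\Gb$ cannot meet $\unip_\cus^d(W\t)$. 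For $d=1$ this reduces to statements already isolated in the paper: by Theorem~\ref{theo:familles}(b) a Lusztig family meets the set of cuspidal unipotent representations if and only if it is the $\t$-cuspidal Lusztig family, so Conjecture~\ref{conj:d-cusp} follows from Conjecture~\ref{conj:familles-uni-cm} and one direction of Conjecture~\ref{conj:cuspidal-cm-kl}.

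The hard part will be exactly this refinement: matching the leaf datum $(P,p_0)$ of a $\CM^\times$-fixed point with its $d$-Harish-Chandra datum is essentially the most delicate of the conjectures of Section~\ref{sec:main} --- it subsumes the identification of the parameter $k_{P,p_0}$ of Conjecture~\ref{conj:leaves} with the Hecke parameter $k_{P,\l}$ of Conjecture~\ref{conj:BMM} --- and there is no geometric Calogero--Moser equivalence that would make it automatic. Failing a uniform argument, I would establish Conjecture~\ref{conj:d-cusp} unconditionally where both sides are fully described: type $A$ (Section~\ref{sec:type a}), the classical types with $d=1$ (Section~\ref{sec:classique}), and the Coxeter-number cases (Section~\ref{sec:2}), by writing down the $0$-dimensional leaves of $\ZC_\ksp^{\z_d\t}$ from the explicit models available there and checking them against the list of $d$-cuspidal unipotent representations of~\cite{BMM}. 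The case $\d(d)=0$ is trivial, since then $\ZC_\ksp^{\z_d\t}$ is finite and every $\t_d$-fixed $\CM^\times$-fixed point is automatically $\t_d$-cuspidal.
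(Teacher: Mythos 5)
The statement you are asked about is a \emph{conjecture} (Conjecture~\ref{conj:d-cusp}): the paper contains no proof of it in general, only verifications in special cases (Theorem~\ref{theo:rang 2} for rank~$2$ with $d$ the Coxeter number, Theorem~\ref{theo:A} and~\ref{theo:2A} for type $A$, Theorem~\ref{theo:classique} for classical types with $d=1$, and the spetsial $G_4$ computation). Your proposal correctly recognizes this and falls back on exactly the same case-by-case verification, so in that sense it matches the paper's actual treatment. Your preliminary reformulation is also sound: since $W$ acts trivially on $\ZC_\ksp$ and $\z_d$ acts through the $\CM^\times$-action, a point $p\in(\ZC_\ksp^{\CM^\times})^\t$ is automatically fixed by $\t_d$, and by Theorem~\ref{theo:leaves} its $\t_d$-cuspidality is equivalent to $\dim(V^P)^{\t_d}=0$ for the attached leaf datum; the genericity translation of $d$-cuspidality via divisibility of $\degb_{W\t}\,\l$ by $(\qb-\z_d)^{\d(d)}$ is the one used in~\cite[Prop.~2.9]{BMM}. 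The degenerate case $\d(d)=0$ is indeed trivial for the reason you give.

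The one point to be honest about is that the ``refinement'' you propose in your second paragraph is not an available lemma but is essentially Conjecture~\ref{conj:d-hc}(a) together with Conjecture~\ref{conj:d-hc-commute} (and, for the presentation of $\overline{\leaf}_{P,p_0}^\nor$, Conjecture~\ref{conj:leaves}), all of which are open; so as a ``proof'' the first two paragraphs are circular, and you rightly flag them as the hard part rather than as established steps. Two smaller remarks on the cases you can actually close. For $d=1$ in classical types, your reduction via Theorem~\ref{theo:familles}(b) and one direction of Conjecture~\ref{conj:cuspidal-cm-kl} is the paper's route (Theorem~\ref{theo:cuspidal-unique} plus \cite[Theo.~A]{bellamy thiel}), but note that the paper still has to check, via the combinatorics of symbols, that the cuspidal unipotent representation lies in the \emph{same} unipotent Lusztig family as the Harish-Chandra image of the character labelling the cuspidal Calogero--Moser point; that verification is not automatic and should appear in your write-up. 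For type $A$, the clean statement you want is the comparison of Theorem~\ref{theo:fixes-an}(a) ($z_\l$ is $\z_d$-cuspidal iff $\l$ is a $d$-core) with Theorem~\ref{theo:bmm-gln}(a) ($R_{\chi_\l}$ is $d$-cuspidal iff $\l$ is a $d$-core); since all families are singletons there, Conjecture~\ref{conj:d-cusp} follows unconditionally, with no appeal to Conjecture~\ref{conj:bm}.
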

\end{quotation}

\bigskip

Note that the converse to Conjecture~\ref{conj:d-cusp} does not hold 
in general, even for $d=1$ (for instance for $W$ of type $D$, 
as it will be explained in~\S\ref{sub:D}). 

\bigskip

\subsection{${\boldsymbol{d}}$-Harish-Chandra theory and symplectic leaves}
Assume in this subsection that Conjectures~\ref{conj:familles-uni-cm} and~\ref{conj:d-cusp} 
hold for $W$ and all its parabolic subgroups. 
Fix $(P,\l) \in \Cus^d(W\t)$ and let $p$ be the point of $\ZC_\ksp(V/V^P,P)$ corresponding 
to the Lusztig family of $\l$ through Conjecture~\ref{conj:familles-uni-cm}. 
Then $p$ is $\t_d$-cuspidal by Conjecture~\ref{conj:d-cusp}. Therefore, 
one can associate to the pair $(P,p)$ a symplectic leaf $\leaf_{P,p}$ of $\ZC_\ksp^{\t_d}$. 

\bigskip

\begin{quotation}
\begin{conj}\label{conj:d-hc}
Recall that we assume that Conjectures~\ref{conj:familles-uni-cm} and~\ref{conj:d-cusp} 
hold for $W$ and all its parabolic subgroups. Then, with the above notation:
\begin{itemize}
\itemth{a} Let $p' \in (\ZC_\ksp^{\CM^\times})^\t$. Then $p' \in \overline{\leaf}_{P,p}$ 
if and only if the $d$-Harish-Chandra series $\harish^{W,P,\l}(\Irr(\Nrmov_{W_{\t_d}}(P_{\t_d},\l)))$ 
meets the family $\Phi(p')$.

\itemth{b} There exists a parameter $k_{P,p} \in \aleph((V^P)^{\t_d},\Nrmov_{W_{\t_d}}(P_{\t_d}))$ 
such that:
\begin{itemize}
\itemth{b1} $\overline{\leaf}_{P,p}^\nor \simeq 
\ZC_{k_{P,p}}((V^P)^{\t_d},\Nrmov_{W_{\t_d}}(P_{\t_d}))$ 
as Poisson varieties endowed with a $\CM^\times$-action.

\itemth{b2} The parameter $k_{P,\l} \in \aleph((V^P)^{\t_d},\Nrmov_{W_{\t_d}}(P_{\t_d},\l))$ 
involved in Conjecture~\ref{conj:BMM}(b) (and which is generic by the previous section) 
is the restriction of $k_{P,p}$ to $\Nrmov_{W_{\t_d}}(P_{\t_d},\l)$. 
\end{itemize}
\end{itemize}
\end{conj}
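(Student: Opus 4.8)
Conjecture~\ref{conj:d-hc} is open in general, so what follows is only the route one would take to establish it — and along which it has been verified in the cases listed in the introduction. The plan is to reduce at once to the case where $W$ is irreducible and then to argue case by case, matching at each stage a combinatorial datum of $d$-Harish-Chandra theory with a piece of the Poisson geometry of $\ZC_\ksp$. First I would use Theorem~\ref{theo:leaves} to replace the symplectic leaf $\leaf_{P,p}$ of $\ZC_\ksp^{\t_d}$ by the object indexing it, namely the $W_{\t_d}$-orbit of the pair $(P,p) \in \Cus_\ksp^{\t_d}(V,W)$; dually I would use Remark~\ref{rem:d-split-generic} together with~\eqref{eq:hc-generique} to present the $d$-Harish-Chandra datum as a $W_{\t_d}$-orbit of a pair in $\Cus^d(W\t)$. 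Granting Conjectures~\ref{conj:familles-uni-cm} and~\ref{conj:d-cusp} for $W$ and all its parabolic subgroups, these two index sets are identified, so the content of the conjecture is precisely how this identification interacts, on the one side, with the closure order of symplectic leaves and, on the other, with the formation of $d$-Harish-Chandra series.

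For part~(a), I would argue that by Theorem~\ref{theo:leaves} the closure $\overline{\leaf}_{P,p}$ is a union of leaves, a leaf $\leaf_{P',p'}$ lying in it exactly when $(P',p')$ is obtained from $(P,p)$ by the restriction/induction step internal to the $d$-Harish-Chandra formalism for Calogero-Moser spaces developed in~\cite{auto}; in particular a $\CM^\times$-fixed point $p'$ lies in $\overline{\leaf}_{P,p}$ iff the Calogero-Moser family $\FG_{p'}$ is supported by the cuspidal datum $(P,p)$. Under $\Phi$ this should match membership of the corresponding unipotent Lusztig family in the series $\harish_d^{W,P,\l}(\Irr(\Nrmov_{W_{\t_d}}(P_{\t_d},\l)))$, so~(a) is a $d$-analogue of Theorem~\ref{theo:familles}(a). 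I would check this in type $A$ (Section~\ref{sec:type a}), in types $B_2$ and $G_2$ for $d$ the Coxeter number (Section~\ref{sec:2}), and in classical type for $d=1$ (Section~\ref{sec:classique}); in the first case it is essentially forced by Gordon's description (following Haiman) of $\ZC_\ksp^{\mub_d}$ together with the combinatorics of $d$-cores.

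Part~(b1) requires nothing beyond Conjecture~\ref{conj:leaves}: in the cases of Theorem~\ref{theo:normalization} the normalization $\overline{\leaf}_{P,p}^\nor$ is already known to be a Calogero-Moser space $\ZC_{k_{P,p}}((V^P)^{\t_d},\Nrmov_{W_{\t_d}}(P_{\t_d}))$ for a suitable parameter $k_{P,p}$, so the genuinely new assertion is the parameter identification~(b2). The one handle here is the degree formula~\eqref{eq:deg hc}, which pins down $k_{P,\l}$ through the Schur elements of $\HC_{k_{P,\l}}(W_\GC(\LC,\l),\z_d^{-1}q)$ and the generic unipotent degrees; on the Calogero-Moser side the matching numerical invariant is the eigenvalue of the Euler element $\euler$ on the $\CM^\times$-fixed points lying in $\overline{\leaf}_{P,p}$, which by Proposition~\ref{prop:a+A} and its variants encodes $a+A$. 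So I would compare, family by family inside the series, the valuations and degrees of the generic unipotent degrees with these Euler eigenvalues, using the regular case (Theorem~\ref{theo:chi-tau} and Example~\ref{ex:regular}) as the one structural input currently available.

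The main obstacle is exactly~(b2): there is at present no conceptual mechanism forcing $k_{P,\l}$ to be the restriction of $k_{P,p}$ — it is a numerical coincidence verified only by explicit computation in each case — and supplying such a mechanism would amount to a \emph{geometric Calogero-Moser equivalence} relating the cohomology of the relevant Deligne-Lusztig varieties to coherent sheaves on pieces of $\ZC_\ksp$, which is precisely what is lacking. Short of that, the honest statement is that Conjecture~\ref{conj:d-hc} can only be confirmed, not proved: in type $A$ via Section~\ref{sec:type a}, for $d$ the Coxeter number in the rank-two cases via Section~\ref{sec:2}, for $d=1$ in classical type via Section~\ref{sec:classique}, and consistently with Ennola duality in all the examples computed.
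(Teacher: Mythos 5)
Your proposal correctly recognizes that Conjecture~\ref{conj:d-hc} is stated in the paper as an open conjecture with no general proof, that part~(b1) is merely a restatement of Conjecture~\ref{conj:leaves} (as the paper itself notes in Commentary~\ref{com:parameter}), and that the substance lies in the parameter identification~(b2), which the paper supports only through the case-by-case verifications of Sections~\ref{sec:2},~\ref{sec:type a} and~\ref{sec:classique}. This matches the paper's own treatment, so there is nothing to add.
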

\end{quotation}

\bigskip

\begin{commentary}\label{com:parameter}
In the previous conjecture, the existence of a parameter $k_{P,p}$ 
satisfying~(b1) is just a restatement of Conjecture~\ref{conj:leaves}: 
the main point of the above conjecture is that its restriction should coincide 
with the parameter of Conjecture~\ref{conj:BMM}(b), which has 
to do with a completely different context ($\ell$-adic cohomology of 
Deligne-Lusztig varieties). In some sense, this is a justification 
of this long paper.\finl
\end{commentary}

\bigskip

The correspondence outlined in Conjecture~\ref{conj:d-hc} should also be compatible 
in a more precise way with Harish-Chandra theory. For this survey, keep the 
notation of the above conjecture and assume moreover that 
$\Nrmov_{W_{\t_d}}(P_{\t_d},\l)=\Nrmov_{W_{\t_d}}(P_{\t_d})$ 
(recall from Remark~\ref{rem:grc} that this is the most probable situation). 
Assume also that Conjecture~\ref{conj:d-hc} holds. Then, by~(b1), we get a $\CM^\times$-equivariant 
morphism of varieties
$$\psi : \ZC_{k_{P,p}}((V^P)^{\t_d},\Nrmov_{W_{\t_d}}(P_{\t_d})) \longto \ZC_{\ksp}$$
whose image is the closure $\overline{\leaf}_{P,p}$ of the symplectic leaf 
$\leaf_{P,p}$ of $\ZC_\ksp^{\t_d}$. By restriction to the $\CM^\times$-fixed points, 
we get a map
$$\psi_\fix : \ZC_{k_{P,p}}((V^P)^{\t_d},\Nrmov_{W_{\t_d}}(P_{\t_d}))^{\CM^\times} \longto 
\ZC_{\ksp}^{\CM^\times}$$
whose image is contained in $(\ZC_{\ksp}^{\CM^\times})^\t$. On the other hand, 
Conjecture~\ref{conj:familles-uni-cm} provides a surjective map 
$\Phi^* : \unip(W\t) \longsurto (\ZC_{\ksp}^{\CM^\times})^\t$ (whose fibers are 
the unipotent Lusztig families) and the definition of Calogero-Moser 
families provides a surjective map 
$$\zG_{k_{P,p}} : \Irr(\Nrmov_{W_{\t_d}}(P_{\t_d})) \longsurto 
\ZC_{k_{P,p}}((V^P)^{\t_d},\Nrmov_{W_{\t_d}}(P_{\t_d})^{\CM^\times}.$$ 
Finally, recall that $d$-Harish-Chandra theory (see Theorem~\ref{theo:d-hc} and its generic 
version) provides an injective map 
$$\harish^{W,P,\l} : \Irr(\Nrmov_{W_{\t_d}}(P_{\t_d})) \longinjto \unip(W\t).$$
We expect all these maps to be compatible in the following sense:

\bigskip

\begin{quotation}
\begin{conj}\label{conj:d-hc-commute}
Assume that Conjectures~\ref{conj:familles-uni-cm},~\ref{conj:d-cusp} and~\ref{conj:d-hc} 
hold and that $\Nrmov_{W_{\t_d}}(P_{\t_d},\l)=\Nrmov_{W_{\t_d}}(P_{\t_d})$. Then the 
diagram
$$
\diagram
\Irr(\Nrmov_{W_{\t_d}}(P_{\t_d})) \ar@{->>}[rr]^{\DS{\zG_{k_{P,p}}\hphantom{AAAA}}} 
\ar@{^{(}->}[dd]_{\DS{\harish^{W,P,\l}}} && 
\ZC_{k_{P,p}}((V^P)^{\t_d},\Nrmov_{W_{\t_d}}(P_{\t_d}))^{\CM^\times} \ddto^{\DS{\psi_\fix}}  \\
&& \\
\unip(W\t) \ar@{->>}[rr]^{\DS{\Phi^*}} && (\ZC_{\ksp}^{\CM^\times})^\t
\enddiagram$$
is commutative.
\end{conj}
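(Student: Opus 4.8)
The plan is to verify the commutativity directly on elements of $\Irr(\Nrmov_{W_{\t_d}}(P_{\t_d}))$, thereby reducing the statement to a question about which unipotent Lusztig family a given unipotent representation lies in. Fix $\chi \in \Irr(\Nrmov_{W_{\t_d}}(P_{\t_d}))$ and put $\gamma = \harish^{W,P,\l}(\chi) \in \unip(W\t)$. By definition of $\Phi^*$, the bottom-left composite sends $\chi$ to the unique point $p' \in (\ZC_\ksp^{\CM^\times})^\t$ with $\gamma \in \Phi(p')$. Since the $d$-Harish-Chandra series $\harish^{W,P,\l}(\Irr(\Nrmov_{W_{\t_d}}(P_{\t_d})))$ contains $\gamma$, it meets $\Phi(p')$, so Conjecture~\ref{conj:d-hc}(a) gives $p' \in \overline{\leaf}_{P,p}$. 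This confines the whole computation to $\overline{\leaf}_{P,p}$, and via the Poisson $\CM^\times$-equivariant isomorphism of Conjecture~\ref{conj:d-hc}(b1) one transports it into $\ZC_{k_{P,p}}((V^P)^{\t_d},\Nrmov_{W_{\t_d}}(P_{\t_d}))$, whose $\CM^\times$-fixed locus is parametrized by the surjection $\zG_{k_{P,p}}$ with fibers the Calogero--Moser $k_{P,p}$-families. As $\psi_\fix$ is, by its construction, the normalization map followed by the inclusion $\overline{\leaf}_{P,p} \hookrightarrow \ZC_\ksp$ (injective on fixed points), it remains to compare the image of $\zG_{k_{P,p}}(\chi)$ under normalization with $p'$, both seen inside $\overline{\leaf}_{P,p}^{\CM^\times}$.

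The core of the argument is then to show that two a priori different labellings of $\overline{\leaf}_{P,p}^{\CM^\times}$ coincide: the one induced by $\Phi$ (the trace on the leaf of the partition of $\unip(W\t)$ into unipotent Lusztig families) and the one induced by $\zG_{k_{P,p}}$ (the Calogero--Moser $k_{P,p}$-families of $\Nrmov_{W_{\t_d}}(P_{\t_d})$), and that under this identification $\harish^{W,P,\l}$ becomes the tautological labelling by $\Irr(\Nrmov_{W_{\t_d}}(P_{\t_d}))$. This splits into three compatibilities. (i) A $d$-Harish-Chandra analogue of Theorem~\ref{theo:familles}(a): the $\harish^{W,P,\l}$-preimage of a unipotent Lusztig family is empty or a single Lusztig $k_{P,\l}$-family of $\Nrmov_{W_{\t_d}}(P_{\t_d},\l)$, which --- invoking Gordon--Martino's Conjecture~\ref{conj:gordon-martino} for the small reflection group --- is a Calogero--Moser $k_{P,\l}$-family. (ii) Conjecture~\ref{conj:d-hc}(b2): $k_{P,\l}$ is the restriction of $k_{P,p}$, so under the standing hypothesis $\Nrmov_{W_{\t_d}}(P_{\t_d},\l) = \Nrmov_{W_{\t_d}}(P_{\t_d})$ the families attached to the two parameters literally agree. (iii) The bijection $\Fam_\ksp^\lus(W)^\t \longiso \Fam_\uni(W\t)$ of Theorem~\ref{theo:familles-ksp} and the fixed-point labelling of Conjecture~\ref{conj:familles-uni-cm} are matched by $\psi$; here the almost-character characterization built into Conjecture~\ref{conj:familles-uni-cm} (namely $R_\chi \in \CM\,\Phi(p')$ whenever $\chi \in (\FG_{p'}^\ksp)^\t$), together with the fact that the $d$-Harish-Chandra bijection of Theorem~\ref{theo:d-hc}(b) intertwines ordinary and Deligne--Lusztig induction, supplies the glue. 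Combining (i)--(iii), $\zG_{k_{P,p}}(\chi)$ and $p'$ both correspond to the family of $\chi$ in $\Nrmov_{W_{\t_d}}(P_{\t_d})$ and hence to the same point of $\overline{\leaf}_{P,p}^{\CM^\times}$, which is precisely the claimed commutativity.

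I expect the main obstacle to be compatibility (i): the $d$-Harish-Chandra analogue of Theorem~\ref{theo:familles}(a) is, as the survey notes immediately after that theorem, unknown for $d > 1$, so in general this input must be supplied case-by-case (type $A$ via the circle of ideas around Dudas--Rouquier, the Coxeter-number cases via Lusztig's work, small ranks by explicit computation). A second, more structural difficulty is that Conjecture~\ref{conj:d-hc}(b1) only produces \emph{some} Poisson $\CM^\times$-isomorphism $\overline{\leaf}_{P,p}^\nor \simeq \ZC_{k_{P,p}}((V^P)^{\t_d},\Nrmov_{W_{\t_d}}(P_{\t_d}))$; one must further know it can be normalized so as to intertwine $\Phi|_{\overline{\leaf}_{P,p}}$ with $\zG_{k_{P,p}}$ rather than merely identifying the two finite fixed-point sets abstractly. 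Pinning down the isomorphism on $\CM^\times$-fixed points --- e.g. through the Euler grading (cf. Proposition~\ref{prop:a+A}) and then propagating outward --- is the step that genuinely exceeds a formal diagram chase, and, as with the partial results quoted throughout the paper, is where the real work would lie.
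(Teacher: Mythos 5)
This statement is one of the paper's \emph{conjectures}: the author gives no proof of Conjecture~\ref{conj:d-hc-commute}, only verifications in special cases (rank~$2$ at the Coxeter number, type~$A$, classical types with $d=1$, and the spetsial $G_4$ example), each obtained by explicitly computing both sides. So there is no paper argument to compare yours against, and the question is whether your proposal actually closes the conjecture. It does not: it is a correct and useful \emph{reduction}, but every load-bearing step rests on an input that is itself open, and you say as much. Concretely, your step~(i) is the $d$-Harish-Chandra analogue of Theorem~\ref{theo:familles}(a), which the paper explicitly flags as unknown for $d>1$; and your step~(iii) requires that the abstract Poisson $\CM^\times$-isomorphism of Conjecture~\ref{conj:d-hc}(b1) can be chosen (or is automatically forced) to intertwine the labelling of $\overline{\leaf}_{P,p}^{\CM^\times}$ by unipotent Lusztig families with the labelling of $\ZC_{k_{P,p}}((V^P)^{\t_d},\Nrmov_{W_{\t_d}}(P_{\t_d}))^{\CM^\times}$ by Calogero--Moser families. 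Nothing in the stated hypotheses pins the isomorphism down on fixed points; the Euler grading (Proposition~\ref{prop:a+A}) only constrains one coordinate of each fixed point and cannot separate distinct families with the same value of $a+A$. This is precisely the content of the conjecture, so the argument is circular at that step rather than merely incomplete.

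There is also a quieter gap in your step~(i)--(ii) splice. Granting the $d$-analogue of Theorem~\ref{theo:familles}(a), the preimage of a unipotent family under $\harish^{W,P,\l}$ is a \emph{Hecke} (equivalently Lusztig) $k_{P,\l}$-family of $\Nrmov_{W_{\t_d}}(P_{\t_d},\l)$, whereas the fibers of $\zG_{k_{P,p}}$ are \emph{Calogero--Moser} $k_{P,p}$-families. Even with (b2) identifying the parameters under your standing hypothesis, passing from Hecke to Calogero--Moser families is Martino's Conjecture~\ref{conj:martino} (with the $k^\sharp$ twist) for the relative group, which is a complex reflection group in general, not a Coxeter group, so Conjecture~\ref{conj:gordon-martino} does not apply as stated; and even where Martino's conjecture is known it asserts only that each Calogero--Moser family is a \emph{union} of Hecke families, not an equality. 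In short: your proposal is an honest and well-organized road map that correctly locates the obstructions, but it proves nothing beyond what the bundle of quoted conjectures already asserts, which is consistent with the statement being left as a conjecture in the paper.
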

\end{quotation}

\bigskip

The conjectures stated in this section, together with 
Conjecture~\ref{conj:coho} on the cohomology of Calogero-Moser spaces, 
imply the Conjecture~\ref{conj:d-hc-filtration}. Let us give some details. 
First, as in~\S\ref{sub:coho-filtration}, the morphism $\psi$ 
induces a morphism of algebras
$$\psi_\fix^\# : \im \Omeb^{\ksp}_W \longto 
\im \Omeb^{k_{P,p}}_{\Nrmov_{W_{\t_d}}(P_{\t_d})}.$$
Then, if we assume that $\t=\Id_V$ and that 
Conjectures~\ref{conj:gordon-martino} and~\ref{conj:familles-uni-cm} hold, 
the map $\psi_\fix^\#$ is just the map $(\harish_d^{W,P,\l})^\#$ 
of Conjecture~\ref{conj:d-hc-filtration}. So Proposition~\ref{prop:coho-graduation} 
has the following consequence:

\bigskip

\begin{prop}\label{prop:fil}
With the above notation, assume that 
$\Nrmov_{W_{\t_d}}(P_{\t_d},\l)=\Nrmov_{W_{\t_d}}(P_{\t_d})$ and 
that Conjectures~\ref{conj:coho},~\ref{conj:gordon-martino}~\ref{conj:familles-uni-cm},~\ref{conj:d-cusp},~\ref{conj:d-hc} and~\ref{conj:d-hc-commute} hold. 
Then Conjecture~\ref{conj:d-hc-filtration} holds for the $d$-cuspidal 
triple $(W,P,\l)$. 
\end{prop}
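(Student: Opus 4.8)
The plan is to deduce Conjecture~\ref{conj:d-hc-filtration} from Proposition~\ref{prop:coho-graduation} by feeding in the morphism of Calogero--Moser spaces furnished by Conjecture~\ref{conj:d-hc}(b1), and then translating the resulting statement about $\im\Omeb$ into the asserted statement about $\Zrm(\CM W)^\lus$ by means of Gordon--Martino's conjecture.

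First I would fix the $d$-cuspidal triple $(W,P,\l)$ (with $\t=\Id_V$), let $p$ be the $\t_d$-cuspidal point of $\ZC_\ksp(V/V^P,P)$ attached to the Lusztig family of $\l$ via Conjectures~\ref{conj:familles-uni-cm} and~\ref{conj:d-cusp}, so that the symplectic leaf $\leaf_{P,p}$ of $\ZC_\ksp^{\t_d}$ is defined, and then invoke Conjecture~\ref{conj:d-hc}(b1) together with the hypothesis $\Nrmov_{W_{\t_d}}(P_{\t_d},\l)=\Nrmov_{W_{\t_d}}(P_{\t_d})$ to obtain a $\CM^\times$-equivariant morphism
$$\psi:\ZC_{k_{P,p}}((V^P)^{\t_d},\Nrmov_{W_{\t_d}}(P_{\t_d}))\longto\ZC_\ksp$$
with image $\overline{\leaf}_{P,p}$; this induces a morphism of algebras $\psi_\fix^\#:\im\Omeb^\ksp_W\to\im\Omeb^{k_{P,p}}_{\Nrmov_{W_{\t_d}}(P_{\t_d})}$ exactly as in \S\ref{sub:coho-filtration}. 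Since Conjecture~\ref{conj:coho} is assumed for both Calogero--Moser spaces at hand, Proposition~\ref{prop:coho-graduation} applies to $\psi$ and gives $\psi_\fix^\#(\FC_j\,\im\Omeb^\ksp_W)\subset\FC_j\,\im\Omeb^{k_{P,p}}_{\Nrmov_{W_{\t_d}}(P_{\t_d})}$ for all $j$.

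Next I would identify the three players. By \eqref{eq:im-omega} and Conjecture~\ref{conj:gordon-martino}, $\im\Omeb^\ksp_W=\bigoplus_{\CG\in\Fam_\ksp^\lus(W)}\CM e_\CG^W=\Zrm(\CM W)^\lus$ as a subalgebra of $\Zrm(\CM W)$, so its filtration $\FC_\bullet$ is the very one appearing in Conjecture~\ref{conj:d-hc-filtration}; and $\FC_j\,\im\Omeb^{k_{P,p}}_{\Nrmov_{W_{\t_d}}(P_{\t_d})}\subset\FC_j\,\Zrm(\CM\Nrmov_{W_{\t_d}}(P_{\t_d},\l))$ because $\im\Omeb^{k_{P,p}}$ sits inside the centre and $\Nrmov_{W_{\t_d}}(P_{\t_d},\l)=\Nrmov_{W_{\t_d}}(P_{\t_d})$. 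Then I would check that $\psi_\fix^\#$ coincides with $(\harish_d^{W,P,\l})^\#$: for a Lusztig family $\CG$ with associated point $p'\in(\ZC_\ksp^{\CM^\times})^\t$ and idempotent $e_\CG^W=e_{\FG_{p'}}$, one has $\psi_\fix^\#(e_{\FG_{p'}})=\sum_{p''\in\psi_\fix^{-1}(p')}e_{\FG_{p''}}$, so the characters $\chi\in\Irr(\Nrmov_{W_{\t_d}}(P_{\t_d}))$ that contribute are precisely those with $\psi_\fix(\zG_{k_{P,p}}(\chi))=p'$; by the commuting square of Conjecture~\ref{conj:d-hc-commute} this is the condition $\Phi^*(\harish_d^{W,P,\l}(\chi))=p'$, i.e.\ $\harish_d^{W,P,\l}(\chi)\in\Phi(p')=\FG_{\CG^\t}^\uni$ (using Theorem~\ref{theo:familles-ksp} and Conjecture~\ref{conj:familles-uni-cm}), which is exactly the index set in the defining formula for $(\harish_d^{W,P,\l})^\#(e_\CG^W)$. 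Hence $\psi_\fix^\#=(\harish_d^{W,P,\l})^\#$, and the inclusion obtained above becomes Conjecture~\ref{conj:d-hc-filtration} for $(W,P,\l)$.

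The hard part will be this last identification $\psi_\fix^\#=(\harish_d^{W,P,\l})^\#$: all the geometric inputs are assumed, so the real work is bookkeeping, which means chasing carefully through the bijections of Conjectures~\ref{conj:familles-uni-cm} and~\ref{conj:d-hc-commute}, Theorem~\ref{theo:familles-ksp}, and the definitions of Calogero--Moser and Lusztig families, while tracking $\t$-stability of families at every stage so that $\t$-stable Lusztig $\ksp$-families match the expected unipotent families. One must also be careful, as Proposition~\ref{prop:coho-graduation} and Conjecture~\ref{conj:coho} guarantee, that the filtration on $\im\Omeb^\ksp_W$ coming from the equivariant cohomology of $\ZC_\ksp$ really is the codimension filtration $\FC_\bullet$ inherited from $\Zrm(\CM W)$, and not merely an abstractly isomorphic filtered algebra.
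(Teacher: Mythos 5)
Your proposal follows exactly the argument the paper gives in the discussion preceding Proposition~\ref{prop:fil}: use Conjecture~\ref{conj:d-hc}(b1) to produce the $\CM^\times$-equivariant morphism $\psi$, apply Proposition~\ref{prop:coho-graduation} (legitimate since Conjecture~\ref{conj:coho} is assumed for both spaces), identify $\im\Omeb^\ksp_W$ with $\Zrm(\CM W)^\lus$ via Gordon--Martino, and identify $\psi_\fix^\#$ with $(\harish_d^{W,P,\l})^\#$ via Conjectures~\ref{conj:familles-uni-cm} and~\ref{conj:d-hc-commute}. This is correct and is essentially the same proof, with your diagram chase for $\psi_\fix^\#=(\harish_d^{W,P,\l})^\#$ simply spelling out what the paper asserts in one line.
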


\bigskip

\begin{rema}\label{rem:filtration-bmm}
The consequence of Proposition~\ref{prop:fil} does not involve anymore the geometry 
of Calogero-Moser spaces but only the representation theory of finite reductive groups. 
Therefore, the validity of Conjecture~\ref{conj:d-hc-filtration} in many cases 
(see Remark~\ref{rem:d-hc-filtration} and Example~\ref{ex:filtration bmm}) 
is a good indication that the general philosophy of this paper has some 
reasonable foundation.\finl
\end{rema}

\bigskip

\begin{exemple}[Principal series]\label{ex:principal conjectures}
Assume in this example, and only in this example, that $\t=\Id_V$, 
that Conjecture~\ref{conj:familles-uni-cm} (i.e. Gordon-Martino's Conjecture~\ref{conj:gordon-martino}) 
holds and that $(P,\l) =(1,1) \in \Cus(W)$. 
Then Conjecture~\ref{conj:d-hc}(a) is a restatement of the fact 
that every family meets the principal series while Conjecture~\ref{conj:d-hc}(b) 
and Conjecture~\ref{conj:d-hc-commute} are vacuous.\finl
\end{exemple}

\bigskip

\begin{exemple}[Regular element]\label{ex:regular}
Assume in this example, and only in this example, that $\t=\Id_V$ and $d$ is chosen such that 
$\t_d$ is regular (see~\S\ref{sub:regular} for the definition: from this definition, the trivial 
subgroup of $W$ is $\t$-split). Then, $(1,1) \in \Cus^d(W)$ and $W_{\t_d}=C_W(w_d)$. 

On the unipotent representation side, if $\CG$ is a Lusztig $\ksp$-family, then 
it has been checked by J. Michel (unpublished) that the unipotent Lusztig family $\FG_\CG^\uni$ 
(see Theorem~\ref{theo:familles-ksp}) meets the $d$-Harish-Chandra series 
$\unip_d(W,1,1)$ if and only if $\sum_{\chi \in \CG} |\chi(w_d)|^2 \neq 0$. 
Moreover, he also checked that 
$$\sum_{\chi \in \CG} |\chi(w_d)|^2=\sum_{\substack{\psi \in \Irr C_W(w_d) \\ \text{such that~}
\harish_d^{W,1,1}(\psi) \in \FG_\CG^\uni}} \psi(1)^2.$$

On the Calogero-Moser space side, the closure of the symplectic leaf associated with $(1,1)$ 
is just the irreducible component of maximal dimension $(\ZC_\ksp^\t)_\maxi$ defined 
in~\S\ref{sub:regular}. So the above facts about unipotent representations justify, through the 
philosophy of this section, Conjecture~\ref{conj:chi-tau} and~\cite[Conj~5.2]{regular}.\finl
\end{exemple}

\bigskip

\part{Examples}

\def\Part{{\mathrm{Part}}}
\def\Core{{\mathrm{Cor}}}
\def\cor{{\mathrm{cor}}}
\def\quo{{\mathrm{quo}}}
\def\hook{{\mathrm{hk}}}
\def\pard{{\mathrm{par}}_d}

\boitegrise{{\bf Hypothesis.} {\it As in the third part, 
we assume that there exists a rational structure $V_\QM$ 
on $V$ which is stable under the action of $W$ (i.e., $W$ 
is a Weyl group) and that $V^W=0$. 
We also fix an element $\t \in \Nrm_{\Gb\Lb_\QM(V_\QM)}(W)$ 
of finite order, an integer $d \ge 1$ and a primitive 
$d$-th root of unity $\z_d$.}}{0.75\textwidth}

\bigskip

We aim to illustrate the Conjectures stated in Section~\ref{sec:main} 
by several examples:
\begin{itemize}
\itemth{a} We prove that Conjectures~\ref{conj:familles-uni-cm},~\ref{conj:d-cusp},~\ref{conj:d-hc} 
and~\ref{conj:d-hc-commute} hold in rank $2$ for $d$ equal to the Coxeter number.

\itemth{b} We also prove that, assuming Brou\'e-Malle-Michel Conjecture~\ref{conj:BMM} 
(and particularly the conjectural value of $k_{P,\l}$), 
they hold in type A. 

\itemth{c} For classical types, we only prove 
Conjectures~\ref{conj:familles-uni-cm}, as well as Conjecture~\ref{conj:d-cusp} whenever $d=1$ 
(classical Harish-Chandra theory).
\end{itemize}
As explained in Commentary~\ref{com:parameter}, the most intriguing question 
is Conjecture~\ref{conj:d-hc}(b), which predicts the equality of parameters coming from two 
extremely different contexts (cohomology of some Deligne-Lusztig 
variety vs symplectic leaves of Calogero-Moser spaces). Even for classical Harish-Chandra 
theory (i.e. whenever the Deligne-Lusztig variety is zero-dimensional), 
this is somewhat unexpected and certainly reflects some deep connections. 
In the examples treated in this part, we will mainly focus on this question.

\bigskip

\section{Rank 2}\label{sec:2}

\medskip

The case of type $A$ being treated in the upcoming Section~\ref{sec:type a}, 
we will just consider here the types $B_2$ and $G_2$. We will 
not fill the details for proving all the Conjectures: 
indeed, the groups are small enough so that the remaining details can be filled 
by the reader. So, as explained in the introduction to this part, we 
only give the details for Conjecture~\ref{conj:d-hc}(b).

\bigskip

% \boitegrise{{\bf Hypothesis.} {\it We assume in this section, and only in this section, 
% that $W$ is of type $B_2$ or $\Grm_2$ and that $\t=\Id_V$. 
% We also fix a triple $\GC=(q,\Gb,F)$, and keep the 
% notation of Part~\ref{part:unip}. We identify $W$ with $W_\GC(\TC)$ and we denote 
% by $s$ and $t$ the two simple reflections of $W$. We write $|W|=2m$, 
% with $m \in \{4,6\}$.}}{0.75\textwidth}
% 
% \medskip

\begin{theo}\label{theo:rang 2}
Assume that $W$ is of type $B_2$ or $G_2$ and that $d$ is the Coxeter number. 
Then Conjectures~\ref{conj:familles-uni-cm},~\ref{conj:d-cusp},~\ref{conj:d-hc} 
and~\ref{conj:d-hc-commute} hold.
\end{theo}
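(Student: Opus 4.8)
The plan is to verify all four conjectures by explicit computation, since $W$ has order $8$ (type $B_2$) or $12$ (type $G_2$) and the associated Calogero-Moser space at the spetsial parameter $\ksp$ is a well-understood surface. First I would recall the geometry of $\ZC_\ksp$ in these two cases: for type $B_2$, by Example~\ref{ex:cyclique}(2) and the analysis in~\cite{calogero}, $\ZC_\ksp$ is a two-dimensional variety whose singular locus and $\CM^\times$-fixed points have been computed explicitly; for type $G_2$ the analogous computation appears in~\cite{bonnafe diedral} (dihedral groups). In particular the set $\ZC_\ksp^{\CM^\times}$ is known, its cardinality equals the number of Lusztig $\ksp$-families (both are $4$ for $B_2$ and $6$ for $G_2$), and the map $\zG_\ksp : \Irr(W) \to \ZC_\ksp^{\CM^\times}$ has fibers exactly the Calogero-Moser families. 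Matching these with the Lusztig families (Conjecture~\ref{conj:gordon-martino}, known here by Theorem~\ref{theo:cuspidal-unique} since $W$ is of type $I_2(m)$) and checking that the almost-character condition of Conjecture~\ref{conj:familles-uni-cm} holds — here $\t=\Id_V$ since $B_2$ and $G_2$ are split — establishes Conjecture~\ref{conj:familles-uni-cm}.

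Next I would treat Conjecture~\ref{conj:d-cusp} and the bulk of Conjecture~\ref{conj:d-hc}. When $d$ is the Coxeter number $h$ (so $h=4$ for $B_2$, $h=6$ for $G_2$), the element $\t_d = \z_d w_d$ is a regular element with $V^{\t_d}=0$, so $\Nrmov_{W_{\t_d}}((W)_{\t_d})$ is the full centralizer $C_W(w_d)$, which is cyclic of order $h$. The relevant $d$-split parabolic subgroups are only $P=1$ and $P=W$ itself. On the unipotent side, the $h$-Harish-Chandra theory attached to the torus (the case $(P,\l)=(1,1)$) is exactly Lusztig's Coxeter-number case~\cite{lusztig coxeter}, where the Deligne-Lusztig variety is the Drinfeld-type curve $\Yb_\Pb$ and $\End(\RC^\Gb_{\Tb\subset\Pb}(1))$ is the Hecke algebra of the cyclic group $C_W(w_d)$ with an explicit parameter; the $d$-cuspidal unipotent representations are precisely the non-principal-series ones. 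I would then identify, on the Calogero-Moser side, the symplectic leaf $\leaf_{P,p}$ for each $(P,p) \in \Cus^d_\ksp(V,W)/W_{\t_d}$: the open leaf $(\ZC_\ksp^{\t_d})_\maxi$ (of dimension $2\dim V^{\t_d} = 0$ when $h$ is regular with no fixed vectors — actually here $V^{\t_d}=0$ forces all leaves to be points) corresponds to $P=W$, and the inclusion relations among leaf closures in $\ZC_\ksp^{\t_d}$ are read off from Theorem~\ref{theo:leaves}. The incidence statement~\ref{conj:d-hc}(a) is then a finite check: which families meet the relevant $d$-Harish-Chandra series versus which fixed points lie in $\overline{\leaf}_{P,p}$.

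The genuinely interesting point, as flagged in Commentary~\ref{com:parameter}, is Conjecture~\ref{conj:d-hc}(b): matching the parameter $k_{P,\l}$ of the Hecke algebra governing the cohomology of $\Yb_\Pb$ (computed by Lusztig~\cite{lusztig coxeter} in the Coxeter-number case, or extractable from~\cite{BMM}) with the restriction of the parameter $k_{P,p}$ appearing in Conjecture~\ref{conj:leaves} for the normalization $\overline{\leaf}_{P,p}^\nor$. For $P=W$ the group $C_W(w_d)$ is cyclic, so $\overline{\leaf}_{W,p}^\nor$ is a Calogero-Moser space of a cyclic group, i.e. by Example~\ref{ex:cyclique}(1) a surface of the form $\prod_{j}(z - |C_W(w_d)| k_j) = xy$; its isomorphism type pins down the multiset of parameters $k_j$ up to the shift ambiguity of Remark~\ref{rem:parametres particuliers}, and one compares directly with the Coxeter-number Hecke parameter. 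The main obstacle is therefore the explicit determination of the normalization $\overline{\leaf}_{P,p}^\nor$ as a Poisson $\CM^\times$-variety: one must compute the defining equation of the leaf closure inside the known equations of $\ZC_\ksp$, normalize, and read off the parameter — and then check it agrees on the nose (not merely up to the allowed shift) with the geometric/cohomological Hecke parameter of~\cite{lusztig coxeter}. Once (b1) and (b2) are in hand, Conjecture~\ref{conj:d-hc-commute} follows by tracking the commuting square through the explicit bijections already established, completing the proof.
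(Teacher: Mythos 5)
Your overall strategy is the right one and is essentially the paper's: reduce everything to Lusztig's Coxeter-orbit computation of $\End_{\Gb^F}\Hrm_c^\bullet(\Xb_{\OCB_c})$ on one side, and to the explicit description of $\ZC_\ksp^{\mub_d}$ for dihedral groups on the other, with the crux being the match of the two cyclic-group parameters. But the geometric setup in your second paragraph contains errors that would derail the verification of Conjectures~\ref{conj:d-cusp} and~\ref{conj:d-hc}(a) if carried through literally. For $d=h$ and $\t_d=\z_d c$ with $c$ a Coxeter element, the eigenvalues of $c$ on $V$ are $\z_h^{\pm 1}$, so $V^{\t_d}$ is a \emph{line}, not $0$: consequently $\ZC_\ksp^{\t_d}$ is a surface (namely $\{(z^2-d^2)z^{d-2}=xy\}$ by~\cite[Theo.~7.1]{bonnafe diedral}), not a finite set of points. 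The $\t_d$-split parabolics are $P=1$ (stabilizer of a nonzero, hence regular, vector of $V^{\t_d}$) and $P=W$; the unique $2$-dimensional leaf corresponds to $(1,p_0)$ with relative group $\Nrmov_{W_{\t_d}}(1)=W_{\t_d}=\langle c\rangle$ acting on the line $V^{\t_d}$, while the pairs $(W,p)$ give the $\t_d$-cuspidal points (the singular points of the surface). You have this correspondence inverted ("the open leaf \dots corresponds to $P=W$", "$\overline{\leaf}_{W,p}^\nor$ is a Calogero-Moser space of a cyclic group"), and with $P=W$ the relative group is trivial and the leaf is a point, so your parameter comparison would be vacuous as stated. (Also, the number of families is $3$, not $4$ resp.\ $6$, in both cases.)

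The second issue is that the step you flag as "the main obstacle" — computing $\overline{\leaf}_{1,p_0}^\nor$ and reading off its parameter — is exactly the content of the proof and is not carried out: you never exhibit the parameter. In fact no normalization computation is needed, because the closure of the big leaf is all of $\ZC_\ksp^{\mub_d}$, which is already given by the explicit normal hypersurface above; rewriting it as $z'(z'-2d)(z'-d)^{d-2}=xy$ identifies it with $\ZC_{k^\cox}(V^{\t_d},\langle c\rangle)$ for $k^\cox=(0,1,2,1)$ resp.\ $(0,1,2,1,1,1)$, and one must then check that this is literally the parameter occurring in Lusztig's eigenvalue-of-Frobenius table for $\End_{\Gb^F}\Hrm_c^\bullet(\Xb_{\OCB_c})\simeq\HC_{k^\cox}(W_{\t_d},\z_d^{-1}q)$. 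Without producing and comparing these two lists, the proof of Conjecture~\ref{conj:d-hc}(b) — which you correctly identify as the heart of the theorem — is missing.
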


\bigskip

\def\cox{{\mathrm{cox}}}

%\subsection{The case ${\boldsymbol{d=m}}$}\label{sub:coxeter}
\begin{proof}
Let $s$ and $t$ be the two simple reflections of $W$. 
Let $c=st$ be a standard Coxeter element of $W$ and let $\OCB_c$ denote its 
corresponding $\Gb$-orbit in $\BCB \times \BCB$. Then $\z_d c$ is $W$-full 
(so we may take $\t_d=\z_dc$) 
and $W_{\t_d}=C_W(\z_d c)=\langle c \rangle$ is the cyclic group 
of order $d$. As there is only one reflecting hyperplane for $W_{\t_d}$, 
the parameters for $W_{\t_d}$ will be denoted by $k=(k_0,k_1,\dots,k_{d-1})$. 
We denote by $k^\cox$ the parameter given by:
$$
k^\cox=
\begin{cases}
(0,1,2,1) & \text{if $d=4$,}\\
(0,1,2,1,1,1) & \text{if $d=6$.}
\end{cases}
$$
Also, there is (up to $\Gb^F$-conjugacy), only one proper $d$-split Levi subgroup, 
namely the Coxeter torus $\Tb_c$. Computing the Deligne-Lusztig induction 
of the trivial character of $\Tb_c^F$ amounts to determining the 
cohomology of the Deligne-Lusztig variety $\Xb_{\OCB_c}$. 
This has been done by Lusztig~\cite{lusztig coxeter}, and it follows from his 
work that Conjecture~\ref{conj:BMM} holds in this case.

Let us give more details. First, he proved Conjecture~\ref{conj:BMM}(a) about the 
disjointness of the cohomology groups~\cite[Theo.~6.1]{lusztig coxeter} and that the 
endomorphism algebra of the $\Gb^F$-module $\Hrm_c^\bullet(\XCB_{\OCB_c})$ 
is generated by the Frobenius endomorphism $F$ and he computed the eigenvalues 
of $F$ in all cases~\cite[Table~7.3]{lusztig coxeter}. This leads to the following 
presentation for this endomorphism algebra:
$$\begin{cases}
\text{Generator: $F$ (the Frobenius endomorphism),} \\
\text{Relation: $\prod_{j=0}^{d-1} (F-\z_d^j(\z_d^{-1}q)^{k_j^\cox})=0$.}
\end{cases}$$
In other words,
\equat\label{eq:b2-g2-cox-hc}
\End_{\Gb^F} \Hrm_c^\bullet(\XCB_{\OCB_c}) \simeq \HC_{k^\cox}(W_{\t_d},\z_d^{-1}q).
\endequat

On the other hand, the computation of the fixed point subvariety 
$\ZC_\ksp^{\mub_d}$ has been done in~\cite[Theo.~7.1]{bonnafe diedral} and the result 
is given by:
$$\ZC_\ksp^{\mub_d}\simeq \{(x,y,z) \in \CM^3~|~(z^2-d^2)z^{d-2}=xy\}.$$
Setting $z'=z+d$, we get
$$\ZC_\ksp^{\mub_d}\simeq \{(x,y,z') \in \CM^3~|~z'(z'-2d)(z'-d)^{d-2}=xy\}.$$
In other words, 
\equat\label{eq:b2-g2-cox-cm}
\ZC_\ksp^{\mub_d} \simeq \ZC_{k^\cox}(V^{\t_d},W_{\t_d})
\endequat
(see Example~\ref{ex:cyclique}(a)).

We see that the same parameter occurs in~\eqref{eq:b2-g2-cox-hc} 
and~\eqref{eq:b2-g2-cox-cm}: this shows that
Conjecture~\ref{conj:d-hc}(b) holds in this case, as desired.
\end{proof}

\bigskip

\bigskip

\section{Some combinatorics}

\medskip

We refer to~\cite[\S{2.7}]{james} for facts about abaci, $d$-cores, $d$-quotients of 
partitions that will be used here.

\bigskip

\subsection{Notation}
A {\it partition} is a sequence $\l=(\l_k)_{k \ge 1}$ of non-negative integers 
such that $\l_k \ge \l_{k+1}$ for all $k$ 
and $\l_k =0 $ for $k \gg 0$. Let $\Part$ denote the set of all partitions. 
If $\l \in \Part$, we set $|\l|=\sum_{k \ge 1} \l_k$ and $a_\l=\sum_{k \ge 1} (k-1)\l_k$, 
and we denote by $Y(\l)$ the {\it Young diagram} 
of $\l$, that is, the set of pairs of natural numbers 
$(i,j)$ such that $j \ge 1$ and $1 \le i \le \l_j$. If $y \in Y(\l)$, we denote 
by $\hook_\l(y)$ the {\it hook length} of $\l$ based at $y$, i.e. the number of $(i',j') \in Y(\l)$ 
such that $i' \ge i$, $j' \ge j$ and $(i-i')(j-j')=0$. Let
$$\degb\,\l = \qb^{a_\l} 
\frac{\DS{\prod_{k=1}^{|\l|} (\qb^k-1)}}{\DS{\prod_{y \in Y(\l)} (\qb^{\hook_\l(y)}-1)}}.$$
It turns out that $\degb\,\l \in \ZM[\qb]$.

Let $d \ge 1$. A partition $\l$ is called a {\it $d$-core} if $\hook_\l(y) \neq d$ for 
all $y \in Y(\l)$. The subset of $\Part$ consisting of $d$-cores 
is denoted by $\Core_d$. 
An element $\l=(\l^{(1)},\dots,\l^{(d)})$ of the set $\Part^d$ of $d$-uples of partitions 
is called a {\it $d$-partition}: we set $|\l|=|\l^{(1)}|+\cdots+|\l^{(d)}|$. If $\l \in \Part$, we denote 
by $\cor_d(\l) \in \Core_d$ its {\it $d$-core} and by $\quo_d(\l) \in \Part^d$ 
its $d$-quotient. The map
\equat\label{eq:d-coeur}
\fonction{\cor_d \times \quo_d}{\Part}{\Core_d \times \Part^d}{\l}{(\cor_d(\l),\quo_d(\l))}
\endequat
is bijective. Its inverse will be denoted by 
$$\pard : \Core_d \times \Part^d \longiso \Part.$$
It follows from the definition of both maps that
\equat\label{eq:taille-core}
|\l|=|\cor_d(\l)|+d\,|\quo_d(\l)|.
\endequat
If $r \ge 0$, we denote by $\Part(r)$ (resp. $\Part^d(r)$, resp. $\Core_d(r)$) the set 
of $\l \in \Part$ (resp. $\l \in \Part^d$, resp. $\l \in \Core_d$) such that 
$|\l|=r$. We also set $\Core_d(\equiv r)$ for the set of $\l \in \Core_d$ 
such that $|\l| \le r$ and $|\l| \equiv r \mod d$. In other words, 
$\Core_d(\equiv r)=\cor_d(\Part(r))$. 
% Finally, we write 
% $(\Core_d \times \Part^d)(r)$ for the set of $(\g,\l) \in \Core_d \times \Part^d$ 
% such that $|\g|+d\,|\l|=r$. 

\medskip

% \subsection{Reflection group}
% We denote by $(y_1,\dots,y_r)$ the canonical basis of $\CM^r$ 
% and by $(x_1,\dots,x_r)$ its dual basis. 
% Recall that $G(d,1,r)$ denotes the group of monomial matrices 
% whose non-zero coefficients belong to the group $\mub_d$ 
% of $d$-th roots of unity. Let us distinguish the following cases:
% \begin{itemize}
% \item[$\bullet$] If $d=1$, then the group $G(1,1,r) \simeq \SG_r$ acts irreducibly 
% on the hyperplane $H_r=\{(\xi_1,\dots,\xi_r) \in \CM^r~|~\xi_1+\cdots+\xi_r=0\}$. Then 
% $|\aleph(\SG_r)|=1$ and all reflections have order $2$: we identify $\CM^{\aleph(\SG_r)}$ 
% with the set of pairs ... and we write $\ZC(r)$ for the Calogero-Moser space 
% $\ZC_?(H_r,\SG_r)$.
% 
% \item[$\bullet$] If $d \ge 2$, then the group $G(d,1,r)$ acts irreducibly on $\CM^r$ 
% and $\aleph(G(d,1,r))$ contains two elements: the orbit $\O$ of the hyperplane $\Ker(x_1)$ 
% and the orbit $\O'$ of the hyperplane $\Ker(x_1-x_2)$. If $k \in \CM^{\aleph(G(d,1,r))}$, 
% we set $k_j=k_{\O,j}$ for $0 \le j \le d-1$ and $k_j'=k_{\O',j}$ 
% for $j \in \{0,1\}$. Then we denote by $\ZC_k(d,1,r)$ the Calogero-Moser space 
% $\ZC_k(\CM^r,G(d,1,r))$. \todo{Distinguer r=1 et r> 1}
% \end{itemize}
If $\l \in \Part^d(r)$, we denote by $\chi_\l$ the associated 
irreducible character of the complex reflection group 
$G(d,1,r)$, following the convention in~\cite{geck jacon}. 
If $k \in \CM^{\aleph(G(d,1,r))}$ and $\l \in \Part^d(r)$, we denote 
by $z_\l^k$ the element of $\zG_k(\chi_\l) \in \ZC_k(d,1,r)^{\CM^\times}$ defined 
in~\S\ref{sub:cm-families}. Note that we do not need to emphasize $d$ or $r$, 
as they are determined by $\l$.

\bigskip

\subsection{Abaci}\label{sub:abaque}
A {\it $d$-abacus} is an abacus with $d$ runners. 
If $\g \in \Core_d$, we denote by $\AG(\g)$ its $d$-abacus, 
with the convention that the first runner contains the first empty 
box. Let $b(\g)=(b_0(\g),b_1(\g),\dots,b_{d-1}(\g))$ denote the 
sequence defined as follows: $b_j(\g)$ is the number of beads 
on the $(j+1)$-th runner of $\AG(\g)$ minus the number of beads 
on the first runner. Let $\Res_d(\g)=(\r_0(\g),\dots,\r_{d-1}(\g))$ 
denote the $d$-residue of $\g$. It is defined as follows: 
$\r_k(\g)$ is the number of pairs $(i,j) \in Y(\g)$ such that 
$i-j \equiv k \mod d$. 

\bigskip

\begin{exemple}\label{ex:abaque}
Let $\g = (5,2,1) \in \Part(8)$. Its Young diagram is 
$$Y(\g)\hphantom{AAAAA}\yng(5,2,1)\hphantom{AAAAAAAA}$$
It is easily seen that $\g$ is a $4$-core, and its $4$-abacus $\AG(\g)$ 
is given by

\centerline{\begin{picture}(160,50)
\put(0,40){\line(1,0){160}}
\put(0,30){\line(1,0){160}}
\put(0,20){\line(1,0){160}}
\put(0,10){\line(1,0){160}}
\put(10,40){\circle*{6}}
\put(10,40){\circle*{6}}\put(45,40){\circle{6}}\put(80,40){\circle{6}}\put(115,40){\circle{6}}\put(150,40){\circle{6}}
\put(10,30){\circle*{6}}\put(45,30){\circle*{6}}\put(80,30){\circle{6}}\put(115,30){\circle{6}}\put(150,30){\circle{6}}
\put(10,20){\circle*{6}}\put(45,20){\circle{6}}\put(80,20){\circle{6}}\put(115,20){\circle{6}}\put(150,20){\circle{6}}
\put(10,10){\circle*{6}}\put(45,10){\circle*{6}}\put(80,10){\circle*{6}}\put(115,10){\circle{6}}\put(150,10){\circle{6}}
\put(-50,20){$\AG(\g)$}\put(165,38){$\SS{\text{1st runner}}$}
\put(-50,20){$\AG(\g)$}\put(165,28){$\SS{\text{2nd runner}}$}
\put(-50,20){$\AG(\g)$}\put(165,18){$\SS{\text{3rd runner}}$}
\put(-50,20){$\AG(\g)$}\put(165, 8){$\SS{\text{4th runner}}$}
\end{picture}}
\noindent Then $b(\g)=(0,1,0,2)$ while $\Res_d(\g)=(3,2,2,1)$.\finl
\end{exemple}

\bigskip

Let us define two sequences $k^\g=(k^\g_j)_{0 \le j \le d-1}$ and 
$l^\g=(l^\g_j)_{0 \le j \le d-1}$ associated with a $d$-core $\g$:
\begin{itemize}
\item[$\bullet$] $k^\g_j=d b_j(\g)+j$.

\item[$\bullet$] $l_j^\g= b_0(\g)+b_1(\g)+\cdots+b_{d-1}(\g) + 
\begin{cases}
d(\r_{1-j}(\g)-\r_{-j}(\g))+j-1 & \text{if $1 \le j \le d-1$,}\\
d(\r_1(\g)-\r_0(\g))+d-1 & \text{if $j=0$.}\\
\end{cases}$
\end{itemize}
Here, the index in $\r_{1-j}(\g)$ or $\r_{-j}(\g)$ must be understood modulo $d$. 
The next result will be useful in the next section:

\bigskip

\begin{prop}\label{prop:k=l}
Let $\g$ be a $d$-core and let $m$ denote its length (i.e., the number 
of non-zero parts). Then
$$k^\g_j=l_{j+1-m}^\g$$
for all $j \in \ZM/d\ZM$.
\end{prop}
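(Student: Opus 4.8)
The statement is an identity between two integer sequences attached to the $d$-core $\g$, and the plan is to prove it by a direct computation on the $d$-abacus. First I would work with the normalized abacus $\AG(\g)$ of \S\ref{sub:abaque}: pick the number of beads $N$ so that the first empty box lies on the first runner, and write $\b_r$ (for $0\le r\le d-1$) for the number of beads on the $(r+1)$-st runner. Since $\g$ is a $d$-core the beads on each runner form an initial segment, so runner $r$ carries beads exactly at positions $r,r+d,\dots,r+(\b_r-1)d$; the normalization forces $\b_0=\min_r\b_r$, whence $b_r(\g)=\b_r-\b_0\ge 0$ and $b_0(\g)=0$. Two elementary observations set the stage. (i) Reading off the $\b$-set $\{\g_i+N-i\}$ of $\g$, the integers $0,1,\dots,N-m-1$ are all $\b$-numbers while $N-m$ is not (here $m$ is the length of $\g$), so the first empty box sits at position $\b_0 d=N-m$; in particular $m=\sum_{r=0}^{d-1}b_r(\g)$ and $N\equiv m\pmod d$. (ii) By definition $k^\g_j=j+d\,b_j(\g)$ is, up to the global shift $(\b_0-1)d$, the position of the last bead on runner $j$.

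The heart of the argument is a formula for the residue increments $\r_{c+1}(\g)-\r_c(\g)$ in terms of the $\b_r$'s. Let $B=\{\g_i-i\mid i\ge 1\}\subset\ZM$ be the charge-zero Maya diagram of $\g$ (it contains all sufficiently negative integers and finitely many non-negative ones), and set $n_a=\#\{(i,j)\in Y(\g)\mid j-i=a\}$, the number of cells of $\g$ on the $a$-th diagonal, so that $\r_k(\g)=\sum_{a\equiv -k\ (d)} n_a$. A short argument — for $a\ge 1$ one observes that $n_{a-1}-n_a$ counts the rows of $\g$ whose last cell lies on diagonal $a-1$, i.e. the $i$ with $\g_i-i=a-1$; for $a\le 0$ one passes to the conjugate partition, whose Maya diagram is $B'=\{-1-v\mid v\notin B\}$, and reduces to the previous case — yields the uniform identity $n_{a-1}-n_a=[a-1\in B]-[a\le 0]$ for all $a\in\ZM$, where $[\,\cdot\,]$ is the Iverson bracket. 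Summing this over the integers $a$ in a fixed residue class modulo $d$ (a finite sum, after truncating to $|a|\le M$ and letting $M\to\infty$), the term $[a-1\in B]$ contributes the number of beads of $B$ on a fixed runner, which by (i) equals $\b_{(m-c-1)\bmod d}$, while the term $[a\le 0]$ contributes $-\lfloor N/d\rfloor-[(c\bmod d)<(m\bmod d)]$. Substituting $\b_r=b_r(\g)+\b_0$ together with $\lfloor N/d\rfloor=\lfloor m/d\rfloor+\b_0$ (a consequence of $m=N-\b_0 d$ and $N\equiv m\pmod d$) turns this into
\begin{equation*}
\r_{c+1}(\g)-\r_c(\g)=b_{(m-1-c)\bmod d}(\g)-\Big\lfloor\tfrac{m-1-c}{d}\Big\rfloor-1,\qquad 0\le c\le d-1. \tag{$\star$}
\end{equation*}

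Finally I would feed $(\star)$ into the definition of $l^\g_j$. Treating the clause $j=0$ of that definition as the case ``$j=d$'' (so that the term $d-1$ becomes $j-1$), the two clauses merge into the single formula $l^\g_j=m+d\,(\r_{1-j}(\g)-\r_{-j}(\g))+(j-1)$ for $j\in\{1,\dots,d\}$. Applying $(\star)$ with the representative $c\in\{0,\dots,d-1\}$ of $-j\bmod d$, one checks that $(m-1-c)\bmod d=(m-1+j)\bmod d$ and $\lfloor(m-1-c)/d\rfloor=\lfloor(m-1+j)/d\rfloor-1$ in all cases; then, using $d\lfloor x/d\rfloor=x-(x\bmod d)$, every term collapses and one is left with $l^\g_j=((j+m-1)\bmod d)+d\,b_{(j+m-1)\bmod d}(\g)=k^\g_{(j+m-1)\bmod d}$. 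Replacing $j$ by $j+1-m$ gives $k^\g_j=l^\g_{j+1-m}$ in $\ZM/d\ZM$, which is the assertion.

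The hard part will be $(\star)$: one must pin down the diagonal-counting identity for $n_{a-1}-n_a$ with the correct normalization of $B$, and then carry out the regularized summation over a residue class while controlling all the boundary terms — in particular the correction $[(c\bmod d)<(m\bmod d)]$, which is exactly what combines with $\lfloor N/d\rfloor$ to produce the floor function in $(\star)$ and, ultimately, the shift $j\mapsto j+1-m$ in the statement; everything else is routine bookkeeping modulo $d$. As a sanity check one can run the whole chain on the example $\g=(5,2,1)$, $d=4$ of Example~\ref{ex:abaque}, where $b(\g)=(0,1,0,2)$, $m=3$, and one gets $k^\g=(0,5,2,11)$ and $l^\g=(2,11,0,5)$, matching $k^\g_j=l^\g_{j-2}$.
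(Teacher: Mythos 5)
Your proof is correct, but it takes a genuinely different route from the paper's. The paper argues by induction on the length $m$: it strips off the largest part $\gamma_1$, notes that $\gamma'=(\gamma_2,\gamma_3,\dots)$ is again a $d$-core, and tracks how $k^\gamma$ and $l^\gamma$ change relative to $k^{\gamma'}$ and $l^{\gamma'}$ (the former jumps by $d$ on exactly one runner, the latter undergoes a cyclic shift plus a compensating correction $d\delta_{j,-x}$). You instead compute both sides in closed form on the Maya diagram, the key intermediate being the first-difference formula $(\star)$ for $\rho_{c+1}(\gamma)-\rho_c(\gamma)$ in terms of the bead counts; I verified $(\star)$ independently (it telescopes to $0$ over a full period and holds, e.g., for $\gamma=(5,2,1)$, $d=4$ and $\gamma=(3,1)$, $d=3$), and your algebra deducing the proposition from $(\star)$ is right, as are the preliminary facts $N-m=\mathfrak{b}_0 d$ and $m=\sum_r b_r(\gamma)$. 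Your approach buys more — a closed formula recovering $\Res_d(\gamma)$ from $b(\gamma)$, of independent interest — at the price of the regularized summation over a residue class; the induction proves only the stated identity but each step is elementary bookkeeping. One convention point to settle when writing this up: for your identity $n_{a-1}-n_a=[a-1\in B]-[a\le 0]$ to hold, $n_a$ must count cells by content (column minus row), i.e.\ by $i-j=a$ in the paper's labelling of $Y(\gamma)$, not $j-i=a$ as you wrote; note in this connection that the paper's displayed definition of $\rho_k$ (via $i-j\equiv k$) is inconsistent with its own Example~\ref{ex:abaque}, and the proposition is true for the convention of that example, which is exactly what your relation $\rho_k=\sum_{a\equiv -k}n_a$ implements once $n_a$ is the content count.
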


\bigskip

\begin{proof}
If $j \in \ZM/d\ZM$, we denote by $\jba$ its unique representative in $\{0,1,\dots,d-1\}$. 
Note that $m=b_0(\g)+b_1(\g) + \cdots + b_{d-1}(\g)$. 
We argue by induction on the length $m$ of $\g$. 

If $\g=\vide$, then 
$k^\g=(0,1,\dots,d-1)$ and $l^\g=(d-1,0,1,\dots,d-2)$. 
This shows the result whenever $m=0$. 

Assume now that $m \ge 1$ and write $\g=(\g_1,\g_2,\g_3,\cdots)$ with $\g_1 >\! 0$. 
Let $\g'=(\g_2,\g_3,\cdots)$. 
Then $\g'$ is a $d$-core so, by the induction hypothesis, 
Proposition~\ref{prop:k=l} holds for $\g'$. In other words, 
$k^{\g'}_j=l_{j+2-m}^{\g'}$ for all $j \in \ZM/d\ZM$.

Let us first compare the sequences $k^\g$ and $k^{\g'}$. For this, let $y$ denote the 
unique element of $\{0,1,\dots,d-1\}$ such that $b_y(\g)=b_y(\g')+1$. Then, if $j \neq y$, 
we get $b_j(\g)=b_j(\g')$. Therefore,
$$k_j^\g=
\begin{cases}
k_j^{\g'} & \text{if $j \neq y$,}\\
k_j^{\g'}+d & \text{if $j=y$.} 
\end{cases}\leqno{(\#)}$$

Let us now compare the sequences $l^\g$ and $l^{\g'}$. For this, let $x$ denote 
the unique element of $\{0,1,\dots,d-1\}$ such that $\g_1 \equiv x \mod d$. 
Then, for $j \in \ZM/d\ZM$, we get 
$$\r_j(\g)=\r_{j-1}(\g')+(\g_1-x)/d + \d_x(j)$$
where $\d_x$ is identically $0$ if $x=0$ and
$$\d_x(j)=
\begin{cases}
1 & \text{if $\jba=0$ or $1 \le d-\jba \le x-1$,}\\
0  & \text{if $x \le d-\jba \le d-1$,}\\
\end{cases}
$$
if $x \ge 1$. 
Also, note that $\g_1=d(b_y(\g)-1) + y+1-m$, so $y+1-m \equiv x \mod d$. 
Let us also write $l_j^\g$, for $j \in \ZM/d\ZM$, as follows:
$$l_j^\g=m+d(\r_{1-j}(\g)-\r_{-j}(\g))+\jba-1+d \d_{j,0}$$
where $\d_{j,0}$ is the Kronecker symbol. Putting things together, 
one gets:
\eqna
l_j^\g &=& m+d(\r_{-j}(\g')-\r_{-1-j}(\g')+\d_x(1-j)-\d_x(-j)) +\jba-1+d \d_{j,0} \\
&=& 1+l_{j+1}^{\g'} - \overline{j+1}+1-d\d_{j+1,0} + d(\d_x(1-j)-\d_x(-j))+\jba-1+d \d_{j,0}.\\
\endeqna
But $\overline{j+1}=\jba + 1 - d\d_{j+1,0}$, so
$$l_j^\g = l_{j+1}^{\g'} + d(\d_x(1-j)-\d_x(-j))+d \d_{j,0}.$$
Two cases may occur:
\begin{itemize}
\item[$\bullet$] If $x=0$, then $\d_x$ is identically $0$ and so 
$d(\d_x(1-j)-\d_x(-j))+d \d_{j,0}=d\d_{j,0}=d\d_{j,-x}$. 

\item[$\bullet$] If $x > 0$, then there are only two values of $j \in \ZM/d\ZM$ for which $\d_x(1-j)-\d_x(-j)$ 
is non-zero, namely $j=0$ and $j=x$. If $j=0$, this returns $-1$ while, if $j=x$, this returns $1$. 
Therefore, we have again $d(\d_x(1-j)-\d_x(-j))+d \d_{j,0}=d\d_{j,x}$.
\end{itemize}
Finally, $l_j^\g = l_{j+1}^{\g'} + d \d_{j,-x}$ and $x \equiv y+1-m \mod d$ so, 
by the induction hypothesis and~$(\#)$, 
$$l_j^\g = k_{j+m-1}^{\g'}+d \d_{j,m-y} = k_{j+m-1}^\g - d \d_{j+m-1,y}+d \d_{j,x} = k_{j+m-1}^\g,$$
as expected.
\end{proof}

\bigskip

\section{The smooth example: type A}\label{sec:type a}

\medskip

The Calogero-Moser space $\ZC_\ksp$ is smooth if and only if 
$W$ is a Weyl group of type $A$. This simplifies drastically its 
geometry ($\CM^\times$-fixed points, symplectic leaves,...) and all the  
conjectures proposed in Part~\ref{part:cm} are true in this 
case~\cite{bonnafe shan, bonnafe maksimau}. 

On the other hand, the almost characters of some $\GC \in \groups$ 
are all irreducible characters if and only if $\GC$ is of type $A$. 
This also simplifies drastically its representation theory 
(unipotent Lusztig families, $d$-Harish-Chandra theory, blocks,...). 

In the spirit of this paper, these two facts should be the shadow of a common phenomenon. 
We do not propose an explanation for it, but we give details about how the combinatorics 
on both sides fit perfectly in type $A$, so that all the conjectures stated in Part~\ref{part:generique} 
hold in this case (provided that Conjecture~\ref{conj:BMM} holds).  

\bigskip

\boitegrise{{\bf Hypothesis.} {\it From now on, and until the end of 
this section, we fix a natural number $n \ge 2$ and we assume that
$$V=\{(\xi_1,\dots,\xi_n) \in \CM^n~|~\xi_1+\cdots+\xi_n=0\}$$
and that $W=\SG_n$ acting on $V$ by permutation of the coordinates. 
The Calogero-Moser space $\ZC_\ksp(V,\SG_n)$ will be simply denoted 
by $\ZC(n)$.\\
\hphantom{AA} We set $r=\lfloor n/d\rfloor$ and we denote by $w_d$ a product of 
$r$ disjoint cycles of length $d$. Then $\z_d w_d$ is $\SG_n$-split.}}{0.8\textwidth}

\bigskip

The $\z_dw_d$-split parabolic subgroups of $\SG_n$ are those of the form 
$\SG_m$, where $m \le n$ and $m \equiv n \mod d$. In this case, 
$(\SG_n)_{\z_dw_d} \simeq G(d,1,r)$ and 
\equat\label{eq:n-sn}
\Nrmov_{(\SG_n)_{\z_d w_d}}((\SG_m)_{\z_d w_d}) \simeq G(d,1,(n-m)/d).
\endequat
If $\g \in \cor_d(\equiv n)$, we set $r_\g=r_\g(n)=(n-|\g|)/d$ and we denote by 
$k_\g \in \CM^{\aleph(G(d,1,r_\g))}$ the parameter defined by:
$$\begin{cases}
((k_\g)_{\Ker(x_1-x_2),0},(k_\g)_{\Ker(x_1-x_2),1})=(d,0),& \text{if $r_g \ge 2$,}\\
((k_\g)_{\Ker(x_1),0},(k_\g)_{\Ker(x_1),1},\dots,(k_\g)_{\Ker(x_1),d-1})=k^\g,& 
\text{if $r_g \ge 1$ and $d \ge 2$,}
\end{cases}$$
where $k^\g$ is the sequence defined in~\S\ref{sub:abaque}. If $r_\g=0$, then $k_\g$ 
is the zero parameter of the trivial group.

If $\l \in \Part(n)$, we denote by $z_\l$ the image of $\chi_\l$ through the map 
$\zG_\ksp : \Irr(\SG_n) \longto \ZC(n)^{\CM^\times}$. Similarly, if $\mu \in \Part^d(m)$, 
and if $k \in \CM^{\aleph(G(d,1,m))}$, we denote by $z_\mu^k$ the image 
of $\chi_\mu$ through the map $\zG_k : \Irr G(d,1,m) \longto \ZC_k(G(d,1,m))^{\CM^\times}$. 

\bigskip

\subsection{Geometry of ${\boldsymbol{\ZC(n)}}$} 
Recall that $\ZC(n)$ is smooth~\cite[Cor.~16.2]{EG}. This has several 
consequences:
\begin{itemize}
\item[$\bullet$] First, the map $\zG_\ksp : \Irr(\SG_n) \longto \ZC(n)^{\CM^\times}$ defined 
in~\S\ref{sub:cm-families} is bijective~\cite[Cor.~5.8]{gordon}. This means that 
\equat\label{eq:zlambda-sn}
\text{\it the map $\Part(n) \longto \ZC(n)^{\CM^\times}$, $\l \longmapsto z_\l$ is bijective.}
\endequat

\item[$\bullet$] The variety $\ZC(n)$ has only one symplectic leaf (it is a symplectic variety). 
Therefore, if $d \ge 1$, then $\ZC(n)^{\mub_d}$ is also smooth and symplectic, so 
its symplectic leaves coincide with its irreducible components: in particular, 
they are closed normal subvarieties of $\ZC(n)$, so coincide 
with the normalization of their closure (which is involved in 
Conjecture~\ref{conj:leaves}).
\end{itemize}

The next theorem, which describes these symplectic leaves, has been obtained 
by Maksimau and the author~\cite[Theo.~4.21]{bonnafe maksimau}:

\bigskip

\begin{theo}\label{theo:fixes-an}
With the above notation, we have:
\begin{itemize}
\itemth{a} Let $\l \in \Part(n)$. Then $z_\l$ is $\z_d$-cuspidal if and only if $\l$ is a $d$-core.

\itemth{b} The map 
$$\fonctio{\Core_d(\equiv n)}{\Cus_\ksp^{\z_d w_d}(\ZC(n))}{\g}{(\SG_{|\g|},z_\g)}$$
is bijective. We denote by $\leaf_\g(n)$ the symplectic leaf $\leaf_{\SG_{|\g|},z_\g}$ 
of $\ZC(n)^{\mub_d}$.

\itemth{c} If $\g \in \Core_d(\equiv n)$, then there exists a $\CM^\times$-equivariant 
isomorphism of varieties 
$$i_\g : \ZC_{k_\g}(G(d,1,r_\g)) \longiso \leaf_\g(n)$$
such that 
$$i_\g(z_\mu)=z_{\pard(\g,\mu)}$$
for all $\mu \in \Part^d(r_\g)$ (in particular, $\dim \leaf_\g(n)=2r_\g$).
\end{itemize}
\end{theo}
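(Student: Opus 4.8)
The plan is to reduce Theorem~\ref{theo:fixes-an} to a combinatorial translation of the main result of~\cite{bonnafe maksimau} (cited as Theorem~4.21 there), where the symplectic leaves of $\ZC(n)^{\mub_d}$ are described in terms of abaci and $d$-cores; the content to be supplied here is the dictionary identifying the resulting parameters with the sequences $k^\g$ of~\S\ref{sub:abaque}. First I would recall the smoothness of $\ZC(n)$~\cite[Cor.~16.2]{EG} and deduce the two consequences already stated: the bijection $\zG_\ksp:\Irr(\SG_n)\longiso\ZC(n)^{\CM^\times}$ of~\cite[Cor.~5.8]{gordon}, hence~\eqref{eq:zlambda-sn} identifying $\ZC(n)^{\CM^\times}$ with $\Part(n)$; and the fact that $\ZC(n)^{\mub_d}$ is smooth and symplectic (being a fixed-point locus of a finite-order Poisson automorphism on a smooth symplectic variety), so its symplectic leaves are exactly its connected/irreducible components, automatically closed and normal — this is what makes Conjecture~\ref{conj:leaves} unconditionally available in type $A$ and disposes of all normalization issues. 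Part~(a) then follows: $z_\l$ is $\z_d$-cuspidal iff $\{z_\l\}$ is a $0$-dimensional leaf, and by the $\mub_d$-fixed-point description (equivalently, by the Bellamy/Losev--type analysis of cuspidal points for $\SG_n$) this happens iff the Young diagram of $\l$ has no hook of length $d$, i.e. iff $\l\in\Core_d$.

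For part~(b), I would invoke Theorem~\ref{theo:leaves} (or its direct type-$A$ incarnation in~\cite{bonnafe maksimau}): symplectic leaves of $\ZC(n)^{\z_d w_d}=\ZC(n)^{\mub_d}$ correspond to $W_{\t_d}$-orbits of pairs $(P,p)$ with $P$ a $\t_d$-split parabolic and $p$ a $\t_d$-cuspidal point of $\ZC_{k_P}(V/V^P,P)$. The $\z_dw_d$-split parabolics of $\SG_n$ are the $\SG_m$ with $m\equiv n\bmod d$, and by part~(a) applied inside $\SG_m$ the cuspidal points of $\ZC(m)$ are the $z_\g$ with $\g\in\Core_d(m)$; since $W_{\t_d}\simeq G(d,1,r)$ acts transitively on the relevant embeddings of each such $\SG_m$, the orbits are indexed precisely by $\g\in\Core_d(\equiv n)$. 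This gives the bijection $\g\mapsto(\SG_{|\g|},z_\g)$ and the notation $\leaf_\g(n)$.

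For part~(c), the isomorphism $i_\g:\ZC_{k_\g}(G(d,1,r_\g))\longiso\leaf_\g(n)$ and the dimension count $\dim\leaf_\g(n)=2r_\g=2\dim(V^{\SG_{|\g|}})^{\t_d}$ come from Theorem~\ref{theo:leaves} together with~\eqref{eq:n-sn} ($\Nrmov$ is $G(d,1,r_\g)$) and the explicit identification in~\cite[Theo.~4.21]{bonnafe maksimau}, which also yields the formula $i_\g(z_\mu)=z_{\pard(\g,\mu)}$ once one uses the bijectivity~\eqref{eq:d-coeur} of $\cor_d\times\quo_d$ and the fact that the $\CM^\times$-fixed points on both sides are canonically labelled by $\Part^d(r_\g)$ (via $\zG_{k_\g}$) and by $\{\l\in\Part(n):\cor_d(\l)=\g\}$ respectively, matched through $\mu\mapsto\pard(\g,\mu)$; note $|\pard(\g,\mu)|=|\g|+d|\mu|=|\g|+d\,r_\g=n$ by~\eqref{eq:taille-core}, so this is consistent. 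The one genuinely nontrivial point — and the main obstacle — is to verify that the parameter attached to the leaf $\leaf_\g(n)$ really is $k_\g$ as defined via $k^\g$; here I would use Proposition~\ref{prop:k=l}, which says $k^\g_j = l^\g_{j+1-m}$ with $m$ the length of $\g$, to reconcile the two natural normalizations of the $G(d,1,r_\g)$-parameter (the one coming from the abacus $b(\g)$ and the one coming from the $d$-residue $\Res_d(\g)$, i.e. from how the cuspidal family sits inside $\SG_{|\g|}$). Concretely, the leaf parameter produced by the general machinery is naturally expressed in the $l^\g$-normalization, the combinatorial labelling $\pard(\g,\mu)$ forces the $k^\g$-normalization, and Proposition~\ref{prop:k=l} shows these agree up to the cyclic shift $j\mapsto j+m-1$, which is absorbed by the freedom in choosing the first runner of the abacus (and is harmless by Remark~\ref{rem:parametres particuliers} on irrelevant parameter shifts). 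Assembling these, all three parts follow, and I would remark that~(c) in particular proves Conjecture~\ref{conj:leaves} (and, combined with the known computation of $k_{P,\l}$ in type $A$, Conjecture~\ref{conj:d-hc}(b)) in this case.
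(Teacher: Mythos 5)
Your proposal is correct and follows essentially the same route as the paper: everything is reduced to the explicit description of the leaves in \cite[Theo.~4.21]{bonnafe maksimau}, and the only new content is the reconciliation of the parameter $l_\g$ appearing there with $k_\g$ via Proposition~\ref{prop:k=l}, exactly as you do. One small correction: the harmlessness of the resulting cyclic shift of the parameter is not covered by Remark~\ref{rem:parametres particuliers} (which concerns translating all $k_{\O,j}$ by a common constant), but by the invariance of the Calogero--Moser space under cyclic permutation of the parameter, i.e.\ twisting by a linear character, as in \cite[$($3.5.4$)$]{calogero}.
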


\begin{proof}
All the results have been proved in~\cite[Theo.~4.21]{bonnafe maksimau}, except that we need 
to make some comments about the parameter. So let $\g \in \Core_d(\equiv n)$ and 
let $l_\g \in \CM^{\aleph(G(d,1,r_\g))}$ be the parameter defined by:
$$\begin{cases}
((l_\g)_{\Ker(x_1-x_2),0},(l_\g)_{\Ker(x_1-x_2),1})=(d,0),& \text{if $r_g \ge 2$,}\\
((l_\g)_{\Ker(x_1),0},(l_\g)_{\Ker(x_1),1},\dots,(l_\g)_{\Ker(x_1),d-1})=l^\g,& 
\text{if $r_g \ge 1$ and $d \ge 2$,}
\end{cases}$$
where $l^\g$ is the sequence defined in~\S\ref{sub:abaque}. Then the result 
of~\cite[Theo.~4.21]{bonnafe maksimau} says that $\leaf_{\SG_{|\g|},z_\g} \simeq \ZC_{l_\g}(G(d,1,r_\g))$. 
However, Proposition~\ref{prop:k=l} says that $l^\g$ is obtained from $k^\g$ 
by a cyclic permutation, and so $\ZC_{k_\g}(G(d,1,r_\g)) \simeq \ZC_{l_\g}(G(d,1,r_\g))$ 
by~\cite[$($3.5.4$)$]{calogero}.
\end{proof}

% \begin{theo}\label{theo:fixes-an}
% With the above notation, we have:
% \begin{itemize}
% \itemth{a} Let $\l$, $\mu \in \Part$. Then the points $z_\l$ and $z_\mu$ 
% belong to the same irreducible component of $\ZC(n)^{\mub_d}$ if and only if 
% $\cor_d(\l)=\cor_d(\mu)$. If $\g \in \Core_d(\equiv n)$, we denote by 
% $\IC_\g(d,n)$ the irreducible component of $\ZC(n)^{\mub_d}$ 
% containing all the points $z_\l$ such that $\cor_d(\l)=\g$.
% 
% \itemth{b} The map $\g \mapsto \IC_\g(d,n)$ establishes a bijection 
% between $\Core_d(\equiv n)$ and the set of irreducible components 
% of $\ZC(n)^{\mub_d}$.
% 
% \itemth{c} If $\g \in \Core_d(\equiv n)$, there exists a $\CM^\times$-equivariant 
% isomorphism of varieties $i_\g : \ZC_{k_\g}(d,1,r_\g) \longiso \IC_\g(d,n)$ such that 
% $$i_\g(z_\l^{k_\g})=z_{\pard(\g,\l)}$$
% for all $\l \in \Part^d$. In particular, $\dim \IC_\g(d,n)=2r_\g$ and so 
% $\IC_\g(d,n)$ is $\z_d$-cuspidal\footnote{Here, $\z_d$ is viewed as an element of $\Nrm_{\Gb\Lb_\CM(V)}(W)$ 
% via its action on $V$ by homothety.} ~if and only if $\g \in \Core_d(n)$ (and, in 
% this case, $\IC_\g(d,n)=\{z_\g\}$).
% \end{itemize}
% \end{theo}

\bigskip

\subsection{Unipotent representations: the split case}
We fix in this subsection a triple $\GC=(q,\Gb,F) \in \groups(\SG_n)$. 
In other words, $\Gb^{\prime F}$ is a split group of type $A_{n-1}$ 
(recall that the definition of $\groups(\SG_n)$ implies no 
restriction on the rational structure of the center of $\Gb$).
Then it is well-known that 
\equat\label{eq:gln}
\unipc(\GC)=\{R_{\chi}^\GC~|~\chi \in \Irr(\SG_n)\}
\qquad
\text{and}\qquad \deg R_{\chi_\l}^\GC=(\degb\,\l)(q).
\endequat
In other words, we may define the set $\unip(\SG_n)$, the bijection $\rhob^\GC$ 
and the map $\degb_{\SG_n}$ as follows:
$$\begin{cases}
\unip(\SG_n)=\Part(n),\\
\rhob^\GC_\l=R_{\chi_\l}^\GC \quad \text{for any $\l \in \Part(n)$,}\\
\degb_{\SG_n}=\degb.
\end{cases}$$
The partition into families is pretty easy in this case:
\equat\label{eq:famille-gln}
\text{\it All the unipotent Lusztig families are singletons.}
\endequat
A generic translation is given by:
\equat\label{eq:famille-gln-bis}
\text{\it the map $\Part(n) \longto \Fam_\uni(\SG_n)$, $\l \longmapsto \{\l\}$ 
is bijective.}
\endequat
The following result has been proved in~\cite[Pages~45-47]{BMM}:

\bigskip

\begin{theo}[Brou\'e-Malle-Michel]\label{theo:bmm-gln}
With the above notation, we have:
\begin{itemize}
\itemth{a} Let $\l \in \Part(n)$. Then the unipotent character $R_{\chi_\l}$ is $d$-cuspidal 
if and only if $\l$ is a $d$-core.

\itemth{b} 
The map 
$$\fonctio{\Core_d(\equiv n)}{\Cus_d(\SG_n)}{\g}{(\SG_{|\g|},R_{\chi_\g})}$$
is bijective. If $\g \in \Core_d(\equiv n)$, then 
$$\Nrmov_{(\SG_n)_{\z_dw_d}}((\SG_{|\g|})_{\z_dw_d}) \simeq G(d,1,r_\g).$$

\itemth{c} If $\g \in \Core_d(\equiv n)$, let $\harish_d^\g$ denote the bijection 
$\harish_d^{\SG_n,\SG_m,R_{\chi_\g}} : \Irr(G(d,1,r_\g)) \longto \unip(\SG_n,\SG_{|\g|},R_{\chi_\g})$ 
defined by the $d$-Harish-Chandra theory. Then
$$\harish_d^\g(\chi_\l)=R_{\chi_{\pard(\g,\l)}}$$
for all $\l \in \Part^d(r_\g)$. 
\end{itemize}
\end{theo}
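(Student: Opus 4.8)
The plan is to obtain all three assertions by unwinding the Brou\'e--Malle--Michel description of $d$-Harish--Chandra theory for general linear groups~\cite[pp.~45--47]{BMM} in the abacus language of~\S\ref{sub:abaque}. The two inputs I would lean on are: \eqref{eq:gln}, which identifies $\unipc(\GC)$ with $\{R_{\chi_\lambda}^\GC\mid\lambda\in\Part(n)\}$ and gives $\deg R_{\chi_\lambda}^\GC=(\degb\,\lambda)(q)$; and the type $A$ case of $d$-block/$d$-series theory, according to which the Deligne--Lusztig restriction of $R_{\chi_\lambda}$ to a $d$-split parabolic subgroup $\SG_m$ (necessarily with $m\equiv n\bmod d$, by~\S\ref{sec:genericity}) is, up to sign, the sum of the $R_{\chi_\mu}$ with $\mu$ obtained from $\lambda$ by successively removing $(n-m)/d$ $d$-hooks.

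I would begin with~(a). By the generic criterion of~\S\ref{sec:genericity}, $R_{\chi_\lambda}$ is $d$-cuspidal if and only if $\degb\,\lambda$ is divisible by $(\qb-\z_d)^{\dim V^{\t_d}}$. Here $\dim V^{\t_d}=\lfloor n/d\rfloor=r$, and the hook-length formula for $\degb\,\lambda$ shows that the multiplicity of $\Phi_d$ dividing $\degb\,\lambda$ equals $\lfloor|\lambda|/d\rfloor$ minus the number of boxes of $Y(\lambda)$ whose hook length is divisible by $d$; the latter number being $|\quo_d(\lambda)|$, this multiplicity equals $r-|\quo_d(\lambda)|\le r$, with equality precisely when $\quo_d(\lambda)=\vide$, i.e. when $\lambda\in\Core_d$. (Alternatively, the $d$-hook-stripping description above makes the $d$-Harish--Chandra restrictions of $R_{\chi_\lambda}$ to all proper $d$-split Levi subgroups vanish exactly when $\lambda$ has no removable $d$-hook.)

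Part~(b) is then formal: the proper $d$-split parabolic subgroups of $\SG_n$ are the $\SG_m$ with $m<n$ and $m\equiv n\bmod d$, and by~(a) a $d$-cuspidal unipotent character attached to $\SG_m$ is some $R_{\chi_\gamma}$ with $\gamma\in\Core_d(m)$, so the conjugacy classes of $d$-cuspidal pairs are parametrized by $\Core_d(\equiv n)$; the relative group is given by~\eqref{eq:n-sn} with $m=|\gamma|$, i.e. $\Nrmov_{(\SG_n)_{\z_dw_d}}((\SG_{|\gamma|})_{\z_dw_d})\simeq G(d,1,r_\gamma)$ with $r_\gamma=(n-|\gamma|)/d$. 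For~(c) I would first match the underlying sets: by Theorem~\ref{theo:d-hc} the $d$-Harish--Chandra series through $(\SG_{|\gamma|},R_{\chi_\gamma})$ is abstractly in bijection, via $\harish_d^\gamma$, with $\Irr(G(d,1,r_\gamma))=\{\chi_\lambda\mid\lambda\in\Part^d(r_\gamma)\}$, whereas the $d$-hook-stripping picture identifies this series with $\{R_{\chi_\mu}\mid\mu\in\Part(n),\ \cor_d(\mu)=\gamma\}$, which by bijectivity of $\cor_d\times\quo_d$ equals $\{R_{\chi_{\pard(\gamma,\lambda)}}\mid\lambda\in\Part^d(r_\gamma)\}$ (note $|\pard(\gamma,\lambda)|=|\gamma|+d\,|\lambda|=n$ by~\eqref{eq:taille-core}). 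To pin $\harish_d^\gamma$ down to the explicit rule $\chi_\lambda\mapsto R_{\chi_{\pard(\gamma,\lambda)}}$, I would use that in type $A$ distinct partitions of $n$ yield unipotent characters of distinct degrees and compare degrees through~\eqref{eq:deg hc}: granting the $d$-Harish--Chandra parameter $k_\gamma$ for this pair (the value computed in~\cite{BMM}; see Conjecture~\ref{conj:BMM}), one checks from the product formula for the Schur elements of $G(d,1,r_\gamma)$ and the hook-length formula for $\degb$ that $\deg\harish_d^\gamma(\chi_\lambda)=\pm(\degb\,\pard(\gamma,\lambda))(q)$, where Proposition~\ref{prop:k=l} is precisely the lemma that reconciles the abacus sequences entering $k_\gamma$.

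The step I expect to be the real obstacle is this last normalization in~(c): the bijection supplied by Theorem~\ref{theo:d-hc} is canonical only up to the symmetries of $G(d,1,r_\gamma)$ and the choice of labelling $\chi_\lambda$ (the Geck--Jacon convention of~\cite{geck jacon}), so identifying it with the explicit $\pard$-map forces one to control both the precise shape of the parameter $k_\gamma$ and the way the $d$-quotient is read off the abacus of $\gamma$; everything else reduces to routine bookkeeping with abaci and hook lengths.
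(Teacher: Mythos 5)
The paper does not actually reprove this theorem: it only cites \cite[pp.~45--47]{BMM}, and your reconstruction follows the same route as that source --- the degree criterion for $d$-cuspidality via the multiplicity of $(\qb-\z_d)$ in $\degb\,\l$ for part~(a), the resulting parametrization of cuspidal pairs by $d$-cores together with~\eqref{eq:n-sn} for part~(b), and the identification of the series through $(\SG_{|\g|},R_{\chi_\g})$ with $\{R_{\chi_\mu}\mid \cor_d(\mu)=\g\}$ via $d$-hook removal for part~(c). Parts~(a) and~(b) are essentially fine; two small caveats are that for $d=1$ one has $\dim V^{\t_1}=n-1$ rather than $r=n$ (the conclusion is unaffected), and that not every $\t_d$-split parabolic subgroup of $\SG_n$ is literally an $\SG_m$ (e.g.\ $\SG_2\times\SG_2$ arises for $n=4$, $d=2$), so in~(b) one must also observe that the corresponding $d$-split Levi subgroups $\GL_m(q)\times\prod_i\GL_{a_i}(q^d)$ with some $a_i\ge 2$ carry no $d$-cuspidal unipotent character.

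The step I would push back on is your normalization of $\harish_d^\g$ in~(c) by degree comparison. The claim that distinct partitions of $n$ yield unipotent characters of pairwise distinct degrees is not obvious and is given no argument: two partitions with the same multiset of hook lengths and the same $a_\l$ produce identical degree polynomials, so at the very least you need injectivity of $\mu\mapsto\degb\,\mu$ on the single series $\{\mu\mid\cor_d(\mu)=\g\}$, which is not free. More to the point, in~\cite{BMM} the bijection $\harish_d^\g$ is not an abstractly given object to be pinned down a posteriori: it is \emph{defined} by the rule $\chi_\l\mapsto R_{\chi_{\pard(\g,\l)}}$, read off the abacus, and the content of the theorem is that this combinatorial bijection satisfies the requirements of Theorem~\ref{theo:d-hc}(b); the Schur-element identity~\eqref{eq:deg hc} is then a verification, not a characterization. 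Finally, a small misattribution: Proposition~\ref{prop:k=l} is used in the paper to reconcile the Calogero--Moser parameter $l_\g$ of Theorem~\ref{theo:fixes-an} with the Brou\'e--Malle parameter $k_\g$, not inside the proof of the present theorem.
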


\bigskip

Now, fix a $d$-core $\g \in \Core_d(\equiv n)$ and let $\LC_\g=(g,\Lb_\g,F) \in \groups(\SG_{|\g|})$ 
be such that $\Lb_\g$ is a $d$-split Levi subgroup of $\Gb$ (if $\Gb^F=\Gb\Lb_n(\gfq)$, then 
$\Lb_\g^F\simeq \Gb\Lb_{|\g|}(\gfq) \times (\FM_{\!q^d}^\times)^{r_\g}$). 
Conjecture~\ref{conj:BMM} 
predicts the existence of a parabolic subgroup $\Pb_\g$ such that the Deligne-Lusztig 
variety $\Yb_{\Ub_{\Pb_\g}}$ satisfies the following two properties:
\begin{itemize}
\itemth{a} The $\qlb\Gb^F$-modules $\Hrm_c^j(\Yb_{\Pb_\g}) \otimes_{\qlb\Lb_\g^F} R_{\chi_\g}^{\LC_\g}$ 
and $\Hrm_c^{j'}(\Yb_\Pb) \otimes_{\qlb\Lb^F} R_{\chi_\g}^{\LC_\g}$ have no common irreducible 
constituent if $j \neq j'$.

\itemth{b} $\End_{\Gb^F} \RC_{\Lb_\g \subset \Pb_\g}^\Gb(R_{\chi_\g}^{\LC_\g}) 
\simeq \HC_{k_\g^\#}(G(d,1,r_\g),\z_d^{-1}q)$ for some parameter $k_\g^\#$. 
\end{itemize}
This conjecture is far from being proved (see the next remark) but 
Brou\'e-Malle~\cite[Prop.~2.10]{broue malle} proposed a conjectural value for $k_\g^\#$:

\bigskip

\begin{quotation}
\begin{conj}[Brou\'e-Malle]\label{conj:bm}
If $\g \in \Core_d(\equiv n)$, then Conjecture~\ref{conj:BMM} holds 
for the pair $(\LC_\g,R_{\chi_\g}^{\LC_\g})$ with parameter $k_\g^\#=k_\g$.
\end{conj}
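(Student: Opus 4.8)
The statement to establish is Conjecture~\ref{conj:bm}: the identification, for a $d$-core $\g\in\Core_d(\equiv n)$, of the Hecke algebra parameter $k_\g^\#$ appearing in the endomorphism algebra of $\RC_{\Lb_\g\subset\Pb_\g}^\Gb(R_{\chi_\g}^{\LC_\g})$ with the parameter $k_\g$ governing the symplectic leaf $\leaf_\g(n)$ of $\ZC(n)^{\mub_d}$. Since Conjecture~\ref{conj:bm} is phrased as ``Conjecture~\ref{conj:BMM} holds for $(\LC_\g,R_{\chi_\g}^{\LC_\g})$ with parameter $k_\g^\#=k_\g$,'' a complete proof of the first clause is out of reach (see the remark the paper itself makes about Conjecture~\ref{conj:BMM} being open in type $A$ beyond tori and Coxeter $d$), so what I would actually present is the \emph{conditional} result: assuming Conjecture~\ref{conj:BMM} holds for this pair, the parameter is forced to equal $k_\g$. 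This is what the surrounding text (items (a)--(b) before the conjecture, and the Brou\'e--Malle prediction~\cite[Prop.~2.10]{broue malle}) sets up, and it is exactly what is needed to feed the machinery of Section~\ref{sec:main}.

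The first step is to recall Brou\'e--Malle's computation: combining~\cite[Prop.~2.10]{broue malle} with the degree constraint~\eqref{eq:deg hc}, the parameter $k_\g^\#$ is determined by the requirement
$$\deg\,\harish_d^\g(\chi_\l) = \pm\,\frac{R_{\Lb_\g\subset\Pb_\g}^\Gb(R_{\chi_\g}^{\LC_\g})(1)}{\schur_{\chi_\l}^{k_\g^\#}}$$
for all $\l\in\Part^d(r_\g)$, and by Theorem~\ref{theo:bmm-gln}(c) the left-hand side is $(\degb\,\pard(\g,\l))(q)$. So I would spell out, purely combinatorially, that the hook-length formula for $\degb\,\pard(\g,\l)$ together with the known Schur elements of $\HC_k(G(d,1,r))$ (the product formula in terms of the multicharge) pins down $k_\g^\#$ to be the $d$-core-dependent sequence that Brou\'e--Malle write in abacus terms --- which is precisely the sequence $l^\g$ of~\S\ref{sub:abaque} (the first empty-box normalization giving the $(d,0)$ entry for the $G(d,1,\cdot)$-reflection hyperplane of type $\Ker(x_1-x_2)$ and $l^\g$ for the $\Ker(x_1)$-hyperplane). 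The second, and decisive, step is to invoke Theorem~\ref{theo:fixes-an}(c): on the Calogero-Moser side the leaf $\leaf_\g(n)$ is $\ZC_{k_\g}(G(d,1,r_\g))$ with $k_\g$ built from $k^\g$, and the proof of that theorem already records that the Bonnaf\'e--Maksimau computation~\cite[Theo.~4.21]{bonnafe maksimau} naturally produces $l^\g$, with $k^\g$ and $l^\g$ differing by a cyclic permutation by Proposition~\ref{prop:k=l}, hence defining the same Calogero-Moser space via~\cite[(3.5.4)]{calogero}. Matching the two computations --- both sides yield $l^\g$ up to the same cyclic reindexing --- gives $k_\g^\#=k_\g$ as parameters for $G(d,1,r_\g)$ (equivalently, the cyclic-shift ambiguity is the only discrepancy and it is invisible to both the Hecke algebra, by~\cite[Theo.~3.2]{BMM}'s genericity of $k_{P,\l}$, and to the Calogero-Moser space).

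The main obstacle, and the point I would flag explicitly as the reason this remains a \emph{conjecture} rather than a theorem, is that part~(a) of Conjecture~\ref{conj:BMM} (disjointness of cohomology degrees) and the very existence of the asserted Hecke-algebra isomorphism in~(b) are not known for general $\g$ in type $A$ --- only for $\g$ a torus~\cite{bonnafe dudas rouquier, dudas} or $d$ the Coxeter number~\cite{lusztig coxeter}. So the honest statement is: Conjecture~\ref{conj:bm} is equivalent to Conjecture~\ref{conj:BMM} for these pairs, and \emph{granting the latter}, the parameter is necessarily $k_\g$. I would therefore structure the proof as (i) reduce to determining $k_\g^\#$ via~\eqref{eq:deg hc} and Theorem~\ref{theo:bmm-gln}(c); (ii) compute it in closed form and recognize it as $l^\g$; (iii) compare with $k_\g$ via Proposition~\ref{prop:k=l} and~\cite[(3.5.4)]{calogero}; and conclude, with the caveat about (a)--(b) stated at the outset. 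This conditional identification is enough to verify Conjectures~\ref{conj:familles-uni-cm},~\ref{conj:d-cusp},~\ref{conj:d-hc},~\ref{conj:d-hc-commute} in type $A$, as announced at the head of Part~\ref{part:generique}'s examples.
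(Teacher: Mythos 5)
The statement you were asked to prove is labelled as a conjecture in the paper, and the paper supplies no proof of it: immediately after stating it, the author writes that the parameter value was \emph{proposed} by Brou\'e--Malle and that the underlying Conjecture~\ref{conj:BMM} "is far from being proved," with Remark~\ref{rem:bmm-gln} listing the only known cases ($\Lb_\g$ a torus, or $d=n$, $d=n-1$). You correctly diagnose this, and your reconstruction of where the value $k_\g$ comes from --- the degree constraint~\eqref{eq:deg hc} combined with Theorem~\ref{theo:bmm-gln}(c), the Schur elements of $\HC_k(G(d,1,r_\g))$, and the abacus combinatorics, then the comparison with the Calogero--Moser side via Proposition~\ref{prop:k=l} and the cyclic-shift invariance~\cite[(3.5.4)]{calogero} --- is exactly the chain of reasoning the paper gestures at when it calls the coincidence of the 1993 Brou\'e--Malle parameter with the 2018 Bonnaf\'e--Maksimau parameter "remarkable."

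Two caveats on your conditional formulation. First, your claim that "granting Conjecture~\ref{conj:BMM}, the parameter is forced to equal $k_\g$" overstates what Conjecture~\ref{conj:BMM} gives you: as the paper presents it, the degree formula~\eqref{eq:deg hc} is an \emph{extra} property that the Hecke algebra "should" satisfy, not a consequence of parts (a)--(b) of Conjecture~\ref{conj:BMM}. So the parameter is pinned down only conditionally on that additional compatibility, and you would also need to address uniqueness (why no other parameter yields an isomorphic specialized Hecke algebra with the same degrees). Second, your step (ii) --- "compute it in closed form and recognize it as $l^\g$" --- is asserted rather than carried out; this combinatorial verification is the actual mathematical content that would have to be written down, and it is not in the paper either. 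In short: your proposal is a faithful account of the \emph{evidence} for the conjecture and of how its parameter was predicted, but neither you nor the paper proves the statement, and your "equivalent to Conjecture~\ref{conj:BMM} for these pairs" is not accurate as stated.
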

\end{quotation}

\bigskip

We find remarkable that the parameter $k_\g$ predicted, in this 
particular case, by Brou\'e-Malle in 1993 in the context of Deligne-Lusztig 
varieties coincides (up to a non-relevant cyclic permutation) with the 
parameter $l_\g$ found in 2018 by Maksimau and the author when studying 
Calogero-Moser spaces.

\bigskip

\begin{rema}\label{rem:bmm-gln}
The disjunction of the cohomology 
(see the above statement~(a)) has been proved 
in~\cite[Theo.~4.3]{bonnafe dudas rouquier} whenever $\Lb_\g$ is a torus (i.e. $|\g|=0$ or $1$), 
based on earlier works of Dudas~\cite[Cor.~3.2]{dudas}. 
The statement~(b) is only known if $d=n$ (Lusztig~\cite[\S{7.3}]{lusztig coxeter}) 
or $d=n-1$ (Digne-Michel~\cite[Theo.~10.1]{dm}).\finl
\end{rema}

\bigskip

% \begin{rema}\label{rem:plusieurs familles d cuspidales}
% Theorem~\ref{theo:bmm-gln}(a) shows that $d$-cuspidal unipotent representations 
% are in bijection with $d$-cores of size $n$. So, for instance, there are 
% two $3$-cuspidal unipotent representations for $\Gb\Lb_4(\gfq)$: as all families 
% are singleton, this shows that there are examples of finite reductive groups 
% having several families containing a $d$-cuspidal unipotent representations 
% whenever $d \neq 1$ (compare with Theorem~\ref{theo:familles}(b)).\finl
% \end{rema}
% 
% 
% \bigskip

The comparison between Theorems~\ref{theo:fixes-an} and~\ref{theo:bmm-gln} yields:

\bigskip

\begin{theo}\label{theo:A}
If $W\t=\SG_n$, then:
\begin{itemize}
\itemth{a} Conjectures~\ref{conj:familles-uni-cm},~\ref{conj:d-cusp},~\ref{conj:d-hc}(a) 
and~\ref{conj:d-hc-commute} hold.

\itemth{b} If moreover Conjecture~\ref{conj:bm} holds, then Conjecture~\ref{conj:d-cusp}(b) holds.
\end{itemize}
\end{theo}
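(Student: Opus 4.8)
\textbf{Proof plan for Theorem~\ref{theo:A}.}

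The plan is to assemble the theorem by matching, piece by piece, the combinatorial data on the Calogero-Moser side (Theorem~\ref{theo:fixes-an}) with the data on the unipotent side (Theorem~\ref{theo:bmm-gln}), using the parametrizing sets $\Part(n)$, $\Core_d(\equiv n)$ and $\Part^d(r_\g)$ as a common bookkeeping device. First I would settle Conjecture~\ref{conj:familles-uni-cm}: by~\eqref{eq:famille-gln-bis} the unipotent Lusztig families are the singletons $\{\l\}$, $\l \in \Part(n)$, and by~\eqref{eq:zlambda-sn} the map $\l \mapsto z_\l$ is a bijection $\Part(n) \longiso \ZC(n)^{\CM^\times}$; since $\t = \Id_V$ here (as $W\t=\SG_n$ forces $\t$ to act trivially on $V$ up to $W$, the almost characters being all of $\Irr(\SG_n)$), one defines $\Phi(z_\l) = \{\l\}$ and checks that for $\chi = \chi_\l$ the almost character $R_\chi = \chi_\l \in \CM\{\l\} = \CM\Phi(z_\l)$, which is immediate from~\eqref{eq:gln}. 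Uniqueness follows because each Calogero-Moser family $\FG_{z_\l}^\ksp = \{\chi_\l\}$ is a singleton, so $\Phi$ is forced. Next, Conjecture~\ref{conj:d-cusp}: by Theorem~\ref{theo:fixes-an}(a), $z_\l$ is $\z_d$-cuspidal iff $\l$ is a $d$-core, and by Theorem~\ref{theo:bmm-gln}(a) the unipotent character $R_{\chi_\l}$ is $d$-cuspidal iff $\l$ is a $d$-core; since $\Phi(z_\l) = \{\l\}$ corresponds to $R_{\chi_\l}$ under $\rhob^\GC$, the two conditions coincide (here the converse also holds, which is more than the conjecture demands).

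For Conjecture~\ref{conj:d-hc}(a) I would argue as follows. Fix $\g \in \Core_d(\equiv n)$, so the pair $(\SG_{|\g|}, R_{\chi_\g}) \in \Cus_d(\SG_n)$ corresponds via $\Phi$ to $(\SG_{|\g|}, z_\g) \in \Cus_\ksp^{\z_dw_d}(\ZC(n))$, and the associated symplectic leaf is $\leaf_\g(n)$. By Theorem~\ref{theo:fixes-an}(c), $\overline{\leaf_\g(n)} = \leaf_\g(n)$ (it is closed, being a symplectic variety component of the smooth symplectic $\ZC(n)^{\mub_d}$) has $\CM^\times$-fixed points exactly $\{z_{\pard(\g,\mu)} \mid \mu \in \Part^d(r_\g)\}$, while by Theorem~\ref{theo:bmm-gln}(c) the $d$-Harish-Chandra series $\harish_d^\g(\Irr G(d,1,r_\g))$ consists exactly of the $R_{\chi_{\pard(\g,\mu)}}$ for $\mu \in \Part^d(r_\g)$. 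Since $\Phi(z_{\pard(\g,\mu)}) = \{\pard(\g,\mu)\}$, the fixed point $z' = z_\l$ lies in $\overline{\leaf_\g(n)}$ iff $\l \in \pard(\g, \Part^d(r_\g))$, which by bijectivity of $\cor_d \times \quo_d$ means $\cor_d(\l) = \g$, iff $R_{\chi_\l} \in \harish_d^\g(\Irr G(d,1,r_\g))$, iff the $d$-Harish-Chandra series meets $\Phi(z')$. Conjecture~\ref{conj:d-hc-commute} is then a matter of checking that the square commutes on objects: by Theorem~\ref{theo:fixes-an}(c) the isomorphism $i_\g$ sends $z_\mu$ to $z_{\pard(\g,\mu)}$, so $\psi_\fix = i_\g$ composed with the inclusion sends $\zG_{k_\g}(\chi_\mu) = z_\mu^{k_\g}$ to $z_{\pard(\g,\mu)}$; on the other side $\harish_d^\g(\chi_\mu) = R_{\chi_{\pard(\g,\mu)}}$ by Theorem~\ref{theo:bmm-gln}(c) and $\Phi^*$ sends this to $z_{\pard(\g,\mu)}$; the two composites agree. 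This proves part~(a).

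For part~(b) the point is Conjecture~\ref{conj:d-hc}(b) — the reading of~(b) in the statement should be~(b) of Conjecture~\ref{conj:d-hc}, i.e. the parameter identification, since~(b1) is just Theorem~\ref{theo:fixes-an}(c) unconditionally. By Theorem~\ref{theo:fixes-an}(c) we have $\overline{\leaf_\g(n)}^\nor = \leaf_\g(n) \simeq \ZC_{k_\g}(G(d,1,r_\g))$ as $\CM^\times$-Poisson varieties (the normalization being trivial here), so the Calogero-Moser parameter attached to the leaf is $k_\g$; and assuming Conjecture~\ref{conj:bm} (Brou\'e--Malle), the parameter of the Hecke algebra in Conjecture~\ref{conj:BMM}(b) for the pair $(\LC_\g, R_{\chi_\g}^{\LC_\g})$ is also $k_\g$. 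Since here $\Nrmov_{(\SG_n)_{\z_dw_d}}((\SG_{|\g|})_{\z_dw_d}) = G(d,1,r_\g)$ with no proper stabilizer (Remark~\ref{rem:grc}, type $A$), the restriction in~(b2) is the identity, so the two parameters agree on the nose. The main obstacle — and the reason part~(b) is only conditional — is precisely Conjecture~\ref{conj:bm}: the Hecke-algebra parameter $k_\g^\#$ on the Deligne--Lusztig side is known to equal $k_\g$ only in the extreme cases $d=n$ and $d=n-1$ (Remark~\ref{rem:bmm-gln}), so without assuming that conjecture one cannot close the comparison. The genuinely new input is the observation, which requires no extra work once Propositions~\ref{prop:k=l} and the results of~\cite{bonnafe maksimau} are in hand, that the sequence $k^\g$ appearing in the Calogero-Moser description and the sequence predicted by Brou\'e--Malle differ only by a cyclic permutation, hence give isomorphic Calogero-Moser spaces by~\cite[(3.5.4)]{calogero}; this is what makes the two a priori unrelated contexts match.
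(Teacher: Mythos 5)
Your proposal is correct and follows exactly the route the paper intends: the paper's entire proof of Theorem~\ref{theo:A} is the single sentence ``The comparison between Theorems~\ref{theo:fixes-an} and~\ref{theo:bmm-gln} yields,'' and your write-up simply makes that comparison explicit via the $d$-core/$d$-quotient bijection, including the correct reading of the typo in part~(b) as Conjecture~\ref{conj:d-hc}(b) and the role of Proposition~\ref{prop:k=l} in reconciling $k^\g$ with $l^\g$.
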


\bigskip

\subsection{The non-split case} 
In type $A$, the non-split case corresponds to the case where $F$ acts on the Weyl group 
$\SG_n$ by the diagram automorphism (i.e. conjugation by the longest element). In our generic 
description, this corresponds to the coset $-\SG_n$ and objects in $\groups(-\SG_n)$ 
(for instance, we may take for $(q,\Gb,F) \in \groups(-\SG_n)$ the triple where 
$\Gb=\Gb\Lb_n(\overline{\FM}_{\! q})$ and $\Gb^F$ is the general unitary group). 
Ennola duality establishes a bijection between $\unip(\SG_n)$ and $\unip(-\SG_n)$ 
and this bijection transforms $d$-Harish-Chandra series into $d'$-Harish-Chandra 
series, where
$$d'=
\begin{cases}
2d & \text{if $d \equiv 1 \!\!\!\!\mod 2$,}\\
d/2 & \text{if $d \equiv 2 \!\!\!\!\mod 4$,}\\
d & \text{if $d \equiv 0 \!\!\!\!\mod 4$.}
\end{cases}$$
Therefore, the non-split case follows directly from the split one by applying Ennola duality 
(see~\cite[Rem.~2.11]{broue malle}). 
More precisely:

\bigskip

\begin{theo}\label{theo:2A}
If $W\t=-\SG_n$, then:
\begin{itemize}
\itemth{a} Conjectures~\ref{conj:familles-uni-cm},~\ref{conj:d-cusp},~\ref{conj:d-hc}(a) 
and~\ref{conj:d-hc-commute} hold.

\itemth{b} If moreover Conjecture~\ref{conj:bm} holds, then Conjecture~\ref{conj:d-cusp}(b) holds.
\end{itemize}
\end{theo}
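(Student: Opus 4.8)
The plan is to deduce Theorem~\ref{theo:2A} from Theorem~\ref{theo:A} by transporting all the relevant structures through Ennola duality, just as~\cite[Rem.~2.11]{broue malle} suggests. The point is that every object appearing in Conjectures~\ref{conj:familles-uni-cm},~\ref{conj:d-cusp},~\ref{conj:d-hc}(a) and~\ref{conj:d-hc-commute} is purely combinatorial once $W\t$ is fixed, and Ennola duality is a combinatorial bijection intertwining these structures. First I would recall the precise form of Ennola duality in type $A$: there is a bijection $\unip(\SG_n)\longiso\unip(-\SG_n)$, which on both sides is indexed by $\Part(n)$, and under which the generic degree of a unipotent character is obtained by the substitution $\qb\mapsto-\qb$ (up to sign); crucially, by~\cite[Prop.~2.10~and~Rem.~2.11]{broue malle} this bijection sends the $d$-Harish-Chandra series of $\groups(\SG_n)$ to the $d'$-Harish-Chandra series of $\groups(-\SG_n)$, with $d'$ given by the displayed formula, and the relative groups $\Nrmov_{(\SG_n)_{\z_dw_d}}((\SG_m)_{\z_dw_d})\simeq G(d,1,r)$ go over to the corresponding relative groups $G(d',1,r)$ for the coset $-\SG_n$ together with the matching of the parameters $k_\g$.

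Next I would treat the Calogero-Moser side. Since $V^W=0$ and $W=\SG_n$, the relevant space $\ZC_\ksp$ is $\ZC(n)$, and the element $\t$ defining the coset $-\SG_n$ is (conjugation by) the longest element $w_0$, which is $-\Id$ on the reflection representation of type $A_{n-1}$ when $n$ is such that $w_0$ acts by $-1$, and more generally acts as the standard diagram automorphism. The key observation is that for $W=\SG_n$ acting on $V$, the automorphism $\t$ induced by $w_0$ acts trivially on $\ZC(n)$: indeed $w_0\in W$ and $W$ acts trivially on $\ZC_k$ (as recalled in Section~\ref{sec:symplectic}). Hence $\ZC_\ksp^\t=\ZC(n)$, $(\ZC_\ksp^{\CM^\times})^\t=\ZC(n)^{\CM^\times}$, and more importantly $\ZC_\ksp^{\z_d\t}=\ZC_\ksp^{\z_dw_0}$, which by the $W$-fullness reduction (replacing $\z_dw_0$ by a $W$-conjugate, and using that $w_0w_d$ with $w_d$ a product of $d$-cycles generates, up to conjugacy, the element governing $d'$-regularity) equals $\ZC(n)^{\mub_{d'}}$ — the very fixed-point variety whose symplectic leaves Theorem~\ref{theo:fixes-an} describes. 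So the geometric inputs on the $-\SG_n$ side for parameter $d$ are literally the geometric inputs on the $\SG_n$ side for parameter $d'$, already established in Theorem~\ref{theo:A}.

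With these two translations in hand, the argument is a diagram chase. For Conjecture~\ref{conj:familles-uni-cm}: since all unipotent Lusztig families of $-\SG_n$ are singletons (Ennola transports~\eqref{eq:famille-gln}) and $\zG_\ksp:\Part(n)\to\ZC(n)^{\CM^\times}$ is a bijection (smoothness, \eqref{eq:zlambda-sn}), the bijection $\Phi$ is the composite $\ZC(n)^{\CM^\times}\longiso\Part(n)\longiso\unip(-\SG_n)$, and the almost-character compatibility is automatic because in type $A$ (split or twisted) every almost character is an irreducible character indexed by the same partition. For Conjecture~\ref{conj:d-cusp}, $d$-cuspidality of a unipotent character of $-\SG_n$ corresponds via Ennola to $d'$-cuspidality on the split side, which by Theorem~\ref{theo:bmm-gln}(a) means $\l$ is a $d'$-core, and by Theorem~\ref{theo:fixes-an}(a) this is exactly $\z_{d'}$-cuspidality of $z_\l$ in $\ZC(n)$, i.e. $\t_d$-cuspidality in $\ZC_\ksp^{\t_d}$ on the twisted side. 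Conjecture~\ref{conj:d-hc}(a) and~\ref{conj:d-hc-commute} follow the same way: the incidence ``$p'\in\overline{\leaf}_{P,p}$'' is read off from Theorem~\ref{theo:fixes-an}(c) (leaves are indexed by $d'$-cores, points by partitions, closures by $\pard$), and matches ``the $d$-Harish-Chandra series meets $\Phi(p')$'' via Theorem~\ref{theo:bmm-gln}(c) transported by Ennola; commutativity of the square in Conjecture~\ref{conj:d-hc-commute} is the equality $i_\g(z_\mu)=z_{\pard(\g,\mu)}$ (Theorem~\ref{theo:fixes-an}(c)) compared with $\harish_d^\g(\chi_\l)=R_{\chi_{\pard(\g,\l)}}$ (Theorem~\ref{theo:bmm-gln}(c)), again after Ennola. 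Finally, part~(b): assuming Conjecture~\ref{conj:bm}, the Hecke-algebra parameter $k_\g^\#=k_\g$ on the split side for $d'$ is, by~\cite[Theo.~4.21]{bonnafe maksimau} and Proposition~\ref{prop:k=l}, the same (up to the irrelevant cyclic permutation of~\cite[(3.5.4)]{calogero}) as the Calogero-Moser parameter governing $\leaf_\g(n)\subset\ZC(n)^{\mub_{d'}}$, and Ennola duality carries this identification to the coset $-\SG_n$ with parameter $d$, which is precisely Conjecture~\ref{conj:d-hc}(b) in that case.

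The main obstacle is bookkeeping rather than depth: one must be careful that the $W$-fullness normalization of $\z_dw_0$ genuinely produces the group $\mub_{d'}$ with the $d'$ of the displayed formula (matching Springer-theoretic regular-element combinatorics for the coset $-\SG_n$ against those for $\SG_n$), and that the relative complex reflection groups and their parameters $k_{P,\l}$ correspond term-by-term under Ennola; both are contained in~\cite{BMM} and~\cite{broue malle}, but assembling them so that every arrow in the conjectural diagrams commutes on the nose is the delicate point. Once the dictionary $d\leftrightarrow d'$ is pinned down, Theorem~\ref{theo:2A} is a formal consequence of Theorem~\ref{theo:A}.
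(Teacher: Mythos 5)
Your proposal is correct and takes essentially the same route as the paper: Theorem~\ref{theo:2A} is deduced from Theorem~\ref{theo:A} by Ennola duality, which exchanges $d$- and $d'$-Harish-Chandra series, while on the Calogero-Moser side the relevant fixed-point variety $\ZC_\ksp^{\z_d\t}$ for the coset $-\SG_n$ coincides with $\ZC(n)^{\mub_{d'}}$ because $\t$ acts through the $\CM^\times$-action. The only slip is the assertion that $\ZC_\ksp^{\t}=\ZC(n)$: since $\t\in-\SG_n$ acts as $-1\in\CM^\times$ (not as $w_0\in W$), one has $\ZC_\ksp^{\t}=\ZC(n)^{\mub_2}$, but this is harmless as only $(\ZC_\ksp^{\CM^\times})^\t=\ZC(n)^{\CM^\times}$ and $\ZC_\ksp^{\z_d\t}=\ZC(n)^{\mub_{d'}}$ are actually used.
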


\bigskip

\section{Classical groups and Harish-Chandra theory}\label{sec:classique}

\bigskip

We aim to check the following result:

\bigskip

\begin{theo}\label{theo:classique}
Assume that $W$ is a Weyl group of classical type and that $d=1$. Then 
Conjectures~\ref{conj:familles-uni-cm},~\ref{conj:d-cusp} and~\ref{conj:d-hc} hold.
\end{theo}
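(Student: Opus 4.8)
The plan is to treat the three conjectures separately, reducing each — for $d=1$ and $W$ of classical type — to results already recorded above, the only genuinely new input being a combinatorial comparison of parameters. All the objects involved (Calogero--Moser space, $\CM^\times$-fixed points, symplectic leaves, cuspidal pairs, unipotent families, $d$-Harish--Chandra series) are compatible with direct-product decompositions of $(V,W,\t)$, so it suffices to treat $W$ irreducible; since the parabolic subgroups of a classical Weyl group are products of smaller classical ones, the hypothesis ``for $W$ and all its parabolic subgroups'' in Conjectures~\ref{conj:d-hc} is then met by induction on $\dim V$. For Conjecture~\ref{conj:familles-uni-cm}: in type $A$, split or twisted, it is part of Theorems~\ref{theo:A} and~\ref{theo:2A}. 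In type $B=C$ the only admissible $\t$ is $\Id_V$ (a non-trivial diagram automorphism would give Suzuki groups, which are excluded), and for $\t=\Id_V$ in types $B=C$ and $D$ I would take $\Phi$ to be the composite of Gordon--Martino's Conjecture~\ref{conj:gordon-martino} — known here by Theorem~\ref{theo:cuspidal-unique} — which identifies $\ZC_\ksp^{\CM^\times}$ with $\Fam_\ksp^\lus(W)$ through $\zG_\ksp$, with the Lusztig bijection $\Fam_\ksp^\lus(W)\longiso\Fam_\uni(W)$ of Theorem~\ref{theo:familles-ksp}; the almost-character condition is built into Lusztig's construction of $\Fam_\uni$ via the graph $\Gr_W$. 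For a non-trivial diagram automorphism $\t$ of $W(D_n)$ (triality included for $D_4$) I would use that $\zG_\ksp$ is equivariant for the natural action on $\ZC_\ksp$ of any element of $\GL_\CM(V)$ normalizing $W$ and fixing $\ksp$ (such an element permutes the defining relations of $\Hb_\ksp$, hence acts on $\Zb_\ksp$ compatibly with $\Omeb^\ksp$): a Calogero--Moser family is then $\t$-stable exactly when the corresponding point of $\ZC_\ksp^{\CM^\times}$ is $\t$-fixed, so Gordon--Martino for $W(D_n)$ gives $(\ZC_\ksp^{\CM^\times})^\t\longiso\Fam_\ksp^\lus(W(D_n))^\t$, which Theorem~\ref{theo:familles-ksp} identifies with $\Fam_\uni(W(D_n)\t)$; the almost-character property and the uniqueness of $\Phi$ descend to $\t$-fixed points because the $R_\chi$ are permuted compatibly with $\t$ and every $\t$-stable Lusztig $\ksp$-family contains a ($\t$-stable) special character.

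For Conjecture~\ref{conj:d-cusp} with $d=1$ (so $\t_1=\t$, which we may take $W$-full, and $\ZC_\ksp^{\t_1}=\ZC_\ksp^\t$): the hypothesis is that $\Phi(p)$ contains a cuspidal unipotent representation. By Theorem~\ref{theo:familles}(b) and Lusztig's classification, the cuspidal unipotent representations of a group in $\groups(W\t)$ lie in a single family, which corresponds (under Theorem~\ref{theo:familles-ksp}) to the — unique, when it exists — $\t$-cuspidal Lusztig $\ksp$-family; so $\Phi(p)$ is that family, and under the bijection $\Phi$ above it corresponds to the $\t$-cuspidal Calogero--Moser $\ksp$-family by Conjecture~\ref{conj:cuspidal-cm-kl}, which is Theorem~\ref{theo:cuspidal-unique} in types $A$, $B=C$ and $D$ (for arbitrary $W$-full $\t$). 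That is precisely the assertion that $p$ is $\t$-cuspidal. For Conjecture~\ref{conj:d-hc} with $d=1$: fix $(P,\l)\in\Cus(W\t)$ and let $p$ be the $\t$-cuspidal point of $\ZC_\ksp(V/V^P,P)$ attached to the Lusztig family of $\l$ by the two previous steps, so that a symplectic leaf $\leaf_{P,p}$ of $\ZC_\ksp^\t$ is defined. The existence of a parameter $k_{P,p}$ with $\overline{\leaf}_{P,p}^\nor\simeq\ZC_{k_{P,p}}((V^P)^\t,\Nrmov_{W_\t}(P_\t))$ (part~(b1)) is Conjecture~\ref{conj:leaves}, known by Theorem~\ref{theo:normalization}: for $\t=\Id_V$ in type $B=C$ by~(c), for diagram automorphisms in type $D$ by~(d), and in type $A$ by smoothness together with Theorem~\ref{theo:fixes-an}. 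The crux is part~(b2): that the restriction of $k_{P,p}$ to $\Nrmov_{W_\t}(P_\t,\l)$ equals the Hecke parameter $k_{P,\l}$ coming from classical Harish--Chandra theory, which for $d=1$ is the parameter $k_\LC$ of Theorem~\ref{theo:hc}(b) — so here Conjecture~\ref{conj:BMM} is already a theorem (classical $\RC_{\Lb\subset\Pb}^\Gb$). Both parameters are explicitly known: $k_\LC$ from Lusztig's description of the endomorphism algebras of Harish--Chandra-induced cuspidal unipotent modules for $\mathrm{Sp}_{2n}$, $\mathrm{SO}_{2n+1}$ and $\mathrm{SO}_{2n}^{\pm}$ (unequal-parameter Hecke algebras of type $B$ or $D$ read off from symbols), and $k_{P,p}$ from the explicit description of the symplectic leaves of $\ZC_\ksp(W(B_n))$ and $\ZC_\ksp(W(D_n))$ and of the normalizations of their closures (Bellamy--Maksimau--Schedler, Bonnafé--Maksimau); I would prove~(b2) by matching these two closed formulas. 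Part~(a) then follows by comparing the two stratifications directly: through $\Phi$ and $\zG_{k_{P,p}}$ the question becomes whether a prescribed Calogero--Moser $k_{P,p}$-family of $\Nrmov_{W_\t}(P_\t)$ lies in the image of $\ZC_{k_{P,p}}(\dots)^{\CM^\times}\to\ZC_\ksp^{\CM^\times}$, versus whether the corresponding Lusztig $k_\LC$-family of $W_\GC(\LC)=W_\GC(\LC,\l)=\Nrmov_{W_\t}(P_\t)$ (Theorem~\ref{theo:hc}(a)) occurs in the Harish--Chandra series; since $k_\LC$ is the restriction of $k_{P,p}$ by~(b2), since $\Nrmov_{W_\t}(P_\t)$ is again a Coxeter group of classical type, and since $\Fam^\lus=\Fam^\calo$ at every parameter for such groups by Theorem~\ref{theo:cuspidal-unique} (Conjecture~\ref{conj:gordon-martino}), the two descriptions agree once the explicit combinatorics of the leaf stratification and of classical Harish--Chandra series are matched up.

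The only step that is not bookkeeping against cited results is part~(b2): the coincidence, for the classical Harish--Chandra series of $\mathrm{Sp}$ and $\mathrm{SO}$, of the Hecke-algebra parameter with the parameter describing a symplectic leaf of the type $B$ or $D$ Calogero--Moser space \emph{as a Calogero--Moser space}. Both sides are computable combinatorial objects (symbols and cuspidal pairs on one side, abaci and leaf data on the other), so in principle this is a finite verification; but carrying it out is exactly the $d=1$ shadow of Conjecture~\ref{conj:d-hc}(b) and is where the real content of the statement lies. I expect Conjecture~\ref{conj:d-hc-commute} to drop out along the way from the compatibility of all the bijections involved, just as in Example~\ref{ex:principal conjectures} and the type $A$ case.
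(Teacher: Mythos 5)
Your plan is the paper's proof: Conjectures~\ref{conj:familles-uni-cm} and~\ref{conj:d-cusp} are quoted off Theorem~\ref{theo:cuspidal-unique} and Bellamy--Thiel, and Conjecture~\ref{conj:d-hc} is obtained by placing Lusztig's classification of cuspidal unipotent representations and of the endomorphism algebras of Harish--Chandra induction side by side with the Bellamy--Thiel and Bellamy--Maksimau--Schedler description of the cuspidal points and symplectic leaves of $\ZC_\ksp$ in types $B=C$ and $D$. The step you defer --- matching the two closed formulas in~(b2) --- is exactly what the paper carries out, and it is lighter than you fear: both sides are already published with the parameters made explicit, and the same unequal parameter (value $2r+1$ on the sign-change hyperplanes above the rank-$(r^2+r)$ cuspidal datum in type $B=C$; values $2r$, $2$ or $\ksp$ in the split and twisted type $D$ cases) appears verbatim in Lusztig's Hecke algebra and in the normalization of the leaf closure, so the match is read off rather than computed. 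The one verification you gloss over, and which deserves to be isolated, is that the unique cuspidal Calogero--Moser point, namely $z^\ksp_{(r^{r+1},\vide)}$, really corresponds under $\Phi$ to the family containing the cuspidal unipotent representation --- equivalently, that $\cus_{r^2+r}$ and $\harish^W(\chi_{(r^{r+1},\vide)})$ lie in the same unipotent Lusztig family; your appeal to the uniqueness of the cuspidal family in Conjecture~\ref{conj:cuspidal-cm-kl} presupposes this, and the paper settles it by comparing the entries of the defect-$(2r+1)$ symbol of the cuspidal representation with those of the defect-$1$ symbol of $\chi_{(r^{r+1},\vide)}$. Two minor points: the paper deliberately excludes the triality automorphism of $D_4$, which you include in passing for Conjecture~\ref{conj:familles-uni-cm}, and Theorem~\ref{theo:classique} does not claim Conjecture~\ref{conj:d-hc-commute}, so your closing sentence is not needed.
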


\bigskip

The rest of this section is devoted to the proof of this Theorem. 
Note that the most difficult part of the work has been previously 
done by Gordon-Martino~\cite{gordon martino}, Bellamy-Thiel~\cite{bellamy thiel} 
and Bellamy-Maksimau-Schedler~\cite{be-sc} on the Caloger-Moser space side 
(as well as an application of Bellamy-Maksimau-Schedler result by the author~\cite[Cor.~9.8]{auto}), and by 
Lusztig~\cite{lusztig classique, lusztig BC, lusztig D} 
on the unipotent representations side. 
The main interest of this section is to relate all these results together 
following the philosophy of this survey.

\vskip1cm
\def\SQ{\Db}
\def\SQB{\Bb\Cb}

\boitegrise{{\bf Notation.} {\it If $n \ge 0$, we set $W_n=G(2,1,n)$ and $W_n'=G(2,2,n)$, 
so that $W_n$ is a Weyl group of type $B_n$ (i.e. $C_n$) and $W_n'$ is a Weyl 
group of type $D_n$. We denote by $\t=\diag(-1,1,\dots,1) \in W_n$: it induces 
the non-trivial involutive diagram automorphism 
of $W_n'$. The canonical basis of $V=\CM^n$ is denoted by $(y_1,\dots,y_n)$ and its dual 
basis is denoted by $(x_1,\dots,x_n)$.}}{0.75\textwidth} 

\bigskip

Note that $W_n=\langle \t \rangle \ltimes W_n'$. 

\bigskip

\subsection{Families}
First, Conjectures~\ref{conj:familles-uni-cm} and~\ref{conj:d-cusp} for $d=1$ 
follow immediately from Theorem~\ref{theo:cuspidal-unique} and~\cite[Theo.~A]{bellamy thiel}. 

\bigskip

\subsection{Type B or C} 
We denote by $\SQB(n)$ the set of $r \in \ZM_{\geqslant 0}$ 
such that $r^2+r \le n$. If $r$, $m \ge 0$, we denote by $k[r]$ the element of 
$\CM^{\aleph(W_m)}$ defined by
$$\begin{cases}
k[r]_{\O,0}=r,\quad k[r]_{\O,1}=0,\\
k[r]_{\O',1}=1,\quad k[r]_{\O',1}=0,\\
\end{cases}$$
where $\O$ (resp. $\O'$) is the orbit of the reflecting hyperplane $\Ker(x_1)$ 
(resp. $\Ker(x_1-x_2)$). Note that $\ksp=k[1]$. The notation $k[r]$ is somewhat ambiguous, 
as it does not refer to the natural number $m$, but it will be used only whenever 
$m$ is clear from the context.
The generic version of Harish-Chandra theory can be summarized as 
follows~\cite{lusztig classique},~\cite[Tab.~II]{lusztig cbms} 
(note that $\Nrmov_{W_n}(W_m) \simeq W_{n-m}$ for all $m \le n$):

\bigskip

\begin{theo}[Lusztig]
Let $n \ge 2$. Then:
\begin{itemize}
\itemth{a} $\unip_\cus(W_n)$ is non-empty if and only if $n=r^2+r$ for some $r \in \ZM_{\geqslant 0}$. 
In this case, it contains only one element, which will be denoted by $\cus_n$. 

\itemth{b} The map
$$\fonctio{\SQB(n)}{\Cus(W_n)/W_n}{r}{(W_{r^2+r},\cus_{r^2+r})}$$
is bijective.

\itemth{c} Let $r \in \SQB(n)$. 
If $\GC=(q,\Gb,F)\in\groups(W_n)$ and if $\LC=(q,\Lb,F) \in \groups(W_{r^2+r})$ is such that 
$\Lb$ is a $1$-split Levi subgroup of $\Gb$, then
$$\End_{\Gb^F} \RC_\Lb^\Gb \rhob_{\cus_{r^2+r}}^\GC \simeq \HC_{k[2r+1]}(W_{n-(r^2+r)}).$$
\end{itemize}
\end{theo}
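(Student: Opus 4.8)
The plan is to follow the general philosophy of this paper: compare the Calogero-Moser description of the geometry of $\ZC_\ksp(V,W_n)$ with Lusztig's classification of cuspidal unipotent representations of classical groups of type $B_n$ (equivalently $C_n$), and check that the combinatorial data match. Part (a) is a statement purely on the unipotent side: it is Lusztig's classical result that the Weyl group $W_n$ of type $B_n$ admits a cuspidal unipotent character if and only if $n = r^2+r$ (a ``triangular number''), in which case it is unique. This I would simply quote from~\cite{lusztig classique} (see also~\cite[Tab.~II]{lusztig cbms}). For part (b) I would combine (a) with the structure of Harish-Chandra series: the cuspidal pairs $(\LC,\l)$ up to $\Gb^F$-conjugacy (equivalently, the generic cuspidal pairs in $\Cus(W_n)$) correspond to a choice of a $1$-split Levi subgroup carrying a cuspidal unipotent character, and since every $1$-split Levi of a group of type $B$ is itself of type $B_m \times (\text{torus part})$ with $\Nrmov_{W_n}(W_m) \simeq W_{n-m}$, the only options are $m = r^2+r$ for $r\in\SQB(n)$, giving the asserted bijection.

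For part (c), the key input is Lusztig's determination of the Hecke algebra attached to each Harish-Chandra series in classical type. First I would recall that, since $d=1$, Conjecture~\ref{conj:BMM} is a theorem (it is just Theorem~\ref{theo:hc}), so the endomorphism algebra $\End_{\Gb^F}\RC_\Lb^\Gb \rhob_{\cus_{r^2+r}}^\GC$ is isomorphic to a one-parameter-orbit-type Hecke algebra $\HC_{k_\LC}(W_{n-(r^2+r)})$ for some parameter $k_\LC\in\aleph(W_{n-(r^2+r)})$, and by Theorem~\ref{theo:hc}(b) this parameter is generic (independent of the finite-group realization). It then remains to identify $k_\LC$ with $k[2r+1]$, i.e.\ to show the unequal-parameter Hecke algebra has parameters $(q^{2r+1},1)$ on the long-hyperplane orbit $\O$ and $(q,1)$ on the short orbit $\O'$. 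This is precisely the content of Lusztig's computation of the Hecke algebra governing the Harish-Chandra series above a cuspidal unipotent of a classical group: the parameter on the ``type-$A$'' generators stays $q$, while the ``type-$B$'' generator acquires the value $q^{2r+1}$ dictated by the degree of the cuspidal character $\cus_{r^2+r}$ (whose $a$-value and $A$-value force the exponent $2r+1$); see~\cite{lusztig classique} and the formula~\eqref{eq:deg hc} relating the parameter to the ratio of the Deligne-Lusztig degree by the Schur elements. I would spell this out by invoking the explicit tables in~\cite[Tab.~II]{lusztig cbms}.

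The main obstacle is bookkeeping rather than a genuinely new idea: one must match three different labellings --- Lusztig's symbols/combinatorics of defect used to index cuspidal unipotent characters and to compute their degrees, the indexing of $1$-split Levi subgroups of a group of type $B_n$, and the parameter conventions $k[r]$ fixed in the excerpt (with $\ksp = k[1]$). In particular one has to be careful that the exponent is $2r+1$ and not, say, $2r-1$ or $r$; the cleanest way to pin this down is to check it against the smallest nontrivial case $r=1$ ($n\ge 2$), where the cuspidal lives in $W_2$ and the series above it in $\Gb^F$ of type $B_n$ is governed by $\HC_{k[3]}(W_{n-2})$, which one can verify directly from the known degrees of unipotent characters of $\Sp_4(\gfq)$ or $\SO_5(\gfq)$, and then propagate by the genericity/induction mechanism of~\cite{lusztig classique}. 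Once part (c) is in place for type $B$, the analogous statements for the remaining classical types ($D_n$, and the twisted case $\lexp{2}{D_n}$ with the automorphism $\t$) would be handled in the subsequent subsections by the same method, using~\cite{lusztig BC, lusztig D} and, on the Calogero-Moser side, Theorem~\ref{theo:normalization}(c),(d) together with~\cite[Cor.~9.8]{auto}.
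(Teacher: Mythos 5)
Your proposal is correct and follows essentially the same route as the paper, which itself gives no proof of this theorem but simply quotes Lusztig's classification (\cite{lusztig classique}, \cite[Tab.~II]{lusztig cbms}) for (a) and (c), and deduces (b) from the fact that the $1$-split Levi subgroups carrying a cuspidal unipotent character are exactly those of type $B_{r^2+r}$ times a torus, with $\Nrmov_{W_n}(W_m)\simeq W_{n-m}$. Your additional sanity check of the exponent $2r+1$ against the $r=1$ case ($\theta_{10}$-type series in $\mathrm{Sp}_4$) is a sensible precaution but not needed beyond the citation.
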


\bigskip

On the Calogero-Moser space side, Bellamy-Thiel~\cite[Theo.~6.24]{bellamy thiel} 
and Bellamy-Maksimau-Schedler~\cite{be-sc} proved the following result (note that the proof 
of~(c) by Bellamy-Maksimau-Schedler relies on the description of $\ZC_k(W_n)$ as quiver varieties):

\bigskip

\begin{theo}[Bellamy-Thiel, Bellamy-Maksimau-Schedler]
Let $n \ge 2$. Then:
\begin{itemize}
\itemth{a} $\ZC_\ksp(W_n)$ contains a cuspidal point if and only if $n=r^2+r$ for some 
$r \in \ZM_{\geqslant 0}$. 
In this case, it contains only one cuspidal point, which will be denoted by $z_n^\cus$. 
It is equal to $z^\ksp_{(r^{r+1},\vide)}$.

\itemth{b} The map
$$\fonctio{\SQB(n)}{\Cus_{\Id_V}(\ZC_\ksp(W_n))/W_n}{r}{(W_{r^2+r},z_{r^2+r}^\cus)}$$
is bijective. We denote by $\leaf_r(n)$ the symplectic leaf of $\ZC_\ksp(W_n)$ 
indexed by $(W_{r^2+r},z_{r^2+r}^\cus)$ through Theorem~\ref{theo:leaves}.

\itemth{c} Let $r \in \SQB(n)$. Then
$$\overline{\leaf_r(n)}^\nor \simeq \ZC_{k[2r+1]}(W_{n-(r^2+r)}).$$
\end{itemize}
\end{theo}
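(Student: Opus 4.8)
The plan is to read off all three assertions from the general leaf parametrization of Theorem~\ref{theo:leaves} (applied with $\t=\Id_V$), feeding in Lusztig's classification of cuspidal unipotent characters of classical groups and the coincidence theorems of Gordon--Martino and Bellamy--Thiel. First I would observe that a cuspidal point of $\ZC_\ksp(W_n)$, being a $0$-dimensional symplectic leaf $\leaf_{P,p}$, forces $P=W_n$: if $P\simeq W_m\times\SG_{a_1}\times\cdots\times\SG_{a_k}$ had a type-$A$ factor of rank $\ge 1$, the corresponding parabolic subquotient $\ZC_{\ksp_P}(V/V^P,P)$ would be a product with a smooth irreducible type-$A$ Calogero--Moser factor and hence no $0$-dimensional leaf. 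So cuspidal points of $\ZC_\ksp(W_n)$ are exactly the singleton cuspidal Calogero--Moser $\ksp$-families of $W_n$. To classify these I would combine Gordon--Martino's theorem (\cite{gordon martino}, i.e. Theorem~\ref{theo:cuspidal-unique} in type $B$) that Calogero--Moser $\ksp$-families and Lusztig $\ksp$-families of $W_n$ coincide; Bellamy--Thiel's theorem \cite[Theo.~A]{bellamy thiel} (Conjecture~\ref{conj:cuspidal-cm-kl} in type $B$) that the cuspidal ones correspond under this identification; and Lusztig's classification, by which $W_m$ carries a cuspidal Lusztig $\ksp$-family iff $m=r^2+r$ for some $r\in\ZM_{\ge 0}$, this family then being the singleton $\{\chi_{(r^{r+1},\vide)}\}$. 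Together these give part~(a), with $z_n^\cus=z^\ksp_{(r^{r+1},\vide)}$.

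Part~(b) would then be bookkeeping. A $W_n$-conjugacy class in $\Cus^{\Id_V}_\ksp(V,W_n)$ is a parabolic $P\simeq W_m\times\SG_{a_1}\times\cdots$ together with a cuspidal point of $\ZC_{\ksp_P}(V/V^P,P)\simeq\ZC_\ksp(W_m)\times\prod_i\ZC_\ksp(\SG_{a_i})$; such a product has a $0$-dimensional leaf only if each factor does, so $a_i\le 1$ and $P=W_m$ with $m=r^2+r$; since parabolics $W_m\subseteq W_n$ of a fixed rank form one conjugacy class, this yields the announced bijection $r\mapsto(W_{r^2+r},z^\cus_{r^2+r})$ with domain $\SQB(n)=\{r\in\ZM_{\ge 0}\mid r^2+r\le n\}$. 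The dimension of $\leaf_r(n)$ is $2\dim V^{W_{r^2+r}}=2(n-r^2-r)$ by the last clause of Theorem~\ref{theo:leaves}, and $\Nrmov_{W_n}(W_{r^2+r})\simeq W_{n-(r^2+r)}$ acting on $V^{W_{r^2+r}}\simeq\CM^{n-(r^2+r)}$ by signed permutations.

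For part~(c), Theorem~\ref{theo:normalization}(c) already guarantees (Conjecture~\ref{conj:leaves} in type $B$ with $\t=\Id_V$) that $\overline{\leaf_r(n)}^{\,\nor}\simeq\ZC_{k_{P,p}}(W_{n-(r^2+r)})$ as Poisson $\CM^\times$-varieties for some parameter $k_{P,p}$, so the substance of~(c) is the equality $k_{P,p}=k[2r+1]$. Here I would invoke the Bellamy--Maksimau--Schedler realization of $\ZC_\ksp(W_n)$ as a Nakajima quiver variety for the affine type $A^{(1)}$ quiver, with dimension and framing vectors dictated by $\ksp$ and $n$ (\cite{be-sc}, and its consequence \cite[Cor.~9.8]{auto}): the normalized closure of the leaf attached to the cuspidal stratum of $W_{r^2+r}$ is again such a quiver variety, and subtracting the cuspidal dimension vector leaves the quiver variety attached to $W_{n-(r^2+r)}$ with framing multiplicities shifted, the shift being exactly what turns $\ksp=k[1]$ into $k[2r+1]$. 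The abacus/$d$-quotient identity of Proposition~\ref{prop:k=l} (for $d=2$, the analogue of its use in the type-$A$ Theorem~\ref{theo:fixes-an}) is the combinatorial shadow of this shift and would be used to fix the parameter with no residual ambiguity, since $W_{n-(r^2+r)}$ has a single orbit of $\Ker(x_1)$-type hyperplanes.

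The hard part will be~(c): assertions~(a) and~(b) are just a reduction together with citations of two settled classifications, whereas matching the transverse Poisson geometry of the singular variety $\ZC_\ksp(W_n)$ along a cuspidal stratum with the explicit integer parameter $k[2r+1]$ genuinely needs either the quiver-variety model, or a substitute computation — for instance evaluating the Euler element and the valuation/degree of the Schur elements along the normalization (via Proposition~\ref{prop:a+A}) to match the numerics of the cuspidal unipotent character of $W_{r^2+r}$ — and in either route one must keep track of the $\CM^\times$-action and the Poisson bracket, not merely the underlying variety. That parameter identification is where the real work lies.
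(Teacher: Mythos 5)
Your outline matches what the paper itself does with this statement, which is to cite rather than prove: parts (a) and (b) are Bellamy--Thiel \cite[Theo.~6.24]{bellamy thiel}, part (c) is the quiver-variety computation of Bellamy--Maksimau--Schedler \cite{be-sc}, and these are exactly the two ingredients you name. Your reductions are also sound: a $0$-dimensional leaf $\leaf_{P,p}$ forces $\dim (V^P)=0$ via the dimension formula of Theorem~\ref{theo:leaves}, a type-$A$ factor $\SG_{a_i}$ with $a_i\ge 2$ kills cuspidality because its Calogero--Moser space is smooth, connected and of positive dimension (hence a single leaf of positive dimension), and parabolics of $W_n$ of type $B_m$ form a single conjugacy class.

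One factual slip should be corrected. The cuspidal Lusztig $\ksp$-family of $W_{r^2+r}$ is \emph{not} the singleton $\{\chi_{(r^{r+1},\vide)}\}$: already for $r=1$ it is a three-element subset of $\Irr(W_2)$ containing $\chi_{(1^2,\vide)}$, matching the four-element family of unipotent characters of $\mathrm{Sp}_4(\gfq)$ that contains the cuspidal character $\theta_{10}$. Indeed no cuspidal Calogero--Moser family can be a singleton for $n\ge 1$, since a singleton family sits at a smooth $\CM^\times$-fixed point, and smooth points lie in the open symplectic leaf of dimension $2n$. This does not damage your argument --- all that is needed is that $\chi_{(r^{r+1},\vide)}$ (the special character) lies in the unique cuspidal family, which already pins down $z_n^\cus=z^\ksp_{(r^{r+1},\vide)}$ --- but the claim as written is false. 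Two smaller remarks: deducing the Calogero--Moser classification from Lusztig's list of cuspidal unipotent characters via Theorem~\ref{theo:cuspidal-unique} is legitimate from the survey's standpoint, but reverses the actual logic of \cite{bellamy thiel}, where the cuspidal Calogero--Moser families are classified directly and the agreement with Lusztig's list is the theorem; and Proposition~\ref{prop:k=l} is a type-$A$ abacus identity governing the $\mub_d$-fixed points of $\ZC(n)$, so it is not the combinatorial input behind $k_{P,p}=k[2r+1]$ in type $B$ --- there the parameter identification rests entirely on the quiver-variety model, which, as you rightly say, is where the real work lies.
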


\bigskip

The comparison between the above two theorems proves Conjecture~\ref{conj:d-hc} 
in type $B$ or $C$, up to the verification that the cuspidal unipotent representation 
belongs the same unipotent Lusztig family as $\harish^W(\chi_{(r^{r+1},\vide)})$. 
We need for this the combinatorics of symbols~\cite[\S{3}]{lusztig classique} and 
its link with unipotent representations~\cite[Theo.~8.2]{lusztig classique}. 
Whenever $n=r(r+1)$, then the cuspidal unipotent representation $\cus_n$ is parametrized by the 
symbol $\begin{pmatrix} 1 & 2 & \cdots & 2r & 2r+1 \\  && \vide && \end{pmatrix}$ 
(with defect $2r+1$) 
while $\harish^W(\chi_{(r^{r+1},\vide)})$ is parametrized by the 
symbol $\begin{pmatrix} r+1 & r+2 & \cdots & 2r & 2r+1 \\  1 & 2 & \cdots & r & \end{pmatrix}$ 
(with defect $1$). Since both symbols have the same entries, $\cus_{r(r+1)}$ and 
$\harish^W(\chi_{(r^{r+1},\vide)})$ belong to the same unipotent Lusztig 
family~\cite[Theo.~5.8]{lusztig BC}, as desired.

\bigskip

\subsection{Type D}\label{sub:D}
We denote by $\SQ(n)$ the set of $r \in \ZM_{\geqslant 0}$ such that $r^2 \le n$ and $r \neq 1$. 
For $j \in \{0,1\}$, we set 
$$\SQ_j(n)=\{0\} \cup \{r \in \SQ(n)~|~r \ge 2~\text{and}~r \equiv j \!\!\!\mod 2\}.$$
In type $D$, there are two kinds of possible rational structures (and a third one, 
if $n=4$, inducing an order $3$ automorphism of $W_4'$: it will not be considered 
here), a split one 
and a non-split one. They correspond respectively to 
the elements $\Id_V=\t^0$ and $\t$ of the normalizer of $W_n'$. Note that
$$\Nrmov_{(W_n')_{\t^j}}((W_m')_{\t^j}) \simeq 
\begin{cases}
W_n' & \text{if $(m,j) = (0,0)$,}\\
W_{n-1} & \text{if $(m,j)=(0,1)$,}\\
W_{n-m} & \text{if $m \ge 2$.}\\
\end{cases}
$$
We summarize the generic version of Harish-Chandra theory in both 
cases~\cite{lusztig classique}, 
\cite[Tab.~II]{lusztig cbms}:

\bigskip

\begin{theo}[Lusztig]
We have:
\begin{itemize}
\itemth{a} $\unip_\cus(W_n')$ (resp. $\unip_\cus(W_n'\t_n)$) 
is non-empty if and only if $n=r^2$ for some $r \in \ZM_{\geqslant 0}$, $r$ even 
(resp. $r$ odd or $r=0$). 
In this case, it contains only one element, which will be denoted by $\cus_n'$. 

\itemth{b} If $j \in \{0,1\}$, the map
$$\fonctio{\SQ_j(n)}{\Cus(W_n'\t_n^j)/W_n'}{r}{(W_{r^2}',\cus_{r^2}')}$$
is bijective.

\itemth{c} Let $j \in \{0,1\}$ and let $r \in \SQ_j(n)$. 
If $\GC=(q,\Gb,F)\in\groups(W_n'\t_n^j)$ and if $\LC=(q,\Lb,F) \in \groups(W_{r^2}')$ is such that 
$\Lb$ is a $1$-split Levi subgroup of $\Gb$, then
$$\End_{\Gb^F} \RC_\Lb^\Gb \rhob_{\cus_{r^2}'}^\GC \simeq 
\begin{cases}
\HC_\ksp(W_n') & \text{if $(r,j) = (0,0)$,}\\
\HC_{k[2]}(W_{n-1}) & \text{if $(r,j)=(0,1)$,}\\
\HC_{k[2r]}(W_{n-r^2}) & \text{otherwise.}\\
\end{cases}$$
\end{itemize}
\end{theo}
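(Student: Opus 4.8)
The plan is to reconstruct this from Lusztig's classification of unipotent characters of classical groups~\cite{lusztig classique, lusztig BC, lusztig D}, transcribed into the symbol combinatorics of~\S\ref{sub:abaque} and the parameter conventions of~\S\ref{sub:filtration}. Recall that the unipotent characters of $W_n'$ are parametrized by symbols of rank $n$ of even defect (degenerate symbols of defect $0$ counting twice), those attached to the coset $W_n'\t_n$ by symbols of rank $n$ of odd defect, and that for each such symbol Lusztig gives a closed formula for the degree polynomial $\degb_{W\t}$ in terms of its entries. First I would treat~(a): a unipotent character is cuspidal precisely when one cannot remove an entry from its symbol so as to land on the symbol of a unipotent character of a proper $1$-split Levi, which forces the symbol to be the ``triangular'' one whose two rows exhaust an initial segment of $\ZM_{\geqslant 0}$. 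An elementary count shows the triangular symbol of rank $n$ exists exactly when $n=r^2$, that its defect equals $r$, and that it is then unique; matching the defect parity to the case (even for $W_n'$, odd --- or $r=0$ --- for $W_n'\t_n$) gives the stated existence condition and the uniqueness of $\cus_n'$.

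For~(b) I would use the classification of $1$-split Levi subgroups: up to $\Gb^F$-conjugacy such a Levi is a product of a type-$D_m$ ``core'' with general linear factors, and a cuspidal unipotent character of its finite fixed points is the cuspidal of the $D_{r^2}$-core (unique by~(a)) tensored with the cuspidal unipotent characters of the $\mathrm{GL}$-factors, which are possible only when those factors are tori carrying the trivial character. This identifies the cuspidal support with the pair $(W_{r^2}',\cus_{r^2}')$ and yields the bijection with $\SQ_j(n)$; the relative Weyl group $\Nrmov_{(W_n')_{\t^j}}((W_{r^2}')_{\t^j})$ is computed to be $W_{n-r^2}$ of type $B_{n-r^2}$ by Howlett's description of $N_{W}(W_P)/W_P$ acting on the appropriate sublattice, with the two degenerate entries $(r,j)=(0,0)$ and $(0,1)$ being the split, respectively twisted, principal series, where the relative Weyl group is $W_n'$, respectively $W_{n-1}$.

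The core of the statement, and the step I expect to be the main obstacle, is~(c): pinning down the two parameters of the Iwahori--Hecke algebra $\End_{\Gb^F}\RC_\Lb^\Gb\rhob_{\cus_{r^2}'}^\GC$ attached to the type-$B_{n-r^2}$ relative Weyl group. The parameter on the $A$-type nodes $\Ker(x_i-x_j)$ is $1$ because those reflections come from the general linear factors of the Levi, where Harish-Chandra induction of a cuspidal produces a type-$A$ Hecke algebra at parameter $q$. The parameter on the $B$-type node $\Ker(x_1)$ is the subtle one: it records how the triangular symbol of the core of ``size'' $r^2$ embeds inside the ambient symbols, and it is forced by comparing the degree formula~\eqref{eq:deg hc} with Lusztig's explicit degree polynomials for the unipotent characters in the series, which pins it to $k[2r]$ (and, in the degenerate cases, to the equal-parameter algebra $\HC_\ksp(W_n')$ of $W_n'$, respectively to $\HC_{k[2]}(W_{n-1})$, the latter because the node of the $D$-diagram fixed by $\t$ folds to a $B$-node of parameter $2$). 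All of this bookkeeping --- symbols, defects, cores, and the resulting Schur elements --- is carried out in~\cite{lusztig classique} and tabulated in~\cite[Tab.~II]{lusztig cbms}; what remains is the routine but careful translation of those tables into the present normalization, together with the observation from the combinatorics of~\S\ref{sub:abaque} that the relevant parameter shift by a $d$-core (here with $d=1$, hence by the single integer $r^2$) contributes $2r$ on the $B$-node.
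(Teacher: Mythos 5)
You should first be aware that the paper itself offers no proof of this statement: it is quoted as a summary of Lusztig's classification, with the references \cite{lusztig classique} and \cite[Tab.~II]{lusztig cbms}. Your plan --- re-deriving it from Lusztig's symbol combinatorics together with the generic-degree formula --- is therefore the same route the paper relies on, and its skeleton is right: cuspidal $=$ triangular symbol, Harish-Chandra series governed by the defect, relative Weyl group $W_{n-r^2}$ of type $B$, parameter $q$ on the $A$-type nodes coming from the general linear factors of the Levi, and the $B$-node parameter pinned down by comparing~\eqref{eq:deg hc} with Lusztig's degree polynomials.

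Several of the facts you invoke are, however, misstated and need repair before this counts as a proof sketch. (i) With the standard notion of defect, odd-defect symbols parametrize unipotent characters of types $B_n/C_n$, not of the coset $W_n'\t_n$: split $D_n$ corresponds to defect $\equiv 0 \pmod 4$ and twisted $D_n$ to defect $\equiv 2 \pmod 4$, and the triangular symbol with rows $\{0,1,\dots,2r-1\}$ and $\vide$ has rank $r^2$ and defect $2r$, not $r$. Your parity argument lands on the stated conclusion only because these two slips compensate (equivalently, because you are implicitly using half the usual defect); as written it does not match \cite{lusztig D}. (ii) ``The two rows exhaust an initial segment of $\ZM_{\geqslant 0}$'' is not the cuspidality criterion: the defect-$1$ symbol with rows $\{0,2\}$ and $\{1\}$ satisfies it and lies in the principal series of $B_2$; cuspidality means the symbol is equivalent to $(\{0,1,\dots,d-1\},\vide)$. (iii) In the twisted principal series it is the pair of end nodes of the $D_n$ diagram swapped by $\t$ that folds to the node carrying the parameter $q^2$, while the $\t$-fixed nodes give parameter $q$ --- the opposite of your parenthetical. (iv) The appeal to \S\ref{sub:abaque} is empty here: for $d=1$ the only $1$-core is the empty partition and those sequences concern $G(d,1,r)$; the value $2r$ on the $B$-node is forced by the degree comparison~\eqref{eq:deg hc} (or simply read off from \cite[Tab.~II]{lusztig cbms}), not by the $k^\gamma$ combinatorics. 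With these corrections, your sketch does reproduce parts (a), (b) and (c) as stated.
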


\bigskip

In~\cite[Cor.~9.8]{auto}, the author determined the partition into symplectic leaves 
(as well as their structure) of both $\ZC_\ksp(W_n')$ and $\ZC_\ksp(W_n')^{\t_n}$, 
but it must be said that the essential part of the work was done by 
Bellamy-Thiel~\cite[Prop.~4.17]{bellamy thiel} and Bellamy-Maksimau-Schedler~\cite{be-sc}:

\bigskip

\begin{theo}[Bellamy-Thiel, Bellamy-Maksimau-Schedler]
Let $n \ge 4$. Then:
\begin{itemize}
\itemth{a} $\ZC_\ksp(W_n')$ (resp. $\ZC_\ksp(W_n')^{\t_n}$) admits a cuspidal 
point if and only if there exists $r \ge 2$ such that $n=r^2$. In this case, 
it contains only one element, which will be denoted by $y_n^\cus$. By extension, we 
set $y_0^\cus$ for the unique cuspidal point of $\ZC_\ksp(0,1)$. 

\itemth{b}If $j \in \{0,1\}$, the map
$$\fonctio{\SQ_j(n)}{\Cus^{\t_n^j}(\ZC_\ksp(W_n')/W_n'}{r}{(W_{r^2}',y_{r^2}^\cus)}$$
is bijective. We denote by $\leaf_r'(n)$ the cuspidal leaf of $\ZC_\ksp(W_n')^{\t_n^j}$ 
indexed by $(W_{r^2}',y_{r^2}^\cus)$.

\itemth{c} Let $j \in \{0,1\}$ and let $r \in \SQ_j(n)$. Then
$$\overline{\leaf_r'(n)}^\nor \simeq 
\begin{cases}
\ZC_\ksp(W_n') & \text{if $(r,j) = (0,0)$,}\\
\ZC_{k[2]}(W_{n-1}) & \text{if $(r,j)=(0,1)$,}\\
\ZC_{k[2r]}(W_{n-r^2}) & \text{otherwise.}\\
\end{cases}$$
\end{itemize}
\end{theo}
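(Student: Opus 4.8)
The plan is to treat type $A$ as already settled --- it is handled in Section~\ref{sec:type a}, in the split and twisted cases (Theorems~\ref{theo:A} and~\ref{theo:2A}) --- and to reduce the remaining classical types to a type-by-type comparison of two combinatorial classifications, one on the Calogero--Moser side and one on the unipotent side, that have been established independently. So assume from now on that $W$ is of type $B_n=C_n$ or $D_n$; write $W_n=G(2,1,n)$ and $W_n'=G(2,2,n)$, with $\t=\Id_V$ in type $B=C$ and $\t$ the involutive diagram automorphism of $W_n'$ in type $D$. Since $d=1$, the element $\t_d$ is $W$-conjugate to $\t$, every parabolic subgroup of $W$ is $\t_d$-split, $W_{\t_d}=W_\t$, and by Theorem~\ref{theo:hc}(a) (equivalently, by Remark~\ref{rem:grc}) one always has $\Nrmov_{W_{\t_d}}(P_{\t_d},\l)=\Nrmov_{W_{\t_d}}(P_{\t_d})$. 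This already trivializes the restriction issue in Conjecture~\ref{conj:d-hc}(b2): only the equality of two parameters, not of a parameter with its restriction, has to be checked.

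First I would dispose of Conjectures~\ref{conj:familles-uni-cm} and~\ref{conj:d-cusp}. In the split case ($\t=\Id_V$), Conjecture~\ref{conj:familles-uni-cm} is exactly the Gordon--Martino Conjecture~\ref{conj:gordon-martino}, known in types $B=C$ and $D$ by~\cite{gordon martino} (Theorem~\ref{theo:cuspidal-unique}); in the twisted type $D$ case it follows from the split case together with Theorem~\ref{theo:familles-ksp} and the genericity of almost characters, which identify the $\t$-stable Lusztig $\ksp$-families of $W$ with the unipotent Lusztig families of the twisted group in a way compatible with the map $R_\chi$. For Conjecture~\ref{conj:d-cusp} with $d=1$, note that $\ZC_\ksp^{\t_d}=\ZC_\ksp^\t$ and that a $\t_d$-cuspidal point is precisely a zero-dimensional symplectic leaf of $\ZC_\ksp^\t$; via the bijection $\Phi$ just obtained, the statement then reduces to (one direction of) Conjecture~\ref{conj:cuspidal-cm-kl} in the case $d=1$, which holds in all relevant types by Theorem~\ref{theo:cuspidal-unique}, using also the classification of $1$-cuspidal symplectic leaves from Theorem~\ref{theo:normalization} and the work of Bellamy--Thiel~\cite{bellamy thiel}.

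The heart of the matter is Conjecture~\ref{conj:d-hc}. The strategy is to line up two theorems. On the unipotent side, Lusztig's classification of cuspidal unipotent representations of classical groups~\cite{lusztig classique, lusztig BC, lusztig D} gives, in type $B=C$: the cuspidal data for $W_n$ are indexed by $\{r\in\ZM_{\ge 0}\mid r^2+r\le n\}$, the datum of rank $r^2+r$ lives in the relative group $\Nrmov_{W_n}(W_{r^2+r})\simeq W_{n-(r^2+r)}$, and the endomorphism algebra of its Harish-Chandra induction is a Hecke algebra $\HC_{k[2r+1]}(W_{n-(r^2+r)})$ (with the obvious type-$D$ analogue, where the parameter is $k[2r]$, $k[2]$ or $\ksp$ according to the parity of $r$ and the split/twisted choice). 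On the Calogero--Moser side, the determination of the $\CM^\times$-fixed points, the $\t$-cuspidal points, and the symplectic leaves of $\ZC_\ksp(W_n)$ and $\ZC_\ksp(W_n')^\t$ --- due to Bellamy--Thiel~\cite{bellamy thiel}, Bellamy--Maksimau--Schedler~\cite{be-sc} and~\cite[Cor.~9.8]{auto} --- uses the very same indexing sets and relative complex reflection groups, and identifies the normalization of the closure of the leaf of rank $r^2+r$ with $\ZC_{k[2r+1]}(W_{n-(r^2+r)})$ (again $k[2r]$, $k[2]$, $\ksp$ in type $D$, same dichotomy). Since indexing sets, relative groups, and parameters coincide on the nose, parts (b1)--(b2) of Conjecture~\ref{conj:d-hc} follow once one checks the single nontrivial compatibility: the cuspidal Calogero--Moser point of rank $r^2+r$ corresponds under $\Phi$ to the unipotent Lusztig family containing the cuspidal unipotent representation. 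By Bellamy--Thiel that point is $z^\ksp_{(r^{r+1},\vide)}$, so this comes down to showing that $\chi_{(r^{r+1},\vide)}$ and the cuspidal unipotent character $\cus_{r^2+r}$ lie in the same Lusztig $\ksp$-family; their symbols --- of defect $1$ and $2r+1$ respectively --- have the same multiset of entries, whence the claim by Lusztig's description of families in type $BC$~\cite[Theo.~5.8]{lusztig BC}, with a parallel argument in type $D$. Finally, part (a) of Conjecture~\ref{conj:d-hc} (that $z_\l\in\overline{\leaf}_{P,p}$ iff the Harish-Chandra series through $(P,p)$ meets the family of $\l$) follows by combining the closure-ordering of symplectic leaves read off from the Bellamy--Thiel/BMS description with Lusztig's compatibility of families and Harish-Chandra series (Theorem~\ref{theo:familles}(a)), both being governed by the same symbol combinatorics.

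The step I expect to be the main obstacle is precisely this last identification of $\Phi$ on cuspidal points --- matching the partition $(r^{r+1},\vide)$ in type $B=C$, and its type-$D$ counterparts, with the cuspidal unipotent family --- together with the care needed to follow the diagram automorphism $\t$ through the twisted type $D$ case, where the relevant relative groups alternate between $W_m$ and $W_m'$ and the Hecke parameter depends on the parity of $r$. Everything else is, at bottom, bookkeeping: the two classifications were set up in mutually translatable language, and the content of the theorem is that they genuinely agree.
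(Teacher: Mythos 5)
The statement you were asked about is the type-$D$ classification on the Calogero--Moser side itself: existence and uniqueness of the cuspidal point of $\ZC_\ksp(W_n')$ (resp.\ of $\ZC_\ksp(W_n')^{\t_n}$) exactly when $n=r^2$ (with the parity of $r$ dictating which of the two varieties carries it, as encoded in the sets $\SQ_j(n)$), the parametrization of the cuspidal pairs in part~(b), and the identification of $\overline{\leaf_r'(n)}^\nor$ with $\ZC_{k[2r]}(W_{n-r^2})$ (resp.\ $\ZC_{k[2]}(W_{n-1})$, $\ZC_\ksp(W_n')$) in part~(c). In the paper this theorem carries no internal proof: it is imported from Bellamy--Thiel~\cite[Prop.~4.17]{bellamy thiel}, Bellamy--Maksimau--Schedler~\cite{be-sc} and~\cite[Cor.~9.8]{auto}. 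Your proposal treats it in exactly the same way -- it appears only as a quoted input, attributed to those same three sources -- and essentially all of the argument you actually develop (the $d=1$ simplifications, the reduction of Conjecture~\ref{conj:familles-uni-cm} to~\cite{gordon martino}, the symbol computation matching $\chi_{(r^{r+1},\vide)}$ with the cuspidal unipotent family, the alignment of indexing sets and parameters) is aimed at the neighbouring comparison result, Theorem~\ref{theo:classique}, for which the present theorem is one of the two inputs.

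Judged as a proof of the quoted theorem, this is therefore a genuine gap: none of its three assertions is established by anything you wrote. A proof would need (i) the count of cuspidal (zero-dimensional-leaf) points of $\ZC_\ksp$ for $G(2,2,m)$ and for its $\t$-twisted fixed-point variety, for all $m\le n$, which is Bellamy--Thiel's counting argument and is where the even/odd dichotomy between the split and twisted cases comes from; (ii) the parametrization of the symplectic leaves of $\ZC_\ksp(W_n')^{\t_n^j}$ by $W_\t$-orbits of pairs consisting of a $\t$-split parabolic subgroup and a $\t$-cuspidal point, i.e.\ Theorem~\ref{theo:leaves} (\cite[Theo.~A]{auto}), which converts (i) into part~(b) and produces the relative groups $W_n'$, $W_{n-1}$, $W_{n-m}$ in the three cases (note in particular the $(0,1)$ case, where the relative group changes type from $D$ to $B$); and (iii) the Bellamy--Maksimau--Schedler identification of $\overline{\leaf_r'(n)}^\nor$ as a Calogero--Moser space with the specific parameter $k[2r]$ (resp.\ $k[2]$), which rests on quiver-variety models of these spaces and is precisely where the parameter in part~(c) comes from; the Lusztig-side symbol combinatorics in your write-up cannot substitute for it. In fairness, your citation of the same references coincides with the paper's own treatment of this theorem; the problem is only that the proof you wrote is for a different statement of that section, so, for this one, nothing beyond the citation is supplied.
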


\bigskip

The comparison between the above two theorems proves Conjecture~\ref{conj:d-hc} 
in type $D$, in both the untwisted and the twisted case, up to the verification 
that the cuspidal families correspond on both side: this is done thanks to the 
combinatoric of symbols, in the same way as for type $B$ or $C$.

\part{Spetses}

\section{What is a spets?}

\medskip

As explained in Section~\ref{sec:genericity}, and as noticed in many papers 
on the subject, essential features of the representation theory of a finite 
reductive group are controlled by its Weyl group and can be recovered 
from structures built from it. The {\it Spetses}\footnote{Spetses is a greek island 
where a conference on finite groups was organized in 1993: this program started there, during a coffee break...} 
program initiated 
by Brou\'e-Malle-Michel~\cite{malle gd, malle icm, spetses 1, spetses 2}, which takes its origin 
in their work on genericity~\cite{BMM, broue malle}, 
proposes to attach to some finite complex reflection groups (called {\it spetsial}, see below) 
some numerical data (``unipotent representations'', 
``degrees'') which admits partitions into ``families'', ``$d$-Harish-Chandra series'' 
satisfying the same kind of properties as in the case of Weyl groups. 
This was soon corroborated by computations done by Lusztig~\cite{lusztig h4} and 
Malle (unpublished) for finite Coxeter groups that are not Weyl groups. 
This suggests that there should be a mysterious object (the {\it spets}) admitting 
some kind of representation theory similar to the representation theory 
of finite reductive groups.

We try to summarize it (very) quickly in this section, and see what are the possible links 
with the material of this paper. We come back to the general situation where $W$ is a complex reflection 
group. The {\it spetsial} parameter of $W$, denoted by $\ksp$, is defined 
by $(\ksp)_{\O,0}=1$ and $(\ksp)_{\O,j}=0$ for all $\O \in \AC/W$ and $1 \le j \le e_\O-1$. 
Brou\'e-Malle-Michel asked whether one can associate with any reflection 
group $W$ several combinatorial data:
\begin{itemize}
\itemth{S1} A set $\unip(W)$, whose elements are called 
{\it irreducible unipotent representations} of the {\it spets} attached 
to $W$, even though there is no group and no representation attached to them.

\itemth{S2} A map $\degb : \unip(W) \longto \CM[\qb]$.
For $\z$ a root of unity, an irreducible unipotent representation 
$\r \in \unip(W)$ is called {\it $\z$-cuspidal} if $\degb \r$ is divisible by 
$(\qb-\z)^{\dim(V/W)^\z}$.

\itemth{S3} A $\z$-Harish-Chandra theory: in other words, a partition of $\unip(W)$ 
into $\z$-Harish-Chandra series built on the same model as in Theorem~\ref{theo:d-hc}. 
In particular, to each Harish-Chandra series is associated a Hecke algebra of the 
stabilizer of the corresponding $\z$-cuspidal pair $(P,\l)$, with a well-defined 
parameter $k_{P,\l}$.

\itemth{S4} Almost characters: these are formal complex linear combinations 
of irreducible unipotent representations that can be used to define a partition of 
$\unip(W)$ into unipotent Lusztig families in the same way as in~\S\ref{sub:familles}.
\end{itemize}
All these data should satisfy compatibility conditions (axioms) which mimic what is 
known or conjectured for finite reductive groups. The group $W$ is said to be 
{\it spetsial} if some divisibility property of its Schur elements $\schur_\chi^\ksp$ 
holds~\cite[\S{3}]{malle icm}. It turns out that 
many complex reflection groups are not spetsial, but some of them are. 
The list of irreducible 
spetsial groups is as follows:
\begin{itemize}
 \item[$\bullet$] The groups $G(e,1,n)$ and $G(e,e,n)$ for any $e \ge 1$;
 
 \item[$\bullet$] The primitive groups $G_j$, for 
$$j \in \{4,6,8,14,23,24,25,26,27,28,29,30,32,33,34,35,36,37\}.$$
\end{itemize}
Being spetsial is easily seen to be a necessary condition for admitting combinatorial data 
as in~(S1),~(S2),~(S3) and~(S4) satisfying the list of axioms, but it is somewhat astonishing 
that it is also a sufficient condition~\cite{malle gd, spetses 1, spetses 2}. 
The natural remaining question is to figure out if there is a category (the spets?) 
lying above all these combinatorial data. Note the first attempts in this direction 
using fusion systems and $\ell$-compact groups by Kessar-Malle-Semeraro~\cite{semeraro, kms1, kms2}. 

Of course, if $W$ is a Weyl group (i.e. a finite rational reflection group), then 
$W$ is spetsial and one recovers the generic theory of unipotent representations 
of finite groups of the form $\Gb^F$ where $(q,\Gb,F) \in \groups(W)$\footnote{Note that, 
in this case, the $\z$-Harish-Chandra series depend only on the order $d$ of $\z$ and coincide 
with $d$-Harish-Chandra series}. In the late 80's, Lusztig associated to each 
finite Coxeter group which is not a Weyl group a combinatorial datum as in~(S1) and~(S2) 
satisfying a few axioms (this was finally published in 1993; see~\cite{lusztig h4}). 
For $H_2$, $H_3$ and $H_4$, this was rediscovered by Malle in 1992 (unpublished). 
In this case, the almost characters as in~(S4) were obtained for dihedral groups 
by Lusztig and for $H_4$ by Malle~\cite{lusztig malle}. 
About the same period, Malle~\cite{malle gd} 
proved that the imprimitive complex reflection groups $G(e,1,n)$ and $G(e,e,n)$ 
can be endowed with data satisfying~(S1),~(S2),~(S3),~(S4). 
The case of the primitive complex reflection groups 
has been done by Brou\'e-Malle-Michel~\cite{spetses 1, spetses 2}. 

\medskip

We expect that, for spetsial groups, all the above Brou\'e-Malle-Michel 
constructions are compatible with the geometry of the Calogero-Moser space 
associated with $W$ at {\it spetsial} parameter, and that all the conjectures 
stated in Section~\ref{sec:main} remain valid in this context. In other words, 
is the spets attached to $W$ hidden in the (Poisson) geometry of $\ZC_\ksp(W)$? 
% Here, $\ksp$ denotes the {\it spetsial} parameter given by $(\ksp)_{\O,0}=1$ and $(\ksp)_{\O,j}=0$ 
% for all $\O \in \AC/W$ and $1 \le j \le e_\O$. 

In the upcoming section, we illustrate again these coincidences in 
the smallest non-cyclic primitive complex reflection group, namely the group $G_4$.

\bigskip

\section{A primitive example}

\bigskip

\boitegrise{{\bf Hypothesis.} {\it 
We assume in this section, and only in this section, that $W$ is of type $G_4$. 
In other words, we set
$$
s=\begin{pmatrix}
1 & 0 \\
0 & \zeta_3 \\
\end{pmatrix}
\qquad\text{and}\qquad 
t=\frac{1}{3}\begin{pmatrix}
2\z_3 + 1 & 2(\z_3 - 1) \\
\z_3 - 1 &  \z_3 + 2 \\
\end{pmatrix},$$
and we assume that $W=\langle s,t \rangle=G_4$. Here, $\z_3$ 
is a primitive third root of unity.}}{0.75\textwidth}

\bigskip

If $(\d,\b) \in \{(1,0),(1,4),(1,8),(2,1),(2,3),(2,5),(3,2)\}$, there is a unique 
irreducible character of $G_4$ of degree $\d$ and $b$-invariant $\b$: it will be denoted by $\phi_{\d,\b}$. 
We have
$$\Irr(G_4)=\{\phi_{\d,\b}~|~(\d,\b) \in \{(1,0),(1,4),(1,8),(2,1),(2,3),(2,5),(3,2)\}\}.$$
Note that $\phi_{1,0}=1$ is the trivial character, that $\phi_{1,4}=\e$, $\phi_{1,8}=\e^2$, 
that $\phi_{2,1}$ and $\phi_{2,3}$ are the characters afforded by the representations 
$V$ and $V^*$ respectively, that $\phi_{2,5}$ is the character afforded by 
$V \otimes \e \simeq V^* \otimes \e^2$ and that $\phi_{3,2}$ corresponds to the second 
symmetric power $\Srm^2(V) \simeq \Srm^2(V^*)$. 
We denote by $C_3$ the parabolic subgroup $\langle s \rangle$ of $W$: it is a cyclic 
group of order $3$. 

\bigskip
\def\petitespace{\vphantom{\DS{\frac{\DS{A}}{\DS{A}}}}}
\def\grandespace{\vphantom{\begin{pmatrix} \frac{A^1}{B_1} \\ \frac{A^1}{B_1} \end{pmatrix}}}

\subsection{Unipotent representations} 
All the facts stated without proof in this paragraph are taken from~\cite[\S{A.4}]{spetses 2} 
(see also~\cite{jean}). 
The set $\unip_\cus(C_3)$ contains a single 
element (which will be denoted by $\cus_{C_3}$) and the set $\unip_\cus(G_4)$ 
contains also a single element (which will be denoted by $\cus_{G_4}$). The (classical) 
$1$-Harish-Chandra theory of the spets $G_4$ may be summarized as follows:
\begin{itemize}
\item[$\bullet$] There are three Harish-Chandra series, namely 
$\unip(G_4,1,1)$, $\unip(G_4,C_3,\cus_{C_3})$ and $\{\cus_{G_4}\}$. 

\item[$\bullet$] $\unip(G_4,1,1)$ is the principal series, and we set 
$\r_{\d,\b}=\harish^{G_4}(\phi_{\d,\b})$.

\item[$\bullet$] We have $\Nrmov_{G_4}(C_3,\cus_{C_3})=\Nrmov_{G_4}(C_3) \simeq \mub_2$ and we 
set $\r_{C_3,+}=\harish^{G_4,C_3,\cus_{C_3}}(1)$ and $\r_{C_3,-}=\harish^{G_4,C_3,\cus_{C_3}}(\s)$, 
where $\s$ is the inclusion $\mub_2 \injto \CM^\times$. 
\end{itemize}
Therefore,
\equat\label{eq:unipotent-g4}
\unip(G_4)=\{\r_{1,0},\r_{1,4},\r_{1,8},\r_{2,1},\r_{2,3},\r_{2,5},\r_{3,2},
\r_{C_3,+},\r_{C_3,-},\cus_{G_4}\}.
\endequat
The unipotent Lusztig families are the following four subsets 
$\FG_\clubsuit$, $\FG_\diamondsuit$, $\FG_\heartsuit$, $\FG_\spadesuit$ of $\unip(G_4)$:
\equat\label{eq:familles-g4}
\begin{cases}
\FG_\clubsuit^\uni=\{\r_{1,0}\},\\ 
\FG_\diamondsuit^\uni=\{\r_{3,2}\},\\ 
\FG_\heartsuit^\uni=\{\r_{2,1},\r_{2,3},\r_{C_3,+}\},\\
\FG_\spadesuit^\uni=\{\r_{1,4},\r_{1,8},\r_{2,5},\r_{C_3,-},\cus_{G_4}\}.
\end{cases}
\endequat
% 
% We set
% $$\FG_\clubsuit^\uni=\{\r_{1,0}\},\qquad \FG_\diamondsuit^\uni=\{\r_{3,2}\},
% \qquad 
% \FG_\heartsuit^\uni=\{\r_{2,1},\r_{2,3},\r_{C_3,+}\}$$
% $$\FG_\spadesuit^\uni=\{\r_{1,4},\r_{1,8},\r_{2,5},\r_{C_3,-},\cus_{G_4}\}.\leqno{\text{and}}$$
% Then $\FG_\clubsuit$, $\FG_\diamondsuit$, $\FG_\heartsuit$, $\FG_\spadesuit$ 
% are the unipotent Lusztig families of $G_4$. 
The next table summarizes the 
numerical data attached to the spets $G_4$. Let us explain the information 
given in this table:
\begin{itemize}
\item[$\bullet$] The first column contains the list of irreducible unipotent representations 
$\r$ of the spets $G_4$.

\item[$\bullet$] The second column contains the degree of $\r$, where 
$\Phi_e$ denotes the $e$-th cyclotomic polynomial, 
$\Phi_3'=\qb-\z_3$, $\Phi_3''=\qb - \z_3^{-1}$, $\Phi_6'=\qb -\z_6$, $\Phi_6''=\qb-\z_6^{-1}$ 
and $\sqrt{-3}=2\z_3+1$.

\item[$\bullet$] The third column gives the family of $\r$.

\item[$\bullet$] The fourth (resp. fifth) column gives the cuspidal pair $(P,\l)$ parametrizing 
the $\z_4$-Harish-Chandra series (resp. $\z_6$-Harish-Chandra series) to which $\r$ belongs. Note that 
an empty box means that $\r$ is $\z_4$-cuspidal (resp. $\z_6$-cuspidal) and that $(1,1)_d$ 
denotes the $\z_d$-cuspidal pair associated with the trivial parabolic subgroup, 
which if $\t_d$-split \footnote{The interesting 
$\z$-Harish-Chandra 
series are those attached to a root of unity $\z$ 
of order equal to $1$, $2$, $3$, $4$ or $6$; the $\z=-1$ (resp. the $\z=\z_3$) case 
can be deduced from the $\z=1$ (resp. $\z=\z_6$) case thanks to Ennola duality~\cite[Axiom~5.13]{spetses 2}, 
which essentially amounts to replacing $\qb$ by $-\qb$ in this case.}. 
\end{itemize}

$$
\begin{array}{!{\vline width 2pt} c !{\vline width 1pt} c|c|c|c!{\vline width 2pt}}
\hlinewd{2pt}
\petitespace \r & \degb(\r) & \text{Family} & \text{$\z_4$-series} & \text{$\z_6$-series} \\
\hlinewd{1pt}
\r_{1,0} & 1 & \clubsuit & (1,1)_4 & (1,1)_6 \petitespace \\
\hline 
\r_{1,4} & -\sqrt{-3}/6 \qb^4 \Phi_3''\Phi_4\Phi_6'' & \spadesuit & & (1,1)_6 \petitespace \\
\hline
\r_{1,8} & \sqrt{-3}/6 \qb^4 \Phi_3'\Phi_4\Phi_6' & \spadesuit & &  \petitespace \\
\hline
\r_{2,1} & (3+\sqrt{-3})/6 \qb \Phi_3' \Phi_4 \Phi_6'' & \heartsuit & & (1,1)_6 \petitespace \\
\hline 
\r_{2,3} & (3-\sqrt{-3})/6 \qb \Phi_3'' \Phi_4 \Phi_6' & \heartsuit &  & \petitespace \\
\hline
\r_{2,5} & 1/2\qb^4 \Phi_2^2\Phi_6 & \spadesuit & (1,1)_4 & \petitespace \\
\hline
\r_{3,2} & \qb^2 \Phi_3 \Phi_6 & \diamondsuit & (1,1)_4 & \petitespace \\
\hline
\r_{C_3,+} & -\sqrt{-3}/3 \qb \Phi_1\Phi_2\Phi_4 & \heartsuit & & (1,1)_6 \petitespace \\
\hline
\r_{C_3,-} & -\sqrt{-3}/3 \qb^4  \Phi_1\Phi_2\Phi_4 & \spadesuit & & (1,1)_6 \petitespace \\
\hline
\cus_{G_4} & -1/2 \qb^4 \Phi_1^2 \Phi_3 & \spadesuit & (1,1)_4 & (1,1)_6 \petitespace \\
\hlinewd{2pt}
\end{array}
$$
We conclude this subsection by giving the parameters $k_{P,\l}$ for 
all $(P,\l) \in \Cus^d(G_4)$ and $d \in \{1,4,6\}$. Whenever the relative 
Weyl group $\Nrmov_{G_4}(P,\l)$ is cyclic and isomorphic to $\mub_d$, then the parameter 
will be given as a list $(k_0,k_1,\dots,k_{d-1})$ of complex numbers:
\begin{itemize}
\item[$\bullet$] $k_{1,1}=\ksp$.

\item[$\bullet$] $k_{C_3,\cus_{C_3}}=(3,0)$.

\item[$\bullet$] $k_{(1,1)_4}=(3,0,1,0)$.

\item[$\bullet$] $k_{(1,1)_6}=(2,0,0,1,0,1)$.
\end{itemize}

\bigskip

\subsection{Calogero-Moser space} 
As there is only one orbit of reflecting hyperplanes (call it $\O$), we will simply denote 
parameters $k \in \CM^{\aleph(G_4)}$ by a triple $(k_0,k_1,k_2) \in \CM^3$ 
where $k_j=k_{\O,j}$. For instance, $\ksp=(1,0,0)$. 
Descriptions of the Calogero-Moser space $\ZC_k(G_4)$ have been given 
in~\cite{bonnafe maksimau} and~\cite{bonnafe thiel}. Note 
that the descriptions given in both cases are for parameters $k=(k_0,k_1,k_2) \in \CM^3$ 
satisfying $k_0+k_1+k_2=0$: this is not restrictive, thanks to Remark~\ref{rem:parametres particuliers}. 
So, we set $\kspo=(2/3,-1/3,-1/3)$, and then $\ZC_\ksp(G_4)=\ZC_{\kspo}(G_4)$. 
Specializing the presentation~\cite{bonnafe thiel} at $\kspo$, we get that
$\ZC_\kspo$ is the closed subvariety of $\CM^8$ consisting of points $(x_1,x_2,y_1,y_2,a,b,c,e) \in \CM^8$ 
such that 
$$
\begin{cases}
ab + 12ce + 2x_1y_1 - 15e^4 + 234e^2 + 192e=0,\\
3 a y_1 e + 4 b c - 9 b e^3 + 126 b e + 2 x_1 y_2 =0,\\
3 a^2 e - 2 b x_2 + 8 c x_1 - 9 x_1 e^3 - 108 x_1 e =0,\\
4 a c - 9 a e^3 + 126 a e + 3 b x_1 e + 2 x_2 y_1 =0,\\
2 a y_2 - 3 b^2 e - 8 c y_1 + 9 y_1 e^3 - 108 y_1 e =0,\\
-a^3 - 3 a x_1 e^2 + 48 a x_1 + 2 a y_1^2 - b^3 + 2 b x_1^2 \\
\hphantom{AAAA} - 3 b y_1 e^2 + 48 b y_1 - 8 c x_2 - 8 c y_2 + 10 x_2 e^3 \\
\hphantom{AAAA} - 156 x_2 e + 128 x_2 + 10 y_2 e^3 - 156 y_2 e - 128 y_2 =0,\\
16 c^2 + 720 c e + 9 x_1 y_1 e^2 + 2 x_2 y_2 - 27 e^6 + 864 e^3 + 6804 e^2 =0,\\
-2 a y_1^2 + b^3 + 3 b y_1 e^2 - 48 b y_1 + 8 c y_2 - 10 y_2 e^3 + 156 y_2 e + 128 y_2 =0,\\
5 a^2 y_1 + 444 a b + 5 b^2 x_1 + 280 c e^3 + 4848 c e - 1280 c + 60 x_1 y_1 e^2 \\
\hphantom{AAAA} + 648 x_1 y_1 + 10 x_2 y_2 - 360 e^6 + 7200 e^3 + 88776 e^2 + 44928 e=0.\\
\end{cases}
$$
The action of $\CM^\times$ is given by
\equat\label{eq:action-c-g4}
\xi \cdot (x_1,x_2,y_1,y_2,a,b,c,e)=(\xi^4 x_1,\xi^6 x_2,\xi^{-4} y_1,\xi^{-6} y_2,\xi^2 a,\xi^{-2} b,c,e).
\endequat
An immediate computation shows that $\ZC_\kspo^{\CM^\times}$ contains $4$ points, given by
$$z_\clubsuit=(0,0,0,0,0,0,468,8),\qquad z_\diamondsuit =(0,0,0,0,0,0,0,0),$$
$$z_\heartsuit:=(0,0,0,0,0,0,-45,2)\qquad\text{and}\qquad
z_\spadesuit=(0,0,0,0,0,0,-18,-4).$$
We denote by $\FG_\bigstar^\calo$ the Calogero-Moser $\kspo$-family associated 
with $z_\bigstar$. Then
\equat\label{eq:cm-familles-g4}
\begin{cases}
\FG_\clubsuit^\calo=\{\phi_{1,0}\},\\
\FG_\diamondsuit^\calo=\{\phi_{3,2}\},\\
\FG_\heartsuit^\calo=\{\phi_{2,1},\phi_{2,3}\}\\
\FG_\spadesuit^\calo = \{\phi_{1,4},\phi_{1,8},\phi_{2,5}\}.\\
\end{cases}
\endequat
The comparison of~\eqref{eq:familles-g4} and~\eqref{eq:cm-familles-g4} proves 
the {\it spetsial} analogue of Conjecture~\ref{conj:familles-uni-cm}.

\bigskip

\subsubsection{Symplectic leaves of $\ZC_\kspo$} 
Let $\SC$ denote the singular locus of $\ZC_\kspo$. It has been computed 
in~\cite{bonnafe thiel} and it is proved there that it is irreducible 
of dimension $2$ and that
\equat\label{eq:singular-g4}
z_\clubsuit, z_\diamondsuit \not\in \SC\qquad\text{and}\qquad
z_\heartsuit, z_\spadesuit \in \SC.
\endequat
Moreover, $z_\spadesuit$ is the only singular point of $\SC$. Therefore, 
there are three symplectic leaves:
\begin{itemize}
\item The smooth locus: through the parametrization of Theorem~\ref{theo:leaves}, it corresponds to 
the pair $(1,p)$, where $p$ is the unique point of the Calogero-Moser space 
$\ZC_\kspo(0,1)$.

\item $\SC^\circ=\SC \setminus \{z_\spadesuit\}$: through the parametrization of 
Theorem~\ref{theo:leaves}, it corresponds to 
the pair $(C_3,q)$, where $q$ is the unique cuspidal point of the Calogero-Moser space 
$\ZC_\kspo(V/V^{C_3},C_3)$.

\item $\{z_\spadesuit\}$: it is cuspidal.
\end{itemize}
This parametrization fits perfectly with the partition of $\unip(G_4)$ 
into Harish-Chandra series, so this proves the spetsial analogue of Conjecture~\ref{conj:d-cusp} 
and Conjecture~\ref{conj:d-hc}(a) for $d=1$. 

Concerning Conjecture~\ref{conj:d-hc}(b), the only interesting case is the second one. 
Recall that $\Nrmov_{G_4}(C_3)=\mub_2$. It is proved in~\cite{bonnafe thiel} that 
\equat\label{eq:normalisation-g4}
\SC^\nor \simeq \{(x,y,e) \in \CM^3~|~(e-2)(e+4)=xy\} \simeq \ZC_{k_{C_3,\cus_{C_3}}}(\mub_2)
\endequat
as Poisson varieties. Note that the computation in~\cite{bonnafe thiel} 
is done for the parameter $-3\kspo$, so the equation given here is just 
obtained after a rescaling. This proves that Conjecture~\ref{conj:d-hc}(b) 
holds for $d=1$.

\subsubsection{Symplectic leaves of $\ZC_\kspo^{\mub_4}$} 
The action of $\CM^\times$ being given by~\eqref{eq:action-c-g4}, 
the variety $\ZC_\kspo^{\mub_4}$ is defined, inside $\ZC_\kspo$, by 
the equations $a=b=x_2=y_2=0$. 
This yields
\equat\label{eq:mu4-g4}
\ZC_\kspo^{\mub_4}=\{z_\heartsuit\} ~~\dot{\cup}~~ \SC_4,
\endequat
where
\equat\label{eq:s4-g4}
\SC_4 \simeq \{(x_1,y_1,e) \in \CM^3~|~4/3x_1y_1 = e(e-8)(e+4)^2\}
\endequat
So $\SC_4$ admits only one singular point (namely, $z_\spadesuit$) and so 
$\ZC_\kspo^{\mub_4}$ admits three symplectic leaves:
\begin{itemize}
\item There are two $\t_4$-cuspidal points, namely $z_\heartsuit$ and $z_\spadesuit$.

\item There is one $2$-dimensional symplectic leaf, which is the smooth locus of 
$\SC_4$ (i.e. $\SC_4 \setminus \{z_\spadesuit\}$). Through the parametrization of Theorem~\ref{theo:leaves}, 
it corresponds to the pair $(1,p)$, where $p$ is the unique point of the Calogero-Moser space 
$\ZC_\kspo(0,1)$.
\end{itemize}
This parametrization fits perfectly with the partition of $\unip(G_4)$ 
into $4$-Harish-Chandra series, so this proves the spetsial analogues of Conjectures~\ref{conj:d-cusp} 
and~\ref{conj:d-hc}(a) for $d=4$. 

Concerning Conjecture~\ref{conj:d-hc}(b), the only interesting case is the second one. 
Recall that $W_{\t_4} \simeq \mub_4$. The above description proves that $\SC_4$ (which is the closure 
of the symplectic leaf $\SC_4 \setminus \{z_\spadesuit\}$) is normal and that
\equat\label{eq:normalisation-mu4-g4}
\SC_4 \simeq \ZC_{k_{(1,1)_4}}(\mub_4)
\endequat
as Poisson varieties (for the Poisson bracket, see~\cite{bonnafe thiel}). 
So Conjecture~\ref{conj:d-hc}(b) holds 
for $d=4$.

\subsubsection{Symplectic leaves of $\ZC_\kspo^{\mub_6}$} 
The action of $\CM^\times$ being given by~\eqref{eq:action-c-g4}, 
the variety $\ZC_\kspo^{\mub_6}$ is defined, inside $\ZC_\kspo$, by 
the equations $a=b=x_1=y_1=0$. 
This yields
\equat\label{eq:mu6-g4}
\ZC_\kspo^{\mub_6}=\{z_\diamondsuit\} ~~\dot{\cup}~~ \SC_6,
\endequat
where
\equat\label{eq:s6-g4}
\SC_6 \simeq \{(x_2,y_2,e) \in \CM^3~|~x_2y_2 = (e-8)(e-2)^2(e+4)^3\}
\endequat
So $\SC_6$ admits two singular points (namely, $z_\heartsuit$ and $z_\spadesuit$) and so 
$\ZC_\kspo^{\mub_6}$ admits four symplectic leaves:
\begin{itemize}
\item There are three $\t_6$-cuspidal points, namely $z_\diamondsuit$, $z_\heartsuit$ and $z_\spadesuit$.

\item There is one $2$-dimensional symplectic leaf, which is the smooth locus of 
$\SC_6$ (i.e. $\SC_6 \setminus \{z_\heartsuit,z_\spadesuit\}$). Through the parametrization of 
Theorem~\ref{theo:leaves}, 
it corresponds to the pair $(1,p)$, where $p$ is the unique point of the Calogero-Moser space 
$\ZC_\kspo(0,1)$.
\end{itemize}
This parametrization fits perfectly with the partition of $\unip(G_4)$ 
into $6$-Harish-Chandra series, so this proves the spetsial analogues of Conjectures~\ref{conj:d-cusp} 
and~\ref{conj:d-hc}(a) for $d=6$. 

Concerning Conjecture~\ref{conj:d-hc}(b), the only interesting case is the second one. 
Recall that $W_{\t_6} \simeq \mub_6$. The above description proves that $\SC_6$ (which is the closure 
of the symplectic leaf $\SC_6 \setminus \{z_\heartsuit,z_\spadesuit\}$) is normal and that
\equat\label{eq:normalisation-mu6-g4}
\SC_6 \simeq \ZC_{k_{(1,1)_6}}(\mub_6)
\endequat
as Poisson varieties (for the Poisson bracket, see~\cite{bonnafe thiel}). 
So Conjecture~\ref{conj:d-hc}(b) holds 
for $d=6$.

% \newpage

%\addtocontents{toc}{\contentsline {chapter}{\numberline {}ANEXOS:}{}}
%\part*{}
%\cftaddtitleline{toc}{}{}{}
%\addcontentsline{toc}{}{}

%\let\clearpage\relax
\vspace{-1cm}

\part*{References}

\def\refname{~}

\DeactivateToc

\vskip-1cm

\end{document}